\documentclass[11pt,reqno]{amsart}
\usepackage{amsfonts, amssymb}
\usepackage{graphicx}
\usepackage[hidelinks]{hyperref}
\usepackage{lipsum,amsmath,amsthm}
\usepackage[numbers,sort&compress]{natbib}
\usepackage{ dsfont }
\usepackage{appendix}
\usepackage{color}
\usepackage{mathrsfs}
\usepackage{esint}
\usepackage{hyperref}
\usepackage{caption}
\usepackage{mathtools}
\usepackage{scalerel}
\usepackage{comment}
\mathtoolsset{showonlyrefs}
\usepackage{todonotes}
\usepackage[right = 2.5cm, left=2.5cm, top = 2.5cm, bottom =2.5cm]{geometry}
\usepackage[utf8]{inputenc}
\usepackage[english]{babel}

\theoremstyle{plain}
\newtheorem{theorem}{Theorem}[section]

\newtheorem{proposition}{Proposition}[section]
\newtheorem{lemma}[proposition]{Lemma}

\theoremstyle{definition}
\newtheorem{remark}{Remark}[section]

\usepackage{todonotes}
\numberwithin{equation}{section}
\allowdisplaybreaks

\newcommand{\leqc}{\lesssim}

\newcommand{\grad}{\nabla}

\newcommand{\tnorm}[1]{\left\lvert\!\left\lvert\!\left\lvert #1
	\right\rvert\!\right\rvert\!\right\rvert}

\newcommand{\R}{\mathbb{R}}
\newcommand{\N}{\mathbb{N}}

\newcommand{\C}{\mathbb{C}}

\newcommand{\Z}{\mathbb{Z}}
\newcommand{\T}{\mathbb{T}}

\newcommand{\loc}{\mathrm{loc}}

\newcommand{\dee}{\mathrm{d}}

\DeclareMathOperator{\supp}{\mathrm{supp}}

\usepackage{bbm}

\newcommand{\E}{\mathbf{E}}

\newcommand{\MM}{\mathcal{M}}

\newtheorem*{lemma*}{Lemma}

\theoremstyle{definition}

\def\Xint#1{\mathchoice
	{\XXint\displaystyle\textstyle{#1}}%
	{\XXint\textstyle\scriptstyle{#1}}%
	{\XXint\scriptstyle\scriptscriptstyle{#1}}%
	{\XXint\scriptscriptstyle\scriptscriptstyle{#1}}%
	\!\int}
\def\XXint#1#2#3{{\setbox0=\hbox{$#1{#2#3}{\int}$}
		\vcenter{\hbox{$#2#3$}}\kern-.5\wd0}}

\def\dashint{\Xint-}

\DeclareUnicodeCharacter{2BC}{'}

\usepackage{zref-clever}

\begin{document}
	
	\title{The Batchelor spectrum for a deterministically driven\\ passive scalar} 
	
	\author{Kyle L. Liss and Jonathan C. Mattingly}
	
	\maketitle
	
	\begin{abstract}
		We study the long-time behavior of a passive scalar transported by an incompressible flow in the presence of smooth, deterministic forcing. For a specific spatially Lipschitz and time-periodic velocity field, we prove that all sufficiently smooth initial data is attracted to a limiting solution that satisfies a cumulative form of Batchelor's law. To our knowledge, this provides the first example for which a version of Batchelor's law can be established with deterministic forcing.
	\end{abstract}
	
	\setcounter{tocdepth}{1}
	{\small\tableofcontents}
	
	\section{Introduction} \label{sec:intro}
	
	The behavior of scalar quantities transported by incompressible flows is a fundamental problem in fluid dynamics, with applications ranging from temperature and salinity transport in geophysical flows to industrial mixing processes. A model problem for the study of such phenomena is the advection-diffusion equation
	\begin{equation}\label{eq:ForcedADE}
		\begin{cases}	\partial_t \rho^\kappa + u \cdot \grad \rho^\kappa = \kappa \Delta \rho^\kappa + F, \\ 
			\rho^\kappa|_{t=0} = \rho_0
		\end{cases}
	\end{equation}
	posed on the $d$-dimensional torus $\T^d \cong [0,1)^d$. This equation describes the evolution of a scalar $\rho^\kappa\colon[0,\infty) \times \T^d \to \R$ that is passively transported by a given divergence-free velocity field $u\colon[0,\infty) \times \T^d \to \R^d$ and undergoes molecular diffusion with strength $\kappa \ge 0$.  The function $F\colon[0,\infty) \times \T^d \to \R$ represents a prescribed external source. 
	
	We will be concerned with the special case of \eqref{eq:ForcedADE} where $\kappa = 0$. The scalar then evolves according to the forced transport equation
	\begin{equation} \label{eq:transport-general}
		\begin{cases} 
			\partial_t \rho + u \cdot \grad \rho = F, \\ 
			\rho|_{t=0} = \rho_0.
		\end{cases}
	\end{equation}
	The main problem of interest in this paper is the long-time Fourier distribution of solutions to \eqref{eq:transport-general} when $u$ is sufficiently smooth and induces exponential growth of scalar gradients, and the source $F$ injects energy at large spatial scales (i.e., low Fourier modes). In 1959, Batchelor \cite{Batchelor1959} argued that in this setting the persistent transfer of energy to higher frequencies produced by the transport term causes the scalar to develop a characteristic power-law distribution in Fourier space. More precisely, his argument predicts that
	\begin{equation} \label{eq:Batchelor1959}
		|\hat{\rho}(t,k)|^2 \approx |k|^{-d}
	\end{equation}
	for large times $t > 0$, where $\hat{\rho}(t,k)$ denotes the Fourier coefficient of $\rho(t)$ for $k \in \Z^d$. This spectral power law is referred to as \textit{Batchelor's law}.
	
	In dimension $d=2$, for a specific deterministic, time-periodic velocity field $u$ that is spatially Lipschitz and for rather general smooth, deterministic, and time-periodic forcing $F$, we prove that
	\eqref{eq:transport-general} admits a time-periodic solution that satisfies a cumulative form of Batchelor's law and attracts all
	solutions with sufficiently smooth initial data. While much
	progress has been made on demonstrating the existence of stationary measures for stochastically forced versions of
	\eqref{eq:ForcedADE} that satisfy some form of
	Batchelor's law, much less is understood in the
	context of deterministically forced advection-diffusion
	equations. We know of no other
	examples of deterministic, spatially regular velocity fields for which a version of Batchelor's law has been proven when the forcing is smooth and deterministic.
	
	Since our velocity field $u$ is Lipschitz, the pure transport equation 
	$$\partial_t \rho + u \cdot \grad \rho = 0$$
	conserves all $L^p$ norms.  Consequently, \eqref{eq:transport-general} is a forced equation with no
	obvious dissipation mechanism, and it is not immediately clear how a time-stationary regime could emerge. Onsager's
	conjecture \cite{onsager1949} in the critical regularity case precisely addresses
	this point. He argued that a forced system that otherwise
	formally conserves energy would evolve, in the infinite-time limit, to a
	solution just rough enough to dissipate energy through an anomalous flux.  Batchelor's prediction fits directly into this perspective. Indeed, the scaling \eqref{eq:Batchelor1959} predicts that solutions of \eqref{eq:transport-general} barely leave $L^2$ as $t \to \infty$, suggesting that regularity just below $L^2$ plays the role of an Onsager-critical threshold at which the advection term can sustain a nontrivial energy flux toward small scales. We show that the
	time-periodic solution constructed in this paper exhibits precisely this behavior: it is contained in $H^{-s}$ for all $s>0$, but does not belong to $L^2$, and sustains a non-vanishing advective energy flux out of all sufficiently small scales. See Remark~\ref{rem:flux} for additional context and discussion.
	
	\subsection{Background and previous works on Batchelor's Law} \label{sec:Batchelor}
	Batchelor's original work was carried out in the context of the forced advection-diffusion equation \eqref{eq:ForcedADE} with weak molecular diffusion, as a model for turbulent scalar advection at length scales below the Kolmogorov scale but above the diffusive cutoff $\sqrt{\kappa}$. In this regime, the velocity field appears effectively smooth and transfers scalar variance injected at large scales by external forcing to progressively finer spatial structures. He predicted that the spectrum \eqref{eq:Batchelor1959} holds over the viscous-convective range of Fourier modes satisfying 
	\begin{equation} \label{eq:BatchelorRange}
		1 \ll |k| \ll \kappa^{-1/2}.
	\end{equation} 
	Batchelor's law is one analogue in passive scalar turbulence of the Kolmogorov-5/3 energy spectrum of hydrodynamic turbulence \cite{K41A, Frisch}. It should be contrasted with the Obukhov-Corrsin theory \cite{Obukhov,Corrsin, KeeferObukhov}, which describes scalar statistics at inertial range scales where the advecting velocity field remains rough and the scalar spectrum is dictated by the regularity of the turbulent flow itself.
	
	Since his original prediction, power-law spectra consistent with Batchelor's law have been observed in passive scalar turbulence over suitable parameter regimes and intermediate ranges of scales in a variety of settings, including natural flows \cite{nye1967scalar,dillon1980batchelor,grant1968spectrum}, laboratory experiments \cite{jullien2000experimental,amarouchene2004batchelor,gibson1963universal}, and direct numerical simulations \cite{bogucki1998direct,yuan2000power,donzis2010batchelor}. Despite the extensive experimental and numerical investigation, the first mathematically rigorous proof of a version of Batchelor's law was given only recently by Bedrossian, Blumenthal, and Punshon-Smith in \cite{BBPS4}, building upon their earlier works \cite{BBPS1,BBPS2,BBPS3} on scalar mixing. They consider \eqref{eq:ForcedADE} with the velocity field $u$ governed by a separate stochastic fluid equation and $F$ a white-in-time stochastic forcing of the form $F = b \dot{W}_t$, where $b$ is a smooth function and $W_t$ is a standard Brownian motion. For a large class of models defining $u$, including the stochastic Navier-Stokes equations with suitably non-degenerate noise, the following ``cumulative'' version of Batchelor's law is proven:
	\begin{equation} \label{eq:BBPScumulative}
		\lim_{t \to \infty} \E \sum_{|k| \le N}|\hat{\rho}^\kappa(t,k)|^2 \approx \log N, \qquad 1 \ll N \ll \kappa^{-1/2},
	\end{equation}
	where $\E$ denotes averaging over the randomness in both the velocity field and stochastic source term. This logarithmic growth is consistent with the ``mode-wise'' law \eqref{eq:Batchelor1959}, though it is a strictly weaker formulation.
	
	It was later shown in \cite{CoopermanRowan} that the cumulative
	estimate \eqref{eq:BBPScumulative} can be strengthened to a version of
	Batchelor’s law that sums over Fourier modes in annuli of fixed width,
	which may be viewed as an intermediate between
	\eqref{eq:Batchelor1959} and \eqref{eq:BBPScumulative}. The same
	authors also recently extended \eqref{eq:BBPScumulative}, in the
	$\kappa = 0$ case, to settings where the random velocity field is
	driven by $C^\infty$ or  degenerate forcing \cite{CoopermanRowanMixing}. The mode-wise law has yet to be proven in any rigorous mathematical setting. Nonetheless, the distinction between the cumulative and mode-wise formulations has been clarified somewhat in the recent work \cite{BlumenthalHuynh}. There, it was shown that in discrete-time analogues of \eqref{eq:ForcedADE}, where the small-scale formation induced by the advection term is highly anisotropic, the cumulative law may hold even when the mode-wise estimate fails badly. This highlights the necessity of some form of isotropy in the velocity field if \eqref{eq:Batchelor1959} is to be expected.
	
	The positive results concerning Batchelor's law mentioned above all rely crucially on the use of a white-in-time stochastic source in \eqref{eq:ForcedADE}. The proof of \eqref{eq:BBPScumulative} in \cite{BBPS4} shows that the cumulative formulation of Batchelor’s law is quite robust in such stochastically forced settings, requiring essentially just suitable exponential mixing properties and sufficient regularity of the velocity field. It is natural to ask whether Batchelor's prediction persists for broader classes of forcing, for instance more physically realistic injection mechanisms like boundary sources. However, even in the comparatively simpler setting of scalar transport on a periodic box in the presence of a deterministic and smooth source term, the validity of the cumulative law already becomes unclear and requires information beyond exponential mixing of the velocity field. We will see that the deterministic setting considered here requires a much more detailed understanding of the velocity than in the white-in-time case.
	
	\vspace{0.2cm}
	\noindent \textbf{Acknowledgments}: The authors thank the NSF for its
	support through the RTG grant DMS-2038056. The authors are also
	grateful for helpful discussions with Alex Blumenthal, Mark Demers, Dmitry
	Dolgopyat, Tarek Elgindi, Carlangelo
	Liverani, Ayman Rimah Said, Vlad Vicol, and Lai-Sang Young.  JCM thanks the Simons Foundation for partial support during the academic year 2024-2025 through a Simons Fellowship. He also thanks the
	Courant Institute at NYU, EPFL in Lausanne, and the SLMath Institute in Berkeley for  their
	hospitality and support during extended stays.
	
	\section{Results and discussion} \label{sec:results}
	
	Let 
	\begin{equation}
		U_1(x,y) = \begin{pmatrix}
			0 \\ 2|x-1/2|
		\end{pmatrix}
		\quad \text{and} \quad U_2(x,y) = \begin{pmatrix}
			2|y-1/2| \\ 0
		\end{pmatrix}.
	\end{equation}
	We consider the alternating ``sawtooth'' shear flow $u_\alpha\colon[0,\infty)\times \T^2 \to \R^2$, defined for an amplitude $\alpha > 0$ by 
	\begin{equation}\label{eq:ualphadef}
		u_\alpha(t,x,y)
		=
		\begin{cases}
			\alpha U_1(x,y) & t \in [0,1/2), \\
			\alpha U_2(x,y) & t \in [1/2,1)
		\end{cases}
	\end{equation}
	and extended periodically in time for $t \ge 1$. This velocity field is divergence free and Lipschitz continuous in space, uniformly in time. It therefore generates a well-defined Lebesgue measure-preserving flow map $\Phi^{\alpha}_{s,t}:\T^2 \to \T^2$, which solves
	\begin{equation}\label{eq:flowmapdef}
		\begin{cases}	\dfrac{\dee}{\dee t} \Phi^\alpha_{s,t}(z) = u_\alpha(t,\Phi^\alpha_{s,t}(z)) & t > s, \\ 
			\Phi^\alpha_{s,s}(z) = z
		\end{cases}
	\end{equation}
	for $z = (x,y) \in \T^2$. The flow generated by $u_\alpha$ was
	analyzed recently by the authors and their collaborator in \cite{ELM25}, where it was shown to be exponentially mixing for all $\alpha$ sufficiently large. Precisely, for the time-one flow map $T_\alpha:=\Phi_{0,1}^\alpha$, there exist constants $c,C > 0$ depending on $\alpha$ such that 
	\begin{equation} \label{eq:umixing}
		\left|\int_{\T^2} (f \circ T_\alpha^{-n})(z) g(z)  \, \dee z \right| \le C e^{-cn}\|f\|_{C^1}\|g\|_{C^1}
	\end{equation}
	for every mean-zero $f,g \in C^1(\T^2)$ and $n \in \N$. Other works that produce exponentially mixing flows on $\T^2$ include \cite{ElgindiZlatos,ACM,YaoZlatos,blumenthal2023exponential, CoopermanRowanMixing}. The only known example besides $u_\alpha$ within the class of time-periodic, Lipschitz velocity fields comes from \cite{Hill2022}, which also considers a family of piecewise linear alternating shears. 
	
	In this paper, we study \eqref{eq:transport-general} with $u = u_\alpha$, namely the forced transport equation
	\begin{equation} \label{eq:transport}
		\begin{cases} 
			\partial_t \rho + u_\alpha \cdot \grad \rho = F, \\ 
			\rho|_{t=0} = \rho_0,
		\end{cases}
	\end{equation}
	where $F\colon[0,\infty) \times \T^2 \to \R$ is deterministic, spatially smooth, and satisfies the mean-zero condition
	\begin{equation} \label{eq:meanzero}
		\int_{\T^2}F(\cdot,z) \,\dee z \equiv 0.
	\end{equation}
	As mentioned earlier, the scaling \eqref{eq:Batchelor1959} suggests that solutions of \eqref{eq:transport} should remain uniformly bounded in every fixed negative Sobolev space $H^{-s}$, but leave $L^2$ in the infinite-time limit. Uniform control of negative Sobolev norms follows readily from Duhamel's formula and the exponential mixing estimate \eqref{eq:umixing} under a mild regularity assumption on $F$, for instance
	$$\sup_{t\ge 0}\|F(t,\cdot)\|_{C^1} < \infty,$$
	which may be interpreted as ensuring that energy injection occurs only at the large scales. On the other hand, the Batchelor scaling and loss of $L^2$ regularity are substantially more delicate phenomena and do not follow in our setting from exponential mixing alone. Addressing these effects is the main contribution of this work. 
	
	Our results are stated precisely in the following two sections. In Section~\ref{sec:discrete}, we consider a discrete-time analogue of \eqref{eq:transport} associated with $T_\alpha$. The discrete-time model captures the essential mechanisms in a simpler setting and allows us to treat arbitrary smooth forcing. Our continuous-time results, stated in Section~\ref{sec:continuous}, follow primarily from the discrete-time analysis, with an additional structural assumption on $F$ required to obtain the full cumulative Batchelor's law.
	
	\subsection{Discrete-time model} \label{sec:discrete}
	
	For a mean-zero function $f \in C^\infty(\T^2)$ with $\|f\|_{L^2} = 1$, define the map $\mathcal{K}_\alpha: H^{1}(\T^2) \to H^{1}(\T^2)$ by
	\begin{equation} \label{eq:ForcedMap}
		\mathcal{K}_\alpha(\rho)= \rho \circ T_\alpha^{-1} + f. 
	\end{equation}
	Note that $\mathcal{K}_\alpha$ indeed maps $H^1$ into itself because $T_\alpha$ and $T_\alpha^{-1}$ are both Lipschitz continuous, and hence composition with either function defines a bounded operator on $H^1$. Iterates of $\mathcal{K}_\alpha$ may be viewed as a discrete-time analogue of \eqref{eq:transport} in the following sense. Given an initial scalar $\rho_0$, define for $n \ge 1$
	\begin{equation} \label{eq:rhon}
		\rho_n = 	\mathcal{K}_\alpha(\rho_{n-1}).
	\end{equation}
	Then, $\rho_n$ coincides with the solution of \eqref{eq:transport} at the integer time $t=n$, with $F$ given by the ``pulsed'' forcing
	\begin{equation} \label{eq:pulsed}
		F(t,z) = \sum_{k=1}^\infty  \delta(t-k)f(z),
	\end{equation}
	where $\delta(\cdot)$ denotes the Dirac delta distribution on $\R$.
	
	In the discrete-time setting above, the natural way to interpret the emergence of Batchelor's law is through the convergence of the iterates $\rho_n$ toward a fixed point of $\mathcal{K}_\alpha$ with the predicted Fourier distribution. This is in contrast to the stochastic settings of \cite{BBPS4,BlumenthalHuynh,CoopermanRowan}, where long-time behavior is described instead in terms of an ergodic stationary measure. Iterating \eqref{eq:ForcedMap}, we see that the limiting object to consider is given by the formal series
	\begin{equation} \label{eq:rhoinftydef}
		\rho_\infty = \sum_{n=0}^\infty f \circ T_\alpha^{-n}.
	\end{equation}
	Our result below shows that for $\alpha$ sufficiently large, $\rho_\infty$ given by the formula above is well defined, satisfies the cumulative Batchelor law, and attracts all sufficiently smooth initial data. 
	
	\begin{theorem} \label{thm:discrete}
		Fix any mean-zero $f \in C^\infty(\T^2)$ with $\|f\|_{L^2} = 1$. For all $\alpha$ sufficiently large, the following hold:
		\begin{itemize}
			\item[(1)] The series \eqref{eq:rhoinftydef} defines a distribution $\rho_\infty \in \left(\bigcap_{s > 0}H^{-s}\right) \setminus L^2$ satisfying $\mathcal{K}_\alpha(\rho_\infty) = \rho_\infty$, where $\mathcal{K}_\alpha$ is extended to a map on $H^{-1}(\T^2)$ by duality. 
			\vspace{0.1cm}
			\item[(2)] For every mean-zero $\rho_0 \in C^1(\T^2)$, we have $\lim_{n \to \infty}\|\mathcal{K}_\alpha^n (\rho_0) - \rho_\infty\|_{H^{-s}} = 0$ for all $s>0$.
			\vspace{0.1cm}
			\item[(3)] The cumulative Batchelor law holds for $\rho_\infty$ in that there exists $C \ge 1$ independent of $\alpha$ and $N_\alpha \ge 1$ such that 
			\begin{equation} \label{eq:Batchelor}
				\frac{1}{C}\frac{\log N}{\log \alpha} \le \|\Pi_{\le N} \rho_\infty\|_{L^2}^2 \le  C\frac{\log N}{\log \alpha}
			\end{equation}
			for every $N \ge N_\alpha$, where $\Pi_{\le N}$ denotes the projection onto wavenumbers $k \in \Z^2$ with $|k| \le N$.
		\end{itemize}
	\end{theorem}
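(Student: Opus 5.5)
Parts (1) and (2) follow from the exponential mixing estimate \eqref{eq:umixing} by duality and interpolation; part (3) is where the real work lies. For (1), test the partial sums $S_M=\sum_{n=0}^{M} f\circ T_\alpha^{-n}$ against a mean-zero $g\in C^1$. Each term is bounded by $Ce^{-cn}\|f\|_{C^1}\|g\|_{C^1}$, so the series converges in the dual of $C^1$, hence in $H^{-s}$ for $s>2$ by Sobolev embedding. To reach every $H^{-s}$, $s>0$, interpolate. The projection $\Pi_{\le N}$ of the $n$-th term has $L^2$ norm at most $\min\bigl(1, C N e^{-cn}\bigr)$, the first bound coming from $L^2$ conservation under $T_\alpha$, which is measure preserving. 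Hence $\|\Pi_{\le N}S_M\|_{L^2}\lesssim \log N$ uniformly in $M$, and the contribution of shell $|k|\sim 2^j$ to the $H^{-s}$ norm is at most $2^{-2sj}j^2$, which is summable. The identity $\mathcal{K}_\alpha(\rho_\infty)=\rho_\infty$ follows by passing to the limit in $S_{M+1}=S_M\circ T_\alpha^{-1}+f$, since composition with $T_\alpha^{-1}$ is continuous on $H^{-1}$ by duality. The failure $\rho_\infty\notin L^2$ follows from the lower bound in (3).

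For (2), write $\mathcal{K}_\alpha^n\rho_0=\rho_0\circ T_\alpha^{-n}+S_{n-1}$. The first term tends to zero weakly against $C^1$ by \eqref{eq:umixing} and is bounded in $L^2$, so it tends to zero in every $H^{-s}$ by the same splitting into low and high modes. The second term converges to $\rho_\infty$ by (1).

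For (3), expand
\[
\|\Pi_{\le N}\rho_\infty\|_{L^2}^2=\sum_{n,m\ge0}\langle \Pi_{\le N}(f\circ T_\alpha^{-n}),\Pi_{\le N}(f\circ T_\alpha^{-m})\rangle .
\]
The heuristic is as follows. For $\alpha$ large, $T_\alpha$ stretches gradients by a factor of roughly $\alpha^2$ per step. So $f\circ T_\alpha^{-n}$ lives essentially at frequency $\alpha^{2n}$, and about $\log N/\log\alpha$ terms lie below frequency $N$, each carrying $O(1)$ mass. The plan has two parts.

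First, show that the terms with $n\lesssim \log N/(C\log\alpha)$ keep a fixed fraction of their $L^2$ mass below $N$, while those with $n\gtrsim C\log N/\log\alpha$ contribute negligibly. For the latter, use $\|\Pi_{\le N} h\|_{L^2}\le N\|h\|_{H^{-1}}$ together with a quantitative decay $\|f\circ T_\alpha^{-n}\|_{H^{-1}}\lesssim \alpha^{-c n}$, where the rate is explicit in $\alpha$. The rate must be explicit in $\alpha$ because the constant $C$ in \eqref{eq:Batchelor} is independent of $\alpha$. For the former, use $\|f\circ T_\alpha^{-n}\|_{H^1}\lesssim \alpha^{Cn}$, which follows from the Lipschitz bound on $T_\alpha^{-1}$, so the mass above $N$ is small.

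Second, and this is the main obstacle, control the cross terms $n\neq m$. Exponential mixing alone gives only $e^{-c|n-m|}$ decay with an $\alpha$-dependent rate, and this could allow the cross terms to produce constructive or destructive interference. I would use the explicit piecewise-linear structure of $u_\alpha$ to show $|\langle f\circ T_\alpha^{-n},f\circ T_\alpha^{-m}\rangle|\lesssim \alpha^{-c|n-m|}$ uniformly, even after the projection $\Pi_{\le N}$. The first approach I would try is to work with the invariant cone fields of the shears from \cite{ELM25}: away from the kink lines, $f\circ T_\alpha^{-n}$ is locally a function of one variable oscillating along a direction in the unstable cone at frequency about $\alpha^{2n}$. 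Pairing two such functions at very different frequencies then gives $\alpha^{-|n-m|}$ smallness by integration by parts along the unstable direction. The neighborhoods of the singular lines $x,y=1/2$ have measure that decays in $\alpha$ and must be handled separately.

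With $\alpha$ large, these cross terms are then a small perturbation of the diagonal. This gives the two-sided bound $\|\Pi_{\le N}\rho_\infty\|_{L^2}^2\approx \#\{n:\alpha^{Cn}\lesssim N\}\approx \log N/\log\alpha$ for all $N\ge N_\alpha$.
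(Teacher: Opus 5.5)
Your arguments for (1), (2), and for the diagonal terms in (3) are fine and essentially the paper's. One harmless slip: duality with $C^1$ gives $\|\Pi_{\le N}(f\circ T_\alpha^{-n})\|_{L^2}\lesssim N^{3}e^{-cn}$ rather than $Ne^{-cn}$. The gap is that (3) rests on two estimates you need but do not prove. The first is decay $\|f\circ T_\alpha^{-n}\|_{H^{-s}}\lesssim \alpha^{-cn}$ with $c$ independent of $\alpha$. The second is the cross-term bound $|\langle \Pi_{\le N}(f\circ T_\alpha^{-n}),\Pi_{\le N}(f\circ T_\alpha^{-m})\rangle|\lesssim \alpha^{-c|n-m|}$ uniformly in $N$. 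Neither follows from \cite{ELM25}, whose spectral-gap argument gives no quantitative rate. Without the projection the cross term is just $\langle f, f\circ T_\alpha^{-(m-n)}\rangle$, so all the new content is in ``even after the projection''.

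Your integration-by-parts sketch does not deliver this. The singular set of $T_\alpha^{-n}$ is not the kink lines of $u_\alpha$ but $\mathcal{S}_n^-=\bigcup_{k<n}T_\alpha^k(\mathcal{S}^-)$. This consists of roughly $\alpha^{2n-1}$ nearly horizontal segments meeting each vertical line at spacing about $\alpha^{-(2n-1)}$. Consequences:
\begin{itemize}
\item $f\circ T_\alpha^{-n}$ is smooth only on strips carrying $O(\alpha)$ oscillations.
\item Neighborhoods of the singular set have small measure only at widths far below the strip width, so they cannot simply be discarded.
\item Integrating by parts in the direction of oscillation crosses every one of these segments. That direction is the stable, nearly vertical one: the level sets of $f\circ T_\alpha^{-n}$ are aligned with $C_u$, so integrating along the unstable direction gains nothing.
\end{itemize}
Controlling the resulting boundary terms requires cancellation of integrals of $f$ along long stable segments, i.e.\ equidistribution, in a norm where this one-step gain can be iterated. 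That is what the anisotropic contraction of Section~\ref{sec:uniform} provides. The frequency-separation heuristic fails as well: after $\Pi_{\le N}$ both factors live below $N$. For $\alpha^{cn}\lesssim N\lesssim \alpha^{2n}$ you can show neither $\Pi_{\le N}(f\circ T_\alpha^{-n})\approx f\circ T_\alpha^{-n}$ nor that it is small, because the provable mixing rate $\alpha^{-c}$ does not match the expansion rate $\alpha^{2}$.

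For comparison, the paper first replaces $\Pi_{\le N}$ by a mollifier $P_{\le N}$ with compactly supported kernel (Lemma~\ref{lem:Kernel}). It disposes of the case $n\le c_0(m-n)$ using Theorem~\ref{prop:UniformMixing} and $\|f\circ T_\alpha^{-n}\|_{W^{1,\infty}}\le (2\alpha)^{2n}\|f\|_{W^{1,\infty}}$. In the remaining case it writes the pairing as $\int f\circ T_\alpha^{-(m-n)}\, g_n$ with $g_n=P_{\le N}^2(g\circ T_\alpha^{-n})\circ T_\alpha^{n}$, and disintegrates along the stable curves $\Psi_n(W_x)$. It then proves that $g_n$ is $C^1$ along these curves, with bounds summable against the stable Jacobians, away from $\alpha^{-3n/2}$-neighborhoods of the multi-intersection points of $\mathcal{S}_n^-$. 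This needs:
\begin{itemize}
\item the count $\#\mathcal{M}_n^-\lesssim (2\alpha)^{2n}$ (Lemma~\ref{lem:multipoints});
\item the good/bad curve decomposition;
\item Lemma~\ref{lem:equivexpansion};
\item the weak-norm decay \eqref{eq:weakdecay}.
\end{itemize}
Finally, even granting the $\alpha^{-c|n-m|}$ bound, the off-diagonal double sum diverges by itself. The paper combines it with the bound $N^{6}\alpha^{-c(n+m)}$ and cuts at $n\approx 3\log N/(c\log\alpha)$ (Lemma~\ref{lem:offdiagonal}) to get $\lesssim \log N/\alpha^{c}$. Only then are the cross terms a small perturbation of the diagonal.
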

	
	Let us now remark on the challenges that arise with deterministic forcing and the role of large $\alpha$. Further comments regarding Theorem~\ref{thm:discrete} are made below in Remarks~\ref{rem:cumulative}-\ref{rem:generalize}. Aside from the statement that $\rho_\infty \not \in L^2$, parts (1) and (2) above only require that $\alpha$ be large enough for the exponential mixing estimate \eqref{eq:umixing} to hold. The sharp threshold for mixing is not known, but it is likely that $\alpha \ge 4$ is sufficient. Indeed, although the proof of mixing in \cite{ELM25} requires $\alpha$ to be somewhat larger, it relies primarily on the uniform hyperbolicity of $T_\alpha$, and this condition is known to be satisfied as soon as $\alpha \ge 4$. 
	
	The more fundamental application of ``large'' amplitude is in the proof of \eqref{eq:Batchelor}. Here, we are required to study
	\begin{equation} \label{eq:expand}
		\|\Pi_{\le N} \rho_\infty\|_{L^2}^2 = \sum_{n=0}^\infty \|\Pi_{\le N} (f\circ T_\alpha^{-n})\|_{L^2}^2 + 2\sum_{n=1}^\infty \sum_{m=0}^{n-1} \langle \Pi_{\le N} (f\circ T_\alpha^{-n}), \Pi_{\le N}(f\circ T_\alpha^{-m})\rangle_{L^2}.
	\end{equation}
	The general arguments in \cite{BBPS4} (see also \cite[Section 2]{BlumenthalHuynh}) show that the Lipschitz regularity of $T_\alpha$ alone is sufficient to establish
	$$ \sum_{n=0}^\infty  \|\Pi_{\le N} (f\circ T_\alpha^{-n})\|_{L^2}^2 \gtrsim_\alpha \log N, \quad N \gg 1,$$
	while exponential mixing implies the corresponding upper bound. Therefore, proving \eqref{eq:Batchelor} essentially amounts to controlling the off-diagonal terms
	\begin{equation}\label{eq:cross}
		\langle \Pi_{\le N} (f\circ T_\alpha^{-n}), \Pi_{\le N}(f\circ T_\alpha^{-m})\rangle_{L^2}
	\end{equation}
	in \eqref{eq:expand}, which encode interactions between the scalar input at distinct time steps. Such terms play no role in white-in-time stochastically forced settings because temporal decorrelations cause them to vanish in expectation. For this reason, mixing estimates are needed in \cite{BBPS4,CoopermanRowan,BlumenthalHuynh} only to obtain the upper bound portion of Batchelor's law. In our setting here, obtaining a lower bound on $\|\Pi_{\le N} \rho_\infty\|_{L^2}$ already becomes challenging and is in fact the more difficult inequality to prove. Although \eqref{eq:umixing} implies that \eqref{eq:cross} decays as $m \to \infty$ for fixed $n$, without capturing subtle cancellations, treating the off-diagonal contribution in \eqref{eq:expand} as an error relative to the diagonal term requires this decay to be sufficiently strong even for moderate time separations. Taking $\alpha$ large makes this possible by accelerating the mixing rate in \eqref{eq:umixing}. However, the low-frequency cutoff $\Pi_{\le N}$ introduces significant technical difficulties that make the necessary bound on \eqref{eq:cross}, which we prove in Theorem~\ref{prop:Projected}, require an understanding of $T_\alpha$ much deeper than quantitative-in-$\alpha$ exponential mixing. This challenge will be discussed more in Section~\ref{sec:projecteddiscussion}.
	
	\begin{remark}[Cumulative vs. Mode-wise] \label{rem:cumulative}
		The uniform hyperbolicity of $T_\alpha$ has the feature that its directions of expansion and contraction are nearly constant over $\T^2$. As a result, the small scale formation induced by $T_\alpha$ is highly anisotropic. In the same spirit as \cite{BlumenthalHuynh}, it is fairly clear then that \eqref{eq:Batchelor} cannot be strengthened to a mode-wise scaling law, though we do not pursue rigorous statements in this direction. 
	\end{remark}
	
	\begin{remark}[$\log \alpha$ scaling in Batchelor's law] \label{rem:alphascaling}
		The inverse powers of $\log \alpha$ appearing in \eqref{eq:Batchelor} are natural and admit a simple heuristic explanation. Each term $f \circ T_\alpha^{-n}$ in the series defining $\rho_\infty$ has unit $L^2$ mass, which is transported to progressively higher frequencies as $n$ increases. The quantitative-in-$\alpha$ exponential mixing estimate \eqref{eq:uniformmix}, stated in Theorem~\ref{prop:UniformMixing}, implies that the amount of Fourier mass that $f \circ T_\alpha^{-n}$ has below frequency $\alpha^{qn}$ decays rapidly in $n$ for some $q > 0$. Consequently, only on the order $\log N/ \log \alpha$ terms in \eqref{eq:rhoinftydef} contribute non-negligibly at frequencies below $N$.
	\end{remark}
	
	\begin{remark}[Extensions to $\kappa > 0$]
		It would be interesting to extend Theorem~\ref{thm:discrete} to the $\kappa > 0$ setting. In this case, one would consider the pulsed diffusion operator $$\mathcal{K}_\alpha^{\kappa}(\rho) = e^{\kappa \Delta} (\rho \circ T_\alpha^{-1}) + f:= T_\alpha^{(\kappa)}(\rho) + f$$ 
		and hope to prove that the limiting object $\rho_\infty^\kappa$ (which now belongs to $L^2$ for each fixed $\kappa > 0$) satisfies \eqref{eq:Batchelor} for $1 \ll N \ll \kappa^{-1/2}$. Establishing this would require the uniform-in-$\alpha$ mixing estimates presented in Section~\ref{sec:mixingstatements} for $T_\alpha$, specifically \eqref{eq:uniformmix} and \eqref{eq:projectedmix}, to be carried out for $T_\alpha^{(\kappa)}$ uniformly in $\kappa$.  The extension of \eqref{eq:uniformmix} should follow from minor adaptations of our proof by using the stochastic representation of $T_\alpha^{(\kappa)}$ as in \cite{ELM25} (see also \cite[Section A.4]{BlumenthalHuynh}), since our argument relies only on properties of $T_\alpha$ that generalize naturally to the corresponding random maps. It is less clear whether \eqref{eq:projectedmix} can be extended uniformly in $\kappa$, as our proof hinges on finer structural properties of the higher iterates $T_\alpha^n$ (see Lemma~\ref{lem:multipoints}) that do not hold for the random maps . Nevertheless, we expect our proof contains the essential ideas necessary to obtain such an estimate. By contrast, extending the continuous-time results of the next subsection to $\kappa > 0$ may involve new difficulties, as even standard uniform-in-$\kappa$ mixing estimates seem to be unclear in that setting.
	\end{remark}
	
	\begin{remark}[Time-dependent discrete forcing] \label{rem:generalize}
		Our analysis extends with no essential changes to establish Batchelor's law for the iterates
		\[
		\rho_n = \rho_{n-1}\circ T_\alpha^{-1} + f_n,
		\]
		where $\{f_n\}_{n\ge1} \subseteq C^\infty(\T^2)$ is a sequence of mean-zero functions with unit $L^2$ norm and uniformly bounded $C^1$ norms. In this slightly more general setting, Batchelor's law holds for $\rho_n$ only over an intermediate range of spatial scales depending on $n$. We stated Theorem~\ref{thm:discrete} with constant forcing to simplify the presentation by allowing for a natural fixed limiting object that satisfies Batchelor's law at all scales.
	\end{remark}
	
	\subsection{Continuous-time results} \label{sec:continuous}
	
	We now return to the transport equation
	$$\begin{cases}
		\partial_t \rho + u_\alpha \cdot \grad \rho = F \\ 
		\rho|_{t=0} = \rho_0
	\end{cases} 
	$$
	introduced earlier in \eqref{eq:transport}. 
	
	\subsubsection{Cumulative Batchelor's law} 
	
	In the same vein as Remark~\ref{rem:generalize}, we restrict for simplicity to time-one periodic forcing. One then expects the long-time behavior of solutions to be described by a limit cycle that satisfies the cumulative Batchelor law. The result below establishes the existence of such a limiting distribution and an upper bound on its Fourier mass for general smooth forcing functions.
	
	\begin{theorem} \label{thm:continuous0}
		Let $F:[0,\infty) \times \T^2 \to \R$ be a smooth, time-one periodic function satisfying the mean-zero condition \eqref{eq:meanzero}. For $\alpha$ sufficiently large, there exists a time-one periodic distribution $$\rho_\infty \in L^\infty([0,\infty);H^{-s}) \qquad \forall s > 0$$ that solves \eqref{eq:transport} and attracts all sufficiently smooth initial data. That is, for every mean-zero $\rho_0 \in C^1(\T^2)$, the unique solution $\rho$ of \eqref{eq:transport} satisfies $\lim_{t \to \infty}\|\rho(t)- \rho_\infty(t)\|_{H^{-s}} = 0$ for all $s > 0$. Moreover, there exists $C_\alpha \ge 1$ such that 
		\begin{equation} \label{eq:continuousUpper}
			\|\Pi_{\le N} \rho_\infty(t)\|_{L^2}^2 \le  C_\alpha \log N
		\end{equation}
		for every $t \ge 0$ and $N \ge 2$.
	\end{theorem}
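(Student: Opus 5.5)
Throughout, write $\Phi_{s,t}:=\Phi^\alpha_{s,t}$. The plan is to reduce Theorem~\ref{thm:continuous0} to the discrete-time analysis of Theorem~\ref{thm:discrete} via Duhamel's formula. Time-one periodicity of $u_\alpha$ gives $\Phi_{s+1,t+1}=\Phi_{s,t}$, and the solution of \eqref{eq:transport} is
\[
\rho(t)=\rho_0\circ\Phi_{0,t}^{-1}+\int_0^t F(s)\circ \Phi_{s,t}^{-1}\,\dee s .
\]
For $t\in[0,1)$ I would define the candidate limit cycle by splitting the Duhamel integral into unit intervals:
\[
\rho_\infty(t)=\int_0^t F(s)\circ\Phi_{s,t}^{-1}\,\dee s+\sum_{n=0}^\infty g_t\circ T_\alpha^{-n}\circ \Phi_{0,t}^{-1},\qquad g_t:=\int_{t-1}^{0}F(s)\circ \Phi_{s,0}^{-1}\,\dee s ,
\]
where $g_t$ is defined using periodicity of $F$, and $\rho_\infty$ is extended periodically in $t$. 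Each $g_t$ is mean zero and uniformly bounded in $C^1$ in $t$, since $F$ is smooth and the flow maps over unit time are bi-Lipschitz with constants depending on $\alpha$.

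The key steps are as follows.

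\begin{enumerate}
\item \textbf{Convergence in $H^{-s}$.} For $s\ge1$, pair against $\phi\in H^{s}\subset C^1$ and use the exponential mixing estimate \eqref{eq:umixing}. Composition with the Lipschitz map $\Phi_{0,t}^{-1}$ is bounded on $C^1$ and measure preserving, so it costs only an $\alpha$-dependent constant. This gives $|\langle g_t\circ T_\alpha^{-n},\phi\rangle|\lesssim e^{-cn}\|\phi\|_{C^1}$, and the series converges in $H^{-s}$ uniformly in $t$.
\item \textbf{Small $s>0$.} To reach every $s>0$, interpolate between this $H^{-1}$-type decay and the uniform $L^2$ bound $\|g_t\circ T_\alpha^{-n}\|_{L^2}\le C$. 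One gets $\|g_t\circ T_\alpha^{-n}\|_{H^{-s}}\lesssim e^{-c s n}$, after first passing from $C^1$ to $H^{1+\epsilon}$ test functions; alternatively one reuses the argument of Theorem~\ref{thm:discrete}(1).
\item \textbf{Solution and periodicity.} That $\rho_\infty$ solves \eqref{eq:transport} and is periodic is then a direct verification from the Duhamel structure and the semigroup property of $\Phi$.
\item \textbf{Attraction.} Write $\rho(t)-\rho_\infty(t)=\rho_0\circ\Phi_{0,t}^{-1}-(\text{tail of the series})$. The first term tends to $0$ in $H^{-s}$ by mixing applied to $\rho_0\in C^1$. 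The tail term, consisting of indices $n\ge\lfloor t\rfloor$ in the sum, tends to $0$ by the summable bound from step 2.
\end{enumerate}

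The upper bound \eqref{eq:continuousUpper} is the main point. Set $h_{n,t}:=g_t\circ T_\alpha^{-n}\circ\Phi_{0,t}^{-1}$. I would estimate
\[
\|\Pi_{\le N}\rho_\infty(t)\|_{L^2}\le C+\Bigl\|\Pi_{\le N}\sum_n h_{n,t}\Bigr\|_{L^2}
\]
and split the sum at $n_0\approx K\log N$.

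\emph{Tail $n>n_0$.} Use $\|\Pi_{\le N}h\|_{L^2}\le N^{s}\|h\|_{H^{-s}}$ with $s$ fixed, say $s=1$, together with $\|h_{n,t}\|_{H^{-1}}\lesssim_\alpha e^{-cn}$. The tail is then bounded by $N e^{-cn_0}\le 1$ once $K\ge 1/c$.

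\emph{Head $n\le n_0$.} The triangle inequality would only give $\log N$ for the norm, i.e.\ $(\log N)^2$ for the squared norm. So I would instead expand the square as in \eqref{eq:expand}, dropping the projection using $\|\Pi_{\le N}\|\le 1$ on $L^2$. Diagonal terms contribute at most $C n_0\lesssim\log N$. The off-diagonal terms $\langle h_{n,t},h_{m,t}\rangle$ without projection equal $\langle g_t\circ T_\alpha^{-(n-m)},g_t\rangle$, which is $O(e^{-c|n-m|})$ by \eqref{eq:umixing} since $g_t\in C^1$. Summing over $m,n\le n_0$ gives $\lesssim n_0$. Removing the projection is legitimate here because $\|\Pi_{\le N}\sum_{n\le n_0}h_{n,t}\|\le\|\sum_{n\le n_0}h_{n,t}\|$. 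Altogether this yields $\|\Pi_{\le N}\rho_\infty(t)\|^2\le C_\alpha\log N$ with constants uniform in $t$.

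I expect the main obstacle to be step 2: obtaining $H^{-s}$ decay for small $s$ with constants uniform in $t\in[0,1)$. This requires care with the interpolation and with the merely Lipschitz conjugation by $\Phi_{0,t}$. I would handle it by noting that $H^{-s}$ norms of measure-preserving bi-Lipschitz compositions are comparable up to $\alpha$-dependent constants, by interpolating between $L^2$ and $H^{-1}$.
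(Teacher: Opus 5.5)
\textbf{The candidate limit cycle is wrong, so Steps 3 and 4 fail as written.} Your input $g_t=\int_{t-1}^0F(r)\circ\Phi_{r,0}^{-1}\,\dee r$ collects forcing over a window of length only $1-t$. Hence $\sum_{n\ge0}g_t\circ T_\alpha^{-n}$ accounts for the forcing on $\bigcup_{n\ge0}[t-1-n,-n]$ and drops the pieces $(-n,t-n)$, $n\ge1$.

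You can see the failure at the endpoints. Since $\|g_t\|_{W^{1,\infty}}\lesssim_\alpha 1-t$, as $t\to1^-$ your $\rho_\infty(t)$ tends in $H^{-s}$ to $\int_0^1F(r)\circ\Phi_{r,1}^{-1}\,\dee r=f_\alpha$ (the paper's \eqref{eq:feff}). On the other hand $\rho_\infty(0)=\sum_{n\ge0}f_\alpha\circ T_\alpha^{-n}$, because $g_0=f_\alpha$. So the periodic extension jumps at every integer time.

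The defect is already present inside $(0,1)$. Evolving $\rho_\infty(0)$ to time $t$ gives $\int_0^tF(r)\circ\Phi_{r,t}^{-1}\,\dee r+\sum_n f_\alpha\circ T_\alpha^{-n}\circ\Phi_{0,t}^{-1}$. This differs from your formula by $\sum_n(f_\alpha-g_t)\circ T_\alpha^{-n}\circ\Phi_{0,t}^{-1}$, and a series $\sum_{n\ge0}h\circ T_\alpha^{-n}$ vanishes only if $h=0$.

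For the same reason Step 4 breaks. $\rho(n+\tau)-\rho_\infty(\tau)$ converges to $\sum_{k\ge0}(f_\alpha-g_\tau)\circ T_\alpha^{-k}\circ\Phi_{0,\tau}^{-1}$, which is nonzero whenever $g_\tau\ne f_\alpha$. Your identity ``$\rho(t)-\rho_\infty(t)=\rho_0\circ\Phi_{0,t}^{-1}-(\text{tail})$'' holds only for the correct candidate.

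The repair is to use the $t$-independent full-period input $f_\alpha=g_0=\int_{-1}^0F(r)\circ\Phi_{r,0}^{-1}\,\dee r$ in every term. Your formula then coincides with \eqref{eq:rhoinftycontinuous}, and Steps 3 and 4 follow from Lemma~\ref{lem:discretereduction} as you intended.

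After this repair the rest is sound, modulo two slips:
\begin{enumerate}
\item $H^s\subset C^1$ needs $s>2$ in two dimensions. Use $H^3\hookrightarrow W^{1,\infty}$ and interpolate with $L^2$, as in Lemma~\ref{lem:negativecontrol}.
\item $f_\alpha$ and $\phi\circ\Phi_{0,t}$ are only Lipschitz, not $C^1$. You need the $W^{1,\infty}$ mixing bound of Theorem~\ref{prop:UniformMixing} (or a mollification argument) rather than \eqref{eq:umixing} for $C^1$ functions.
\end{enumerate}

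\textbf{Your upper bound is then correct and takes a different, lighter route than the paper.} The paper gets \eqref{eq:continuousUpper} at $t=0$ from the upper bound in Theorem~\ref{thm:discrete}(3). Its off-diagonal control there rests on the projected mixing estimate, Theorem~\ref{prop:Projected}. It then transports the bound to $t\in[0,1]$ with the Fourier-side Lemma~\ref{lem:Fourier}.

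You instead work at each $t$ directly. On the head $n\le n_0$ you discard $\Pi_{\le N}$ entirely, so measure preservation of $\Phi_{0,t}$ and $T_\alpha$ reduces the cross terms to unprojected correlations $\langle f_\alpha\circ T_\alpha^{-|n-m|},f_\alpha\rangle$. The tail costs only $\|\Pi_{\le N}h\|_{L^2}\le N\|h\|_{H^{-1}}$.

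This avoids both Theorem~\ref{prop:Projected} and Lemma~\ref{lem:Fourier}. Combined with the $\alpha$-uniform rate of Theorem~\ref{prop:UniformMixing} and $n_0\approx\log N/\log\alpha+C$, it even gives the $\alpha$-uniform bound that the paper reuses in Theorem~\ref{thm:continuous}. The projected estimate is what the lower bound requires, since there the projection cannot be dropped.
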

	
	Under an additional structural assumption on the forcing, we are able to upgrade \eqref{eq:continuousUpper} to the full cumulative law with the expected scaling in $\alpha$.
	
	\begin{theorem} \label{thm:continuous}
		Let $F:[0,\infty) \times \T^2 \to \R$ be a time-one periodic function, defined for $t \in [0,1)$ by
		\begin{equation} \label{eq:forcespecial}
			F(t,x,y) = \eta(t)h(y),
		\end{equation}
		where $h \in C^\infty(\T)$ is a mean-zero function with $\|h\|_{L^2(\T)} = 1$ and $\eta \in C^\infty([0,1];[0,\infty))$ is a nonzero function supported in $(1/2,1)$. Then, the limit cycle $\rho_\infty$ guaranteed by Theorem~\ref{thm:continuous0} satisfies the cumulative Batchelor law: there exists $C \ge 1$ independent of $\alpha$ and $N_\alpha \ge 1$ such that 
		\begin{equation} \label{eq:continuousBatchelor}
			\frac{1}{C}\frac{\log N}{\log \alpha} \le \|\Pi_{\le N} \rho_\infty(t)\|_{L^2}^2 \le  C\frac{\log N}{\log \alpha}
		\end{equation}
		for all $t \ge 0$ and $N \ge N_\alpha$. In particular, $\rho_\infty(t) \not \in L^2$ for every $t \ge 0$.
	\end{theorem}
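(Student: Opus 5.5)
The plan is to reduce Theorem~\ref{thm:continuous} to the discrete-time analysis behind Theorem~\ref{thm:discrete}, using the special structure \eqref{eq:forcespecial}. During $t\in[0,1/2)$ the flow is $\alpha U_1$, and on $[1/2,1)$ it is the horizontal shear $\alpha U_2$, which depends only on $y$ and moves only in $x$. Therefore $U_2\cdot\grad h(y)=0$, and the forcing $\eta(t)h(y)$ commutes with the transport on $[1/2,1)$. By Duhamel's formula, the contribution of the forcing in period $k$ at time $k+1$ is exactly $\int_{1/2}^1\eta(s)\,\dee s\; h(y)$. Set $c_\eta:=\|\eta\|_{L^1}>0$. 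At integer times the solution then satisfies the pulsed recursion
\[
\rho(n+1)=\rho(n)\circ T_\alpha^{-1}+c_\eta h ,
\]
so $\rho_\infty(n)=c_\eta\sum_{m\ge 0} h\circ T_\alpha^{-m}$. This is precisely the discrete object of Theorem~\ref{thm:discrete} with $f=h(y)$, which is mean zero, smooth, and of unit $L^2$ norm. Hence \eqref{eq:continuousBatchelor} at integer times follows from \eqref{eq:Batchelor}, with $c_\eta^2$ absorbed into $C$. Since $\eta$ is fixed independently of $\alpha$, $C$ remains independent of $\alpha$. I would check that the attracting periodic solution from Theorem~\ref{thm:continuous0} agrees with this series by uniqueness of the $H^{-s}$ limit.

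For non-integer $t\in[0,1)$, write $\rho_\infty(t)=\rho_\infty(0)\circ\Phi_{0,t}^{-1}+G_t$, where $G_t$ is the Duhamel contribution of the forcing within the current period. $G_t$ is smooth and of size $O(1)$, because for $t\ge1/2$ it is just $(\int_{1/2}^t\eta)\,h(y)$ and for $t<1/2$ it vanishes. Its projection therefore changes $\|\Pi_{\le N}\rho_\infty(t)\|_{L^2}^2$ by at most $O(1+\sqrt{\log N/\log\alpha})$, which is harmless for $N\ge N_\alpha$. The remaining task is to compare $\|\Pi_{\le N}(g\circ\Phi^{-1}_{0,t})\|$ with $\|\Pi_{\le N} g\|$, where $g=\rho_\infty(0)$ and $\Phi_{0,t}$ is a single partial shear of Lipschitz constant $\lesssim\alpha$.

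My first attempt would not compare these projections directly, since composition does not commute with $\Pi_{\le N}$. Instead, I would write $\rho_\infty(t)=\sum_m c_\eta h\circ T_\alpha^{-m}\circ\Phi_{0,t}^{-1}+G_t$ and rerun the discrete argument with the modified terms $h\circ T_\alpha^{-m}\circ\Phi^{-1}_{0,t}$. The diagonal terms admit the same lower bound from Lipschitz regularity: a map with Lipschitz constant $\le\alpha^{m+1}$ keeps a fixed fraction of the $L^2$ mass below frequency $\sim\alpha^{m+1}$. The upper bound follows from quantitative mixing, as in Theorem~\ref{prop:UniformMixing}. The off-diagonal terms are inner products of the form \eqref{eq:cross} pushed forward by the same measure-preserving $\Phi_{0,t}$. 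I would appeal to the projected mixing estimate of Theorem~\ref{prop:Projected}, which I expect to tolerate an additional fixed shear at one end, since $\Phi_{0,t}$ is either a partial vertical shear or a full vertical shear followed by a partial horizontal one. This costs only a shift of $N$ by a factor $\alpha$, that is, an $O(1)$ change in $\log N/\log\alpha$.

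The main obstacle is this last step: confirming that the bound of Theorem~\ref{prop:Projected} on the cross terms is stable under post-composition with the partial flows $\Phi_{0,t}$, uniformly in $t$ and with $\alpha$-independent constants. Failing that, I would fall back on a cruder comparison. Since $\Phi_{0,t}^{\pm1}$ are Lipschitz with constant $\lesssim\alpha$, one has $\|\Pi_{\le N}(g\circ\Phi^{-1})\|^2\ge \|\Pi_{\le N/(C\alpha)}g\|^2-(\text{tail})$, where the tail is controlled by the $H^{-s}$ bounds together with the upper Batchelor estimate. This yields the lower bound with $\log(N/\alpha)/\log\alpha\gtrsim\log N/\log\alpha$ once $N\ge\alpha^2$. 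Finally, $\rho_\infty(t)\notin L^2$ follows directly from the lower bound, because the left side of \eqref{eq:continuousBatchelor} diverges as $N\to\infty$.
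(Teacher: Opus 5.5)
Your integer-time reduction is correct and matches the paper. Since $U_2\cdot\nabla h(y)=0$, the effective forcing is $c_\eta h(y)$, so $\rho_\infty(n)=c_\eta\sum_m h\circ T_\alpha^{-m}$, and Theorem~\ref{thm:discrete} gives both bounds at integer times. The gap is at non-integer $t$.

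\textbf{The primary route rests on an unproven estimate.} Your first attempt needs a version of Theorem~\ref{prop:Projected} with an extra composition by $\phi^\alpha_{t,0}$. This is not a mere shift of $N$. Because $P_{\le N}$ does not commute with composition, the argument of Section~\ref{sec:projected} would have to be redone for $\Phi^\alpha_{0,t}\circ T_\alpha^n$. That map has a different singularity set and a different good/bad curve structure. Moreover, for $t$ just above $1/2$ the partial horizontal shear has small amplitude, so the invariant-cone structure used there breaks down; compare the restriction $t\ge \tfrac12+\alpha^{-1/2}$ in \eqref{eq:uniformmix2}. The paper deliberately avoids proving such an estimate. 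The discrete upper bound also uses the off-diagonal control of Lemma~\ref{lem:offdiagonal}, so both bounds at non-integer times depend on this step. Your fallback treats only the lower bound, so the upper bound for $t\notin\mathbb{N}$ is never established.

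\textbf{The fallback is not justified as written.} Knowing only that $\Phi_{0,t}^{\pm1}$ is Lipschitz does not suffice; note its constant is $\sim\alpha^2$, not $\alpha$, once $t>1/2$. Suppose you split $g=\rho_\infty(0)$ at frequency $N/(C\alpha)$. The Lipschitz bound controls how the low part spreads upward. But modes of $g$ with $N/(C\alpha)<|k|\lesssim \alpha^2 N$ can legitimately be moved below $N$ by the composition. The cumulative law only bounds their mass by $O(\log N/\log\alpha)$, the same order as the main term, because the upper and lower constants differ. So a triangle-inequality tail estimate fails.

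\textbf{The missing tool.} What you need is a one-sided comparison with \emph{polynomial} frequency loss, which is Lemma~\ref{lem:Fourier}:
$\|\Pi_{\le N}(f\circ\phi^\alpha_{t,s})\|_{L^2}\lesssim \|\Pi_{\le N^{100}}f\|_{L^2}+\alpha^{1/8}N^{-3/2}\|f\|_{H^{-1/8}}$.
Here only modes above a polynomially larger cutoff are treated as a tail, and these are controlled by the $H^{-1/8}$ norm alone. Since $\log N^{100}=100\log N$, the loss costs only a constant factor in $\log N/\log\alpha$.
\begin{itemize}
\item Applied with $\phi^\alpha_{t,0}$, it carries the integer-time upper bound to all $t$.
\item Applied with $\phi^\alpha_{1,t_0}$ to $\rho_\infty(1)=\int_{t_0}^1 F_s\circ\phi^\alpha_{1,s}\,ds+\rho_\infty(t_0)\circ\phi^\alpha_{1,t_0}$, it shows that a small value of $\|\Pi_{\le N_0}\rho_\infty(t_0)\|_{L^2}^2$ would contradict the integer-time lower bound for $\|\Pi_{\le N_0^{1/100}}\rho_\infty(1)\|_{L^2}^2$.
\end{itemize}
Your backward formulation $\rho_\infty(0)=(\rho_\infty(t)-G_t)\circ\Phi^\alpha_{0,t}$ would work equally well with this lemma, since its proof only uses the Fourier structure of individual shears of amplitude at most $\alpha$. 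With it, you need neither an extended projected-mixing estimate nor a two-sided comparison under composition.
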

	
	We now briefly explain the role of the particular forcing in Theorem~\ref{thm:continuous} and its connection with establishing a lower bound on $\|\Pi_{\le N} \rho_\infty\|_{L^2}$. Let $(\Phi_{s,t}^\alpha)^{-1} = \phi_{t,s}^
	\alpha$, where $\Phi_{s,t}^\alpha$ is the flow map defined in \eqref{eq:flowmapdef}. By Duhamel's formula, the unique solution of \eqref{eq:transport} is then given by 
	\begin{equation}
		\rho(t) = \rho_0 \circ \phi_{t,0}^{\alpha} + \int_0^t F_s \circ \phi_{t,s}^\alpha \, \dee s,
	\end{equation}
	where we have defined $F_s(z) = F(s,z)$. A computation exploiting the joint periodicity of $u_\alpha$ and $F$ (see Lemma~\ref{lem:discretereduction}) shows that the above can be rewritten at integer times as 
	\begin{equation} \label{eq:effectivediscrete}
		\rho(n) = \rho_0 \circ T_\alpha^{-n}+ \sum_{k=0}^{n-1}f_\alpha \circ T_\alpha^{-k},
	\end{equation}
	where 
	\begin{equation}\label{eq:feff}
		f_\alpha = \int_0^1 F_s \circ \phi_{1,s}^\alpha \, \dee s.
	\end{equation}
	One should interpret $f_\alpha$ as the net scalar input accumulated over each period. For typical choices of $F$, say, a generic autonomous function with unit $L^2$ norm, the time averaging in \eqref{eq:feff} produces an effective $f_\alpha$ whose $L^2$ norm is small when $\alpha$ is large. Thus, the large-amplitude regime in continuous time typically corresponds to a small-input regime. At the same time, pushing through the map $\phi_{1,s}^\alpha$ typically makes $f_\alpha$ increasingly irregular as $\alpha$ grows. This combination of roughness and small energy input prevents the isolation of a positive diagonal component capable of dominating the off-diagonal interactions in the continuous-time analogue of \eqref{eq:expand}. 
	
	In view of the discussion above, to obtain lower bounds in the large-amplitude regime it is natural to require $F$ be such that
	\begin{equation} \label{eq:falphaconditions}
		\inf_{\alpha \ge 1}\|f_\alpha\|_{L^2} > 0 \qquad \text{and} \qquad \sup_{\alpha \ge 1} \|\grad f_\alpha\|_{L^\infty} < \infty,
	\end{equation}
	which guarantees that scalar mass accumulates at low frequencies uniformly in $\alpha$. Under these conditions, the representation \eqref{eq:effectivediscrete} falls directly within the scope of Theorem~\ref{thm:discrete}. Our proof of Theorem~\ref{thm:continuous} is based on this reduction and applies equally well to any spatially smooth, time-one periodic $F$ satisfying \eqref{eq:falphaconditions}. The specific choice \eqref{eq:forcespecial} is just one simple class of functions for which \eqref{eq:falphaconditions} holds, as it is easy to see that 
	$$f_\alpha(x,y) = h(y) \int_{1/2}^1 \eta(t) \, \dee t.$$  
	Other examples include the ideal pulsed forcing \eqref{eq:pulsed} and the temporally smoothed version 
	$$F(t,z) = \alpha\sum_{k=1}^\infty \psi(\alpha(t-k))f(z),$$
	where $\psi$ is a bump function centered at $t = 0$.
	
	\subsubsection{Non-vanishing energy flux}\label{sec:Flux}
	
	In the zero-diffusivity setting of \eqref{eq:transport}, the time-periodic behavior of $\rho_\infty$ requires that energy injected at low frequencies by the forcing be exactly balanced, on average, by a transfer of scalar mass to higher frequencies from advection. Indeed, taking the $L^2$ inner product of \eqref{eq:transport} with $\Pi_{\le N} \rho_\infty$ yields 
	$$\frac{1}{2}\frac{\dee}{\dee t} \|\Pi_{\le N} \rho_\infty(t)\|_{L^2}^2 +  \langle \Pi_{\le N} \rho_\infty(t), \Pi_{\le N}\bigl(u_\alpha \cdot \nabla \rho_\infty(t)\bigr) \rangle_{L^2} = \langle \Pi_{\le N} \rho_\infty(t), F_t \rangle_{L^2}.$$
	Integrating over $t \in [0,1]$ and using the time-periodicity of $\rho_\infty$ then gives the balance
	\begin{equation}\label{eq:flux}
		\mathcal{T}_N := \int_0^1 \langle \Pi_{\le N} \rho_\infty(t), \Pi_{\le N}\bigl(u_\alpha \cdot \nabla \rho_\infty(t)\bigr) \rangle_{L^2}\,\dee t = \int_0^1 \langle \Pi_{\le N} \rho_\infty(t), F_t \rangle_{L^2} \, \dee t.
	\end{equation}
	Physically, $\mathcal{T}_N$ represents time-averaged advective $L^2$-energy flux out of Fourier modes with  $|k| \le N$. In white-in-time stochastically forced settings, the right-hand side of \eqref{eq:flux} comes from an It\^{o} correction and is independent of the solution in expectation. In such cases, positivity of the corresponding flux is automatic once the existence of a stationary measure is established (see, e.g., \cite[Theorem 1.12]{BBPS4}). This is no longer true in the deterministic setting here. Although seemingly quite unlikely, it is a priori possible for the structure of $\rho_\infty$ to result in cancellations that cause the net energy input over each period to vanish or even be negative. In the setting of Theorem~\ref{thm:continuous}, we show that this does not occur, and that $\mathcal{T}_N$ remains strictly positive at sufficiently small scales. 
	\begin{theorem} \label{thm:flux}
		Let $F$ and $\rho_\infty$ be as in Theorem~\ref{thm:continuous}. Then, the limit $\lim_{N\to \infty} \mathcal{T}_N$ exists and is strictly positive.
	\end{theorem}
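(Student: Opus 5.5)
The approach is to pass to the limit $N\to\infty$ in the right-hand side of the balance \eqref{eq:flux}, and then to compute the limit explicitly using the structure of the forcing \eqref{eq:forcespecial}. Two structural facts make this possible. First, on the support of $\eta$ the flow is the horizontal shear $\alpha U_2$, which preserves functions of $y$. Second, the resulting expression is a Green--Kubo-type sum of correlations of $h$ under $T_\alpha$, which the quantitative-in-$\alpha$ mixing estimate of Theorem~\ref{prop:UniformMixing} controls.

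\textbf{Step 1 (existence of the limit).} By self-adjointness of $\Pi_{\le N}$ we have $\langle \Pi_{\le N}\rho_\infty(t),F_t\rangle = \langle \rho_\infty(t),\Pi_{\le N}F_t\rangle$. Fix some $s\in(0,1)$.
\begin{itemize}
\item Since $F_t=\eta(t)h(y)$ is smooth, $\|\Pi_{\le N}F_t-F_t\|_{H^{s}}\to0$ uniformly in $t$.
\item By Theorem~\ref{thm:continuous0}, $\rho_\infty\in L^\infty([0,\infty);H^{-s})$.
\end{itemize}
Hence $\mathcal{T}_N\to \mathcal{T}_\infty:=\int_0^1\langle\rho_\infty(t),F_t\rangle\,\dee t$, where the bracket is the $H^{-s}$--$H^s$ duality pairing.

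\textbf{Step 2 (explicit formula on $[1/2,1)$).} Set $A=\int_{1/2}^1\eta>0$ and $H(t)=\int_{1/2}^t\eta$. For $t\in[1/2,1)$, Duhamel's formula together with the time-periodicity of $\rho_\infty$ gives
\[
\rho_\infty(t)=\rho_\infty(1/2)\circ\phi^\alpha_{t,1/2}+H(t)h(y).
\]
This uses that $h(y)\circ\phi^\alpha_{t,s}=h(y)$, because $\alpha U_2$ only moves $x$. Composition with the bi-Lipschitz, measure-preserving map $\phi^\alpha_{t,1/2}$ extends by duality to $H^{-s}$ for $s<1$. Since $h(y)\circ(\phi^\alpha_{t,1/2})^{-1}=h(y)$, we get $\langle\rho_\infty(1/2)\circ\phi^\alpha_{t,1/2},h\rangle=\langle\rho_\infty(1/2),h\rangle$. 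Therefore
\[
\mathcal{T}_\infty=A\langle\rho_\infty(1/2),h\rangle+\int_{1/2}^1\eta H\,\dee t=A\langle\rho_\infty(1/2),h\rangle+\tfrac{A^2}{2}.
\]
Applying the same identity at $t=1$ gives $\rho_\infty(0)=\rho_\infty(1)=\rho_\infty(1/2)\circ\phi^\alpha_{1,1/2}+Ah$. Pairing this with $h$ yields $\langle\rho_\infty(1/2),h\rangle=\langle\rho_\infty(0),h\rangle-A$.

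\textbf{Step 3 (reduction to correlations).} By Lemma~\ref{lem:discretereduction} and the construction in Theorem~\ref{thm:continuous0}, $\rho_\infty(0)=\sum_{k\ge0}f_\alpha\circ T_\alpha^{-k}$ with $f_\alpha=Ah$, and the series converges in $H^{-s}$. Write $c_k=\langle h\circ T_\alpha^{-k},h\rangle$, so that $c_0=\|h\|_{L^2}^2=1$. Then
\[
\langle\rho_\infty(0),h\rangle=A\Big(1+\sum_{k\ge1}c_k\Big),\qquad\text{so}\qquad \mathcal{T}_\infty=A^2\Big(\tfrac12+\sum_{k\ge1}c_k\Big).
\]

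\textbf{Step 4 (main estimate).} It remains to show that $\sum_{k\ge1}|c_k|<1/2$ for all $\alpha$ sufficiently large. This is the only genuinely analytic input, and it is where the large-$\alpha$ hypothesis enters. I would invoke the quantitative-in-$\alpha$ mixing estimate \eqref{eq:uniformmix} of Theorem~\ref{prop:UniformMixing}, which I expect to give $|c_k|\le C\alpha^{-qk}\|h\|_{C^1}^2$ with $C,q$ independent of $\alpha$. Summing the geometric series gives $\sum_{k\ge1}|c_k|\le C\|h\|_{C^1}^2\alpha^{-q}/(1-\alpha^{-q})$, which tends to $0$ as $\alpha\to\infty$. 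Hence $\mathcal{T}_\infty\ge A^2/4>0$ once $\alpha$ is large.

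If \eqref{eq:uniformmix} instead only controls $\|\Pi_{\le M}(h\circ T_\alpha^{-k})\|$, I would split $h=\Pi_{\le M}h+(h-\Pi_{\le M}h)$. The high-frequency remainder is small in $L^2$ by smoothness of $h$, and should be absorbed using the same mass-transfer bound that underlies the $\log N/\log\alpha$ upper bound in Theorem~\ref{thm:continuous}.
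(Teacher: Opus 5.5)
Your proof is correct. It reaches the conclusion by a different and noticeably shorter route than the paper.

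\textbf{Common starting point.} Both proofs begin from $\mathcal{T}=\int_0^1\langle\rho_\infty(t),F_t\rangle\,\dee t$. Both isolate a positive diagonal Duhamel contribution from the correlation terms $\int_0^1\langle f_\alpha\circ T_\alpha^{-n}\circ\phi^\alpha_{t,0},F_t\rangle\,\dee t$, $n\ge0$. Your value $A^2/2$ for the diagonal term is exact. The paper's displayed $\int_{1/2}^1\eta(t)(t-1/2)\,\dee t$ should have $\int_{1/2}^t\eta$ in place of $t-1/2$, which does not affect positivity.

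\textbf{The paper's route.} The paper estimates the correlation terms with their $t$-dependence intact:
\begin{itemize}
\item the $n=0$ term is handled by the explicit Fourier computation of Lemma~\ref{lem:n=0};
\item for the $n\ge1$ terms, it splits off $I_\alpha=\{|t-1/2|<\alpha^{-1/2}\}$, controlled by the $H^{-1/8}$ bound \eqref{eq:continuousabsolute};
\item on the complement of $I_\alpha$ it uses the second mixing estimate \eqref{eq:uniformmix2}, which in turn needs the weak-norm bound for compositions with $\phi^\alpha_{t,0}$ from Proposition~\ref{prop:StableUnstable}.
\end{itemize}

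\textbf{Your route.} You exploit that $h(y)$ is invariant under the horizontal shear active on $(1/2,1)$. Hence each correlation term is independent of $t$ and equals $A^2c_{n+1}$; your use of periodicity at $t=1$ encodes exactly this. The flux then collapses to the exact Green--Kubo identity $\mathcal{T}=A^2\bigl(\tfrac12+\sum_{k\ge1}\langle h\circ T_\alpha^{-k},h\rangle\bigr)$. The single estimate \eqref{eq:uniformmix} with $f=g=h(y)$ then finishes the proof. This includes the term the paper must treat separately because \eqref{eq:uniformmix2} gives no decay at $n=0$; for you it is just $c_1=O(\alpha^{-c})$.

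\textbf{What each approach buys.} Your route dispenses with \eqref{eq:uniformmix2}, the time splitting, and Lemma~\ref{lem:n=0}. It also shows that $\mathcal{T}\to A^2/2$ as $\alpha\to\infty$. The paper's treatment of the $n\ge1$ terms, by contrast, does not use exact invariance of $F_t$ under the partial flow, so it is less tied to the special form \eqref{eq:forcespecial}.

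\textbf{Minor point.} You can delete the hedging final paragraph of your Step~4. Estimate \eqref{eq:uniformmix} applies verbatim, since $h(y)$ is mean-zero and lies in $W^{1,\infty}(\T^2)$.
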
 
	
	\begin{remark}\label{rem:flux}
		Our proof relies on the particular structure of \eqref{eq:forcespecial} and does not extend directly to arbitrary forcing satisfying \eqref{eq:falphaconditions}. More general sufficient conditions on $F$ ensuring positivity of $\mathcal{T}_N$ could be obtained by expanding out the right-hand side of \eqref{eq:flux} and using the explicit formula for $\rho_\infty(t)$, but for simplicity we avoid such generality.
	\end{remark}
	
	\begin{remark}
		In hydrodynamic turbulence, a nonvanishing flux of energy through arbitrarily small scales is closely tied to the persistence of energy dissipation in the inviscid limit, a phenomenon typically referred to as ``anomalous'' dissipation. As described by Onsager's 
		Conjecture \cite{onsager1949}, such anomalous dissipation can occur
		only for Euler solutions that are sufficiently rough, lying at or just
		below a critical regularity threshold
		\cite{Eyink_1994,ConstETtit, Eyink_Sreenivasan_2006,isettOnsager}. Related anomalous dissipation
		mechanisms also arise in passive scalar turbulence when the advecting
		velocity field is rough, and have been studied extensively recently in
		the mathematics literature
		\cite{DEIJ,VicolArmstrong,EL,CGM,johansson2024anomalous,burczak2023anomalous,RowanHess,
			BrueDeLellis, KeeferObukhov}. The regime considered here is slightly different from advection by a rough velocity field, where it is possible to construct $L^2$ solutions of the transport equation that dissipate energy. In the Batchelor setting, smoothness of the velocity field implies that energy dissipation at $\kappa = 0$ requires a loss of $L^2$ regularity of the scalar itself. Batchelor’s prediction may still be viewed as an Onsager-type statement predicting anomalous dissipation, in which
		scalar regularity just below $L^2$ represents a critical threshold for
		the flux \eqref{eq:flux} to remain positive as $N \to \infty$, effectively dissipating energy out of any fixed box in Fourier space. A
		precise characterization of the Onsager-critical regularity class for
		a passive scalar  was identified in
		\cite[Theorem 1.19 and Lemma 5.4]{BBPS4}, and we do not pursue
		refinements of the regularity of $\rho_\infty$ here. 
		
		Compared to the explorations of non-uniqueness of solutions below the
		Onsager regularity threshold, there are fewer explorations of how a system
		with  forcing evolves from smooth initial data to a rough solution
		with the critical Onsager regularity, where the anomalous dissipation
		generated by the advection term balances the energy injected by the forcing. Other examples of mathematical
		explorations of this 
		phenomenon include simplified linear shell models with deterministic
		and stochastic forcing
		\cite{MattinglySuidanVanden-Eijnden2007a,MattinglySuidanVanden-Eijnden2007b}, the stochastically forced
		Burgers equation \cite{EKhaninMazelSinai,EVandenEijnden}, and some stochastically
		forced nonlinear shell
		models \cite{Barbato,Romito,FriedlanderGlatt-HoltzVicol}.
	\end{remark}

	\section{Proof of main theorems} \label{sec:outline}
	
	In this section, we state the quantitative-in-$\alpha$ exponential mixing estimates that we require and then use them to complete the proofs of the theorems presented in Section~\ref{sec:results}. 
	The mixing estimates will be proven in Sections~\ref{sec:uniform} and~\ref{sec:projected}, and comprise the main technical work of the paper.
	
	\subsection{Quantitative exponential mixing} \label{sec:mixingstatements}
	
	We begin with a quantitative version of \eqref{eq:umixing}. Recall that $\Phi_{s,t}^\alpha$ denotes the flow map associated with $u_\alpha$.
	
	\begin{theorem}[Quantitative-in-$\alpha$ exponential mixing] \label{prop:UniformMixing}
		There exist constants $c, C > 0$ such that for all $\alpha$ sufficiently large, mean-zero $f,g \in W^{1,\infty}(\T^2)$, $n \in \N$, and $t \in [\tfrac{1}{2} + \alpha^{-1/2},1]$ the following
		estimates hold:
		\begin{subequations}
			\begin{align}
				\left|\int_{\T^2} f\circ T_\alpha^{-n}(z)\, g(z)\,\dee z\right|
				&\le C e^{-c n\log\alpha}\,\|f\|_{W^{1,\infty}}\|g\|_{W^{1,\infty}}, \label{eq:uniformmix} \\[0.5em]
				\left|\int_{\T^2} f\circ T_\alpha^{-n}(z)\, g\circ \Phi_{0,t}^\alpha(z)\,\dee z\right|
				&\le C e^{-c n\log\alpha}\,\|f\|_{W^{1,\infty}}\|g\|_{W^{1,\infty}}. \label{eq:uniformmix2}
			\end{align}
		\end{subequations}
	\end{theorem}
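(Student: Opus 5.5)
The plan is to prove \eqref{eq:uniformmix} by a standard-pair / curve-averaging argument that exploits the near-constant hyperbolic splitting of $T_\alpha$. All constants will be tracked explicitly in $\alpha$.

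\emph{Step 1: the one-step geometry.} Write $T_\alpha = S_2\circ S_1$ with $S_1(x,y)=(x,\,y+\alpha\cdot 2|x-1/2|\cdot\tfrac12)$ and $S_2$ the analogous horizontal shear. Off the singular lines $\{x=0,1/2\}$ and $\{y=0,1/2\}$, $DT_\alpha$ is one of four constant matrices with entries of size $O(\alpha)$. These share an invariant cone $\mathcal C^u$ of directions, within angle $O(\alpha^{-1})$ of a fixed direction, on which vectors expand by a factor $\gtrsim \alpha^2$. Similarly there is an unstable cone for $T_\alpha^{-1}$. The singular lines cut a short unstable curve into at most $O(1)$ pieces per crossing. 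Because the expansion is $\alpha^2 \gg$ the number of cuts, a growth lemma holds uniformly: the fraction of length lying on pieces shorter than $\delta$ is $\lesssim \delta+\alpha^{-1}$.

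\emph{Step 2: reduce to curve averages.} Fix $g\in W^{1,\infty}$ and foliate $\T^2$ by straight segments $\gamma$ of length $\sim 1$ in the stable direction of $T_\alpha^{-1}$, that is, the unstable direction of $T_\alpha$. Using invariance of Lebesgue measure, write
\[
\int f\circ T_\alpha^{-n}\, g \;=\; \int f\; g\circ T_\alpha^{n}.
\]
Disintegrate Lebesgue measure along these segments, each carrying a Lipschitz density. Pushing a segment forward by $T_\alpha^n$ produces a union of unstable curves. After $k$ steps each surviving piece is nearly straight, with curvature controlled because each piece is piecewise affine, and its length is multiplied by $\gtrsim\alpha^2$ per step, up to the cuts. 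The pushed-forward density's log-Lipschitz constant contracts by $\alpha^{-2}$ per step. The plan is to show the inductive bound
\[
\Bigl|\int_{T^n\gamma} f\,\rho_n\,\dee\ell\Bigr|\lesssim \alpha^{-cn}\|f\|_{W^{1,\infty}}.
\]
The mechanism is that $f$ integrated along a long, nearly straight curve in an irrational-ish fixed direction is small, since mean zero of $f$ is replaced by equidistribution of long segments. Iteration then comes from standard-pair coupling: at each step a fixed proportion $1-O(\alpha^{-c})$ of the mass lies on curves long enough to be equidistributed to accuracy $O(\alpha^{-c})$. The derivative of $g$ enters only through the initial density.

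\emph{Step 3: the main obstacle.} Equidistribution of a single long segment requires its direction to be badly approximable, and the constant directions here depend on $\alpha$. My first attempt is to avoid Diophantine issues entirely. Each pushed-forward piece is itself a \emph{folded} curve: the sawtooth produces reflections, so the image under $S_1$ of a segment of length $L$ wraps $\sim\alpha L$ times vertically. A Fourier-side computation then gives $|\int_{S_1\gamma} e^{2\pi i k\cdot z}|\lesssim |k|^{-1}+\alpha^{-1}$ uniformly in $k\neq0$, combined with truncating $f$ at frequency $\alpha^{c}$ using $\|f\|_{W^{1,\infty}}$. The per-step gain is then $\alpha^{-c}$, and iterating $n$ times gives $e^{-cn\log\alpha}$. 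Controlling the mass lost near the singular lines through the growth lemma with $O(\alpha^{-1/2})$ error is where I expect most care to be needed.

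\emph{Step 4: the estimate \eqref{eq:uniformmix2}.} Write
\[
\int f\circ T_\alpha^{-n}\, g\circ\Phi^\alpha_{0,t}\;=\;\int (f\circ T_\alpha^{-n}\circ \phi^\alpha_{t,0})\, g .
\]
For $t\ge\tfrac12+\alpha^{-1/2}$, the map $\Phi_{0,t}^\alpha$ consists of the full first shear followed by a partial second shear of strength $\ge\alpha^{1/2}$. It therefore still maps stable segments to curves that are long (length $\gtrsim\alpha^{1/2}$) and lie in the unstable cone. Rerun Step 2 with the first pushforward replaced by $\Phi_{0,t}^\alpha$, losing only constants. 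The threshold $\alpha^{-1/2}$ is exactly what keeps this partial shear expanding enough to restart the induction.
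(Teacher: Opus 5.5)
There is a genuine gap. Nothing in the proposal produces a gain that compounds over $n$ steps, and even the single-step gain is not established as written.

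\textbf{The single-step gain.} In Step~3, $S_1$ fixes the $x$-coordinate. So for $k=(k_1,0)$ the normalized average of $e^{2\pi i k\cdot z}$ over $S_1\gamma$ equals, up to $O(\alpha^{-1})$, the average of $e^{2\pi i k_1 x}$ over the horizontal projection of $\gamma$. That average has size $\sim (|k_1|L)^{-1}$, which is generically of order one for $k_1=\pm1$ and $L\sim 1$. Your bound $|k|^{-1}+\alpha^{-1}$ is consistent with this, but it is not small. Truncating $f$ at frequency $\alpha^{c}$ discards exactly the modes where the bound would be small and keeps the ones where it is not. So the mean-zero condition on $f$ is never converted into a factor $\alpha^{-c}$.

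The paper gets this cancellation from the exact combinatorics of $\mathcal{S}^-$. Take a stable segment $W=W_1\cup W_2$ with each $W_i$ between adjacent spanning curves. Then $T_2^{-1}W_i$ runs exactly from an edge of the fundamental domain to $\{x=1/2\}$. Hence $V_i=T^{-1}W_i$ is a union of $\approx\alpha$ nearly vertical segments sweeping across one half $R_i$ of the torus. The two halves carry equal weight, since $|J_{V_i}T|\,|V_i|=|W|/2$. This gives $\dashint_{V_1}f+\dashint_{V_2}f\approx\dashint_{R_1}f+\dashint_{R_2}f=0$ (Lemma~\ref{lem:onestepkey}).

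\textbf{Iteration.} More seriously, a fixed-time equidistribution estimate of the form $|\int_{T\gamma}F\rho_1\,\dee\ell-\int F|\lesssim\alpha^{-c}\|F\|_{W^{1,\infty}}$ does not iterate.
\begin{itemize}
\item Applied at the last step, it gives an error $\alpha^{-c}$ uniformly in $n$, not $\alpha^{-cn}$.
\item Applied at the first step, the observable is $f\circ T^{n-1}$, whose Lipschitz constant is of order $(2\alpha)^{2(n-1)}$.
\end{itemize}
To compound gains you must show that the error after one step is again an object of the same class, of size $O(\alpha^{-c})$. In the standard-pair framework this is a coupling lemma: all but $O(\alpha^{-c})$ of the mass of two standard families must be matched along local stable manifolds, with quantitative-in-$\alpha$ control of how these are cut by $\bigcup_k T^{-k}\mathcal{S}^+$. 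The sentence ``a proportion $1-O(\alpha^{-c})$ of the mass lies on curves equidistributed to accuracy $O(\alpha^{-c})$'' does not supply this, because weak closeness to Lebesgue does not yield such a matching.

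The paper builds this transverse matching into a norm instead. The norm $\|\cdot\|_u$ bounds differences of averages of $f$ over parallel stable segments at distance $\epsilon$ by $\epsilon^p$. This is exactly what lets one replace the average over the parallel segments of $V_i$ by the area average over $R_i$. The Lasota--Yorke bounds of Proposition~\ref{prop:StableUnstable}(a),(c), combined with the one-step gain $|\mathcal{L}f|_w\lesssim\alpha^{-p}\tnorm{f}$, then give $\tnorm{\mathcal{L}^3f}\le\alpha^{-r}\tnorm{f}$, which does iterate.

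\textbf{Smaller points.} Your Step~4 is in the right spirit: it is the paper's observation that $\phi^\alpha_{t,0}$ is a composition of shears of amplitude at least $2\sqrt{\alpha}$ preserving the cones. However, the paper only needs composition with $\phi^\alpha_{t,0}$ to be bounded for $|\cdot|_w$, not a restart of the induction. Also, your pushforward in Step~2 actually computes $\int (f\circ T^n)\,g$ rather than $\int f\,(g\circ T^n)$. This is harmless only because the statement is symmetric in $f$ and $g$.
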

	
	\begin{remark} \label{rem:Sobolevdecay}
		It follows from \eqref{eq:uniformmix} and the continuous embedding $H^3(\T^2) \hookrightarrow 
		W^{1,\infty}(\T^2)$ that
		\begin{equation} \label{eq:Sobolevdecay}
			\|f\circ T_\alpha^{-n}\|_{H^{-3}} \le C e^{-cn\log \alpha}\|f\|_{W^{1,\infty}}.
		\end{equation}
		This version of \eqref{eq:uniformmix} will be useful in the proof of Theorem~\ref{thm:discrete} and our extensions to continuous time. The bound \eqref{eq:uniformmix2} involving $\Phi_{0,t}^\alpha$ will be needed only in the proof of Theorem~\ref{thm:flux} to estimate certain terms that arise after expanding out the right-hand side of \eqref{eq:flux}.
	\end{remark}
	
	The proof of Theorem~\ref{prop:UniformMixing} is carried out in Section~\ref{sec:uniform} and based on proving a finite-step contraction estimate for the \textit{transfer operator} $\mathcal{L}_\alpha$, defined by $\mathcal{L}_\alpha f = f \circ T_\alpha^{-1}$, in a suitable \textit{anisotropic norm}. For context and a discussion of our approach, see Section~\ref{sec:anisotropicoutline}.
	
	Next, we present the bound that will be needed to control the off-diagonal terms \eqref{eq:cross}, discussed earlier. Notice that it follows just from Theorem~\ref{prop:UniformMixing} and a simple $W^{1,\infty}$ growth estimate that 
	\begin{align*}\left|\int_{\T^2} \Pi_{\le N}(f \circ T_\alpha^{-n})(z) \Pi_{\le N}(g\circ T_\alpha^{-m})(z) \, \dee z\right| \le C (4\alpha^2)^n \alpha^{-cm}\|f\|_{W^{1,\infty}} \|g\|_{W^{1,\infty}}
	\end{align*}
	for $m > n$. It is unlikely (and probably false) that the exponential decay rate \(\alpha^{-c}\) can be shown to match the growth rate \(4\alpha^2\). Thus, the above only provides strong decay for $0 \le n \ll m$, which, as will be clear from Lemma~\ref{lem:offdiagonal}, is far from sufficient to prove Theorem~\ref{thm:discrete}. The key feature of Theorem~\ref{prop:Projected} below is that it forces \eqref{eq:cross} to vanish as $\alpha \to \infty$ as soon as $|m-n| > 0$, even when $n \approx m$. For purely technical reasons, it will be convenient to work with a regularized version of $\Pi_{\le N}$ which we now introduce.
	
	Let $\varphi:\R^2 \to [0,\infty)$ be an even, smooth bump function supported in $[-1,1]^2$ with $\int_{\R^2} \varphi(z)\dee z = 1$, and define the rescaled and periodized kernel 
	$$ \varphi_N(z) = \sum_{m \in \Z^2}N^2 \varphi(N(z+m)).$$
	For $f:\T^2 \to \R$, we let 
	\begin{equation} \label{eq:convolution}
		P_{\le N}f(z) = \int_{\T^2} \varphi_N(z-\bar{z}) f(\bar{z}) \, \dee \bar{z}
	\end{equation}
	denote the convolution of $f$ and the kernel $\varphi_N$. 
	
	\begin{theorem}[Projected exponential mixing] \label{prop:Projected}
		There exist constants $c, C > 0$ such that for all $\alpha$ sufficiently large, mean-zero $f,g \in W^{1,\infty}(\T^2)$, and $m,n \in \N$ we have
		\begin{equation}\label{eq:projectedmix}
			\left|\int_{\T^2} P_{\le N}(f \circ T_\alpha^{-n})(z) P_{\le N}(g\circ T_\alpha^{-m})(z) \, \dee z\right| \le C e^{-c|m-n|\log \alpha} \|f\|_{W^{1,\infty}}\|g\|_{W^{1,\infty}}
		\end{equation}
		for every $N \ge 1$.
	\end{theorem}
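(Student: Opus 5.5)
Without loss of generality take $m = n + k$ with $k \ge 1$ (the case $k=0$ is trivial from $\|P_{\le N}\|_{L^2\to L^2}\le 1$ and measure preservation). The plan is to rewrite the integral so that the kernel of $P_{\le N}^2$ is transported, and then to exploit the near-constant hyperbolic splitting of $T_\alpha$.

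\textbf{Step 1: change variables.} Since $P_{\le N}$ is self-adjoint with even kernel, the integral equals $\langle (f\circ T_\alpha^{-n}), K_N (g\circ T_\alpha^{-n-k})\rangle$ with $K_N = P_{\le N}^2$, a convolution with the kernel $\psi_N = \varphi_N * \varphi_N$, supported at scale $2/N$. Composing with $T_\alpha^{n}$, which preserves Lebesgue measure, the integral becomes
\begin{equation}
\int_{\T^2}\int_{\T^2} f(z)\,\psi_N\bigl(T_\alpha^n z - T_\alpha^n w\bigr)\, (g\circ T_\alpha^{-k})(w)\,\dee w\,\dee z .
\end{equation}
Thus we must show that mixing of $g$ for $k$ steps is not destroyed when it is tested against the pulled-back kernel $\psi_N(T_\alpha^n z - T_\alpha^n w)$. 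This kernel is concentrated on pairs $(z,w)$ that lie within distance about $N^{-1}$ after $n$ forward iterates. Because $T_\alpha$ is uniformly hyperbolic with nearly constant unstable and stable cones, such pairs lie in thin strips along stable curves of $T_\alpha^n$: each strip has unstable width about $N^{-1}\alpha^{-2n}$ and stable length up to order one.

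\textbf{Step 2: decompose the kernel.} For fixed $z$, the function $w\mapsto \psi_N(T_\alpha^nz-T_\alpha^nw)$ has total mass $\int \psi_N = 1$, by measure preservation. It is essentially a normalized bump adapted to stable-direction curves and very thin in the unstable direction. Integrating first in $z$ against $f$, which is Lipschitz, the $z$-integral produces a function $\Psi(w) = \int f(z)\psi_N(T^n_\alpha z - T^n_\alpha w)\,\dee z$. This equals $(P_{\le N}^2(f\circ T_\alpha^{-n}))\circ T_\alpha^n$, a function whose regularity along stable curves of $T_\alpha^n$ is controlled, with Lipschitz constant in the stable direction of order $\|f\|_{W^{1,\infty}}$ because contraction helps. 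It is rough only in the unstable direction. The task is therefore to bound $\int \Psi\,(g\circ T_\alpha^{-k})$ by $\alpha^{-ck}\|f\|_{W^{1,\infty}}\|g\|_{W^{1,\infty}}$, using only $L^\infty$ control on $\Psi$ together with stable-direction regularity.

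\textbf{Step 3: anisotropic mixing.} This is exactly the setting for the anisotropic-norm contraction used to prove Theorem~\ref{prop:UniformMixing}. There, $\mathcal{L}_\alpha^k g$ contracts in a weak norm that tests against functions which are Lipschitz along admissible stable curves and only bounded transversally. I would therefore disintegrate $\Psi$ along a foliation of $\T^2$ by admissible stable curves (nearly straight by the near-constancy of the stable direction), and write $\int \Psi\,(g\circ T_\alpha^{-k}) = \int \int_{\gamma} \Psi\,\mathcal{L}^k_\alpha g$. The weak-norm bound $\sup_\gamma \sup_{\|\phi\|_{C^1(\gamma)}\le 1}|\int_\gamma \phi\, \mathcal{L}_\alpha^k g| \lesssim \alpha^{-ck}\|g\|_{W^{1,\infty}}$ then gives the result after integrating over leaves.

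\textbf{Main obstacle.} The hard step is showing that $\Psi$ restricted to stable leaves is uniformly $C^1$, or at least H\"older, independently of $n$ and $N$. This is where the finer structure of the iterates $T_\alpha^n$ (the multipoint structure alluded to in Lemma~\ref{lem:multipoints}) is needed. The stable curves of $T_\alpha^n$ are piecewise linear, with singularities along preimages of the lines $x=1/2$ and $y=1/2$. Near these singularities, a single $N^{-1}$-ball pulled back by $T_\alpha^n$ may split into several pieces lying along different branches. I would try to show that the number of such pieces meeting a given leaf is bounded by a geometric series whose growth is beaten by the hyperbolic expansion $\alpha^{2}$ per step, which is a complexity bound for large $\alpha$. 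Near-singular pieces would then be absorbed into an error term costing $\alpha^{-ck}$ as well.
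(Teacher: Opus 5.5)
Your Steps 1--3 match the paper's reduction. Your $\Psi$ is the paper's $g_n$ from \eqref{eq:easycase2} (with $f$ and $g$ exchanged), and the paper also concludes with the weak-norm decay \eqref{eq:weakdecay}, disintegrating along the pieces $W_i$ of $T^{-n}(W_x)$, $W_x$ vertical, with weights $|J_{W_i}T^n|$. The gap is that the step carrying all the weight is left as ``I would try,'' and the target you set for it cannot be reached. The uniform stable-direction Lipschitz bound asserted in Step 2 (``contraction helps'') fails in general. In $\nabla\Psi(w)\cdot v=\int\psi_N(\bar z)\,\nabla f(\cdot)\cdot DT^{-n}(\bar z+T^nw)\,DT^n(w)v\,\dee\bar z$, the contraction is undone only up to the ratio $|A_{\sigma'}u|/|A_\sigma u|$ of two $n$-letter words. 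This ratio is merely $\le 2^n$ when the $4/N$-cube about $T^nw$ meets singularity curves of many generations. What the paper proves instead is the Jacobian-weighted bound \eqref{eq:reducedgoal} on regular parts, using three ingredients your sketch does not contain. (i) Cubes of radius $\alpha^{-3n/2}$ about the multi-intersection set $\mathcal{M}_n^-$ are excised. (ii) For a good curve (a long piece with two long neighbours) and $N^{-1}\le |J_{W_i}T^n|/C_1$, Lemma~\ref{lem:goodlemma2} shows the cube meets at most one $T^\ell(\mathcal{S}^-)$. The words then differ in one letter, and Lemma~\ref{lem:equivexpansion} bounds the ratio. For $N^{-1}> |J_{W_i}T^n|/C_1$ one puts the derivative on $\psi_N$ instead. (iii) A bad curve is traced back to its most recent good ancestor $j$ steps earlier. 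This loses $2^j$ but gains $(\alpha^2/2)^{-j}$ against only $C_0^j$ bad descendants (Lemma~\ref{lem:badcount}). Your ``geometric series beaten by $\alpha^2$'' resembles (iii), but it does not work without (i) and (ii). Moreover, (ii) depends on the specific leaves coming from $T^{-n}(W_x)$, whose consecutive pieces tile a vertical line, not on an arbitrary stable foliation.

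Second, the near-singular error cannot be ``absorbed into an error term costing $\alpha^{-ck}$.'' The only available bound for it is $\|f\|_{L^\infty}\|g\|_{L^\infty}$ times the measure of the excised set. By the counting bound $\#\mathcal{M}_n^-\lesssim(2\alpha)^{2n}$ of Lemma~\ref{lem:multipoints} (itself a nontrivial complexity count for forward generations of unstable curves), that measure is $\lesssim (2\alpha)^{2n}\alpha^{-3n}=(\alpha/4)^{-n}$. This is small in $n$ but independent of $k=m-n$, so for $n\ll k$ your scheme yields no decay in $k$. The paper closes this with a case split that your proposal is missing. If $n\le c_0(m-n)$, Lemma~\ref{lem:easycase} uses the crude bound $\|f\circ T^{-n}\|_{W^{1,\infty}}\le(2\alpha)^{2n}\|f\|_{W^{1,\infty}}$ against $\alpha^{-cm}$ mixing. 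If $n>c_0(m-n)$, then $(\alpha/4)^{-n}\le(\alpha/4)^{-c_0(m-n)}$.
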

	
	The proof of Theorem~\ref{prop:Projected} is given in Section~\ref{sec:projected}. It relies on the anisotropic estimates used to prove Theorem~\ref{prop:UniformMixing} as a key ingredient, but also requires several additional ideas.
	
	\subsection{Proof of Theorem~\ref{thm:discrete}}
	
	Throughout this entire section, $f \in C^\infty(\T^2)$ is a fixed mean-zero function with $\|f\|_{L^2} = 1$ and 
	$$\rho_\infty = \sum_{n=0}^\infty f \circ T_\alpha^{-n}$$ 
	is as defined in \eqref{eq:rhoinftydef}. Here and in the remainder of the paper, we write $a \leqc b$ when $a \le Cb$ for a constant $C > 0$ that is independent of $\alpha$ for all $\alpha$ sufficiently large. We write $a \leqc_p b$ when we want to emphasize that the constant $C$ depends on some parameter $p$.
	
	Before proving Theorem~\ref{thm:discrete}, we establish three preliminary lemmas. We begin with uniform-in-$\alpha$ control on $\rho_\infty$ in negative regularity spaces, which is an easy consequence of Theorem~\ref{prop:UniformMixing}.
	
	\begin{lemma} \label{lem:negativecontrol}
		For $\alpha$ sufficiently large, the series defining $\rho_\infty$ converges absolutely in $H^{-s}$ for every $s > 0$ and we have
		\begin{equation} \label{eq:negativecontrol}
			\|\rho_\infty\|_{H^{-s}}\leqc_s 1.
		\end{equation}
		In particular, $\rho_\infty \in \bigcap_{s > 0} H^{-s}$.
	\end{lemma}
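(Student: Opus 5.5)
The plan is to combine the $H^{-3}$ decay \eqref{eq:Sobolevdecay} with the trivial $L^2$ bound by interpolation, then sum the resulting geometric series. Since $f$ is smooth, $\|f\|_{W^{1,\infty}}\leqc_f 1$, so Remark~\ref{rem:Sobolevdecay} gives
\[
\|f\circ T_\alpha^{-n}\|_{H^{-3}} \le C\alpha^{-cn}.
\]
Because $T_\alpha$ preserves Lebesgue measure, we also have $\|f\circ T_\alpha^{-n}\|_{L^2}=\|f\|_{L^2}=1$. Each term $f\circ T_\alpha^{-n}$ is mean zero, so the homogeneous and inhomogeneous Sobolev norms agree up to constants, and interpolation applies.

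Fix $s\in(0,3)$; larger $s$ is trivial since $\|\cdot\|_{H^{-s}}\le\|\cdot\|_{H^{-3}}$ for mean-zero functions. The standard Fourier interpolation inequality, obtained from H\"older's inequality on the Fourier side, is
\[
\|g\|_{H^{-s}}\le \|g\|_{L^2}^{1-s/3}\|g\|_{H^{-3}}^{s/3}.
\]
Applied to $g=f\circ T_\alpha^{-n}$, it yields
\[
\|f\circ T_\alpha^{-n}\|_{H^{-s}}\le C^{s/3}\alpha^{-cns/3}.
\]

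Summing over $n\ge 0$ gives
\[
\sum_{n\ge0}\|f\circ T_\alpha^{-n}\|_{H^{-s}}\le C^{s/3}\sum_{n\ge0}\alpha^{-cns/3}=\frac{C^{s/3}}{1-\alpha^{-cs/3}}.
\]
For $\alpha\ge 2$, say, the right-hand side is bounded by a constant depending only on $s$ (and $f$), uniformly in $\alpha$. Absolute convergence in the Hilbert space $H^{-s}$ then gives both that $\rho_\infty$ is a well-defined element of $H^{-s}$ and the bound \eqref{eq:negativecontrol}. Since this holds for every $s>0$, we get $\rho_\infty\in\bigcap_{s>0}H^{-s}$.

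There is no real obstacle here once Theorem~\ref{prop:UniformMixing} is available. The only points needing care are the following. The interpolation should be stated for mean-zero functions, so that the $k=0$ mode is absent. The constant blows up like $s^{-1}$ as $s\to0$, which is consistent with $\rho_\infty\notin L^2$. The constants from \eqref{eq:Sobolevdecay} are independent of $\alpha$, so the implicit constant depends only on $s$ (and the fixed $f$), as the notation $\leqc_s$ requires.
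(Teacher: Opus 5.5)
Your proposal is correct and follows the paper's proof essentially verbatim. You use the $H^{-3}$ decay \eqref{eq:Sobolevdecay}, interpolate against the conserved $L^2$ norm to get $\|f\circ T_\alpha^{-n}\|_{H^{-s}}\leqc \alpha^{-cns/3}$ for $0<s<3$, and sum the geometric series uniformly in large $\alpha$.
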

	
	\begin{proof}
		It suffices to prove that 
		\begin{equation} \label{eq:negativegoal}
			\sum_{n=0}^\infty \|f \circ T_\alpha^{-n}\|_{H^{-s}} \leqc_s 1
		\end{equation}
		for $0 < s< 3$. By \eqref{eq:Sobolevdecay}, for any $n \in \N$ and $\alpha$ sufficiently large there holds
		\begin{equation} \label{eq:Existence1}
			\|f \circ T_\alpha^{-n}\|_{H^{-3}} \leqc e^{-c n \log \alpha}\|f\|_{C^1} \leqc e^{-c n \log \alpha}.
		\end{equation}
		By interpolation, \eqref{eq:Existence1}, and the fact that composition with $T_\alpha^{-1}$ preserves the $L^2$ norm, we have
		\begin{equation}\label{eq:interpolation}
			\|f\circ T_\alpha^{-n}\|_{H^{-s}} \le \|f\circ T_{\alpha}^{-n}\|_{H^{-3}}^{\frac{s}{3}}\|f \circ T_\alpha^{-n}\|^{1-\frac{s}{3}}_{L^2} \leqc e^{-\frac{s}{3} c n \log \alpha}.
		\end{equation} 
		Thus,
		\begin{equation} \label{eq:Existence2} 
			\sum_{n=0}^\infty \|f\circ T_\alpha^{-n}\|_{H^{-s}} \leqc \sum_{n=0}^\infty e^{-\frac{s}{3} c n \log \alpha} \leqc_s 1,
		\end{equation}
		which proves \eqref{eq:negativegoal} and completes the proof.
	\end{proof}
	
	Next, we show that it is sufficient to prove the cumulative law with $\Pi_{\le N}$ replaced by the regularized version $P_{\le N}$. Below and in all that follows, for $g:\T^2 \to \R$ we write $\hat{g}: \Z^2 \to \C$ to denote its Fourier coefficients, defined for $k \in \Z^2$ by 
	$$\hat{g}(k) = \int_{\T^2} e^{-2\pi i z\cdot k} g(z) \, \dee z.$$  
	\begin{lemma} \label{lem:Kernel}
		Suppose that there exists $C_0 \ge 1$ such that for all $\alpha$ sufficiently large and $N \ge 2$ we have 
		\begin{equation} \label{eq:PBatchelor}
			-C_0 + \frac{1}{C_0}\frac{\log N}{\log \alpha} \le \|P_{\le N} \rho_\infty\|_{L^2}^2 \le C_0 \left(\frac{\log N}{\log \alpha}+1\right).
		\end{equation}
		Then, there exists $c_0 \ge 1$, depending only on $C_0$ and the kernel $\varphi$ used to define $P_{\le N}$ in \eqref{eq:convolution}, such that 
		\begin{equation} \label{eq:Kernelgoal}
			-c_0 + \frac{1}{c_0}\frac{\log N}{\log \alpha} \le \|\Pi_{\le N} \rho_\infty\|_{L^2}^2 \le c_0 \left(\frac{\log N}{\log \alpha}+1\right)
		\end{equation}
		for all $N \ge 2$. In particular, \eqref{eq:Batchelor} holds with $C = 2c_0$ and $N_\alpha = \alpha^{2c_0^2}$.
	\end{lemma}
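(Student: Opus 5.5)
The plan is to compare the two projections on the Fourier side and to exploit the fact that $\|P_{\le N}\rho\|_{L^2}^2$ is essentially monotone in $N$. We have $\widehat{P_{\le N} g}(k) = \hat\varphi(k/N)\hat g(k)$, where $\hat\varphi$ denotes the Fourier transform on $\R^2$ of the bump function. Since $\varphi\ge 0$ is even with unit mass, $\hat\varphi$ is real, $|\hat\varphi|\le 1$, and $\hat\varphi(0)=1$. By continuity there is $\delta\in(0,1)$, depending only on $\varphi$, such that $\hat\varphi(\xi)\ge 1/2$ for $|\xi|\le \delta$. Moreover $\hat\varphi$ is Schwartz, so $|\hat\varphi(\xi)|\le C_M|\xi|^{-M}$ for every $M$.

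For the lower bound I will compare $\Pi_{\le N}$ with $P_{\le N'}$ for $N' = \delta N$ (or with $N'=2$ when $\delta N<2$, which is then absorbed into the constant). For $|k|\le N$ we have $|\hat\varphi(k/N')|\le 1$. Hence
\[
\|P_{\le \delta N}\rho_\infty\|_{L^2}^2 \le \|\Pi_{\le N}\rho_\infty\|_{L^2}^2 + \sum_{|k|>N}|\hat\varphi(k/(\delta N))|^2|\hat\rho_\infty(k)|^2 .
\]
The tail is handled with the uniform negative Sobolev bound of Lemma~\ref{lem:negativecontrol}. For $|k|>N$ we have $|k/(\delta N)|\ge \delta^{-1}\ge 1$, so the decay of $\hat\varphi$ gives $|\hat\varphi(k/(\delta N))|^2\lesssim_\varphi (\delta N/|k|)^{2}\le \delta^2$. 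Therefore the tail is at most a $\varphi$-dependent constant times $\sum_k \langle k\rangle^{-2s}|k|^{2s}\ldots$, and I must bound it by a constant. It is simpler to take $s=1$: the tail is at most $C_\varphi (\delta N)^2\sum_{|k|>N}|k|^{-2}|\hat\rho_\infty(k)|^2 \lesssim (\delta N)^2 N^{-0}\ldots$. Since this is not uniform in $N$, I will instead use the decay $|\hat\varphi(\xi)|\le C|\xi|^{-1}$ for $|\xi|\ge 1$. Writing $|k|\ge N$, this gives $|\hat\varphi(k/\delta N)|^2 \le C(\delta N)^2/|k|^2$, and the tail becomes $\le C\delta^2 N^2 \sum_{|k|>N}|k|^{-2}|\hat\rho_\infty(k)|^2 \le C \delta^2 N^{2}N^{-2+2s}\|\rho_\infty\|_{H^{-s}}^2$. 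Here I use $|k|^{-2}\le N^{-2+2s}|k|^{-2s}$ for $|k|>N$, with $s<1$. This still leaves a power $N^{2s}$. The fix is to use a faster decay $|\hat\varphi(\xi)|\le C_M|\xi|^{-M}$ with $M$ large. The ratio $(\delta N/|k|)^{2M}$ then beats $N^{2s}$ growth: with $s=1/2$ and $M=1$, the bound $(N/|k|)^2\le (N/|k|)^{1}$ combined with $|k|^{-1}\le \ldots$ reduces to $\sum_{|k|>N}(N/|k|)^{2M}|\hat\rho_\infty(k)|^2\le N^{2s}\sum|k|^{-2s}|\hat\rho_\infty(k)|^2$. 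So in fact one should split dyadically and use the \emph{absolute} convergence from Lemma~\ref{lem:negativecontrol}.

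The clean route is the following. Boundedness in $H^{-s}$ alone does not bound the tail by a constant, so I will instead apply the hypothesis \eqref{eq:PBatchelor} itself at dyadic scales. Decompose $|k|>N$ into annuli $A_j=\{2^jN<|k|\le 2^{j+1}N\}$. On $A_j$, $\hat\varphi(k/(2^{j+1}N/\delta))\ge 1/2$, so
\[
\sum_{A_j}|\hat\rho_\infty|^2\le 4\|P_{\le 2^{j+1}N/\delta}\rho_\infty\|^2\le 4C_0\Bigl(\tfrac{\log(2^{j+1}N/\delta)}{\log\alpha}+1\Bigr).
\]
Weighted by $|\hat\varphi|^2\lesssim 2^{-2Mj}\delta^{2M}$, the sum over $j$ is $\lesssim C_0(\log N/\log\alpha+1)\,\delta^{2M}$. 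Choosing $\delta$ small, depending on $C_0$ and $\varphi$, makes this at most $\frac{1}{2C_0}\frac{\log N}{\log\alpha}+O(1)$. Combining with the lower bound of \eqref{eq:PBatchelor} at scale $\delta N$, where $\log(\delta N)=\log N-O(1)$, gives the lower bound of \eqref{eq:Kernelgoal}.

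The upper bound is direct. For $|k|\le N$, $\hat\varphi(k/(N/\delta))\ge 1/2$, so $\|\Pi_{\le N}\rho_\infty\|^2\le 4\|P_{\le N/\delta}\rho_\infty\|^2 \le 4C_0(\log N/\log\alpha+1+O(1))$.

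Finally, for $N\ge N_\alpha=\alpha^{2c_0^2}$ we have $\frac{\log N}{\log\alpha}\ge 2c_0^2$. Then $c_0\le \frac{1}{2c_0}\frac{\log N}{\log\alpha}$, so the lower bound is at least $\frac{1}{2c_0}\frac{\log N}{\log\alpha}$, and similarly $c_0(\cdot+1)\le 2c_0\frac{\log N}{\log\alpha}$. The main obstacle is controlling the high-frequency tail in the lower bound. I expect the dyadic self-bootstrapping via the upper half of \eqref{eq:PBatchelor}, together with choosing $\delta$ small against $C_0$, to handle it.
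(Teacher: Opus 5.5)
Your final dyadic argument is correct, but it controls the high-frequency tail in the lower bound differently from the paper.

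\textbf{The paper's route.} The paper proves the comparison \eqref{eq:Pequiv} for arbitrary $g\in H^{-1}$. The upper bound is exactly yours. For the lower bound it compares $\Pi_{\le N}$ with $P_{\le\sqrt N}$ rather than $P_{\le\delta N}$. The polynomial gap between $\sqrt N$ and $N$ lets the Schwartz decay of $\hat\varphi$ beat the $H^{-1}$ weight: $\sup_{|k|>N}|k|^2|\hat\varphi(k/\sqrt N)|^2\lesssim N^{-1}$. So the tail is at most $N^{-1}\|\rho_\infty\|_{H^{-1}}^2$, which is bounded uniformly in $\alpha$ by Lemma~\ref{lem:negativecontrol}. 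The only price is a factor $\tfrac12$, from $\log\sqrt N=\tfrac12\log N$. Your remark that negative Sobolev boundedness cannot control the tail is therefore true only for a fixed-ratio comparison.

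\textbf{Your route.} You keep the fixed ratio and feed the upper half of \eqref{eq:PBatchelor} back in on the shells $2^jN<|k|\le 2^{j+1}N$. The weight $|\hat\varphi(k/\delta N)|^2\lesssim\delta^{2M}2^{-2Mj}$ becomes small compared with $C_0^{-2}$ once $\delta$ is chosen small depending on $C_0$ and $\varphi$. The $\log(1/\delta)$ and $(j+1)\log 2$ terms contribute only additive constants after summing in $j$.

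\textbf{What each buys.} Your argument uses nothing about $\rho_\infty$ beyond the hypothesis itself and the finiteness of $\|P_{\le M}\rho_\infty\|_{L^2}$. It is thus a Tauberian-type statement for any distribution whose smoothed spectrum grows logarithmically. The paper's argument is shorter and avoids the dyadic bookkeeping. Its comparison inequality also holds for every $g\in H^{-1}$, independently of any spectral hypothesis.

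\textbf{Write-up.} Drop the exploratory middle paragraph (the fixed-ratio attempts via $H^{-s}$), which contains incomplete fragments. Keep only the dyadic argument, plus the trivial case $\delta N<2$: there the left side of \eqref{eq:Kernelgoal} is nonpositive once $\alpha\ge 2/\delta$ and $c_0\ge 1$.
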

	
	\begin{proof}
		We first prove that there is a constant $C_1 \ge 1$ depending only on $\varphi$ such that for all $g \in H^{-1}(\T^2)$ and $N \ge 1$ we have
		\begin{equation} \label{eq:Pequiv}
			\frac{1}{C_1}\|P_{\le \sqrt{N}} g\|^2_{L^2} - \frac{\|g\|^2_{H^{-1}}}{N} \le \|\Pi_{\le N} g\|^2_{L^2} \le C_1 \|P_{\le C_1 N} g\|^2_{L^2}. 
		\end{equation}
		For $\xi \in \R^2$, let 
		$$\hat{\varphi}(\xi) = \int_{\R^2} e^{-2\pi i z \cdot \xi} \varphi(z) \, \dee z$$
		denote the Fourier transform of $\varphi$. Then, the Fourier coefficients of the periodized and rescaled kernel are given as $\hat{\varphi}_N(k) = \hat{\varphi}(k/N)$. Since $\hat{\varphi}(0) = \int_{\R^2} \varphi(z) \dee z = 1$, it follows by the continuity of $\hat{\varphi}$ that there exists $\delta \in (0,1)$ such that $\hat{\varphi}_{N/\delta}(k) = \hat{\varphi}(\delta k/N) \ge 1/2$ for $|k| \le N$. Therefore,
		\begin{equation} 
			\|\Pi_{\le N} g\|_{L^2}^2 = \sum_{|k| \le N} |\hat{g}(k)|^2 \le 4 \sum_{|k| \le N} |\hat{\varphi}_{N/\delta}(k)|^2 |\hat{g}(k)|^2 \le 4 \|P_{\le N/\delta} g\|_{L^2}^2,
		\end{equation}
		which proves the upper bound in \eqref{eq:Pequiv}. For the lower bound, first note that since $\hat{\varphi}: \R^2 \to \C$ is Schwartz class, we have
		\begin{equation}
			|\hat{\varphi}(k/\sqrt{N})| \leqc \left(\frac{|k|}{\sqrt{N}}\right)^{-3},
		\end{equation}
		and thus
		\begin{align}
			\|P_{\le \sqrt{N}}g\|_{L^2}^2 & = \sum_{k \in \Z^2} |\hat{\varphi}_{\sqrt{N}}(k)|^2 |\hat{g}(k)|^2 
			= \sum_{k\in \Z^2} |\hat{\varphi}(k/\sqrt{N})|^2 |\hat{g}(k)|^2 \\ 
			& \le \|\Pi_{\le N} g\|_{L^2}^2 + N \sum_{ |k| > N} \left(\frac{|k|}{\sqrt{N}}\right)^2 |\hat{\varphi}(k/\sqrt{N})|^2 |k|^{-2}|\hat{g}(k)|^2 \\
			& \le \|\Pi_{\le N} g\|_{L^2}^2 +  \|g\|_{H^{-1}}^2 \sup_{|k| > N} N\left(\frac{|k|}{\sqrt{N}}\right)^2 |\hat{\varphi}(k/\sqrt{N})|^2 \\ 
			& \leqc \|\Pi_{\le N} g\|_{L^2}^2 +
			N^{-1}\|g\|_{H^{-1}}^2.
		\end{align}
		This implies the desired lower bound and hence completes the proof of \eqref{eq:Pequiv}.
		
		We now turn to the proof of \eqref{eq:Kernelgoal}. 
		By Lemma~\ref{lem:negativecontrol}, $\|\rho_\infty\|_{H^{-1}}\le C_2$ for some $0 < C_2 < \infty$ independent of $\alpha$. Applying \eqref{eq:Pequiv} with $g = \rho_\infty$ gives 
		$$ \frac{1}{C_1} \|P_{\le \sqrt{N}}\rho_\infty\|_{L^2}^2 - \frac{C_2}{N} \le \|\Pi_{\le N} \rho_\infty\|_{L^2}^2 \le C_1 \|P_{\le C_1 N} \rho_\infty\|_{L^2}^2. $$
		Using \eqref{eq:PBatchelor} then yields 
		\begin{equation}
			- \frac{C_2}{N} - \frac{C_0}{C_1} + \frac{1}{2C_0 C_1} \frac{\log N}{\log \alpha} \le \|\Pi_{\le N} \rho_\infty\|_{L^2}^2 \le C_0 C_1 \left(\frac{\log N}{\log \alpha} + \frac{\log C_1}{\log \alpha} + 1 \right),
		\end{equation}
		which gives \eqref{eq:Kernelgoal} and completes the proof.
	\end{proof}
	
	Lastly, it will be convenient to establish a lemma that estimates the off-diagonal terms of the infinite series defining $\|P_{\le N} \rho_\infty\|_{L^2}^2$.
	
	\begin{lemma} \label{lem:offdiagonal}
		There exists $C \ge 1$ such that for every $N \ge 2$ and $\alpha$ sufficiently large we have 
		$$ \left|\sum_{n=1}^\infty \sum_{m=0}^{n-1} \int_{\T^2} P_{\le N}(f \circ T^{-n})(z) P_{\le N} (f\circ T^{-m})(z) \, \dee z\right| \le \frac{ C\log N}{\alpha^c}, $$
		where $c > 0$ is as in Theorem~\ref{prop:UniformMixing}.
	\end{lemma}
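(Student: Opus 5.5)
Write $I_{n,m} := \int_{\T^2} P_{\le N}(f\circ T_\alpha^{-n})\,P_{\le N}(f\circ T_\alpha^{-m})\,\dee z$ for $m<n$. The plan is to split the double sum according to whether the larger index $n$ is small or large compared with $L := \log N/\log\alpha$, up to a constant factor. For small $n$ we use Theorem~\ref{prop:Projected}, which gives decay in the gap $n-m$. For large $n$ we use Theorem~\ref{prop:UniformMixing}: the term $f\circ T_\alpha^{-n}$ then has almost no Fourier mass below frequency $N$.

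\textbf{Region $n \le K$.} Set $K := \lceil A\log N/\log\alpha\rceil$, with $A$ a large constant independent of $\alpha$. Apply Theorem~\ref{prop:Projected} with $g=f$ to get
$$|I_{n,m}| \leqc \alpha^{-c(n-m)}\|f\|_{W^{1,\infty}}^2 .$$
For fixed $n$, summing over $0\le m<n$ gives at most $\sum_{j\ge1}\alpha^{-cj}\leqc \alpha^{-c}$, valid once $\alpha$ is large. Summing over $n\le K$ then gives a total of order $K\alpha^{-c}\leqc (\log N/\log\alpha+1)\alpha^{-c}\leqc \alpha^{-c}\log N$, using $N\ge 2$. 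The constant $c$ here comes from Theorem~\ref{prop:Projected}. To match the stated exponent we may shrink $c$, taking the minimum of the constants in the two theorems.

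\textbf{Region $n>K$.} Here we avoid the projected estimate and use Cauchy--Schwarz:
$$|I_{n,m}|\le \|P_{\le N}(f\circ T_\alpha^{-n})\|_{L^2}\,\|P_{\le N}(f\circ T_\alpha^{-m})\|_{L^2}.$$
The second factor is at most $\|f\|_{L^2}=1$, because $\hat\varphi_N$ is bounded by $1$ and $T_\alpha$ preserves measure. For the first factor, we have $\|P_{\le N}g\|_{L^2}\leqc N^3\|g\|_{H^{-3}}$, since $|\hat\varphi(k/N)|\,\langle k\rangle^{3}\leqc N^3$ by the Schwartz decay of $\hat\varphi$. Combining this with \eqref{eq:Sobolevdecay} gives
$$\|P_{\le N}(f\circ T_\alpha^{-n})\|_{L^2}\leqc N^3\alpha^{-cn}.$$
Summing over $m<n$ introduces a factor $n$, so this region contributes at most $\sum_{n>K} nN^3\alpha^{-cn}$. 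Since $\alpha^{-cK}\le N^{-cA}$, choosing $A$ with $cA\ge 4$ bounds the tail by a constant times $K N^{-1}\alpha^{-c}$. This is $\leqc \alpha^{-c}\log N$, again after shrinking $c$ slightly to absorb the factor $n$ into the exponential.

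\textbf{Main obstacle.} The key input is the gap-decay in Theorem~\ref{prop:Projected}, which is assumed here. Within the present proof, the only point that needs care is that the split threshold $K$ is of order $\log N/\log\alpha$ and not $\log N$. This is what makes the first region produce the claimed $\log N$ factor, and it requires taking $A$ independent of $\alpha$. This works because the decay rate in \eqref{eq:Sobolevdecay} is $\alpha^{-c}$ per step, with $c$ uniform in $\alpha$.
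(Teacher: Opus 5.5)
Your proof is correct and is essentially the paper's argument: Theorem~\ref{prop:Projected} handles indices below a threshold of order $\log N/\log\alpha$, and $\|P_{\le N}\|_{H^{-3}\to L^2}\leqc N^3$ together with \eqref{eq:Sobolevdecay} handles the rest. The differences are only bookkeeping. You bound the lower-index factor by $\|f\|_{L^2}=1$ and use two regions, whereas the paper uses the symmetric bound $N^6\alpha^{-c(m+n)}$ and three regions. Also, your extra shrinking of $c$ in the tail is unnecessary, since $\sum_{n>K}n\alpha^{-cn}\leqc K\alpha^{-c(K+1)}$ already gives the $KN^{-1}\alpha^{-c}$ bound.
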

	
	\begin{proof}
		Note that $\|P_{\le N}\|_{H^{-3} \to L^2} \leqc N^3$ since $\hat{\varphi}$ is Schwartz class.
		Thus, by \eqref{eq:Sobolevdecay} we have 
		\begin{align} \left|\int_{\T^2} P_{\le N}(f \circ T_\alpha^{-n})(z) P_{\le N}(f \circ T_\alpha^{-m})(z) \dee z\right| &\le \|P_{\le N} (f \circ T_\alpha^{-n})\|_{L^2} \|P_{\le N} (f \circ T_\alpha^{-m})\|_{L^2} \\ 
			& \leqc N^6 \|f \circ T_\alpha^{-n}\|_{H^{-3}}\|f \circ T_\alpha^{-m}\|_{H^{-3}}\\ 
			& \leqc  N^6 e^{-c(m+n)\log\alpha}. \label{eq:diagonal1}
		\end{align}
		Combining \eqref{eq:diagonal1} with Theorem~\ref{prop:Projected}, we obtain 
		\begin{equation}\label{eq:diagonal2} \left|\int_{\T^2} P_{\le N}(f \circ T_\alpha^{-n})(z) P_{\le N}(f \circ T_\alpha^{-m})(z) \, \dee z\right|\leqc \min\left(e^{-c|n-m|\log\alpha},N^6 e^{-c(m+n)\log\alpha}\right). 
		\end{equation}
		Define
		\begin{equation} \label{eq:nstar}
			n_* = \frac{3 \log N}{c \log \alpha} 
		\end{equation}
		and let $n_\alpha$ denote the first natural number greater than or equal to $n_*$. Using the first bound in \eqref{eq:diagonal2} for $m \le n_\alpha$ and the second for $m > n_\alpha$, we have 
		\begin{align} &\left|\sum_{n=1}^\infty \sum_{m=0}^{n-1} \int_{\T^2} P_{\le N}(f \circ T^{-n})(z) P_{\le N} (f\circ T^{-m})(z) \, \dee z\right| \leqc  \sum_{n=1}^\infty \sum_{m=0}^{n-1}\min\left(e^{-c|n-m|\log\alpha},N^6 e^{-c(m+n)\log\alpha}\right) \\ 
			& \quad \le \sum_{n=1}^{n_\alpha} \sum_{m=0}^{n-1} e^{-c(n-m)\log \alpha} + \sum_{n=n_\alpha + 1}^\infty \sum_{m=0}^{n_\alpha} e^{-c(n-m)\log \alpha} + N^6 \sum_{n=n_\alpha + 2}^\infty \sum_{m=n_\alpha + 1}^{n-1} e^{-c(m+n)\log \alpha}. \label{eq:diagonal3}
		\end{align}
		For the three sums in \eqref{eq:diagonal3}, assuming $\alpha$ is large enough so that $\alpha^c \ge 4$, we have the estimates
		$$ \sum_{n=1}^{n_\alpha} \sum_{m=0}^{n-1} e^{-c(n-m)\log \alpha} = \sum_{n=1}^{n_\alpha} \alpha^{-cn} \sum_{m=0}^{n-1} \alpha^{cm} = \frac{1}{\alpha^c - 1}\sum_{n=1}^{n_\alpha} \alpha^{-cn} (\alpha^{cn}-1)\leqc \frac{n_\alpha}{\alpha^c} \leqc \frac{\log N}{\alpha^c},  $$
		$$ \sum_{n=n_\alpha + 1}^\infty e^{-cn \log \alpha} \sum_{m=0}^{n_\alpha} e^{c m \log \alpha} = \left(\frac{\alpha^{-c(n_\alpha+1)}}{1-\alpha^{-c}}\right) \left(\frac{\alpha^{c(n_\alpha+1)}-1}{\alpha^c-1}\right) = \frac{1-\alpha^{-c(n_\alpha+1)}}{\alpha^c + \alpha^{-c} -2} \leqc \alpha^{-c},$$
		and
		$$ N^6 \sum_{n=n_\alpha + 2}^\infty e^{-cn \log \alpha} \sum_{m=n_\alpha+1}^{n-1} e^{-mc \log \alpha} \le N^6 \left(\frac{\alpha^{-c(n_\alpha+2)}}{1-\alpha^{-c}}\right) \left(\frac{\alpha^{-c(n_\alpha+1)}}{1-\alpha^{-c}}\right) \leqc N^6 \alpha^{-3c} \alpha^{-2n_\alpha c} \le \alpha^{-3c}.  $$
		In the final inequality above we noted that the definition of $n_*$ is such that 
		\begin{equation} \label{eq:nstarprop}
			\alpha^{-2n_\alpha c} \le \alpha^{-2n_* c} = N^{-6}.
		\end{equation}
		Putting the estimates of the three sums above into \eqref{eq:diagonal3} completes the proof.
	\end{proof}
	
	With Lemmas~\ref{lem:negativecontrol}-\ref{lem:offdiagonal} in hand, we are now ready to prove Theorem~\ref{thm:discrete}. 
	\begin{proof}[Proof of Theorem~\ref{thm:discrete} (1) and (2)] 
		That $\rho_\infty \in \bigcap_{s > 0} H^{-s}$ is a consequence of Lemma~\ref{lem:negativecontrol}, while $\rho_\infty \not \in L^2$ follows from part (3) of Theorem~\ref{thm:discrete}, which is proven independently below. 
		
		To show that $\mathcal{K}_\alpha(\rho_\infty) = \rho_\infty$ in $H^{-1}$, we must show that for all test functions $\psi \in H^1$ there holds 
		\begin{equation} \label{eq:dualfixed}
			\langle \rho_\infty, \psi \rangle_{H^{-1} \times H^1} = \langle \rho_\infty, \psi \circ T_\alpha \rangle_{H^{-1} \times H^1} + \langle f, \psi \rangle_{L^2},
		\end{equation}
		where $\langle \cdot, \cdot \rangle_{H^{-1} \times H^1}$ denotes the dual pairing between $H^{-1}$ and $H^1$. Let 
		$$S_m = \sum_{n=0}^m f \circ T_\alpha^{-n}.$$
		Then, $S_m \in L^2$ for every $m \in \N$ and $\lim_{m \to \infty} S_m = \rho_\infty$ in $H^{-1}$ by Lemma~\ref{lem:negativecontrol}. Consequently,   
		\begin{align*}
			\langle \rho_\infty, \psi \circ T_\alpha \rangle_{H^{-1} \times H^1}  = \lim_{m \to \infty} \langle S_m, \psi \circ T_\alpha \rangle_{L^2} & = -\langle f , \psi \rangle_{L^2} + \lim_{m \to \infty} \langle S_{m+1}, \psi\rangle_{L^2} \\ 
			& = - \langle f, \psi\rangle_{L^2} + \langle \rho_\infty, \psi \rangle_{H^{-1} \times H^1},
		\end{align*}  
		which is \eqref{eq:dualfixed}.
		
		It remains to prove that $\lim_{n \to \infty} \| \mathcal{K}_\alpha^n (\rho_0) - \rho_\infty\|_{H^{-s}} = 0$ for every mean-zero $\rho_0 \in C^1(\T^2)$ and $s > 0$. Iterating \eqref{eq:ForcedMap}, we have 
		$$\mathcal{K}_\alpha^n(\rho_0) = \rho_0 \circ T_\alpha^{-n} + \sum_{k=0}^{n-1} f \circ T_\alpha^{-k}.$$
		Thus,
		$$\|\mathcal{K}_\alpha^n(\rho_0) - \rho_{\infty}\|_{H^{-s}} \le \|\rho_0 \circ T_\alpha^{-n}\|_{H^{-s}} + \sum_{k=n}^\infty \|f \circ T_\alpha^{-k}\|_{H^{-s}}.$$
		As $n \to \infty$, the first term above tends to zero by Theorem~\ref{prop:UniformMixing} and the interpolation argument in \eqref{eq:interpolation}, while the second term tends to zero by Lemma~\ref{lem:negativecontrol}.
	\end{proof}
	
	\begin{proof} [Proof of Theorem~\ref{thm:discrete} (3)] 
		By Lemma~\ref{lem:Kernel}, it is sufficient to show that there exists $C_0 \ge 1$ such that \eqref{eq:PBatchelor} holds for all $\alpha$ sufficiently large. From \eqref{eq:rhoinftydef}, we have 
		\begin{equation} \label{eq:Batchelor1}
			\|P_{\le N} \rho_\infty\|_{L^2}^2 = \sum_{n=0}^\infty \|P_{\le N}(f \circ T_\alpha^{-n})\|_{L^2}^2 + 2 \sum_{n=1}^\infty \sum_{m=0}^{n-1} \int_{\T^2}P_{\le N}(f\circ T^{-n})(z)P_{\le N}(f \circ T^{-m})(z) \, \dee z.
		\end{equation}
		To bound this expression from above, first observe that the estimate $\|P_{\le N}(f \circ T_\alpha^{-n})\|_{L^2} \leqc N^3 e^{-nc \log \alpha}$ from  \eqref{eq:diagonal1}, together with $\|P_{\le N}\|_{L^2 \to L^2} \le 1$, implies
		$$ \|P_{\le N} (f \circ T_\alpha^{-n})\|_{L^2}^2 \leqc \min\left(1,N^6 e^{-2c n \log\alpha}\right). $$  
		Therefore, for $n_*$ as in \eqref{eq:nstar} we have
		\begin{align}
			\sum_{n=0}^\infty \|P_{\le N} (f \circ T_\alpha^{-n})\|_{L^2}^2 & \leqc \sum_{n=0}^\infty \min\left(1,N^6 e^{-2c n \log\alpha}\right)  
			\le \sum_{n \le n_*} 1 + N^6 \sum_{n > n_*} e^{-2cn \log \alpha} \\ 
			& \leqc n_* + N^6 \alpha^{-2 c n_*}  \leqc 1 + \frac{\log N}{\log \alpha}, \label{eq:Batchelor2}
		\end{align}
		where in the last inequality we used \eqref{eq:nstarprop}. Putting \eqref{eq:Batchelor2} into \eqref{eq:Batchelor1} and applying Lemma~\ref{lem:offdiagonal} gives
		\begin{equation}
			\|P_{\le N} \rho_\infty\|_{L^2}^2 \leqc 1 + \frac{\log N}{\log \alpha} + \frac{\log N}{\alpha^c} \leqc 1+\frac{\log N}{\log \alpha},
		\end{equation}
		which is the upper bound in \eqref{eq:PBatchelor}.
		
		We now turn to the lower bound. Let $\delta \in (0,1)$ such that $|1- |\hat{\varphi}(\xi)|^2| \le 1/2$ for all $|\xi| \le \delta$. Then, 
		\begin{align}
			\|P_{\le N}(f \circ T_\alpha^{-n})\|_{L^2}^2 & = \|f\circ T_\alpha^{-n}\|_{L^2}^2 + \sum_{k \in \Z^2}\left(|\hat{\varphi}(k/N)|^2 - 1\right)|\widehat{f\circ T_\alpha^{-n}}(k)|^2 \\ 
			& \ge \frac{1}{2} - \|\Pi_{> \delta N}(f\circ T_\alpha^{-n})\|_{L^2}^2, \label{eq:Batchelor3}
		\end{align}
		where in the inequality we used that $\|f \circ T_\alpha^{-n}\|_{L^2} = \|f\|_{L^2} = 1$. Since $\|DT_\alpha^{-1}\|_{L^\infty} \le 4\alpha^2$ (see Section~\ref{sec:hyperbolic}, and in particular \eqref{eq:matrices}), we have
		$$\|\grad (f\circ T_\alpha^{-n})\|_{L^2} \le (2\alpha)^{2n}\|\grad f\|_{L^2},$$
		and hence
		\begin{equation} \label{eq:Batchelor3.2}
			\|\Pi_{> \delta N} (f \circ T_\alpha^{-n})\|^2 \le (\delta N)^{-2}\|\grad (f \circ T_\alpha^{-n})\|_{L^2}^2 \leqc N^{-2} (2 \alpha)^{4n}. 
		\end{equation}
		It follows from \eqref{eq:Batchelor3} and \eqref{eq:Batchelor3.2} that there is a constant $C_1 \ge 1$ independent of $\alpha$ such that for all $\alpha \gg 1$ we have
		\begin{equation}\label{eq:Batchelor4}
			\|P_{\le N}(f \circ T_\alpha^{-n})\|_{L^2}^2 \ge \frac{1}{4} \qquad \forall n \le \frac{\log N}{4\log \alpha} - C_1.
		\end{equation}
		Using \eqref{eq:Batchelor4} in \eqref{eq:Batchelor1} and appealing to Lemma~\ref{lem:offdiagonal} gives
		\begin{align} 
			\sum_{n=0}^\infty \|P_{\le N}\rho_\infty\|_{L^2}^2 & \ge - \frac{2C \log N}{\alpha^c} + \sum_{n=0}^\infty \|P_{\le N}(f\circ T_\alpha^{-n})\|_{L^2}^2 \\ 
			& \ge - \frac{2C \log N}{\alpha^c} + \sum_{0 \le n \le \frac{\log N}{4\log \alpha} - C_1} \frac{1}{4} 
			\ge - \frac{2C \log N}{\alpha^c} + \frac{\log N}{16 \log \alpha} - \frac{C_1}{4}.
		\end{align}
		When $\alpha$ is large, the first term is absorbed by the second, which gives the lower bound in \eqref{eq:PBatchelor} and completes the proof. 
	\end{proof}
	
	\subsection{Proofs of Theorems~\ref{thm:continuous0},~\ref{thm:continuous}, and~\ref{thm:flux}} 
	
	In this section, we prove Theorems~\ref{thm:continuous0}-\ref{thm:flux} concerning the forced transport equation \eqref{eq:transport}. Throughout, $F:[0,\infty) \times \T^2 \to \R$ denotes a smooth, time-one periodic function satisfying \eqref{eq:meanzero} and we write $F_t(\cdot) = F(t,\cdot)$. Recall that $\Phi_{s,t}^\alpha:\T^2 \to \T^2$ denotes the flow map of $u_\alpha$ and we let $\phi_{t,s}^\alpha = (\Phi_{s,t}^\alpha)^{-1}$, so that the solution of \eqref{eq:transport} is given by
	\begin{equation} \label{eq:gensolution}
		\rho(t) = \rho_0 \circ \phi_{t,0}^{\alpha} + \int_0^t F_s \circ \phi_{t,s}^\alpha \, \dee s.
	\end{equation}
	The ``effective'' forcing $f_\alpha \in W^{1,\infty}(\T^2)$ is as defined in \eqref{eq:feff}. Note that since composition with $\phi_{t,s}^\alpha$ preserves the spatial average, $f_\alpha$ has zero mean by \eqref{eq:meanzero}.
	
	\subsubsection{Proof of Theorems~\ref{thm:continuous0} and~\ref{thm:continuous}} \label{sec:continuousproof}
	
	We first establish a generalization of \eqref{eq:effectivediscrete} that will be needed to leverage Theorem~\ref{thm:discrete} in the continuous-time setting.
	
	\begin{lemma} \label{lem:discretereduction}
		Let $\rho_0 \in W^{1,\infty}$. Then, for any $n \in \N$ and $\tau \in [0,1)$ the solution \eqref{eq:gensolution} can be written as
		\begin{equation}\label{eq:effectivediscrete2}
			\rho(n+\tau) = \rho_0 \circ T_\alpha^{-n} \circ \phi_{\tau,0}^\alpha + \int_0^\tau F_s \circ \phi_{\tau,s}^\alpha \, \dee s + \sum_{k=0}^{n-1} f_\alpha \circ T_\alpha^{-k} \circ \phi_{\tau,0}^\alpha.
		\end{equation}
	\end{lemma}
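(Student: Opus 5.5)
The identity is a bookkeeping consequence of the time-periodicity of $u_\alpha$ and $F$ together with the cocycle (semigroup) property of the flow maps. Uniqueness of the flow for the Lipschitz field $u_\alpha$ gives $\phi_{t,s}^\alpha = \phi_{r,s}^\alpha\circ\phi_{t,r}^\alpha$ for $s\le r\le t$, which in composition order reads
\[
F\circ\phi_{t,s}^\alpha = (F\circ\phi_{r,s}^\alpha)\circ\phi_{t,r}^\alpha .
\]
Time-one periodicity of $u_\alpha$ gives $\phi_{t+1,s+1}^\alpha=\phi_{t,s}^\alpha$, and in particular $\phi_{k+1,k}^\alpha=\phi_{1,0}^\alpha=T_\alpha^{-1}$ for every $k\in\N$. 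Iterating the cocycle identity then yields $\phi_{n,0}^\alpha = T_\alpha^{-n}$, in the sense that $g\circ\phi_{n,0}^\alpha = g\circ T_\alpha^{-n}$. More generally, for $0\le k\le n$ we get $g\circ\phi_{n+\tau,k}^\alpha = g\circ T_\alpha^{-(n-k)}\circ\phi_{\tau,0}^\alpha$.

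Next I insert these into \eqref{eq:gensolution} at time $t=n+\tau$. For the initial-data term, $\phi_{n+\tau,0}^\alpha$ factors through time $n$, giving $\rho_0\circ\phi_{n+\tau,0}^\alpha=\rho_0\circ T_\alpha^{-n}\circ\phi_{\tau,0}^\alpha$. For the Duhamel integral, I split $[0,n+\tau]$ into $[k,k+1]$ for $k=0,\dots,n-1$ and the final piece $[n,n+\tau]$.

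On the final piece, substitute $s=n+\sigma$. Periodicity of $F$ gives $F_{n+\sigma}=F_\sigma$, and periodicity of the flow gives $\phi_{n+\tau,n+\sigma}^\alpha=\phi_{\tau,\sigma}^\alpha$, so this piece is exactly $\int_0^\tau F_\sigma\circ\phi_{\tau,\sigma}^\alpha\,\dee\sigma$.

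On $[k,k+1]$, substitute $s=k+\sigma$ and factor the flow through times $k+1$ and $n$:
\[
F_{k+\sigma}\circ\phi_{n+\tau,k+\sigma}^\alpha = F_\sigma\circ\phi_{1,\sigma}^\alpha\circ T_\alpha^{-(n-k-1)}\circ\phi_{\tau,0}^\alpha .
\]
Here $F_\sigma\circ\phi_{1,\sigma}^\alpha$ is the integrand of $f_\alpha$, the middle factor comes from the $n-k-1$ full periods between $k+1$ and $n$, and the last factor comes from $\phi_{n+\tau,n}^\alpha=\phi_{\tau,0}^\alpha$.

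The outer compositions do not depend on $\sigma$, so they pull out of the $\sigma$-integral. That piece therefore equals $f_\alpha\circ T_\alpha^{-(n-k-1)}\circ\phi_{\tau,0}^\alpha$. Reindexing $j=n-k-1$, which runs over $0,\dots,n-1$, gives the sum in \eqref{eq:effectivediscrete2}.

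The only step needing justification is pulling the composition out of the integral. I would verify it pointwise: both sides are continuous in $z$, the integrand is bounded and measurable in $(\sigma,z)$, and composition with a fixed map commutes with the Bochner/pointwise integral in $\sigma$. Beyond that there is no genuine obstacle; the main care is in keeping the composition order consistent with the convention $\phi_{t,s}^\alpha=(\Phi_{s,t}^\alpha)^{-1}$, which maps time-$t$ positions back to time $s$.
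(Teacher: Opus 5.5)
Your proposal is correct and is essentially the paper's proof. Both arguments use the cocycle identity and time-one periodicity to write $\phi^\alpha_{n+\tau,0}=T_\alpha^{-n}\circ\phi^\alpha_{\tau,0}$ and $\phi^\alpha_{n+\tau,k+\sigma}=\phi^\alpha_{1,\sigma}\circ T_\alpha^{-(n-k-1)}\circ\phi^\alpha_{\tau,0}$, split the Duhamel integral over $[k,k+1]$ and $[n,n+\tau]$, and reindex; the step of pulling the composition out of the $\sigma$-integral is just pointwise evaluation at $T_\alpha^{-(n-k-1)}\circ\phi^\alpha_{\tau,0}(z)$, so no Bochner-integral justification is needed.
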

	
	\begin{proof} 
		Since $u_\alpha$ is time-one periodic, we have $\phi_{n+\tau,0}^\alpha = T_\alpha^{-n} \circ \phi_{\tau,0}^\alpha$, and so to complete the proof it suffices to show that 
		\begin{equation}\label{eq:discretegoal}
			\int_0^{n+\tau} F_s \circ \phi_{n+\tau,s}^\alpha \, \dee s = \int_0^\tau F_s \circ \phi_{\tau,s}^\alpha \, \dee s + \sum_{k=0}^{n-1} f_\alpha \circ T_\alpha^{-k} \circ \phi_{\tau,0}^\alpha.
		\end{equation}
		We have
		\begin{align} \label{eq:discretegoal1}
			\int_0^{n+\tau} F_s \circ \phi_{n+\tau,s}^\alpha \, \dee s = \sum_{k=0}^{n-1} \int_{k}^{k+1} F_s \circ \phi_{n+\tau,s}^\alpha \, \dee s + \int_n^{n+\tau} F_s \circ \phi_{n+\tau,s}^\alpha \, \dee s.
		\end{align}
		By the co-cycle property $\phi_{t_2,t_1}^\alpha = \phi^\alpha_{r,t_1} \circ \phi_{t_2,r}^\alpha$ for $t_1 < r < t_2$ and the periodicity of $u_\alpha$, we have 
		$$ \phi_{n+\tau,s}^\alpha = \phi_{k+1,s}^\alpha \circ \phi_{n,k+1}^\alpha \circ \phi_{n+\tau,n} = \phi_{k+1,s}^\alpha \circ T_{\alpha}^{-(n-k-1)} \circ \phi_{\tau,0}^\alpha. $$
		Using also the periodicity of $F$, each term of the summation in \eqref{eq:discretegoal1} can then be re-written as 
		\begin{equation}\label{eq:discretegoal2}
			\int_{k}^{k+1} F_s \circ \phi_{n+\tau,s}^\alpha \, \dee s = \left(\int_k^{k+1} F_s \circ \phi_{k+1,s}^\alpha\, \dee s\right)\circ T_\alpha^{-(n-k-1)} \circ \phi_{\tau,0}^\alpha = f_\alpha \circ T_\alpha^{-(n-k-1)} \circ \phi_{\tau,0}^\alpha. 
		\end{equation}
		Similarly,
		\begin{equation} \label{eq:discretegoal3}
			\int_n^{n+\tau} F_s \circ \phi_{n+\tau,s}^\alpha \, \dee s = \int_0^\tau F_s \circ \phi_{\tau,s}^\alpha \, \dee s.
		\end{equation}
		Putting \eqref{eq:discretegoal2} and \eqref{eq:discretegoal3} into \eqref{eq:discretegoal1} and relabeling the summation completes the proof of \eqref{eq:discretegoal}.
	\end{proof}
	
	In view of Lemma~\ref{lem:discretereduction}, the natural limiting object to consider is 
	\begin{equation} \label{eq:rhoinftycontinuous}
		\rho_\infty(t) = \int_0^t F_s \circ \phi_{t,s}^\alpha \, \dee s + \sum_{n = 0}^\infty f_\alpha \circ T_\alpha^{-n} \circ \phi_{t,0}^\alpha,
	\end{equation}
	and indeed $\rho_\infty$ in the statement of Theorem~\ref{thm:continuous0} will be as defined above. In order to estimate the summation in \eqref{eq:rhoinftycontinuous}, we will need
	the following lemma, which is essentially an extension of \eqref{eq:interpolation} to continuous time.
	
	\begin{lemma}\label{lem:continuouscorrelation}
		Let $\alpha$ be sufficiently large. For every mean-zero $f \in W^{1,\infty}$, $s \in (0,1)$, $n \in \N$, and $t \ge 0$ we have
		\begin{equation} \label{eq:continuouscorrelation}
			\|f \circ T_\alpha^{-n} \circ \phi_{t,0}^\alpha\|_{H^{-s}} \leqc \alpha^{2s} e^{-\frac{s}{3}c(n+\lfloor t \rfloor)\log \alpha} \|f\|_{W^{1,\infty}}.
		\end{equation}
	\end{lemma}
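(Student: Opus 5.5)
We write $t = \lfloor t\rfloor + \tau$ with $\tau\in[0,1)$. By time-periodicity of $u_\alpha$ we have $\phi_{t,0}^\alpha = T_\alpha^{-\lfloor t\rfloor}\circ\phi_{\tau,0}^\alpha$, exactly as in the proof of Lemma~\ref{lem:discretereduction}. Hence
$$f\circ T_\alpha^{-n}\circ\phi_{t,0}^\alpha = g\circ\phi_{\tau,0}^\alpha, \qquad g := f\circ T_\alpha^{-(n+\lfloor t\rfloor)} .$$
This reduces matters to two claims. First, composition with $\phi_{\tau,0}^\alpha$ distorts negative Sobolev norms by at most a factor $\alpha^{2s}$. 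Second, $g$ itself enjoys the decay already obtained in \eqref{eq:interpolation}, namely
$$\|g\|_{H^{-s}}\leqc e^{-\frac{s}{3}c(n+\lfloor t\rfloor)\log\alpha}\|f\|_{W^{1,\infty}} .$$
This second claim follows verbatim from \eqref{eq:Sobolevdecay}, interpolation between $H^{-3}$ and $L^2$, and $\|g\|_{L^2}=\|f\|_{L^2}\le\|f\|_{W^{1,\infty}}$.

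For the first claim, we use that $\phi_{\tau,0}^\alpha$ is a measure-preserving bi-Lipschitz homeomorphism with $\tau<1$. It is built from at most one piece of each shear with time-length at most $1/2$. Its Jacobian is therefore a product of at most two matrices of the form $\begin{pmatrix}1&0\\ \pm 2\alpha\sigma &1\end{pmatrix}$ and their transposes, with $\sigma\le 1/2$. Consequently $\|D\phi_{\tau,0}^\alpha\|_{L^\infty},\|D(\phi_{\tau,0}^\alpha)^{-1}\|_{L^\infty}\leqc \alpha^2$, which is consistent with the bound $\|DT_\alpha^{-1}\|_{L^\infty}\le 4\alpha^2$ from \eqref{eq:matrices}. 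We then argue by duality. For a test function $\psi\in H^1$,
$$\langle g\circ\phi_{\tau,0}^\alpha,\psi\rangle = \langle g,\psi\circ(\phi_{\tau,0}^\alpha)^{-1}\rangle ,$$
by measure preservation. Composition with $(\phi_{\tau,0}^\alpha)^{-1}$ is an isometry on $L^2$. On $H^1$ it has operator norm $\leqc\alpha^2$ by the chain rule. Complex interpolation then bounds it on $H^s$, $s\in(0,1)$, by $\leqc\alpha^{2s}$. Dualizing, we get $\|g\circ\phi_{\tau,0}^\alpha\|_{H^{-s}}\leqc\alpha^{2s}\|g\|_{H^{-s}}$. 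Combined with the second claim, this gives \eqref{eq:continuouscorrelation}.

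The step needing the most care is the interpolation of the composition operator on $H^s$. Since $\T^2$ has mean-zero issues and we want constants independent of $\alpha$, the cleanest route is to interpolate the inhomogeneous norms $\|\psi\|_{H^1}^2=\|\psi\|_{L^2}^2+\|\nabla\psi\|_{L^2}^2$. Here the $H^1$ bound is $(1+C\alpha^4)^{1/2}\leqc\alpha^2$, and the interpolation constant is absolute. We must also check that the restriction $s\in(0,1)$, rather than $(0,3)$, is exactly what allows this $H^1$-based interpolation. This also explains the form of the exponent $\alpha^{2s}$.
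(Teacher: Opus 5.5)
Your proposal is correct and matches the paper's proof essentially step for step. It uses the periodicity reduction $T_\alpha^{-n}\circ\phi_{t,0}^\alpha = T_\alpha^{-(n+\lfloor t\rfloor)}\circ\phi_{\tau,0}^\alpha$, the interpolation decay for $f\circ T_\alpha^{-(n+\lfloor t\rfloor)}$, and duality, where the bound $\|\psi\circ\Phi_{0,\tau}^\alpha\|_{H^s}\leqc\alpha^{2s}\|\psi\|_{H^s}$ comes from interpolating between the $L^2$ isometry and the $H^1$ bound $\leqc\alpha^2$; the only cosmetic difference is that you invoke complex interpolation directly where the paper cites a Sobolev interpolation lemma.
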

	
	\begin{proof}
		Since $u_\alpha$ is time-one periodic, we have 
		\begin{equation} \label{eq:floort}
			T_\alpha^{-n} \circ \phi^\alpha_{t,0} = T_\alpha^{-(n+\lfloor t \rfloor)}\circ \phi^\alpha_{\tau, 0},
		\end{equation}
		where $\tau = t -  \lfloor t \rfloor \le 1$. 
		The same argument used to prove \eqref{eq:interpolation} gives
		\begin{equation}\label{eq:interpolation2}
			\|f \circ T_\alpha^{-(n+\lfloor t \rfloor)}\|_{H^{-s}} \leqc e^{-\frac{s}{3}c(n + \lfloor t \rfloor )\log \alpha}\|f\|_{W^{1,\infty}},
		\end{equation}
		and hence by \eqref{eq:floort} we have
		\begin{align}
			\|f \circ T_\alpha^{-n} \circ \phi_{t,0}^\alpha\|_{H^{-s}} & = \sup_{g \in H^s, \|g\|_{H^s}\le 1} \langle f \circ T_\alpha^{-(n+\lfloor t \rfloor)}, g \circ \Phi_{0,\tau}^\alpha \rangle_{L^2} \\ 
			& \leqc e^{-\frac{s}{3}c(n + \lfloor t \rfloor )\log \alpha}\|f\|_{W^{1,\infty}} \sup_{g \in H^s, \|g\|_{H^s} \le 1} \|g \circ \Phi_{0,\tau}^\alpha\|_{H^s}.
		\end{align}
		Therefore, to complete the proof, it suffices to show that 
		\begin{equation}\label{eq:dualitygoal}
			\sup_{g \in H^s, \|g\|_{H^s} \le 1} \|g \circ \Phi_{0,\tau}^\alpha\|_{H^s} \leqc \alpha^{2s}.
		\end{equation}
		Since $\tau \le 1$, we have $\|D \Phi_{0,\tau}^\alpha\|_{L^\infty} \leqc \alpha^2$. Thus, for general $h \in H^1$ there holds
		$$\|\grad (h \circ \Phi_{0,\tau}^\alpha)\|_{L^2}\le \alpha^2 \|h\|_{H^1} \quad \text{and} \quad \|(h \circ \Phi_{0,\tau}^\alpha)\|_{L^2} = \|h\|_{L^2}.$$
		The bound \eqref{eq:dualitygoal} then follows from Sobolev interpolation (see e.g. \cite[Lemma A.1]{EL}).
	\end{proof}
	
	The final preliminary ingredient we require is a tool to show that the upper bound portion of Batchelor's law is propagated under the flow of \eqref{eq:transport}. This is probably not strictly necessary, but it allows us to treat the continuous time setting without proving a version of Theorem~\ref{prop:Projected} that includes an additional composition with $\phi_{t,0}^\alpha$.
	
	\begin{lemma}\label{lem:Fourier}
		For any $0 \le s < t \le 1$, $f \in H^{-1/8}$ and $N \gg \alpha$ there holds
		\begin{equation}
			\|\Pi_{\le N} (f \circ \phi_{t,s}^\alpha)\|_{L^2} \leqc \|\Pi_{\le N^{100}} f\|_{L^2} + \alpha^{1/8}N^{-3/2}\|f\|_{H^{-1/8}}.
		\end{equation}
	\end{lemma}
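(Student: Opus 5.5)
We split $f$ at frequency $M := N^{100}$, writing $f = \Pi_{\le M} f + \Pi_{> M} f$, and treat the two pieces separately. Composition with the measure-preserving map $\phi_{t,s}^\alpha$ is an isometry on $L^2$, and $\Pi_{\le N}$ is a contraction on $L^2$. Hence
$$\|\Pi_{\le N}((\Pi_{\le M} f)\circ \phi_{t,s}^\alpha)\|_{L^2} \le \|\Pi_{\le M} f\|_{L^2},$$
which gives the first term of the bound. It remains to bound the high-frequency part
$$g := \Pi_{> M} f, \qquad \text{and show} \qquad \|\Pi_{\le N}(g\circ\phi_{t,s}^\alpha)\|_{L^2} \lesssim \alpha^{1/8}N^{-3/2}\|f\|_{H^{-1/8}}.$$

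For this we use duality. Write
$$\|\Pi_{\le N}(g\circ \phi_{t,s}^\alpha)\|_{L^2} = \sup_{\psi}\langle g\circ\phi_{t,s}^\alpha, \Pi_{\le N}\psi\rangle = \sup_\psi \langle g, (\Pi_{\le N}\psi)\circ \Phi_{s,t}^\alpha\rangle,$$
where the supremum is over $\|\psi\|_{L^2}\le 1$ and we used measure preservation. Set $h = \Pi_{\le N}\psi$. This is a trigonometric polynomial of degree $N$, so $\|h\|_{H^k}\le N^k$ for every $k$. Since $g$ only has frequencies above $M$,
$$|\langle g, h\circ\Phi_{s,t}^\alpha\rangle| \le \|g\|_{H^{-1/8}}\,\|\Pi_{>M}(h\circ\Phi_{s,t}^\alpha)\|_{H^{1/8}} \le M^{1/8-k}\,\|f\|_{H^{-1/8}}\,\|h\circ \Phi_{s,t}^\alpha\|_{H^k}.$$
We therefore need a Sobolev estimate for the composition at some fixed $k$.

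This composition estimate is the main obstacle, because $\Phi_{s,t}^\alpha$ is only Lipschitz: the sawtooth velocity has a kink at $x=1/2$ (resp. $y=1/2$), and $D\Phi$ jumps across finitely many lines. Consequently $h\circ\Phi$ lies in $W^{1,\infty}$ and, for each $\sigma<3/2$, in $H^\sigma$, but not in $H^2$. We first use $k=1$: from $\|D\Phi^\alpha_{s,t}\|_{L^\infty}\lesssim \alpha^2$ we get
$$\|h\circ\Phi\|_{H^1}\lesssim \alpha^2 N.$$
This yields the bound $M^{-7/8}\alpha^2 N = N^{-86.5}\alpha^2$, which is far better than $N^{-3/2}$. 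Since $N\gg\alpha$, the factor $\alpha^2$ is absorbed, for instance $\alpha^2 N^{-86.5}\le \alpha^{1/8}N^{-3/2}$ once $N\ge\alpha$.

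So $k=1$ already suffices, and no higher regularity across the kinks is needed. The exponent $100$ in the statement is generous; a large power of $N$ is chosen precisely so that the $\alpha^{2}$ loss from the Lipschitz constant can be absorbed into $N\gg\alpha$. The only care required is the constant in $\|D\Phi_{s,t}^\alpha\|_{L^\infty}$ for $0\le s<t\le 1$. Each half-period shear has derivative matrix of norm $\lesssim\alpha$, so the product over a full period is $\lesssim\alpha^2$, consistent with the bound \eqref{eq:matrices} cited earlier. Combining the low-frequency bound with the high-frequency bound gives the lemma.
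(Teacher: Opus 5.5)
Your proof is correct, and it takes a genuinely different and more economical route than the paper.

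The paper works through the Fourier side of the two shears:
\begin{itemize}
\item It factors $\phi_{t,s}^\alpha=\phi_1\circ\phi_2$ into two explicit shears and computes the Fourier coefficients of $(\Pi_{>N^{10}}g)\circ\phi_2$ by hand.
\item Only modes with $k_1=\ell_1$ interact, and the remaining one-dimensional oscillatory integral is bounded by $|k_2-\ell_2\mp c_2\ell_1|^{-1}$. A Cauchy--Schwarz step against the $H^{-1/8}$ weight finishes one shear.
\item Repeating this for the second shear produces the cascade $N\to N^{10}\to N^{100}$, which is where the exponent $100$ comes from.
\item The factor $\alpha^{1/8}$ enters through $\|f\circ\phi_1\|_{H^{-1/8}}\lesssim\alpha^{1/8}\|f\|_{H^{-1/8}}$.
\end{itemize}

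You instead move the composition onto the band-limited test function $h=\Pi_{\le N}\psi$ by duality. You then use only that $\Phi_{s,t}^\alpha$ is measure preserving with $\|D\Phi_{s,t}^\alpha\|_{L^\infty}\lesssim\alpha^2$. This gives $\|\Pi_{>M}(h\circ\Phi_{s,t}^\alpha)\|_{H^{1/8}}\le M^{-7/8}\|h\circ\Phi_{s,t}^\alpha\|_{H^1}\lesssim\alpha^2NM^{-7/8}$ for an arbitrary cutoff $M$.

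This buys several things:
\begin{itemize}
\item You never use the shear structure or explicit oscillatory integrals.
\item No density step is needed, since your duality formula is exactly how $f\circ\phi_{t,s}^\alpha$ is defined for $f\in H^{-1/8}$.
\item The argument applies to any bi-Lipschitz volume-preserving map, with $\alpha^2$ replaced by the Lipschitz constant of the inverse.
\item The tail bound $\alpha^2N^{-86.5}\|f\|_{H^{-1/8}}$ is far stronger than needed, which shows that $N^{100}$ could be replaced by a much smaller power.
\end{itemize}

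Your route also avoids the exponent bookkeeping in the paper's Cauchy--Schwarz step. As written there, the restriction $|k_2|\gtrsim N^{10}$ only yields a per-mode bound of order $N^{-5/4}\|g\|_{H^{-1/8}}$, not the stated $N^{-5/2}$. This is harmless for the later applications, where any negative power of $N$ suffices.

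In return, the paper's computation gives explicit information on how a single shear couples Fourier modes, but that structure is not used elsewhere.
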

	
	\begin{proof}
		By density we may assume without loss of generality that $f$ is smooth. Regardless of $0 \le s < t \le 1$ we can write $\phi_{t,s}^\alpha = \phi_1 \circ \phi_2$, where 
		$$ \phi_1(x,y) = (x,y+c_1|x-1/2|) \quad \text{and} \quad \phi_2(x,y) = (x+c_2|y-1/2|,y) $$
		for some constants $-\alpha \le c_1, c_2 \le 0$. We will control the effects of each composition separately and then piece together the resulting estimates. Let $g = f \circ \phi_1$, so that $f \circ \phi_{t,s}^\alpha = g \circ \phi_2$. To bound $\Pi_{\le N} (g \circ \phi_2)$, we begin by writing
		$$ \Pi_{\le N} (g \circ \phi_2) = \Pi_{\le N}(\Pi_{\le N^{10}}g \circ \phi_2) + \Pi_{\le N}(\Pi_{>N^{10}}g \circ \phi)$$
		and using $\|\Pi_{\le N^{10}} g \circ \phi_2\|_{L^2} = \|\Pi_{\le N^{10}} g\|_{L^2}$ to obtain
		\begin{equation} \label{eq:Fourier1}
			\|\Pi_{\le N} (g \circ \phi_2)\|_{L^2} \le \|\Pi_{\le N^{10}} g\|_{L^2} + \|\Pi_{\le N}(\Pi_{>N^{10}} g \circ \phi)\|_{L^2}.
		\end{equation}
		Towards estimating the second term on the right-hand side of \eqref{eq:Fourier1}, for $\ell = (\ell_1,\ell_2) \in \Z^2$ we compute 
		\begin{align}
			(\Pi_{> N^{10}}g \circ \phi_2)^\wedge(\ell) &= \sum_{|k| > N^{10}} \hat{g}(k) \int_{\T^2} e^{2\pi i (k \cdot \phi_2(z) - \ell \cdot z)} \, \dee z  \\ 
			& = \sum_{|k| > N^{10}} \hat{g}(k) \, \mathbf{1}_{k_1 = \ell_1}  \int_{\T} e^{2 \pi i (\ell_1 c_2 |y-1/2|+k_2 y - \ell_2 y)} \, \dee y \\ 
			& = \sum_{\{k_2: |(\ell_1,k_2)| > N^{10}\}} \hat{g}(k) \int_{\T} e^{2 \pi i (\ell_1 c_2 |y-1/2|+k_2 y - \ell_2 y)} \, \dee y  \label{eq:Fourier2}
		\end{align}
		Evaluating the integral above explicitly, we find
		\begin{equation} \label{eq:Fourier3}
			\left|\int_{\T} e^{2 \pi i (\ell_1 c_2 |y-1/2|+k_2 y - \ell_2 y)} \, \dee y\right| \leqc  \frac{1}{|k_2 - \ell_2 - c_2 \ell_1|} + \frac{1}{|k_2 - \ell_2 + c_2\ell_1|}.
		\end{equation}
		When $|\ell| \le N$ and $N \gg \alpha$, the summation in \eqref{eq:Fourier2} is restricted to a region where $|k_2 - \ell_2 \pm c_2 \ell_1| \gtrsim 1+|k_2| \gtrsim N^{10}$. Therefore, putting \eqref{eq:Fourier3} into \eqref{eq:Fourier2}, for $|\ell| \le N$ we have
		\begin{align}
			\left|(\Pi_{> N^{10}}g \circ \phi_2 )^\wedge (\ell)\right| & \leqc \sum_{\{k_2: |(\ell_1,k_2)| > N^{10}\}} \frac{|\hat{g}(k)|}{1+|k_2|} \\ 
			& \le \left(\sum_{|k_2| \gtrsim N^{10}} \frac{|\hat{g}(k)|^2}{(1+|k_2|)^{1/2}}\right)^{1/2}\left(\sum_{k_2 \in \Z} \frac{1}{(1+|k_2|)^{3/2}}\right)^{1/2} \\ 
			& \leqc N^{-5/2}\|g\|_{H^{-1/8}}, \label{eq:Fourier4}
		\end{align}
		where in the final inequality we noted that $(1+|k_2|)^{1/2} \gtrsim N^{-5/2}(1+|k_2|)^{1/4}$. It follows from \eqref{eq:Fourier4} and \eqref{eq:Fourier1} that
		\begin{equation} \label{eq:Fourier5}
			\|\Pi_{\le N}(f \circ \phi_{t,s}^\alpha)\|_{L^2} = \|\Pi_{\le N}(g \circ \phi_2)\|_{L^2} \leqc \|\Pi_{\le N^{10}} g\|_{L^2} + N^{-3/2}\|g\|_{H^{-1/8}}.
		\end{equation} 
		The same computations used to prove \eqref{eq:Fourier5} with $N$ replaced by $N^{10}$ show that 
		\begin{equation} \label{eq:Fourier6}
			\|\Pi_{\le N^{10}} g\|_{L^2} = \|\Pi_{\le N^{10}}(f \circ \phi_1)\|_{L^2} \leqc \|\Pi_{\le N^{100}} f\|_{L^2} + N^{-15}\|f\|_{H^{-1/8}}.
		\end{equation}
		Combining \eqref{eq:Fourier6} with \eqref{eq:Fourier5} completes the proof after noting that $\|g\|_{H^{-1/8}} \leqc \alpha^{1/8}\|f\|_{H^{-1/8}}$ due to the same argument that gave \eqref{eq:dualitygoal}.
	\end{proof}
	
	We are now ready to prove Theorems~\ref{thm:continuous0} and~\ref{thm:continuous}.
	
	\begin{proof}[Proof of Theorem~\ref{thm:continuous0}]
		\hfill \\
		\vspace{-0.3cm}
		
		\textbf{Step 1} ($\rho_\infty$ is well defined): Let $\rho_\infty$ be given by \eqref{eq:rhoinftycontinuous}. By Lemma~\ref{lem:continuouscorrelation}, for any $s \in (0,1)$ we have 
		\begin{equation} \label{eq:continuousabsolute}
			\sum_{n=0}^\infty \sup_{t \ge 0}\|f_\alpha\circ T_\alpha^{-n} \circ \phi_{t,0}^\alpha\|_{H^{-s}} \leqc \alpha^{2s}\|f_\alpha\|_{W^{1,\infty}}\sum_{n=0}^\infty e^{-\frac{s}{3}cn\log\alpha} \leqc_s \alpha^{2s} \|f_\alpha\|_{W^{1,\infty}}.
		\end{equation}
		Thus, the series in \eqref{eq:rhoinftycontinuous} converges absolutely in $L^\infty([0,\infty);H^{-s})$. Combined with the simple estimate
		\begin{equation} \label{eq:L2trivial}
			\left\|\int_0^t F_s \circ \phi_{t,s}^\alpha \, \dee s\right\|_{L^2} \le \int_0^t\|F_s \circ \phi_{t,s}^\alpha\|_{L^2} \, \dee s \leqc t,
		\end{equation}
		it follows that $\rho_\infty \in L^\infty_\loc([0,\infty);H^{-s})$ for every $s > 0$ with the bound
		\begin{equation} \label{eq:continuousnegative}
			\sup_{0 \le t \le t_0}\|\rho_\infty(t)\|_{H^{-s}}  \leqc_{s} t_0 +\alpha^{2s} \|f_\alpha\|_{W^{1,\infty}}.
		\end{equation}
		
		\textbf{Step 2} ($\rho_\infty$ is a time-one periodic solution of \eqref{eq:transport}): Let
		$$\rho_\infty^{(m)}(t):=\int_0^t F_s \circ \phi_{t,s}^\alpha \, \dee s + \sum_{n=0}^m f_\alpha\circ T_\alpha^{-n} \circ \phi_{t,0}^\alpha$$
		denote the unique solution of \eqref{eq:transport} with initial data $\sum_{n=0}^m f_\alpha \circ T_\alpha^{-n}$. By Step 1, we have $\lim_{m\to \infty}\rho_\infty^{(m)} = \rho_\infty$ in $L^\infty_{\loc}([0,\infty);H^{-s})$, and hence we may pass to the limit in the distributional formulation of \eqref{eq:transport} for $\rho_\infty^{(m)}$ to see that  $\rho_\infty$ is also a solution. Regarding the periodicity of $\rho_\infty$, it is straightforward to check using Lemma~\ref{lem:discretereduction} that for every $k \in \N$ and $\tau \in [0,1)$ we have $\rho_\infty^{(m)}(k+\tau) = \rho_\infty^{(m+k)}(\tau)$. Taking $m \to \infty$ in this equality shows that $\rho_\infty(k+\tau) = \rho_\infty(\tau)$ in $H^{-s}$, which implies $\rho_\infty$ is time-one periodic.
		
		\textbf{Step 3} ($\rho_\infty$ attracts smooth data): Fix a mean-zero initial data $\rho_0 \in C^1$ and let $\rho$ denote the associated solution of \eqref{eq:transport}. Since $\rho_\infty$ is time-one periodic, to prove that $\lim_{t \to \infty}\|\rho(t) - \rho_\infty(t)\|_{H^{-s}} = 0$, it is sufficent so show that 
		\begin{equation}\label{eq:convergencegoal}
			\lim_{n \to \infty} \sup_{\tau \in [0,1)}\|\rho(n+\tau) - \rho_\infty(\tau)\|_{H^{-s}} = 0.
		\end{equation}
		By Lemma~\ref{lem:discretereduction}, we have 
		$$\rho(n+\tau) - \rho_\infty(\tau) = \rho_0 \circ T_\alpha^{-n} \circ \phi_{\tau,0}^\alpha + \sum_{k=n}^\infty f_\alpha \circ T_\alpha^{-k} \circ \phi_{\tau,0}^\alpha,$$
		and \eqref{eq:convergencegoal} then follows immediately from Lemma~\ref{eq:continuouscorrelation} and \eqref{eq:continuousabsolute}.
		
		\textbf{Step 4} (Cumulative upper bound): We now prove \eqref{eq:continuousUpper}. Since $\rho_\infty$ is time-one periodic, it is sufficient to show that there exist $N_\alpha, C_\alpha \ge 1$ such that 
		\begin{equation} \label{eq:continuousUpper2}
			\|\Pi_{\le N} \rho_\infty(t)\|_{L^2}^2 \le C_\alpha \log N
		\end{equation}
		for all $t \in [0,1]$ and $N \ge N_\alpha$. Let $\tilde{f}_\alpha = \|f_\alpha\|_{W^{1,\infty}}^{-1}f_\alpha$ and define
		$$\tilde{\rho}_\infty = \sum_{n=0}^\infty \tilde{f}_\alpha \circ T_\alpha^{-n}.$$
		The proof of the upper bound portion of Theorem~\ref{thm:discrete} relied only on the forcing $f$ having zero mean and satisfying $\|f\|_{W^{1,\infty}} \leqc 1$. Both of these conditions are satisfied for $\tilde{f}_\alpha$, and so by Theorem~\ref{thm:discrete} there exists $N_\alpha \ge 1$ such that
		\begin{equation} \label{eq:rhotildeupper}
			\|\Pi_{\le N} \rho_\infty(0)\|_{L^2}^2 = \|f_\alpha\|_{W^{1,\infty}}^2	\|\Pi_{\le N} \tilde{\rho}_\infty\|_{L^2}^2 \leqc \|f_\alpha\|_{W^{1,\infty}}^2\frac{\log N}{\log \alpha}
		\end{equation}
		for all $N \ge N_\alpha$. Since 
		$$\Pi_{\le N} \rho_\infty(t) = \Pi_{\le N} \int_0^t F_s \circ \phi_{t,s}^\alpha \, \dee s + \Pi_{\le N} (\rho_\infty(0) \circ \phi_{t,0}^\alpha),$$
		by Lemma~\ref{lem:Fourier}, \eqref{eq:rhotildeupper}, and \eqref{eq:continuousnegative}, for all $t \in [0,1]$ and $N \ge N_\alpha$ we have
		\begin{align}
			\|\Pi_{\le N} \rho_\infty(t)\|_{L^2}^2
			& \leqc 1 + \|\Pi_{N^{100}} \rho_\infty(0)\|_{L^2}^2 + \alpha^{1/4}N^{-3}\|\rho_\infty(0)\|_{H^{-1/8}}^2 \\ 
			& \leqc 1 + \|f_\alpha\|_{W^{1,\infty}}^2 \frac{\log N}{\log \alpha} + \alpha^{1/4}N^{-3}(1+\alpha^{1/2}\|f_\alpha\|_{W^{1,\infty}}^2), \label{eq:continuousUpper3}
		\end{align}
		which implies \eqref{eq:continuousUpper2}.
	\end{proof}
	
	\begin{proof}[Proof of Theorem~\ref{thm:continuous}]
		Suppose now that $f_\alpha$ satisfies \eqref{eq:falphaconditions}. Then, it is immediate from \eqref{eq:continuousUpper3} that 
		$$\|\Pi_{\le N} \rho_\infty(t)\|^2_{L^2} \leqc \frac{\log N}{\log \alpha}$$
		for all $t \in [0,1]$ and $N$ sufficiently large. It remains to prove the lower bound in \eqref{eq:continuousBatchelor}. We first note that since 
		$$\rho_\infty(0) = \sum_{n=0}^\infty f_\alpha \circ T_\alpha^{-n}$$
		and $\rho_\infty$ is time-one periodic, 
		it follows from \eqref{eq:falphaconditions} and Theorem~\ref{thm:discrete} that there exists $N_\alpha, C_0 \ge 1$ such that 
		\begin{equation} \label{eq:integerlower}
			\|\Pi_{\le N} \rho_\infty(m)\|^2_{L^2} \ge \frac{1}{C_0} \frac{\log N}{\log \alpha} 
		\end{equation}
		for every $N \ge N_\alpha$ and integer time $m \in \N \cup \{0\}$. Suppose now that there exists $t_0 \in (0,1)$ and $N_0 \ge N_\alpha^{100}$ such that 
		\begin{equation}\label{eq:lowercontradiction}
			\|\Pi_{\le N_0} \rho_\infty(t_0)\|_{L^2}^2 \le c_0 \frac{\log N_0}{\log \alpha}
		\end{equation}
		for some $c_0 > 0$. We will show that \eqref{eq:lowercontradiction} combined with Lemma~\ref{lem:Fourier} and the lower bound in \eqref{eq:integerlower} at $m = 1$ implies an $\alpha$-independent lower bound on $c_0$. First, since $\rho_\infty$ solves \eqref{eq:transport}, we have
		$$\rho_\infty(1) = \int_{t_0}^1 F_s \circ \phi_{1,s}^\alpha \, \dee s + \rho_\infty(t_0) \circ \phi_{1,t_0}^\alpha.$$
		Then, using computations identical to those giving \eqref{eq:continuousUpper3}, it follows from Lemma~\ref{lem:Fourier}, \eqref{eq:lowercontradiction}, and \eqref{eq:continuousnegative} that 
		\begin{equation} \label{eq:lowercontradiction1}
			\|\Pi_{N_0^{1/100}} \rho_\infty(1)\|_{L^2}^2 \leqc 1 + c_0 \frac{\log N_0}{\log \alpha} + \alpha^{3/4} N_0^{-3/100}.
		\end{equation}
		As $N_0^{1/100} \ge N_\alpha$, combining \eqref{eq:lowercontradiction1} and \eqref{eq:integerlower} implies that there exists $C_1 \ge 1$ independent of $\alpha$ such that 
		\begin{equation}
			\frac{1}{100 C_0} \frac{\log N_0}{\log \alpha} \le C_1\left(1 + c_0 \frac{\log N_0}{\log \alpha} + \alpha^{3/4} N_0^{-1/50}N_\alpha^{-1}\right).
		\end{equation}
		Provided that $N_\alpha$ is sufficiently large, the above results in 
		$$c_0 \ge \frac{1}{300 C_0 C_1}$$
		and we conclude that 
		$$\|\Pi_{\le N} \rho_\infty(t)\|_{L^2}^2 \ge \frac{1}{300 C_0 C_1} \frac{\log N}{\log \alpha}$$ 
		for every $t \in [0,1]$ and $N \ge N_\alpha^{100}$, which completes the proof. 
	\end{proof}

	\subsubsection{Proof of Theorem~\ref{thm:flux}}
	
	In this section, we consider the energy flux 
	$$\mathcal{T}_N = \int_0^1 \langle \rho_\infty(t), \Pi_{\le N} F_t\rangle_{L^2} \, \dee t, $$ 
	introduced in \eqref{eq:flux}, when $F$ is given by \eqref{eq:forcespecial}. Recall that in this case, 
	$$ f_\alpha(x,y) = h(y) \int_{1/2}^1 \eta(t) \, \dee t.$$
	We will assume without loss of generality that $\int_{1/2}^1 \eta(t) \, \dee t = 1$, so that $f_\alpha(x,y) = h(y)$.
	
	Theorem~\ref{thm:flux} asserts that $\lim_{N \to \infty} \mathcal{T}_N$ exists and is strictly positive. The former is immediate from the results proven in Section~\ref{sec:continuousproof} above. Indeed, since $\rho_\infty \in L^\infty([0,\infty);H^{-1})$ and $F$ is smooth, we have 
	\begin{equation}
		\lim_{N \to \infty} \mathcal{T}_N:= \mathcal{T} = \int_0^1  \langle \rho_\infty(t), F_t\rangle_{L^2} \, \dee t.
	\end{equation}
	Our goal here is thus to show that $\mathcal{T} > 0$. In view of \eqref{eq:rhoinftycontinuous}, the expression above can be expanded as 
	\begin{equation} \label{eq:fluxlimit}
		\mathcal{T} = \int_0^1 \int_0^t \langle F_s \circ \phi_{t,s}^\alpha, F_t \rangle_{L^2} \, \dee s \, \dee t + \sum_{n=0}^\infty \int_0^1 \langle f_\alpha \circ T_\alpha^{-n} \circ \phi_{t,0}^\alpha, F_t\rangle_{L^2} \, \dee t.
	\end{equation}
	The positivity of $\mathcal{T}$ will come from the first term, while the summation will be treated as an error using \eqref{eq:uniformmix2} of Theorem~\ref{prop:UniformMixing}. Since \eqref{eq:uniformmix2} gives no decay for $n = 0$, we first treat that term separately. 
	
	\begin{lemma} \label{lem:n=0}
		Let $F$ be given by \eqref{eq:forcespecial}. Then, 
		\begin{equation} 
			\left|\int_0^1 \langle f_\alpha \circ \phi_{t,0}^\alpha, F_t \rangle_{L^2} \, \dee t \right| \leqc \frac{\|h\|_{L^2(\T)}^2}{\alpha}.
		\end{equation}
	\end{lemma}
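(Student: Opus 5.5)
The plan is to compute the integrand explicitly, exploiting that $F_t = \eta(t)h(y)$ vanishes for $t < 1/2$ and that $f_\alpha = h(y)$ (under the normalization $\int_{1/2}^1 \eta = 1$) depends only on $y$. Then
$$\int_0^1 \langle f_\alpha\circ\phi_{t,0}^\alpha, F_t\rangle_{L^2}\,\dee t = \int_{1/2}^1 \eta(t)\int_{\T^2} h\bigl((\phi^\alpha_{t,0}(x,y))_2\bigr)\,h(y)\,\dee x\,\dee y\,\dee t,$$
and it suffices to show that the inner spatial integral is $\leqc \|h\|_{L^2}^2/\alpha$ uniformly in $t\in[1/2,1]$, since $\eta$ is a fixed bounded function.

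\textbf{Step 1: explicit formula for $\phi^\alpha_{t,0}$.} For $t \in [1/2,1]$, write $\phi_{t,0}^\alpha = \phi_{1/2,0}^\alpha \circ \phi_{t,1/2}^\alpha$. The shear $U_2$ gives
$$\phi^\alpha_{t,1/2}(x,y) = \bigl(x - 2\alpha(t-\tfrac12)|y-\tfrac12|,\, y\bigr),$$
and the shear $U_1$ over a half period gives $\phi^\alpha_{1/2,0}(x',y) = (x', y - \alpha|x'-\tfrac12|)$. Hence
$$h\bigl((\phi^\alpha_{t,0}(x,y))_2\bigr) = h\bigl(y - \alpha|x - c_t(y) - \tfrac12|\bigr), \qquad c_t(y) = 2\alpha(t-\tfrac12)|y-\tfrac12|.$$

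\textbf{Step 2: integrate in $x$ first for fixed $y$.} Since $x$ ranges over the whole circle, the $y$-dependent shift $c_t(y)$ drops out, so
$$\int_\T h\bigl(y-\alpha|x-c_t(y)-\tfrac12|\bigr)\,\dee x = 2\int_0^{1/2} h(y-\alpha s)\,\dee s = \frac{2}{\alpha}\int_0^{\alpha/2} h(y-r)\,\dee r.$$
Because $h$ is $1$-periodic with mean zero, the integral over the $\lfloor \alpha/2\rfloor$ full periods vanishes. What remains is an integral over an interval of length less than one, so the $x$-integral is bounded by $\frac{2}{\alpha}\|h\|_{L^1(\T)}$, uniformly in $y$ and $t$.

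\textbf{Step 3: conclude.} Multiplying by $|h(y)|$ and integrating in $y$ bounds the spatial integral by $\frac{2}{\alpha}\|h\|_{L^1}^2 \le \frac{2}{\alpha}\|h\|_{L^2}^2$. Integrating against $\eta(t)$ over $t\in[1/2,1]$ then gives the claim, with constant $2\|\eta\|_{L^1}$ (equal to $2$ under the normalization).

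The only step needing care is Step 1: one has to get the composition order of the half-period shears right and observe that the horizontal shift depends on $y$ alone. This is exactly what makes Step 2 work, since the shift disappears after integrating in $x$. The cancellation itself, from the mean-zero property over full periods, is elementary, so I do not expect a real obstacle.
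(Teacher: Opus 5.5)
Your argument is correct. After a common first step it takes a different route from the paper.

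\textbf{Common reduction.} Both proofs reduce, via the explicit form of $\phi^\alpha_{t,0}$ and the normalization $\int_{1/2}^1\eta=1$, to the quantity $\int_{\T^2}h(y-\alpha|x-\tfrac12|)h(y)\,\dee x\,\dee y$. Your Step 1 makes explicit something the paper leaves inside its ``direct computation'': the $y$-dependent shift $c_t(y)$ drops out after integrating in $x$, if you read $|\cdot-\tfrac12|$ as the $1$-periodic tent function. So the quantity does not depend on $t$.

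\textbf{The paper's route.} It expands $h$ in Fourier series in $y$, which diagonalizes the translation by $\alpha|x-\tfrac12|$. Everything then reduces to the oscillatory integrals $\int_0^1 e^{-2\pi i k\alpha|x-1/2|}\,\dee x=O((|k|\alpha)^{-1})$ for $k\neq0$.

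\textbf{Your route.} You stay in physical space. You recognize the $x$-average as an average of $h$ over an interval of length $\alpha/2$, and you cancel all full periods using the mean-zero condition. This is the same cancellation as the paper's missing $k=0$ mode, seen from the other side.

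\textbf{Comparison.} Your version avoids Fourier analysis, gives an explicit constant, and bounds the $x$-average uniformly in $y$. That yields the slightly sharper bound $\frac{2}{\alpha}\|h\|_{L^1(\T)}^2$. The Fourier computation instead naturally produces the weighted quantity $\frac1\alpha\sum_{k\neq0}|h_k|^2/|k|$, i.e. control by a negative Sobolev norm of $h$. The paper only records the cruder bound $\|h\|_{L^2(\T)}^2/\alpha$.
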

	
	\begin{proof}
		A direct computation using \eqref{eq:forcespecial} and $f_\alpha(x,y) = h(y)$ shows that 
		\begin{equation} \label{eq:n=0integral}
			\int_0^1 \langle f_\alpha \circ \phi_{t,0}^\alpha, F_t \rangle_{L^2} \, \dee t = \int_{\T^2} h(y-\alpha |x-1/2|)h(y) \, \dee y \, \dee x.
		\end{equation}
		Expressing $h(y) = \sum_{k \in \Z\setminus \{0\}} h_k e^{2\pi i k y}$ as a Fourier series, we have 
		\begin{align}
			\left|\int_{\T^2} h(y-\alpha |x-1/2|)h(y) \, \dee y \, \dee x\right| & = \left| \sum_{k \in \Z \setminus \{0\}}|h_k|^2 \int_{0}^1 e^{-2\pi i k\alpha |x-1/2|} \, \dee x\right| \\ 
			& \leqc\frac{1}{\alpha} \sum_{k \in \Z\setminus \{0\}} |h_k|^2 = \frac{1}{\alpha} \|h\|_{L^2(\T)}^2.
		\end{align}
		The result then follows by putting the bound above into \eqref{eq:n=0integral}.
	\end{proof}
	
	\begin{proof}[Proof of Theorem~\ref{thm:flux}]
		Throughout the proof we allow for implicit constants to depend on both $F$ and $f_\alpha$, as in the present setting they are both smooth functions independent of $\alpha$. Let $I_\alpha = \{t \in [1/2,1]:|t-1/2| < \alpha^{-1/2}\}$ and $I_\alpha^c = [1/2,1]\setminus I_\alpha$. We have 
		$$\left|\langle f_\alpha \circ T_\alpha^{-n} \circ \phi_{t,0}^\alpha, F_t\rangle_{L^2}\right| \leqc \mathbf{1}_{I_\alpha}(t) \|f_\alpha \circ T_\alpha^{-n} \circ \phi_{t,0}^\alpha\|_{H^{-1/8}} + \mathbf{1}_{I_\alpha^c}(t)\left|\langle f_\alpha \circ T_\alpha^{-n}, F_t \circ \Phi_{0,t}^\alpha\rangle_{L^2}\right|.$$
		It then follows from \eqref{eq:continuousabsolute} applied with $s = 1/8$ and Theorem~\ref{prop:UniformMixing} that
		\begin{align} \sum_{n=1}^\infty \int_0^1 \left|\langle f_\alpha \circ T_\alpha^{-n} \circ \phi_{t,0}^\alpha, F_t\rangle_{L^2}\right| \, \dee t & \leqc \alpha^{1/4}|I_\alpha|  + |I_\alpha^c| \sum_{n=1}^\infty e^{-n c \log \alpha} \\ 
			& \leqc \alpha^{-1/4} + \alpha^{-c}. \label{eq:fluxproof1}
		\end{align}
		From a direct computation using \eqref{eq:forcespecial}, we find 
		\begin{equation} \label{eq:fluxproof2}
			\int_0^1 \int_0^t \langle F_s \circ \phi_{t,s}^\alpha, F_t \rangle_{L^2} \, \dee s \, \dee t = \|h\|_{L^2(\T)}^2 \int_{1/2}^1 \eta(t)(t-1/2) \dee t = c_0 
		\end{equation}
		for some constant $c_0 > 0$ that is independent of $\alpha$. Putting the previous two estimates into \eqref{eq:fluxlimit} and using Lemma~\ref{lem:n=0} to bound the $n=0$ term of the sum yields 
		$$\mathcal{T} \ge c_0 - C_0\left(\alpha^{-1}+\alpha^{-1/4} + \alpha^{-c}\right)$$
		for some constant $C_0 \ge 1$ independent of $\alpha$. This proves that $\mathcal{T} \ge c_0/2 > 0$ for $\alpha$ sufficiently large. 
	\end{proof}
	
	\section{Uniform-in-$\alpha$ exponential mixing} \label{sec:uniform}
	
	This section is devoted to the proof of Theorem~\ref{prop:UniformMixing}. In Section~\ref{sec:hyperbolic}, we recall from \cite{ELM25} the uniform hyperbolicity of $T_\alpha$ and the structure of its singularity sets. In Section~\ref{sec:anisotropicoutline}, we begin with a brief overview of anisotropic Banach space methods for correlation decay and discuss the basic strategy of our proof. We then introduce our specific anisotropic framework and reduce the proof of Theorem~\ref{prop:UniformMixing} to the contraction estimates stated in Propositions~\ref{prop:StableUnstable} and~\ref{prop:OneStep}. Section~\ref{sec:Sminus} collects preliminary lemmas regarding the backward singularity set, and the proofs of Propositions~\ref{prop:StableUnstable} and~\ref{prop:OneStep} are given in Section~\ref{sec:contractionproofs}.
	
	\subsection{Uniform hyperbolicity of $T_\alpha$} \label{sec:hyperbolic}
	Define $T_{1,\alpha},T_{2,\alpha}:\T^2 \to \T^2$ by 
	\begin{equation} \label{eq:shears}
		T_{1,\alpha}(x,y) = \begin{pmatrix} x \\ 
			y+\alpha|x-1/2|\end{pmatrix} \mod 1, \quad T_{2,\alpha}(x,y) = \begin{pmatrix} x+\alpha|y-1/2| \\ 
			y\end{pmatrix} \mod 1.
	\end{equation}
	Then, $T_\alpha$ is given by the composition. 
	\begin{equation}\label{eq:wedge}
		T_\alpha = T_{2,\alpha} \circ T_{1,\alpha}.
	\end{equation}
	Both $T_{1,\alpha}$ and $T_{2,\alpha}$ are continuous, piecewise affine maps on $\T^2$. Specifically, $T_{1,\alpha}$ is an affine map modulo 1 on $(0,1/2)\times \T$ and $(1/2,1)\times\T$, while $T_{2,\alpha}$ is affine modulo 1 on $\T \times (0,1/2)$ and $\T \times (1/2,1)$. It follows that $T_\alpha = T_{2,\alpha} \circ T_{1,\alpha}$ is piecewise smooth on $\T^2$ and has constant derivative on each of  
	\begin{align}
		\mathcal{C}_1^+&=((0,1/2)\times \T) \cap T_{1,\alpha}^{-1}
		(\T \times (0,1/2)),
		&
		\mathcal{C}_2^+&= ((1/2,1)\times \T)
		\cap T_{1,\alpha}^{-1}
		(\T\times
		(0,1/2)),\\
		\mathcal{C}_3^+&= ((0,1/2)\times \T )\cap T_{1,\alpha}^{-1}
		(\T \times (1/2,1)),
		&
		\mathcal{C}_4^+&=( (1/2,1)\times \T) \cap T_{1,\alpha}^{-1}
		(\T \times (1/2,1)).
	\end{align}
	Notice that each $\mathcal{C}_i^+$ can be written as $T_{1,\alpha}^{-1}(R)$ for some square $R$, and hence is connected. For example, $\mathcal{C}_1^+ = T_{1,\alpha}^{-1}((0,1/2)\times (0,1/2))$. We define the \textit{singularity set} of $T_\alpha$ by 
	\begin{equation} \label{eq:Splusdef}
		\mathcal{S}^+ = \bigcup_{i=1}^4 \partial \mathcal{C}_i^+,
	\end{equation}
	so that  $\{\mathcal{C}_i^+\}_{i=1}^4$ partitions $\T^2 \setminus \mathcal{S}^{+}$ into open, connected components on which $T_\alpha$ is smooth with $D T_\alpha$ constant. Here, we have written $\partial \mathcal{C}$ to denote the boundary of a set $\mathcal{C} \subseteq \T^2$. Direct computation shows that the possible values of $D T_\alpha$ are the elements of
	\begin{equation} \label{eq:matrices}
		\mathcal{A}:= \left\{
		\begin{pmatrix}
			1 + \alpha^2 & \alpha \\
			- \alpha & 1
		\end{pmatrix},
		\begin{pmatrix}
			1 + \alpha^2 & -\alpha \\
			- \alpha & 1
		\end{pmatrix},
		\begin{pmatrix}
			1 - \alpha^2 & \alpha \\
			\alpha & 1
		\end{pmatrix},
		\begin{pmatrix}
			1 - \alpha^2 & -\alpha \\
			\alpha & 1
		\end{pmatrix}
		\right\}.
	\end{equation}
	Naturally, analogous properties hold for the inverse map $T_\alpha^{-1}$. In particular, defining $\mathcal{C}_i^- = T_\alpha(\mathcal{C}_i^+)$ and $\mathcal{S}^- = T_\alpha(\mathcal{S}^+)$, we have that $\{\mathcal{C}_i^{-1}\}_{i=1}^4$ partitions $\T^2 \setminus \mathcal{S}^-$ into connected components where $D T_\alpha^{-1} = A^{-1}$ for some $A \in \mathcal{A}$.

	Each matrix $A \in \mathcal{A}$ can be written as 
	\begin{equation} \label{eq:factor}
		A = \alpha^2\left[\begin{pmatrix} \pm 1 & 0 \\ 0 & 0\end{pmatrix} + \alpha^{-1} M\right] 
	\end{equation}
	for some $M \in \R^{2\times 2}$ with $\max_{1 \le i,j \le 2}|M_{ij}| \le 1$, and hence for large $\alpha$ one expects the eigenvalues and eigenvectors of $A$ to be close to those of the matrix obtained by setting $M$ to zero in \eqref{eq:factor}. Indeed, direct computation shows that for $\alpha \gg 1$, every $A \in \mathcal{A}$ has two distinct eigenvalues $\lambda_u, \lambda_s$ that satisfy 
	\begin{equation}\label{eq:eigenvalues} \alpha^2 - 3 \le |\lambda_u| \le \alpha^2 +3, \quad |\lambda_s \lambda_u| = 1
	\end{equation}
	and associated eigenvectors $e_u$ and $e_s$ such that 
	\begin{equation}
		\tan[\angle(e_u,\mathrm{Span}\{e_x\})] \le 2\alpha^{-1}, \quad \tan[\angle(e_s, \mathrm{Span}\{e_y\})] \le 2\alpha^{-1}, \label{eq:eigenvectors}
	\end{equation}
	where $e_x$ and $e_y$ are the standard basis vectors of $\R^2$. The bounds \eqref{eq:eigenvalues} and \eqref{eq:eigenvectors} imply that for $\alpha$ large, $T_\alpha$ is a \textit{uniformly hyperbolic} map with singularities that possesses a pair of constant, $D T_\alpha$-invariant cone fields. Precisely, if we define the cones
	\begin{equation} \label{eq:cones}
		C_s = \left\{(x,y) \in \R^2: |x| \le \frac{4|y|}{\sqrt{\alpha}}\right\} \quad \text{and} \quad C_u = \left\{(x,y) \in \R^2: |y| \le \frac{4|x|}{\sqrt{\alpha}}\right\},
	\end{equation}
	then for all $\alpha$ sufficiently large we have 
	\begin{equation} \label{eq:forwardinvariant}
		(D T_\alpha)(z)C_u \subseteq C_u \quad \forall z \in \T^2 \setminus \mathcal{S}^+, \qquad \inf_{v \in C_u} \inf_{z \in \T^2 \setminus \mathcal{S}^+} \frac{|(D T_\alpha)(z)v|}{|v|} \ge \frac{\alpha^2}{2}
	\end{equation}
	and
	\begin{equation} \label{eq:backwardinvariant}
		(DT^{-1}_\alpha)(z)C_s \subseteq C_s \quad \forall z \in \T^2 \setminus \mathcal{S}^-, \qquad \inf_{v \in C_s} \inf_{z \in \T^2 \setminus \mathcal{S}^-} \frac{|(D T^{-1}_\alpha)(z)v|}{|v|} \ge \frac{\alpha^2}{2}.
	\end{equation}
	In fact, computing explicitly with \eqref{eq:matrices}, one can check that 
	\begin{equation} \label{eq:leadingorder}
		\alpha^2 - 10 \alpha^{3/2} \le \frac{|(D T_\alpha)(z) v|}{|v|} \le \alpha^2 + 10\alpha^{3/2} \qquad \forall v \in C_u, \quad z \in \T^2 \setminus \mathcal{S}^+,
	\end{equation}
	as well as corresponding inequalities for the inverse map
	\begin{equation} \label{eq:leadingorder2}
		\alpha^2 - 10 \alpha^{3/2} \le \frac{|(D T^{-1}_\alpha)(z) v|}{|v|} \le \alpha^2 + 10\alpha^{3/2} \qquad \forall v \in C_s, \quad z \in \T^2 \setminus \mathcal{S}^-.
	\end{equation}
	That the expansion rate satisfies upper and lower bounds matching to leading order in $\alpha$ will be important in the proof of Theorem~\ref{prop:Projected}, but \eqref{eq:forwardinvariant} and \eqref{eq:backwardinvariant} will be sufficient for our purposes in the present section. In what follows, we will repeatedly use that by \eqref{eq:backwardinvariant}, for any $z \in \T^2 \setminus \mathcal{S}^-$ and $v \in C_s$ we have 
	\begin{equation} \label{eq:forwardcontract0}
		\frac{|(D T_\alpha)(z')v'|}{|v'|} \le \left(\frac{\alpha^2}{2}\right)^{-1},
	\end{equation}
	where $ z'= T_\alpha^{-1}(z) \in \T^2 \setminus \mathcal{S}^+$ and $v' = (D T_\alpha^{-1})(z) v \in C_s.$
	
	Detailed properties of the singularity sets $\mathcal{S}^{\pm}$ (as well as those of higher iterates of $T_\alpha$) play an important role in the proofs of our mixing estimates and will be discussed later in Sections~\ref{sec:Sminus} and~\ref{sec:nstep}. For now, we only note that $\mathcal{S}^-$ can be written as
	\begin{equation}\label{eq:ExplicitS1}
		\mathcal{S}^- = \{y=0\} \cup \{y=1/2\} \cup T_{2,\alpha}(\{x=0\}) \cup T_{2,\alpha}(\{x=1/2\}).
	\end{equation}
	This follows from the decomposition $T_\alpha^{-1} = T_{1,\alpha}^{-1} \circ T_{2,\alpha}^{-1}$: the singularity set of $T_\alpha^{-1}$ consists of the singularity set of $T_{2,\alpha}^{-1}$ together with the preimage under $T_{2,\alpha}^{-1}$ of the singularity set of $T_{1,\alpha}^{-1}$. By \eqref{eq:ExplicitS1} and the definition of $T_{2,\alpha}$, we see that $\mathcal{S}^-$ is a finite union of line segments whose tangent vectors lie in the unstable cone $C_u$ (see Figure~\ref{fig:backward8}), and hence are bounded away from the stable cone $C_s$.
	
	\begin{remark} \label{rem:largecones}
		The cones $C_u$ and $C_s$ can actually be taken much smaller and still be $DT_\alpha$-invariant. One can check that \eqref{eq:forwardinvariant} and \eqref{eq:backwardinvariant} hold with $C_s$ and $C_u$ replaced by
		\begin{equation} \label{eq:smallcones}
			\widetilde{C}_s = \left\{(x,y) \in \R^2: |x| \le \frac{4|y|}{\alpha}\right\} \quad \text{and} \quad \widetilde{C}_u = \left\{(x,y) \in \R^2: |y| \le \frac{4|x|}{\alpha}\right\}.
		\end{equation}
		In fact, it holds that $(DT_\alpha)(z)C_u \subseteq \tilde{C}_u$, as well as the corresponding backwards statement. While $\widetilde{C}_s$ and $\widetilde{C}_u$ have the natural scaling in $\alpha$, we work with the slightly larger cones defined in \eqref{eq:cones} because they are simultaneously invariant for the derivative of $\Phi_{0,t}^\alpha$, as it appears in \eqref{eq:uniformmix2}, which will be necessary for a small portion of the proof.  
	\end{remark}

	\subsection{Anisotropic framework and reduction of the proof} \label{sec:anisotropicoutline}
	
	Our proof of Theorem~\ref{prop:UniformMixing} is inspired by the functional--analytic approach to the statistical properties of uniformly hyperbolic systems, which has been developed extensively over the past few decades. The basic goal of the method is to construct a Banach space on which the associated transfer operator has good spectral properties (in our setting, the transfer operator $\mathcal{L}_\alpha$ acts on functions by $\mathcal{L}_\alpha f = f \circ T_\alpha^{-1}$). The fundamental principle in building such spaces is to work with norms adapted to the hyperbolicity: under iteration, the dynamics create fine scales along stable directions while producing an effective smoothing along unstable directions. This leads naturally to anisotropic norms that measure weak (negative) regularity in stable directions and strong (positive) regularity, or smoothing, in unstable directions. On such spaces the transfer operator can be quasi-compact, yielding a spectral gap (in the absence of peripheral eigenvalues) and hence exponential decay of correlations. Two broad classes of constructions have emerged in the literature. One is a geometric approach defining anisotropic norms through integration against families of test functions supported on approximate stable curves (see, e.g., \cite{DemersLiverani2d,BKL02,GouezelLiverani06,BaladiDemersLiverani}). The other is based on anisotropic Sobolev-type norms defined via Fourier multipliers or pseudo-differential operators adapted to the hyperbolic splitting (see, e.g., \cite{BaladiTsujii,Baladi2005Anisotropic,FaureRoySjoestrand2008,baladi2009good}). Further discussion and broader surveys can be found in \cite{DemersGentle,DLNbook,baladi2000positive}.
	
	The map $T_\alpha$ falls within the general class of two-dimensional
	piecewise hyperbolic maps with singularities studied in
	\cite{DemersLiverani2d}. There, anisotropic Banach spaces defined via
	geometric norms were constructed on which the transfer operator acts
	quasi-compactly. The exponential mixing estimate \eqref{eq:umixing}
	was established in \cite{ELM25} by ruling out the existence of
	peripheral spectrum and appealing to the resulting spectral gap
	provided by \cite{DemersLiverani2d}. While this is a powerful approach
	to establish exponential mixing, it is not sufficient for our purposes
	here. In the present work, it is crucial to obtain quantitative growth
	of the mixing rate as the amplitude parameter $\alpha$
	increases. Abstract quasi-compactness arguments yield estimates on the
	\textit{essential} spectral radius, but generally provide no
	information on the discrete spectrum and therefore no quantitative
	control on the spectral gap. Our proof of
	Theorem~\ref{prop:UniformMixing} instead exploits the large-$\alpha$
	regime and our detailed understanding of the singularity sets of
	$T_\alpha$ to prove a finite-step contraction estimate for the
	transfer operator in an anisotropic norm adapted from
	\cite{DemersLiverani2d}. This allows us to directly estimate the
	exponential decay rate. It is likely possible to quantify the mixing
	rate using other approaches to correlation decay, such as Young Towers
	or Standard Pairs \cite{Young1998,Young1999,Dolgopyat}. However, the
	detailed dynamical information provided by decay in an anisotropic
	norm will play a crucial role later in the paper, serving as a key
	lemma in our proof of Theorem~\ref{prop:Projected} (see
	Section~\ref{sec:projecteddiscussion} for more discussion).
	
	\subsubsection{Definition of the anisotropic norms} \label{sec:normdefs}
	We now define the specific anisotropic norms that we employ, which follow closely the geometric norms from \cite{DemersLiverani2d}, but with some modifications and simplications available in our specific setting. Let $\mathcal{W}_s$ denote the set of line segments in $\T^2$ whose tangent vectors lie in $C_s$. We define the set of \textit{admissible stable curves}
	\begin{equation} \label{eq:admissible}
		\Sigma = \{W \in \mathcal{W}_s: |W| \le 2\},
	\end{equation}
	where $|W|$ denotes the length of $W$. For $W \in \mathcal{W}_s$ and $\gamma \in (0,1]$, let $C^{\gamma}(W)$ denote the set of H\"{o}lder continuous functions on $W$ with exponent $\gamma$. For $\varphi \in C^\gamma(W)$, we define 
	\begin{equation}\label{eq:Holderdef}
		[\varphi]_{\gamma,W} = \sup_{z_1,z_2 \in W} \frac{|\varphi(z_1) - \varphi(z_2)|}{d_{W}(z_1,z_2)^\gamma}, \quad |\varphi|_{C^\gamma(W)} = [\varphi]_{\gamma,W} + \sup_{z \in W}|\varphi(z)|, 
	\end{equation}
	where $d_W$ denotes the arclength distance along $W$. Throughout this entire section, we ignore endpoint conventions for the line segments $W \in \Sigma$, as they never play a role in the arguments. For example, segments that differ only by endpoints are identified and statements concerning the partition of a set into line segments are understood up to finitely many omitted points.
	
	Choose  $p, \beta > 0$ satisfying $0 < p \le 1-\beta$ and let $\dee m_W$ denote the arclength measure on $W \in \mathcal{W}_s$. Given $f \in W^{1,\infty}(\T^2)$, we define its \textit{weak stable norm} by
	\begin{equation}\label{eq:weakdef}
		|f|_w = \sup_{W \in \Sigma} \Big(\sup_{|\varphi|_{C^1(W)}\le 1} \int_W f \varphi \, \dee m_W\Big)
	\end{equation}
	and its \textit{strong stable norm} by
	\begin{equation}\label{eq:strongstabledef}
		\|f\|_s = \sup_{W \in \Sigma} \Big(\sup_{|\varphi|_{C^\beta(W)}\le |W|^{-p}} \int_W f \varphi \, \dee m_W\Big).
	\end{equation}
	Note these definitions are such that $|f|_w \leqc \|f\|_s$ for any choice of $p, \beta > 0$. Roughly speaking, the norms above measure the negative regularity of $f$ along stable curves. To define a measure of positive regularity in unstable directions, we introduce a family $\Sigma_u \subseteq \Sigma \times \Sigma$ of \textit{admissible pairs} and a notion of distance between them. Let
	\begin{equation} \label{eq:pairdef}
		\Sigma_u = \{(W_1,W_2) \in \Sigma \times \Sigma: |W_1| = |W_2| \text{ and } W_1 \parallel W_2\}
	\end{equation}
	denote the pairs of admissible curves that are parallel and have equal length. For any $(W_1,W_2) \in \Sigma_u$, there exists $h \in \T^2$ such that 
	\begin{equation} \label{eq:translation}
		W_2 = W_1+h,
	\end{equation} 
	and therefore we may define
	\begin{equation} 
		d_{\Sigma}(W_1,W_2) = |h|_{\T^2},
	\end{equation}
	where $h$ satisfies \eqref{eq:translation} and $|h|_{\T^2} := d_{\T^2}(0,h)$. Here, $d_{\T^2}$ denotes the standard distance function on $\T^2$, defined for $z_1,z_2 \in \T^2$ by $d_{\T^2}(z_1,z_2) = \min_{k \in \Z^2}|z_1 - z_2 + k|$. For notational convenience, when $z \in \T^2$ we abbreviate $|z|_{\T^2} = |z|$. The \textit{strong unstable norm} of $f \in W^{1,\infty}(\T^2)$ is defined as 
	\begin{equation}\label{eq:unstablenorm}
		\|f\|_{u} = \sup_{\substack{(W_1,W_2) \in \Sigma_u \\ d_{\Sigma}(W_1,W_2) \le \alpha^{-1}}} d_{\Sigma}(W_1,W_2)^{-p} \left|\int_{W_1} f \, \dee m_{W_1} - \int_{W_2} f \, \dee m_{W_2}\right|. 
	\end{equation} 
	We also combine the strong stable/unstable norms and define the \textit{strong norm} of $f$ by 
	\begin{equation} \label{eq:strongdef}
		\tnorm{f}:= \|f\|_u + \|f\|_s.
	\end{equation}
	
	\begin{remark}
		It is possible to work with more general classes of admissible curves, but restricting to parallel, equal-length segments is natural in our setting: this property is preserved by each affine part of the dynamics and simplifies certain computations.
	\end{remark}
	
	\begin{remark}
		Typically, in geometric constructions of anisotropic norms, the strong unstable norm is defined by testing against general H\"{o}lder continuous functions on nearby admissible curves that are close in a suitably defined metric. By contrast, $\|\cdot\|_u$ introduced above restricts to constant test functions, and hence only measures differences of averages along stable leaves. This forfeits key features of the usual functional analytic framework. In particular, the standard pair of ``strong'' and ``weak'' Banach spaces associated with our norms do not clearly satisfy the compact embedding necessary to deduce quasi-compactness of $\mathcal{L}_\alpha$ from a set of Lasota-Yorke inequalities (see, e.g., \cite{DemersGentle} for context). This, however, is irrelevant to our argument since we proceed by establishing a finite-step contraction estimate for $\mathcal{L}_\alpha$ directly in the strong norm, and therefore never rely on quasi-compactness or properties of the associated Banach spaces.
	\end{remark}
	
	\subsubsection{Contraction estimates and the proof of Theorem~\ref{prop:UniformMixing}}
	
	The main estimates that we will prove for $\mathcal{L}_\alpha:W^{1,\infty}(\T^2) \to W^{1,\infty}(\T^2)$ are stated in the following two propositions. Proposition~\ref{prop:StableUnstable} collects bounds similar in spirit to those obained in typical functional--analytic approaches, while Proposition~\ref{prop:OneStep} contains the key one-step estimate that will allow us to avoid arguing via quasi-compactness.
	
	\begin{proposition} \label{prop:StableUnstable}
		For all $\alpha \gg 1$ and $f \in W^{1,\infty}(\T^2)$ we have the estimates:
		\vspace{0.2cm}
		\begin{itemize}
			\item[(a)] $\|\mathcal{L}_\alpha f\|_s \leqc \alpha^{-2 \beta}\|f\|_s + |f|_w$,
			\vspace{0.2cm}
			\item[(b)] $\left|f \circ \phi_{t,0}^\alpha\right|_w \leqc |f|_w \quad \forall \, t \in [\tfrac{1}{2} + \alpha^{-1/2},1]$,
			\vspace{0.2cm}
			\item[(c)] $\|\mathcal{L}_\alpha f\|_{u} \leqc \alpha^{-p}\|f\|_{u} + \|f\|_s.$
		\end{itemize}
	\end{proposition}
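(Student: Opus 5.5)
All three estimates start from the change of variables $\int_W (\mathcal{L}_\alpha f)\varphi\,\dee m_W = \int_{T_\alpha^{-1}W} f\,(\varphi\circ T_\alpha)\,J_W\,\dee m$. Here $J_W$ is the (constant on each smooth piece) contraction factor of $T_\alpha$ along the stable direction. Since $T_\alpha^{-1}$ is piecewise affine, for $W\in\Sigma$ the preimage $T_\alpha^{-1}W$ splits along $\mathcal{S}^-$ into finitely many line segments $V_i$. By \eqref{eq:backwardinvariant} these segments have tangents in $C_s$ and lengths about $\alpha^2|W|$. On each $V_i$ the Jacobian is $J_i\le 2\alpha^{-2}$, with $J_i$ comparable to $\alpha^{-2}$ by \eqref{eq:leadingorder2}. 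The first step is to cut each $V_i$ further into admissible pieces $V_{ij}\in\Sigma$ of length between $1$ and $2$, plus at most boundedly many short pieces. I need a count of the singular cuts: since $\mathcal{S}^-$ consists of $O(1)$ segments in $C_u$, transverse to $C_s$, a stable segment of length $L$ meets $\mathcal{S}^-$ in $O(1+\alpha L)$ points. So the number of pieces is $O(\alpha^2|W|+1)$, and only $O(1)$ of them can be short relative to their neighbours.

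For (a), fix $\varphi$ with $|\varphi|_{C^\beta(W)}\le|W|^{-p}$. On each $V_{ij}$ I split $\varphi\circ T_\alpha$ into its average $\bar\varphi_{ij}$ plus a remainder. Because $T_\alpha$ contracts $V_{ij}$ by about $\alpha^{-2}$, the remainder has $C^\beta$ seminorm $\lesssim\alpha^{-2\beta}|W|^{-p}$.
\begin{itemize}
\item The remainder is tested against $\|f\|_s$ with weight $|V_{ij}|^{p}$. Summing over pieces with the Jacobian $\alpha^{-2}$ gives $\alpha^{-2}\cdot\#\{\text{pieces}\}\cdot\alpha^{-2\beta}|W|^{-p}$, which is $\lesssim \alpha^{-2\beta}\|f\|_s$ after using $|W|^{1-p}\le 2$ and $p\le 1-\beta$.
\item The averages are Lipschitz multiples of $1$ and are tested against $|f|_w$. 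Summing gives $\lesssim |f|_w$, using $|\bar\varphi_{ij}|\le|W|^{-p}$ and $|W|^{1-p}\lesssim1$.
\item The short pieces are handled directly with $\|f\|_s$. There are $O(1)$ of them, so this costs $\alpha^{-2}|W|^{-p}|V|^{p}\|f\|_s\lesssim\alpha^{-2\beta}\|f\|_s$, using $|V|\gtrsim\alpha^2|W|$ capped at $2$.
\end{itemize}
I expect the delicate point in (a) to be the bookkeeping of short pieces when $|W|\lesssim\alpha^{-2}$. Here $T_\alpha^{-1}W$ has length $O(1)$ and is cut only $O(1)$ times, and the $|W|^{-p}$ weight must be recovered from the shortness of $W$.

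For (b), I use the same change of variables with $\phi_{t,0}^\alpha$ in place of $T_\alpha^{-1}$. By Remark~\ref{rem:largecones}, $C_s$ is invariant under the derivative of $\phi_{t,0}^\alpha$ precisely because $t\ge\frac12+\alpha^{-1/2}$: the second shear has acted long enough to bring vertical-ish vectors back into $C_s$. Pulling back by $\phi_{t,0}^\alpha$ stretches $W$ by a factor of at most $\alpha^2$ and creates $O(1+\alpha|W|)$ singular cuts. Testing each piece with the $C^1$ test function $\varphi\circ\phi$ times the Jacobian, whose $C^1$ norm on the piece stays bounded because $\phi$ contracts, and summing the Jacobians gives a total $\lesssim|f|_w$. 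The sum of Jacobian-weighted piece counts stays $O(1)$ because total preimage length times Jacobian equals $|W|$.

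For (c), take $(W_1,W_2)\in\Sigma_u$ with offset $h$, $|h|\le\alpha^{-1}$. On each affine region the preimages $T_\alpha^{-1}W_1$ and $T_\alpha^{-1}W_2$ are parallel translates with offset $DT_\alpha^{-1}h$. Since $h$ must be taken in the unstable-ish direction modulo shifting along $W$, the offset contracts to size $\lesssim\alpha^{-2}|h|$, and the matched pieces form pairs in $\Sigma_u$. Matched pairs then contribute $\alpha^{-2}\sum(\alpha^{-2}|h|)^p\|f\|_u$, which is $\lesssim \alpha^{-2p}\alpha^{2}\cdot\alpha^{-2}|h|^p\|f\|_u$. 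The remaining unmatched pieces lie near $\mathcal{S}^-$, where one curve is cut and the other is not. These strips have length $\lesssim|h|$ along the curves, and there are $O(1+\alpha|W|)$ of them. Bounding them by $\|f\|_s$ gives $\lesssim|h|^{p}\|f\|_s$ once the Jacobian $\alpha^{-2}$ is included. I expect this to be the main obstacle: the unmatched error must come out as $d_\Sigma^p$ while the component of $h$ parallel to $W$ is absorbed. The first thing I would try is to decompose $h$ into a stable component, which only shifts the endpoints and is controlled by $\|f\|_s$ on end segments of length $|h|$, and a transverse component, which is contracted.
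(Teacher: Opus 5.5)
Parts (a) and (b) essentially follow the paper's argument. You decompose the preimage along the singularity set, use that only boundedly many pieces are short, split $\varphi\circ T_\alpha$ into its average plus an oscillation that gains $\alpha^{-2\beta}$, and sum the long pieces via $\sum_{V\ \mathrm{long}} J_V\le\sum_V J_V|V|\le |W|$. One slip: for short pieces you need the \emph{upper} bound $|V|\lesssim\alpha^2|W|$, i.e.\ $J_V|V|^p=|T_\alpha V|^pJ_V^{1-p}\lesssim \alpha^{-2(1-p)}|W|^p$, not $|V|\gtrsim\alpha^2|W|$.

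The genuine gap is in (c), for pairs with $|W_i|\gg\alpha^{-1}$. Write $\epsilon=d_\Sigma(W_1,W_2)$. Pairing along the contracting direction of $DT_\alpha^{-1}$ and leaving $O(1)$ remainders of length $O(\epsilon)$ is exactly what the paper does when $|W_i|\le(4\alpha)^{-1}$ (Lemmas~\ref{lem:pairdecomp} and~\ref{lem:UnstableShortCase}); there $W_1$ meets $\mathcal{S}^-$ at most three times. Your accounting of $O(1+\alpha|W|)$ unmatched strips of length $\lesssim\epsilon$ does not give $\epsilon^p\|f\|_s$ for long pairs. For a strip $V$ of length $\ell$ one has $\int_V\mathcal{L}_\alpha f=J_V\int_{T_\alpha^{-1}V}f$ with $|T_\alpha^{-1}V|\approx\alpha^2\ell$, so the Jacobian gain is cancelled by the length of the preimage. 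The most $\|f\|_s$ can give is $|\int_V\mathcal{L}_\alpha f|\lesssim(\alpha^{-2(1-p)}\ell^p+\ell)\|f\|_s$. Summing over $\sim\alpha$ strips with $\ell\sim\epsilon$ and dividing by $\epsilon^p$ gives $\alpha^{2p-1}+\alpha\epsilon^{1-p}$, which is of size $\alpha^p$ when $\epsilon\sim\alpha^{-1}$. So the step ``bounding them by $\|f\|_s$ gives $\lesssim|h|^p\|f\|_s$'' fails. The decomposition of $h$ that you single out is not the real obstacle.

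What is missing is a way to cross the $\sim\alpha|W|$ spanning curves without paying a length-$\epsilon$ remainder at each one. The paper cuts $W_1,W_2$ at the spanning curves $T_2(\{x=0\})\cup T_2(\{x=1/2\})$. Inside each strip between $\{y=0\}$ and $\{y=1/2\}$ these curves are parallel. Hence the pieces $W_{1,j},W_{2,j}$ lying between the same two adjacent spanning curves are exact translates of each other along the spanning direction, with no remainder at all.

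Since $T_2^{-1}$ maps the spanning curves to vertical lines and $T_1^{-1}$ preserves vertical segments, $T_\alpha^{-1}W_{1,j}$ and $T_\alpha^{-1}W_{2,j}$ are segments of length $\approx\alpha$ differing by a vertical translation of size $\lesssim\epsilon\alpha^{-1}$. Chopping them into unit pieces gives a contribution $\lesssim\alpha^{-p}|W_{1,j}|\,\|f\|_u$, which is summable since $\sum_j|W_{1,j}|\le 2$. The offset contracts only by $\alpha^{-1}$, which is why (c) has $\alpha^{-p}$ rather than $\alpha^{-2p}$.

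Only $O(1)$ short pairs go through the $e_s$-pairing argument: those near the horizontal lines, inside the ``V'' shapes, and at the ends of $W_i$. In addition, $O(1)$ remainders of length $\lesssim\epsilon$ are bounded via $\|\mathcal{L}_\alpha f\|_s\lesssim\|f\|_s$.

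Alternatively, your global $e_s$-pairing could in principle be rescued by a sharper geometric fact. In each component, the contracting eigendirection of $DT_\alpha^{-1}$ differs in slope from the spanning curves by only $O(\alpha^{-3})$. So the mismatch at each spanning crossing has length $O(\epsilon\alpha^{-3})$, not $O(\epsilon)$. Either way, some such structural input about $\mathcal{S}^-$ is indispensable.
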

	
	\begin{proposition}\label{prop:OneStep}
		For all mean-zero $f \in W^{1,\infty}(\T^2)$ and $\alpha \gg 1$, we have 
		\begin{equation}\label{eq:OneStep}
			|\mathcal{L}_\alpha f|_{w} \leqc \alpha^{-p}\tnorm{f}.
		\end{equation}
	\end{proposition}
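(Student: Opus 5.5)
Fix $W \in \Sigma$ and a test function $\varphi$ with $|\varphi|_{C^1(W)} \le 1$. By change of variables along curves,
\[
\int_W \mathcal{L}_\alpha f \,\varphi \, \dee m_W = \sum_{i} \int_{W_i} f \,(\varphi\circ T_\alpha)\, J_{W_i} \, \dee m_{W_i},
\]
where the $W_i$ are the connected components of $T_\alpha^{-1}(W \setminus \mathcal{S}^-)$ and $J_{W_i} \approx \alpha^{-2}$ is the (constant) stable Jacobian. Since $\mathcal{S}^-$ consists of a bounded number of segments in $C_u$, transverse to $C_s$, $W$ is cut into $O(1)$ pieces. Each $W_i$ is a stable segment of length $\approx \alpha^2 |W\cap \mathcal{C}_j^-|$ by \eqref{eq:backwardinvariant}. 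I would therefore subdivide each $W_i$ into subsegments of length in $[1,2]$ (or less for short pieces). This gives $O(\alpha^2|W|)$ admissible curves $W_{ij}$, each carrying weight $\alpha^{-2}$. The pulled-back test function $\varphi\circ T_\alpha$ has Lipschitz constant $\lesssim \alpha^{-2}$ along $W_{ij}$.

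The key step is a decomposition of $f$'s pairing on the pieces. Write $\varphi\circ T_\alpha = \bar\varphi_{ij} + (\varphi\circ T_\alpha - \bar\varphi_{ij})$ on each $W_{ij}$, where $\bar\varphi_{ij}$ is the average.

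The oscillating part has $C^\beta$ norm $\lesssim \alpha^{-2}|W_{ij}|^{1-\beta}$, so it is controlled by $\|f\|_s$. Summing the $O(\alpha^2)$ pieces, each with weight $\alpha^{-2}$, gives a contribution $\lesssim \alpha^{-2}\|f\|_s$, which is far better than needed.

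The constant part contributes $\alpha^{-2}\sum_{ij}\bar\varphi_{ij}\int_{W_{ij}} f$. To handle it, I would compare the averages $\int_{W_{ij}} f$ with one another using $\|f\|_u$. The long stable segments $W_{ij}$ are nearly vertical and, for large $\alpha$, sweep across the torus in the horizontal direction. I would pair them into parallel, equal-length families separated by translations of size $\lesssim \alpha^{-1}$; this matches the restriction $d_\Sigma \le \alpha^{-1}$ in \eqref{eq:unstablenorm}, and pairing is possible since all pieces from one branch are parallel. Neighboring averages then differ by $\lesssim \alpha^{-p}\|f\|_u$. Telescoping, the constant part becomes, up to an error of $\lesssim \alpha^{-p}\|f\|_u$, a Riemann-sum approximation of $\int_{\T^2} f \cdot(\text{smooth density})$. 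For a single reference leaf family foliating $\T^2$ by parallel segments with spacing $\alpha^{-1}$, the leaf averages reconstruct $\int_{\T^2} f = 0$. The density here is a step function in the transverse coordinate whose values are the weights $\bar\varphi_{ij}$. The main obstacle is this step: I must show that the translates of $W_{ij}$ equidistribute in the transverse direction, which uses the explicit form of $T_{1,\alpha},T_{2,\alpha}$. I must also show that the non-constant weights $\bar\varphi_{ij}$, which vary slowly since $\varphi$ is $C^1$ on $W$, still let me subtract a multiple of $\int f=0$. My plan is to group consecutive pieces on which $\bar\varphi$ changes by $O(\alpha^{-1})$. Within a group I compare to the mean of $f$ over the strip they sweep. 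The strip mean is in turn compared, via $\|f\|_u$ translations, to the mean over $\T^2$, which vanishes, with a residual $O(\alpha^{-1})|f|_w \lesssim \alpha^{-1}\tnorm{f}$.

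Finally I would handle the endpoint pieces near $\mathcal{S}^-$ and the short leftover segments. There are $O(1)$ of them per component, each of weight $\alpha^{-2}$ times a strong-stable pairing, for a total $\lesssim \alpha^{-2}\|f\|_s$. Combining the three contributions gives $|\mathcal{L}_\alpha f|_w \lesssim \alpha^{-p}\|f\|_u + \alpha^{-1}\|f\|_s \lesssim \alpha^{-p}\tnorm{f}$, using $p\le 1-\beta<1$.
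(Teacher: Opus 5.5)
There is a genuine gap, located at the step you yourself flag as the main obstacle: the cancellation of the constant part. The workaround you propose for it does not work.

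\textbf{The geometry is off.} Although $\mathcal{S}^-$ consists of only six segments, four of them have length $\approx\alpha$ and wrap around $\T^2$. So $W$ crosses $\mathcal{S}^-$ about $2\alpha|W|$ times, not $O(1)$ times. The components of $W\setminus\mathcal{S}^-$ are the pieces between adjacent spanning curves, of length $\approx(2\alpha)^{-1}$. The map $T_2^{-1}$ sends each such piece onto a segment running between $\{x=0\}$ and $\{x=1/2\}$. Then $T_1^{-1}$ sends it onto a segment of length $\approx\alpha/2$, confined to one half $R_1=\{0\le x<1/2\}$ or $R_2=\{1/2\le x<1\}$, wrapping vertically with horizontal spacing $\approx\alpha^{-1}$. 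Preimages landing in $R_1$ and in $R_2$ have slopes of opposite sign ($\approx\pm\alpha$). Hence $\|f\|_u$ can only compare leaves within a single half. The equidistribution you need therefore holds per half-torus, not for an arbitrary block of consecutive pieces.

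\textbf{The proposed comparison is false.} You propose to compare the mean of $f$ over the swept strip with its mean over $\T^2$ ``via $\|f\|_u$ translations''. This fails in general. The norm $\|f\|_u$ only relates parallel leaves at distance at most $\alpha^{-1}$, at cost $d^p$, so crossing an $O(1)$ distance costs $\alpha^{1-p}\|f\|_u$. A concrete counterexample is $f(x,y)=\cos(2\pi x)$. It is mean zero, with $\|f\|_s\approx 1$ and $\|f\|_u\leqc\alpha^{p-1}$, yet its mean over $\{0<x<1/4\}$ is $2/\pi$. For the same reason, literally telescoping neighbor differences across $\approx\alpha$ leaves would lose a factor $\alpha^{1-p}$. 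Only the leaf-to-adjacent-parallelogram Riemann sum is affordable, with total error $\alpha\cdot\alpha^{-1-p}\|f\|_u$.

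\textbf{The missing idea.} You need to choose the blocks rigidly.
\begin{itemize}
\item Up to $O(1)$ remainder pieces of length $\leqc\alpha^{-1}$, which are controlled by $\|\mathcal{L}_\alpha f\|_s\leqc\|f\|_s$, split $W$ into segments joining two parallel spanning curves. Each such segment is a union $W_1\cup W_2$ of two equal-length pieces bounded by adjacent spanning curves.
\item On such a block, freeze $\varphi$ to a single constant, at a cost of $\alpha^{-1}\|f\|_s$.
\item One of $T^{-1}W_1$, $T^{-1}W_2$ sweeps $R_1$ and the other sweeps $R_2$. The Riemann-sum argument then gives $\dashint_{T^{-1}W_i}f=\dashint_{R_i}f+O(\alpha^{-p}\tnorm{f})$.
\item Since $|W_1|=|W_2|$, the constant part is proportional to $\dashint_{R_1}f+\dashint_{R_2}f=2\int_{\T^2}f=0$.
\end{itemize}
The cancellation comes from pairing the two halves exactly, never from comparing a partial-region mean with the global mean.

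Your treatment of the oscillating part of $\varphi\circ T_\alpha$ and of the short endpoint pieces is fine.
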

	
	Combining Propositions~\ref{prop:StableUnstable} and~\ref{prop:OneStep} gives an exponential decay estimate of $|\mathcal{L}_\alpha^n f|_w$, from which Theorem~\ref{prop:UniformMixing} follows as an easy corollary.
	
	\begin{theorem} \label{thm:exponential}
		Let 
		\begin{equation}
			q = \frac{1}{6}\min(2\beta,p) > 0.
		\end{equation}
		There exists $C_\alpha \ge 1$ such that for all mean-zero $f \in W^{1,\infty}(\T^2)$, $\alpha \gg 1$, and $n \in \N$ we have
		\begin{equation} \label{eq:spectralgap}
			\tnorm{\mathcal{L}_\alpha^n f} \le C_\alpha \alpha^{-qn} \tnorm{f}
		\end{equation}
		and 
		\begin{equation} \label{eq:weakdecay}
			|\mathcal{L}_\alpha^n f|_w  \leqc \alpha^{-qn} \tnorm{f}.
		\end{equation}
	\end{theorem}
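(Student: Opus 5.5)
The plan is to combine Proposition~\ref{prop:StableUnstable}(a),(c) with Proposition~\ref{prop:OneStep} into a two-step contraction for a weighted strong norm. For mean-zero $f$, note that $\mathcal{L}_\alpha$ preserves the mean, so every iterate $\mathcal{L}_\alpha^k f$ is mean-zero and Proposition~\ref{prop:OneStep} can be applied at each step.

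\textbf{Step 1: a two-step bound.} Apply (a) to $\mathcal{L}_\alpha f$ and then Proposition~\ref{prop:OneStep} to the weak term:
\begin{equation}
\|\mathcal{L}_\alpha^2 f\|_s \leqc \alpha^{-2\beta}\|\mathcal{L}_\alpha f\|_s + |\mathcal{L}_\alpha f|_w \leqc \alpha^{-2\beta}\|\mathcal{L}_\alpha f\|_s + \alpha^{-p}\tnorm{f}.
\end{equation}
Applying (a) once more, and using $|f|_w \leqc \|f\|_s$, gives $\|\mathcal{L}_\alpha f\|_s \leqc \|f\|_s$. Hence $\|\mathcal{L}_\alpha^2 f\|_s \leqc \alpha^{-\min(2\beta,p)}\tnorm{f}$.

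For the unstable part, (c) applied twice gives
\begin{equation}
\|\mathcal{L}_\alpha^2 f\|_u \leqc \alpha^{-2p}\|f\|_u + \alpha^{-p}\|f\|_s + \|\mathcal{L}_\alpha f\|_s.
\end{equation}
The last term is only $O(\|f\|_s)$, which is not small. So I would instead use the weighted norm $\tnorm{f}_\epsilon := \epsilon\|f\|_u + \|f\|_s$ with $\epsilon = \alpha^{-3q}$. Then
\begin{equation}
\epsilon\|\mathcal{L}_\alpha^2 f\|_u \leqc \alpha^{-2p}\epsilon\|f\|_u + \epsilon\|f\|_s.
\end{equation}
The stable part obeys $\|\mathcal{L}_\alpha^2 f\|_s \leqc \alpha^{-6q}(\|f\|_u + \|f\|_s)$, and we have $\alpha^{-6q}\|f\|_u = \alpha^{-3q}\cdot\epsilon\|f\|_u$. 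Together these give
\begin{equation}
\tnorm{\mathcal{L}_\alpha^2 f}_\epsilon \leqc \alpha^{-3q}\tnorm{f}_\epsilon,
\end{equation}
since $\alpha^{-2p} \le \alpha^{-3q}$ and $\epsilon\|f\|_s = \alpha^{-3q}\|f\|_s$. For $\alpha$ large, the implied constant is absorbed to yield $\tnorm{\mathcal{L}_\alpha^2 f}_\epsilon \le \alpha^{-2q}\tnorm{f}_\epsilon$.

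\textbf{Step 2: iterate and compare norms.} Iterating the two-step bound, and using the one-step bound $\tnorm{\mathcal{L}_\alpha f}_\epsilon \leqc \alpha^{p}\tnorm{f}_\epsilon$ from (a),(c) for odd $n$, gives $\tnorm{\mathcal{L}_\alpha^n f}_\epsilon \leqc_\alpha \alpha^{-qn}\tnorm{f}_\epsilon$. Since $\epsilon\tnorm{\cdot} \le \tnorm{\cdot}_\epsilon \le \tnorm{\cdot}$, we obtain \eqref{eq:spectralgap} with $C_\alpha \sim \alpha^{3q+p}$.

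For \eqref{eq:weakdecay}, the constant must be independent of $\alpha$. The plan is to write $|\mathcal{L}_\alpha^{n} f|_w \leqc \alpha^{-p}\tnorm{\mathcal{L}_\alpha^{n-1}f}$ by Proposition~\ref{prop:OneStep}, and to bound $\tnorm{\mathcal{L}_\alpha^{n-1}f}$ by a more careful bookkeeping. I would track the pair $(a_k, b_k) = (\|\mathcal{L}^k_\alpha f\|_u, \|\mathcal{L}_\alpha^k f\|_s)$, which satisfies a linear recursion. For $k \ge 1$ this recursion reads
\begin{equation}
b_{k+1} \leqc \alpha^{-2\beta}b_k + \alpha^{-p}(a_{k-1}+b_{k-1}), \qquad a_{k+1} \leqc \alpha^{-p}a_k + b_k.
\end{equation}
Its matrix has entries that are small except for a single $O(1)$ off-diagonal entry. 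The spectral radius of such a (nilpotent-dominated) matrix is $O(\alpha^{-p/2})$, and its powers satisfy $\|M^k\| \leqc \alpha^{-qk}$ with $\alpha$-independent constants, because the $O(1)$ entry can appear at most once between small factors.

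The main obstacle is exactly this $\alpha$-uniformity: the entry $\|f\|_s$ in (c) is not small. The key point is that, after one further step, it must be multiplied by a small factor, either $\alpha^{-p}$ from Proposition~\ref{prop:OneStep} or $\alpha^{-2\beta}$ from (a). Carefully expanding the products of the recursion matrix, and checking that the combinatorial growth (one $O(1)$ factor per pair of steps) is absorbed by using the rate $q = \tfrac16\min(2\beta,p)$ rather than the full rate, is where I expect the work to lie.
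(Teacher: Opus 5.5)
Your weighted-norm argument for \eqref{eq:spectralgap} is correct, and it differs from the paper's route. The paper keeps the unweighted norm and takes three steps. In $\|\mathcal{L}_\alpha^3 f\|_u\lesssim\alpha^{-p}\|\mathcal{L}_\alpha^2 f\|_u+\|\mathcal{L}_\alpha^2 f\|_s$ the non-small term has already been hit by Proposition~\ref{prop:OneStep}, which gives the $\alpha$-uniform contraction $\tnorm{\mathcal{L}_\alpha^3 f}\le\alpha^{-3q}\tnorm{f}$. You instead obtain a two-step contraction by down-weighting $\|\cdot\|_u$ by $\epsilon=\alpha^{-3q}$, and you pay $\epsilon^{-1}$ when returning to $\tnorm{\cdot}$. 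That cost is harmless for an $\alpha$-dependent $C_\alpha$.

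The gap is \eqref{eq:weakdecay}. This is the $\alpha$-uniform estimate actually used downstream (Theorem~\ref{prop:UniformMixing} and Section~\ref{sec:projected}), and you leave it as an unexecuted plan resting on claims that are false.

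\begin{itemize}
\item Your recursion is second order. Its matrix $M$ therefore acts on $(a_k,b_k,a_{k-1},b_{k-1})$ and contains shift entries equal to $1$, in addition to the $O(1)$ entry coming from (c).
\item Along the path $b_k\to a_{k+1}\to b_{k+3}$, the first two transitions are $O(1)$. So a small factor is guaranteed only once every three steps, not every two.
\item Consequently the spectral radius is of order $\alpha^{-p/3}$, not $\alpha^{-p/2}$: the nonzero eigenvalues have product $\sim\alpha^{-p}$.
\item More importantly, $\|M^k\|\lesssim\alpha^{-qk}$ with an $\alpha$-independent constant is false already for $k=1,2$, since $\|M\|\gtrsim 1$.
\item A spectral-radius bound gives no $\alpha$-uniform control of powers of a non-normal, $\alpha$-dependent matrix.
\end{itemize}

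What you actually need is $\alpha^{-p}\|M^{n-2}\|\lesssim\alpha^{-qn}$, and that is exactly the bookkeeping you defer.

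The missing step is available inside your own framework. The one-step bound is in fact $\tnorm{\mathcal{L}_\alpha g}_\epsilon\lesssim\tnorm{g}_\epsilon$ with an $\alpha$-independent constant; your factor $\alpha^{p}$ is unnecessary and would spoil uniformity. Hence
\begin{equation*}
\tnorm{\mathcal{L}_\alpha^{n-1}f}\le\epsilon^{-1}\tnorm{\mathcal{L}_\alpha^{n-1}f}_\epsilon\lesssim\alpha^{3q}\alpha^{-q(n-2)}\tnorm{f}.
\end{equation*}
Proposition~\ref{prop:OneStep} then gives
\begin{equation*}
|\mathcal{L}_\alpha^n f|_w\lesssim\alpha^{-p}\tnorm{\mathcal{L}_\alpha^{n-1}f}\lesssim\alpha^{-(p-5q)}\alpha^{-qn}\tnorm{f}\le\alpha^{-qn}\tnorm{f},
\end{equation*}
because $5q\le 5p/6<p$.

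Alternatively, follow the paper. Write $n=3m+k$ with $k\in\{0,1,2\}$, and use the uniform three-step contraction to get $\tnorm{\mathcal{L}_\alpha^{3m}f}\le\alpha^{-3qm}\tnorm{f}$. Finish with $|\mathcal{L}_\alpha^k g|_w\lesssim\alpha^{-kp/2}\tnorm{g}\le\alpha^{-3qk}\tnorm{g}$, which follows from Proposition~\ref{prop:OneStep} and the boundedness of $\mathcal{L}_\alpha$ in $\tnorm{\cdot}$.
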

	
	\begin{proof}
		Since $|f|_w \leqc \|f\|_s$, as a basic consequence of
		Proposition~\ref{prop:StableUnstable}, we have
		\begin{equation} \label{eq:boundedoperator}
			\|\mathcal{L}_\alpha f\|_s \leqc \|f\|_s \quad \text{and} \quad \tnorm{\mathcal{L}_\alpha f} \leqc \tnorm{f}.
		\end{equation}
		Applying \eqref{eq:boundedoperator}, Proposition~\ref{prop:StableUnstable}(a), and Proposition~\ref{prop:OneStep} yields
		\begin{equation}\label{eq:iterate1}
			\|\mathcal{L}_\alpha^3 f\|_s \leqc	\|\mathcal{L}_\alpha^2 f\|_s \leqc \alpha^{-2\beta}\|\mathcal{L}_\alpha f\|_s + |\mathcal{L}_\alpha f|_w \leqc (\alpha^{-2\beta} + \alpha^{-p})\tnorm{f}.
		\end{equation}
		Using Proposition~\ref{prop:StableUnstable}(c), \eqref{eq:boundedoperator}, and \eqref{eq:iterate1} then gives
		\begin{equation} \label{eq:iterate2}
			\|\mathcal{L}_\alpha^3 f\|_u \leqc \alpha^{-p}\|\mathcal{L}_\alpha^2 f\|_u + \|\mathcal{L}^2_\alpha f\|_s \leqc (\alpha^{-2\beta} + \alpha^{-p})\tnorm{f}.
		\end{equation}
		Let $$r = \frac{1}{2}\min(2\beta,p) = 3q.$$ Combining \eqref{eq:iterate1} and \eqref{eq:iterate2} shows that for $\alpha$ sufficiently large we have 
		\begin{equation} 
			\tnorm{\mathcal{L}_\alpha^3 f} \le \alpha^{-r}\tnorm{f},
		\end{equation}
		and hence 
		\begin{equation} \label{eq:iterate3}
			\tnorm{\mathcal{L}_\alpha^{3m} f} \le \alpha^{-rm} \tnorm{f} \quad \forall m\in \N.
		\end{equation}
		Moreover, from Proposition~\ref{prop:OneStep} and \eqref{eq:boundedoperator}, it follows that for any $k \in \{0,1,2\}$ we have 
		\begin{equation}\label{eq:iterate4}
			|\mathcal{L}_\alpha^k f|_w \leqc \alpha^{-kp/2}\tnorm{f} \le \alpha^{-kr}\tnorm{f}.
		\end{equation}
		Now fix any $n \in \N$ and write $n = 3m+k$ for $m \in \N \cup \{0\}$ and $k \in \{0,1,2\}$. Then, from \eqref{eq:boundedoperator} and \eqref{eq:iterate3} we obtain 
		\begin{equation}
			\tnorm{\mathcal{L}^n_\alpha f} = \tnorm{\mathcal{L}_\alpha^k (\mathcal{L}_\alpha^{3m}f)} \leqc \tnorm{\mathcal{L}_\alpha^{3m}f} \le \alpha^{-rm}\tnorm{f} \le \alpha^{2r/3}\alpha^{-rn/3}\tnorm{f}= \alpha^{2r/3}\alpha^{-qn}\tnorm{f},
		\end{equation}
		which proves \eqref{eq:spectralgap}. Similarly,  using \eqref{eq:iterate4} and \eqref{eq:iterate3} gives 
		\begin{equation}
			|\mathcal{L}_\alpha^n f|_w \leqc \alpha^{-kr}\tnorm{\mathcal{L}_\alpha^{3m} f} \le \alpha^{-r(k+m)}\tnorm{f}\le \alpha^{-rn/3}\tnorm{f} = \alpha^{-qn}\tnorm{f},
		\end{equation}
		which is \eqref{eq:weakdecay}.
	\end{proof}
	
	\begin{proof}[Proof of Theorem~\ref{prop:UniformMixing}]
		
		For any mean-zero $f \in W^{1,\infty}(\T^2)$, there exists a sequence of mean-zero functions $\{f_n\} \subseteq C^1(\T^2)$ such that $\|f_n - f\|_{W^{1,\infty}} \to 0$ and $\sup_n\|f_n\|_{C^1} \le \|f\|_{W^{1,\infty}}$. Therefore, it suffices to prove the result for $f,g \in C^1(\T^2)$. Let $q > 0$ be as in the statement of Theorem~\ref{thm:exponential} and let
		$W_{x} = \{(x,y):x\in \T\} \in
		\Sigma$ denote the vertical segment of length one with horizontal coordinate $x$. It follows from Fubini's theorem, the definition of the weak stable norm, and \eqref{eq:weakdecay} that 
		\begin{align}
			\left|\int_{\T^2} \mathcal{L}_\alpha^n f(z) g(z) \, \dee z\right|&\le \int_\T \left|\int_{W_x} \mathcal{L}_\alpha^n f \, g \, \dee m_{W_x}\right| \, \dee x \\ 
			& \le |\mathcal{L}_\alpha^n f|_w \|g\|_{C^1} \\ 
			& \leqc \alpha^{-qn} \tnorm{f}\|g\|_{C^1}.
		\end{align}
		Since $\tnorm{f}\leqc \|f\|_{C^1(\T^2)}$, this proves \eqref{eq:uniformmix}. The bound \eqref{eq:uniformmix2} is equivalent to proving the same estimate as above with $\mathcal{L}_\alpha^n f$ replaced by $(\mathcal{L}_\alpha^n f) \circ \phi_{t,0}^\alpha$, and therefore follows from the same argument and Proposition~\ref{prop:StableUnstable}(c). 
	\end{proof}
	
	\subsection{Decomposition of $T^{-1}(W)$ and basic properties of $\mathcal{S}^-$} \label{sec:Sminus}
	
	Henceforth, we suppress the subscript $\alpha$ from the notation. We write $T = T_2 \circ T_1$ and denote the associated transfer operator by $\mathcal{L}$. Recall from Section~\ref{sec:hyperbolic} that the singularity set of $T^{-1}$ can be written as 
	\begin{equation} \label{eq:S-explicit}
		\mathcal{S}^- = \{y = 0\} \cup \{y = 1/2\} \cup T_2(\{x=0\}) \cup T_2(\{x=1/2\}),
	\end{equation}
	and hence consists of a finite union of line segments in $C_u$. In
	addition, the collection $\{\mathcal{C}_i^-\}_{i=1}^4$
	partitions $\T^2 \setminus \mathcal{S}^-$ into open, connected
	components on which $DT^{-1}$ is constant. Plots of
	$\mathcal{S}^-$ for $\alpha = 8, 8.5$ are shown below in
	Figure~\ref{fig:backward8}. The lines $ \{y = 0\}$ and
	$\{y = 1/2\}$ are drawn in red and blue,
	respectively. Both $T_2(\{x=0\})$ and $T_2(\{x=1/2\})$ each consist of two lines and are colored in green and orange, respectively. By the definition of $T_2:\T^2 \to \T^2$, the green and orange lines have slope $\pm \alpha^{-1}$. We will often refer to the orange and green segments that connect the two vertical edges of fundamental domain as \textit{spanning curves} of $\mathcal{S}^{-}$.  A simple computation shows that the perpendicular distance between adjacent, parallel spanning curves is $(2\sqrt{1+\alpha^2})^{-1} \approx \alpha^{-1}$. We also note that the maximal number of line segments comprising $\mathcal{S}^-$ that intersect at a single point is three. Such a triple intersection occurs at four points, regardless of $\alpha$. In the special case that $\alpha$ is an integer, these multi-intersection points occur at $\{(0,0), (1/2,0), (0,1/2), (1/2,1/2)\}$. Generally, the multi-intersections are located at $\{(0,1/2), (1/2,1/2), T_2(0,0), T_2(1/2,0)\}$.
	
	\begin{figure}[h]
		\includegraphics[width=8cm, height=8cm]{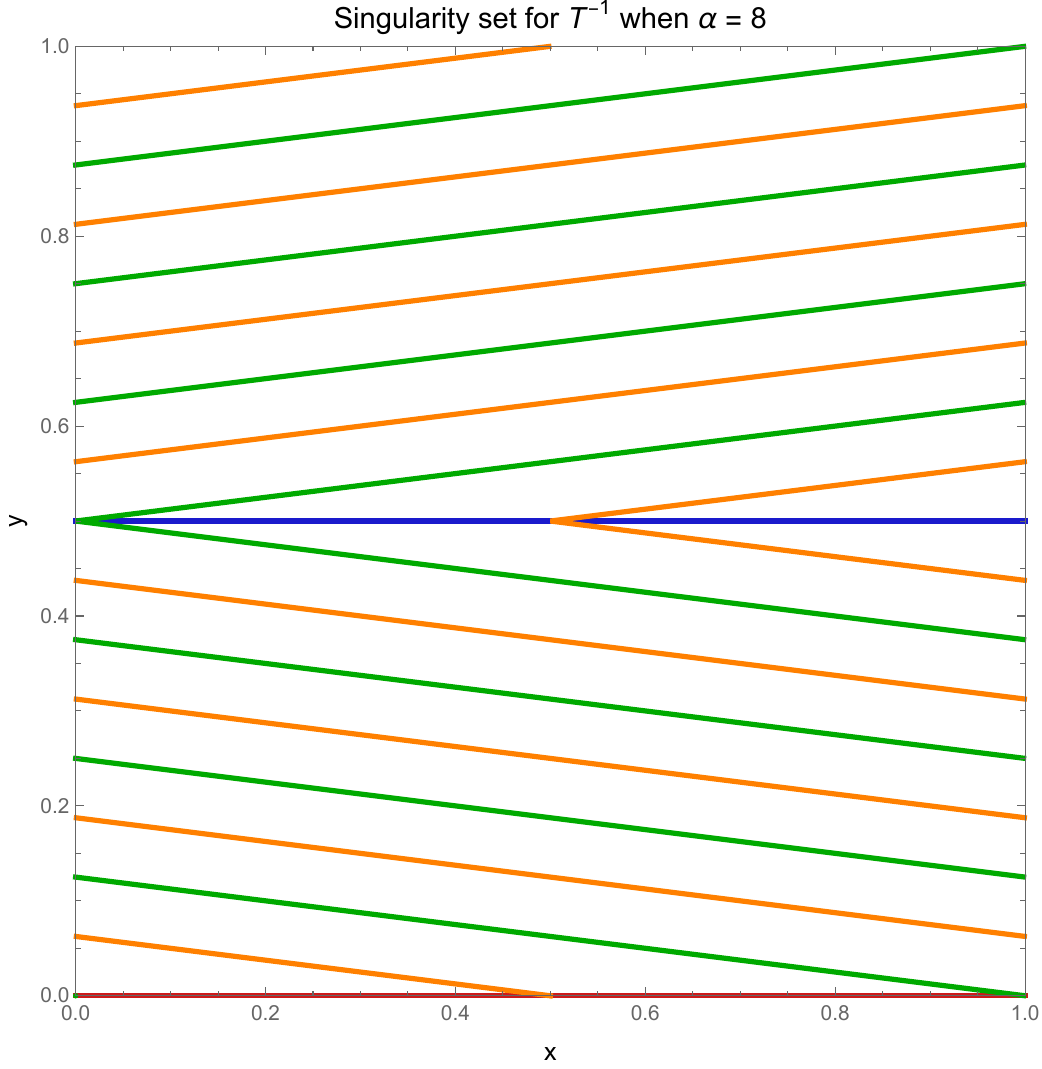}
		\includegraphics[width=8cm, height=8cm]{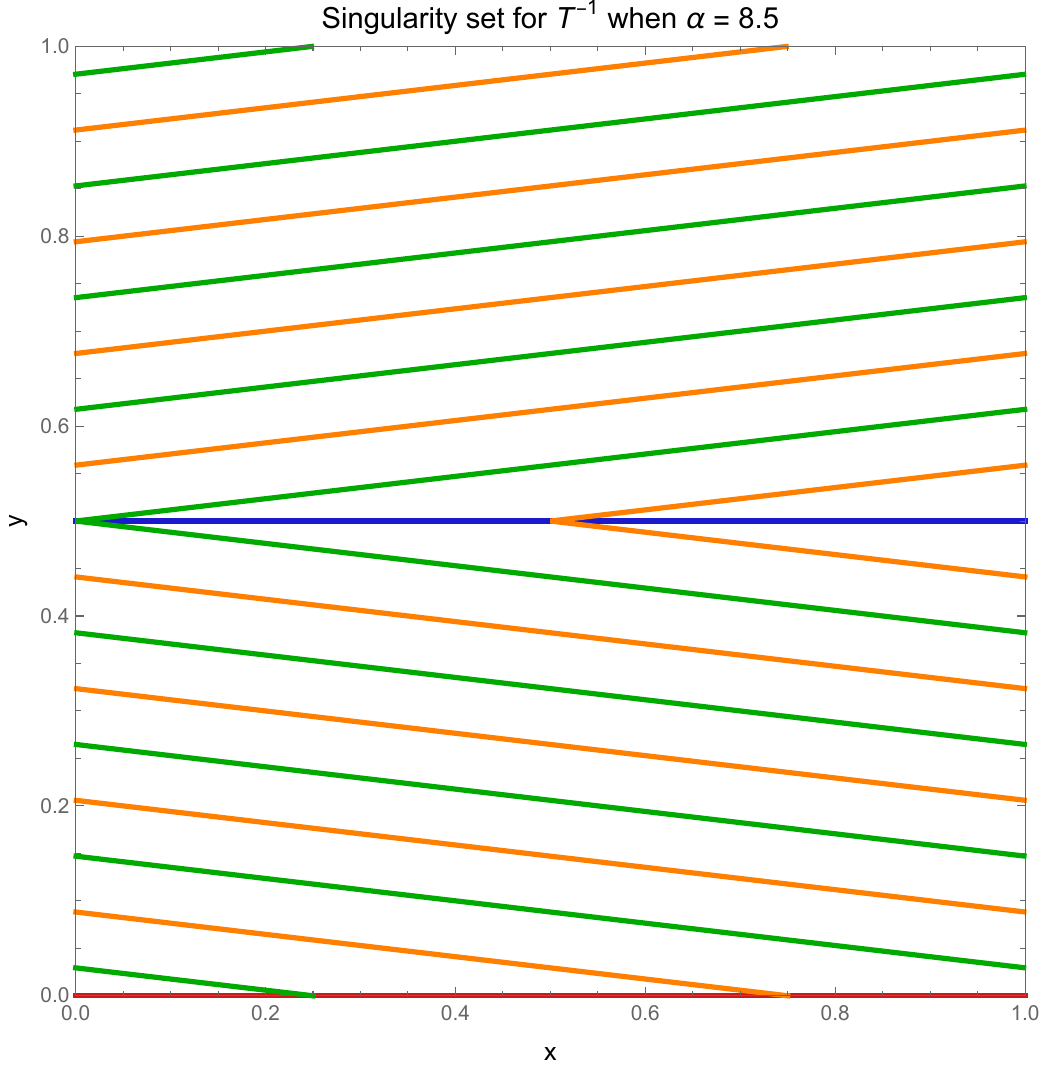}
		\caption{Plots of $\mathcal{S}^-$ for $\alpha = 8$ and $\alpha = 8.5$ obtained from \eqref{eq:S-explicit}. The green and orange lines show $T_2(\{x=0\})$ and $T_2(\{x=1/2\})$, respectively.}
		\label{fig:backward8}
	\end{figure}
	
	Estimating quantities such as $\|\mathcal{L} f\|_s = \|f \circ T^{-1}\|_s$ in terms of the norms of $f$ requires understanding the properties of $T^{-1}(W)$ for $W \in \Sigma$. Given an admissible segment $W$, we define a partition (up to finitely many removed points) of $T^{-1}(W)$ into disjoint line segments $\Psi(W) \subseteq \Sigma$ as follows. Since the tangent vector of $W$ lies in $C_s$ and $\mathcal{S}^-$ is a union of line segments in $C_u$, it follows that $W \cap \mathcal{S}^-$ is a finite set. Let $\{W_i'\}_{i=1}^M$ be the decomposition of $W \setminus \mathcal{S}^-$ into its connected components. For each $1 \le i \le M$, $W_i'$ is contained in just one element of $\{\mathcal{C}_i\}_{i=1}^4$, and hence the image $T^{-1}(W_i')$ consists of a single line segment $W_i$ that is tangent to some vector in $C_s$. If $|W_i| \le 2$, then we declare $W_i \in \Psi(W)$. If instead $|W_i| > 2$, then we subdivide $W_i$ into shorter segments $\{W_{ij}\}$ with $|W_{ij}| \in (1,2)$ and include each $W_{ij}$ in $\Psi(W)$. It will be convenient at times to refer to the collection $\{W_i\}_{i=1}^M$ obtained prior to subdivision in the construction above, which we call the \textit{minimal decomposition} of $T^{-1}(W)$ into line segments.
	
	\begin{remark}\label{rem:psin}
		Having defined $\Psi(W)$ as above, it is natural and often necessary to consider subsequent generations of line segments that provide a decomposition of $T^{-k}(W)$ for $k \ge 2$. These are defined by setting $\Psi_1(W) = \Psi(W)$ and 
		\begin{equation}\label{eq:generationrem}
			\Psi_k(W) = \bigcup_{W_i \in \Psi_{k-1}(W)} \Psi(W_i) \quad \forall k \ge 2.
		\end{equation}
		Since our proof of Proposition~\ref{prop:UniformMixing} is based on establishing one-step estimates for the transfer operator, we will not need these generations now, but they will play a crucial role in Section~\ref{sec:projected}.
	\end{remark}
	
	For $W \in \Sigma$, we further decompose $\Psi(W)$ into short and long curves by defining  
	\begin{equation}\label{eq:shortdef1}
		S(W) = \{W_i \in \Psi(W): |W_i| < 1\}
	\end{equation}
	and
	\begin{equation} 
		L(W) = \Psi(W) \setminus S(W) = \{W_i \in \Psi(W):|W_i| \in [1,2]\}.
	\end{equation}
	The following lemma describes some properties of $\mathcal{S}^-$ and $S(W)$ that are clear from the inspection of Figure~\ref{fig:backward8}. We write $\# U$ to denote the cardinality of a finite set $U$.
	
	\begin{lemma} \label{lem:complexityfact1}
		The statements below hold for all $W \in \Sigma$ and $\alpha$ sufficiently large.
		\begin{itemize}
			\item[(a)] If $|W| \le (4\alpha)^{-1}$, then $\# (W \cap \mathcal{S}^-) \le 3$.
			\vspace{0.1cm}
			\item[(b)] $\#S(W) \le 10$.
		\end{itemize}
	\end{lemma}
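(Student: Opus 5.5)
The plan is to work directly from the explicit description \eqref{eq:S-explicit} of $\mathcal{S}^-$. It consists of the two horizontal circles $\{y=0\}$, $\{y=1/2\}$ and the four spanning curves of slope $\pm\alpha^{-1}$ forming $T_2(\{x=0\})\cup T_2(\{x=1/2\})$. Since $W\in\Sigma$ has tangent in $C_s$, it is nearly vertical, and its vertical extent satisfies $|\Delta y|\ge |W|(1+16/\alpha)^{-1/2}$.

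For (a), the first step is to bound how many segments of $\mathcal{S}^-$ a short nearly vertical segment can meet. Each line of $\mathcal{S}^-$ has slope of modulus at most $\alpha^{-1}$ and $W$ has slope at least $\sqrt{\alpha}/4$ in modulus, so $W$ meets each individual line at most once (a line segment meets another non-parallel line segment at most once). Next, parallel spanning curves of the same slope sign are separated by perpendicular distance $(2\sqrt{1+\alpha^2})^{-1}$. Measured vertically, this is a vertical gap of $1/2$ between neighboring curves in the same family, and $\approx 1/(2\alpha)$ or more between crossing families away from intersection points. The horizontal lines $\{y=0\}$ and $\{y=1/2\}$ are $1/2$ apart.

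Since $|W|\le (4\alpha)^{-1}$ forces $|\Delta x|\le 4|\Delta y|/\sqrt\alpha\ll\alpha^{-1}$ and $|\Delta y|\le(4\alpha)^{-1}$, I would carry out the count by tracking vertical coordinates. Over the horizontal window of $W$, each line of $\mathcal{S}^-$ is essentially a graph $y=\ell_j(x)$ varying by $O(\alpha^{-1}|\Delta x|)$. Within a vertical window of length $\le(4\alpha)^{-1}$, $W$ can cross at most one line from each of the three families: horizontal, slope $+\alpha^{-1}$, and slope $-\alpha^{-1}$. This holds because consecutive lines in a family are at vertical distance $\ge 1/2$ when the family is restricted to a fixed $x$. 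Hence $\#(W\cap\mathcal{S}^-)\le 3$. The one point to check is that the spanning curves really do have vertical spacing $1/2$ within each slope family. From $T_2(\{x=0\})=\{x=\alpha|y-1/2|\}$, for fixed $x$ the solutions $y$ are spaced by $1/\alpha$ in total across both slopes, which is exactly the stated perpendicular distance $\approx\alpha^{-1}$. So I would instead do a direct count: within vertical extent $(4\alpha)^{-1}<1/\alpha$, $W$ meets at most one green and one orange curve of each slope, plus possibly one horizontal line. Bookkeeping of which slopes can co-occur near a given $x$ then reduces this to at most $3$. I expect this combinatorial case check near the triple-intersection points to be the main obstacle, and I would handle it by a small case analysis on whether $W$ lies within $O(\alpha^{-1})$ of one of the four triple points.

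For (b), I would note that $T^{-1}$ expands vectors in $C_s$ by a factor $\ge\alpha^2/2$ by \eqref{eq:backwardinvariant}. Hence a component $W_i'$ of $W\setminus\mathcal{S}^-$ gives a short curve $W_i$ (length $<1$) only if $|W_i'|\le 2\alpha^{-2}$. Subdivided pieces of long curves all have length in $(1,2)$, so they are never short, and the only remaining short pieces come from components $W_i'$ of $W$ that are themselves short (or a final subdivided remainder, which by construction does not occur). A component with $|W_i'|\le 2\alpha^{-2}$ is either one of the two end pieces of $W$ or a piece between two consecutive intersection points at distance $\le 2\alpha^{-2}$. Two such close consecutive intersections on a nearly vertical segment must occur within $O(\alpha^{-2})$ of a crossing point of two lines of $\mathcal{S}^-$, since otherwise consecutive lines are at vertical distance $\gtrsim\alpha^{-1}$.

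The crossing points of $\mathcal{S}^-$ are finitely many, and their number is bounded independently of $\alpha$ in the relevant sense. The key point is that, since $|W|\le 2$, the number of crossing points within horizontal distance $O(\alpha^{-1/2})$ of $W$ is bounded. Applying (a) near each such crossing contributes at most $2$ short interior pieces. I would verify that the total, including the $2$ end pieces, is at most $10$, and this is where I expect to rely most on the picture in Figure~\ref{fig:backward8}.
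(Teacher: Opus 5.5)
Part (a) has a genuine gap. Your first argument is the right one, and it is essentially the paper's: $W$ meets at most one curve from each of three families (horizontal, slope $+\alpha^{-1}$, slope $-\alpha^{-1}$). But you justified it with a vertical gap of $1/2$, which is false. Your correction then counts at most one green \emph{and} one orange curve of each slope plus one horizontal line, which only gives $5$. The reduction to $3$ is deferred to a case analysis near the four triple points that you never carry out. That analysis could not suffice anyway, because meeting a green and an orange curve of the same slope is an issue away from the triple points too.

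The missing observation is that green and orange curves of a given slope are parallel and interleave:
\begin{itemize}
\item in $\{0<y<1/2\}$ the non-horizontal part of $\mathcal{S}^-$ is $\{x+\alpha y\equiv \alpha/2 \pmod{1/2}\}$;
\item in $\{1/2<y<1\}$ it is $\{x-\alpha y\equiv -\alpha/2 \pmod{1/2}\}$.
\end{itemize}
So adjacent same-slope curves are at perpendicular distance $(2\sqrt{1+\alpha^2})^{-1}\approx(2\alpha)^{-1}$. This is a vertical spacing of $1/(2\alpha)$, not $1/2$ or $1/\alpha$ per family. After lifting to $\R^2$, a segment meeting two distinct parallel lines has length at least their distance. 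Since $|W|\le(4\alpha)^{-1}<(2\sqrt{1+\alpha^2})^{-1}<1/2$, $W$ meets at most one curve from each family, so $\#(W\cap\mathcal{S}^-)\le 3$ with no case analysis.

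For (b), your route of localizing short pieces near crossings differs from the paper's and can be closed, but as written it is a sketch with two errors. First, the two consecutive intersection points lie within $O(\alpha^{-1})$ of the crossing, not $O(\alpha^{-2})$, because the curves cross at angle $\approx\alpha^{-1}$. Second, counting crossing points of $\mathcal{S}^-$ near $W$ is the wrong count. There are only four such points, but a $W$ of length $2$ can pass the same one twice.

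A clean way to finish is as follows.
\begin{itemize}
\item The two endpoints of a short interior piece are at distance $\le 2\alpha^{-2}$, so they cannot lie on parallel curves.
\item The two slanted families live in different strips, so one endpoint must lie on $\mathcal{H}=\{y=0\}\cup\{y=1/2\}$.
\item $W$ crosses $\mathcal{H}$ at most $4$ times, and each crossing is an endpoint of at most $2$ pieces.
\item Adding the $2$ end pieces of $W$ gives at most $10$.
\end{itemize}
This is the paper's proof in different form. The paper cuts $W$ along $\mathcal{H}$ into at most $5$ components and notes that inside each open strip all remaining singularity curves are parallel with stable-curve separation $\ge(2\sqrt{1+\alpha^2})^{-1}$. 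Hence every interior piece pulls back under $T^{-1}$ to length $\ge\alpha/8\ge 1$, and only the two end pieces of each component can be short. That version needs neither part (a), the triple points, nor the figure.
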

	
	\begin{proof}
		The angle between any two directions in $C_s$ and $C_u$ tends to $\pi/2$ as $\alpha \to \infty$, and so Part (a) follows from the fact that the perpendicular distance between the adjacent parallel line segments in Figure~\ref{fig:backward8} is $(2\sqrt{1+\alpha^2})^{-1} > (4\alpha)^{-1}$. We now prove (b). Let $\mathcal{H} = \{y=0\} \cup \{y=1/2\}$ denote the two horizontal lines contained in $\mathcal{S}^-$. Fix $W \in \Sigma$ and let $\{W_i\}_{i=1}^I \subseteq \Sigma$ be the decomposition of $W \setminus \mathcal{H}$ into its connected components. Note that since $|W| \le 2$, we have $I \le 5$. Moreover, from $\mathcal{H} \subseteq \mathcal{S}^-$ and the construction of $\Psi(W)$ it follows that
		\begin{equation} \label{eq:shortcountsum}
			\# S(W) = \sum_{i=1}^I \# S(W_i).
		\end{equation}
		We now estimate $\# S(W_{i_0})$ for a fixed $1 \le i_0 \le I$. Let $\mathcal{S}^-_\ell = \mathcal{S}^{-} \cap \{0 < y < 1/2\}$ and $\mathcal{S}^-_u = \mathcal{S}^{-} \cap \{1/2 < y < 1\}$. For $U \subseteq \T^2$, we define $d_{C_s}(U)$ as the minimal stable-curve length needed to connect two distinct points of $U$. Precisely,
		\begin{equation}\label{eq:unstabledistance}
			d_{C_s}(U) = \inf_{\substack{z_1,z_2 \in U \\ z_1 \neq z_2}}d_s(z_1,z_2), \quad \text{where} \quad d_s(z_1,z_2) = \inf\{|W|: W\in \Sigma, \text{ } z_1,z_2 \in W\}.
		\end{equation}
		We then have 
		\begin{equation} \label{eq:Sunstabledistance}
			d_{C_s}(\mathcal{S}_{u}^-) \ge \frac{1}{2\sqrt{1+\alpha^2}}, \quad d_{C_s}(\mathcal{S}_{\ell}^-) \ge \frac{1}{2\sqrt{1+\alpha^2}}.
		\end{equation}
		By the definition of $\{W_i\}_{i=1}^I$, only one of $W_{i_0} \cap \mathcal{S}_u^-$ and $W_{i_0} \cap \mathcal{S}_\ell^-$ can be nonempty. Thus, by \eqref{eq:Sunstabledistance} and \eqref{eq:backwardinvariant}, all except possibly two segments $V$ in the minimal decomposition of $T^{-1}(W_{i_0})$ satisfy
		$$ |V| \ge \frac{1}{2\sqrt{1+\alpha^2}} \frac{\alpha^2}{2} \ge \frac{\alpha}{8} \ge 1. $$
		Consequently, $\# S(W_{i_0}) \le 2$, with the short segments occurring only from intersections with $\mathcal{S}^-$ near the ends of $W_{i_0}$. Since $I \le 5$ and $i_0$ was arbitrary, $\# S(W) \le 10$ now follows from \eqref{eq:shortcountsum}.
	\end{proof}
	
	By construction, $D T$ exists and is constant along each $W_i \in \Psi(W)$. We can then define the \textit{stable Jacobian} of $T$ along $W_i$ by 
	\begin{equation} \label{eq:StableJacobian}
		|J_{W_i} T|
		:= |(D T)(z) v_i|,
	\end{equation}
	where $z$ is any point on $W_i$ and $v_i \in C_s$ is the unit tangent vector that defines $W_i$. With the definition \eqref{eq:StableJacobian}, for any $W \in \Sigma$, $f \in W^{1,\infty}(\T^2)$, and $\varphi \in C^1(W)$ we have
	$$ \int_W \mathcal{L} f \varphi \, \dee m_W = \sum_{W_i \in \Psi(W)} |J_{W_i} T| \int_{W_i} f \varphi \circ T \, \dee m_{W_i}. $$
	It will thus be necessary to estimate $\sum_{W_i \in \Psi(W)}|J_{W_i}T|$ and related quantities in the proofs of Propositions~\ref{prop:StableUnstable} and~\ref{prop:OneStep}. The bounds we require are recorded in the following lemma.
	
	\begin{lemma} \label{lem:ComplexityOneStep}
		Fix $W \in \Sigma$ and let $0 \le \eta \le 1-\beta$. Then, 
		\begin{equation} \label{eq:ComplexityOneStep1}
			\sum_{W_i \in S(W)} |W_i|^\eta|J_{W_i} T| \leqc \alpha^{-2\beta}|W|^\eta
		\end{equation}
		and
		\begin{equation} \label{eq:ComplexityOneStep2}
			\sum_{W_i \in \Psi(W)} |W_i|^\eta |J_{W_i} T| \leqc \alpha^{-2\beta}|W|^\eta + |W|.
		\end{equation}
	\end{lemma}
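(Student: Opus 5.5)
The key quantitative input is the stable Jacobian bound: each $W_i \in \Psi(W)$ lies in $\T^2\setminus\mathcal{S}^+$ with tangent in $C_s$, so \eqref{eq:forwardcontract0} gives
\[
|J_{W_i}T| \le 2\alpha^{-2}.
\]
Both estimates then reduce to counting pieces and comparing lengths.

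\textbf{Short pieces, \eqref{eq:ComplexityOneStep1}.} By Lemma~\ref{lem:complexityfact1}(b) there are at most $10$ short pieces. I first check that $|W_i| \le 1$ for each of them, and that $|W_i| \le |T^{-1}(W)| \le (\alpha^2+10\alpha^{3/2})|W|$ by \eqref{eq:leadingorder2}. This gives
\[
|W_i|^\eta |J_{W_i}T| \leqc \alpha^{-2}\min\bigl(1, \alpha^2|W|\bigr)^\eta .
\]
I then split on the size of $|W|$.
\begin{itemize}
\item If $|W| \ge \alpha^{-2}$, I bound $\alpha^{-2}\cdot 1 \le \alpha^{-2}\alpha^{2(1-\eta)}|W|^{\eta}\cdot\alpha^{-2(1-\eta)}$. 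This reduces to the inequality $\alpha^{-2} \le \alpha^{-2\beta}|W|^\eta$, which holds since $\alpha^{-2+2\beta} \le \alpha^{-2\eta}\le |W|^\eta$ when $\eta\le 1-\beta$ and $|W|\ge\alpha^{-2}$.
\item If $|W| < \alpha^{-2}$, I get the bound $\alpha^{-2+2\eta}|W|^\eta \le \alpha^{-2\beta}|W|^\eta$, again using $\eta \le 1-\beta$.
\end{itemize}
Summing over at most $10$ pieces gives \eqref{eq:ComplexityOneStep1}.

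\textbf{All pieces, \eqref{eq:ComplexityOneStep2}.} It remains to handle $L(W)$. For $W_i \in L(W)$ we have $|W_i|\in[1,2]$, so $|W_i|^\eta \le 2 \le 2|W_i|$. Since $T$ maps the disjoint pieces $W_i$ into $W$, and $J_{W_i}T$ is the constant length distortion along $W_i$,
\[
\sum_{W_i\in L(W)} |W_i|^\eta|J_{W_i}T| \le 2\sum_{W_i} |J_{W_i}T|\,|W_i| = 2\sum_{W_i} |T(W_i)| \le 2|W|.
\]
Adding \eqref{eq:ComplexityOneStep1} then gives \eqref{eq:ComplexityOneStep2}.

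\textbf{Main obstacle.} The only delicate point is the short-curve estimate when $W$ is moderately long. There one must use the bound $|W_i|\le 1$, not the length comparison, and verify that the exponent bookkeeping closes exactly under the condition $\eta\le 1-\beta$. That condition is the reason for the hypothesis in the statement.
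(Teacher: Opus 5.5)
Your proof is correct and is essentially the paper's argument: it uses the same Jacobian bound $|J_{W_i}T|\le 2\alpha^{-2}$, the same count $\#S(W)\le 10$ for the short pieces, and the same treatment of $L(W)$ by comparing $|W_i|^\eta$ with $|W_i|$ and using $\sum_i|T(W_i)|\le|W|$. The only difference is the bookkeeping for the short pieces, where the paper writes $|W_i|^\eta|J_{W_i}T|=|T(W_i)|^\eta|J_{W_i}T|^{1-\eta}\le(2\alpha^{-2})^{1-\eta}|W|^\eta$ with no case split; in fact your own comparison $|W_i|\lesssim\alpha^2|W|$ already gives $\alpha^{-2(1-\eta)}|W|^\eta\le\alpha^{-2\beta}|W|^\eta$ for every $|W|$, so your closing claim that one ``must'' use $|W_i|\le 1$ is mistaken (also, the first displayed inequality in your case $|W|\ge\alpha^{-2}$ simplifies to $\alpha^{-2}\le\alpha^{-2}|W|^\eta$ and is wrong as written, although the reduction you state immediately after it is correct).
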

	
	\begin{proof}
		By \eqref{eq:forwardcontract0}, for every $W_i \in \Psi(W)$ we have \begin{equation}\label{eq:forwardcontract}
			|J_{W_i} T| \le \left(\frac{\alpha^2}{2}\right)^{-1}.
		\end{equation}
		Using the identity $|W_i| |J_{W_i} T| = |T(W_i)|$, \eqref{eq:forwardcontract}, H\"{o}lder's inequality, and Lemma~\ref{lem:complexityfact1}(b) we obtain
		\begin{align} 
			& \sum_{W_i \in S(W)}|W_i|^\eta |J_{W_i}T| =\sum_{W_i \in S(W)} |T(W_i)|^\eta |J_{W_i} T|^{1-\eta} \le \left(\frac{\alpha^2}{2}\right)^{\eta-1}  \sum_{W_i \in S(W)} |T(W_i)|^\eta \\ 
			& \qquad \le \left(\frac{\alpha^2}{2}\right)^{\eta-1} 10^{1-\eta} \left(\sum_{W_i \in S(W)} |T(W_i)|\right)^\eta \le \left(\frac{\alpha^2}{20}\right)^{-\beta} |W|^\eta \leqc \alpha^{-2\beta}|W|^\eta,
		\end{align}
		which proves \eqref{eq:ComplexityOneStep1}. Next, observe that 
		\begin{equation}\label{eq:ComplexityOneStep3}
			\sum_{W_i \in L(W)}|W_i|^\eta |J_{W_i}T| = \sum_{W_i \in L(W)} |W_i|^{\eta-1} |J_{W_i}T||W_i| \le \sum_{W_i \in L(W)} |T(W_i)| \le |W|,
		\end{equation}
		where in the first inequality we used that $ |W_i|^{\eta-1} \leq 1$ since $\eta-1 < 0$ and $|W_i| \ge 1$ for each $W_i \in L(W)$. Because
		\begin{equation} 
			\sum_{W_i \in \Psi(W)} |W_i|^\eta |J_{W_i} T| = \sum_{W_i \in L(W)}|W_i|^\eta |J_{W_i}T| + \sum_{W_i \in S(W)}|W_i|^\eta |J_{W_i}T|,
		\end{equation}
		the bounds \eqref{eq:ComplexityOneStep3} and \eqref{eq:ComplexityOneStep1} together imply \eqref{eq:ComplexityOneStep2}.
	\end{proof}
	
	\subsection{Proofs of contraction estimates} \label{sec:contractionproofs}
	
	In this section, we prove Propositions~\ref{prop:StableUnstable} and~\ref{prop:OneStep}.
	
	\subsubsection{Estimate of the stable norms}
	
	\begin{proof}[Proof of Proposition~\ref{prop:StableUnstable}(a)]
		Let $f \in W^{1,\infty}(\T^2)$, $W \in \Sigma$, and $\varphi \in C^\beta(W)$ satisfy $|\varphi|_{C^\beta(W)} \le |W|^{-p}$. For each $W_i \in \Psi(W)$, we define 
		\begin{equation} 
			\bar{\varphi_i} \circ T = \frac{1}{|W_i|} \int_{W_i} \varphi \circ T \, \dee m_{W_i}.
		\end{equation}
		Then, we have
		\begin{align}
			\int_{W} \mathcal{L} f \varphi \dee m_W & = \sum_{W_i \in \Psi(W)} |J_{W_i} T|\int_{W_i} f \varphi \circ T \, \dee m_{W_i} \\ 
			& = \sum_{W_i \in \Psi(W)}|J_{W_i}T| \left(\int_{W_i} f (\varphi - \bar{\varphi_i})\circ T \, \dee m_{W_i} + \int_{W_i} f \bar{\varphi_i}\circ T \, \dee m_{W_i}\right) \\ 
			&:=I_1 + I_2\label{eq:Stable1}
		\end{align}
		We will bound $I_1$ using the strong stable norm, and therefore must estimate $|(\varphi - \bar{\varphi_i})\circ T|_{C^\beta(W_i)}$. To this end, first observe that since $\bar{\varphi_i}\circ T$ is constant on $W_i$, by \eqref{eq:forwardcontract0} we have 
		\begin{align}
			[(\varphi-\bar{\varphi_i})\circ T)]_{\beta,W_i} & = \sup_{z, z_2 \in W_i} \frac{|\varphi\circ T(z_1) - \varphi \circ T(z_2)|}{d_{W_i}(z_1,z_2)^\beta} \\ 
			& = \sup_{z_1, z_2 \in W_i} \frac{|\varphi\circ T(z_1) - \varphi \circ T(z_2)|}{d_{W}(T(z_1),T(z_2))^\beta}\left(\frac{d_{W}(T(z_1),T(z_2))}{d_{W_i}(z_1,z_2)}\right)^\beta \\
			& \le |\varphi|_{C^\beta(W)}\left(\frac{\alpha^2}{2}\right)^{-\beta}. \label{eq:Stable2}
		\end{align}
		Moreover, 
		\begin{align}
			\sup_{z \in W_i}|(\varphi - \bar{\varphi_i})\circ T(z)| & = \sup_{z \in W_i} \frac{1}{|W_i|} \left|\int_{W_i} (\varphi \circ T(z) - \varphi \circ T(\bar{z}))\dee m_{W_i}(\bar{z})\right| \\
			& \le \sup_{z,\bar{z} \in W_i}|\varphi \circ T(z) - \varphi \circ T(\bar{z})| \le |\varphi|_{C^\beta(W)} \sup_{z,\bar{z} \in W_i}d_{W}(T(z),T(\bar{z}))^{\beta} \\ 
			& \le |W_i|^\beta \left(\frac{\alpha^2}{2}\right)^{-\beta} |\varphi|_{C^\beta(W)} \le 2\left(\frac{\alpha^2}{2}\right)^{-\beta}|\varphi|_{C^\beta(W)}. \label{eq:Stable3}
		\end{align}
		Combining \eqref{eq:Stable2} and \eqref{eq:Stable3} we have shown that 
		\begin{equation} \label{eq:AveError1}
			|(\varphi - \bar{\varphi_i})\circ T|_{C^\beta(W_i)} \le 3|W|^{-p} \left(\frac{\alpha^2}{2}\right)^{-\beta}. 
		\end{equation}
		Therefore, by the definition of the strong stable norm and \eqref{eq:ComplexityOneStep2} applied with $\eta = p$, we have 
		\begin{align} 
			I_1 = \sum_{W_i \in \Psi(W)} |J_{W_i} T| \int_{W_i} f (\varphi - \bar{\varphi_i}) \circ T \, \dee m_{W_i} &\le 3 |W|^{-p} \left(\frac{\alpha^2}{2}\right)^{-\beta}\|f\|_{s} \sum_{W_i \in \Psi(W)} |W_i|^p |J_{W_i}T| \\ 
			& \leqc \alpha^{-2\beta}\|f\|_{s}. \label{eq:I1}
		\end{align}
		For $I_2$, first observe that since $\bar{\varphi_i}\circ T$ is constant with 
		$$|\bar{\varphi_i}\circ T| \le \sup_{z \in W_i}|\varphi \circ T(z)|\le |\varphi|_{C^\beta(W)} \le |W|^{-p},$$ we have
		\begin{align} 
			I_2 = \sum_{W_i \in \Psi(W)} |J_{W_i} T| &\int_{W_i} f \bar{\varphi_i}\circ T \, \dee m_{W_i} \le |W|^{-p} \sum_{W_i \in \Psi(W)} |J_{W_i} T| \left|\int_{W_i} f \, \dee m_{W_i}\right| \\ 
			& = |W|^{-p} \sum_{W_i \in S(W)} |J_{W_i} T| \left|\int_{W_i} f \, \dee m_{W_i}\right| + |W|^{-p} \sum_{W_i \in L(W)} |J_{W_i} T| \left|\int_{W_i} f \, \dee m_{W_i}\right| \\ 
			& \le |W|^{-p} \|f\|_s \sum_{W_i \in S(W)} |W_i|^p |J_{W_i}T| + |W|^{-p} |f|_w \sum_{W_i \in L(W)} |J_{W_i} T|.
		\end{align} 
		Applying \eqref{eq:ComplexityOneStep1} with $\eta = p$ in the first sum and \eqref{eq:ComplexityOneStep3} with $\eta = 0$ to the second sum, we obtain 
		\begin{equation} \label{eq:I2}
			I_2 \leqc \alpha^{-2\beta}\|f\|_{s} + |W|^{1-p}|f|_w \leqc \alpha^{-2\beta}\|f\|_{s} + |f|_w.
		\end{equation}
		Putting \eqref{eq:I1} and \eqref{eq:I2} into \eqref{eq:Stable1} completes the proof.
	\end{proof}
	
	We next prove Proposition~\ref{prop:StableUnstable}(b). Let us first note that the bound would be immediate from the computations above if $f \circ \phi_{t,0}^\alpha$ were replaced with $\mathcal{L}_\alpha f$. Indeed, by \eqref{eq:ComplexityOneStep2} applied with $\eta = 0$, for $W \in \Sigma$ and $\varphi \in C^1(W)$ we have 
	\begin{align}
		\int_W \mathcal{L}f \varphi \, \dee m_W & = \sum_{W_i \in \Psi(W)} |J_{W_i} T| \int_{W_i} f \varphi \circ T \, \dee m_{W_i} \\ 
		& \leqc |f|_w \sup_{W_i \in \Psi(W)} |\varphi \circ T|_{C^1(W_i)} \sum_{W_i \in \Psi(W)} |J_{W_i} T| 
		\leqc |f|_w \sup_{W_i \in \Psi(W)} |\varphi \circ T|_{C^1(W_i)}.
	\end{align}
	The same argument used to establish \eqref{eq:Stable2} gives 
	$$[\varphi \circ T]_{1,W_i} \le |\varphi|_{C^1(W)} \left(\frac{\alpha^2}{2} \right)^{-1},$$
	so that $\sup_{W_i \in \Psi(W)}|\varphi \circ T|_{C^1(W_i)} \leqc |\varphi|_{C^1(W)}$ and $|\mathcal{L}f|_w \leqc |f|_w$ follows. The main point in the proof below is that since $t-1/2 \ge \alpha^{-1/2}$, the map $\phi_{t,0}^\alpha$ is still a composition of two shears that have large amplitude. While the amplitudes are not the same, this ensures that $\phi_{t,0}^\alpha$ has the same essential features as $T_\alpha^{-1}$.
	
	\begin{proof}[Proof of Proposition~\ref{prop:StableUnstable}(b)]
		Fix $t \in [1/2 + \alpha^{-1/2}, 1]$ and let $\tilde{\alpha} = 2\alpha(t-1/2) \ge 2\sqrt{\alpha} \gg 1$. Then, we have 
		$$\phi_{t,0}^\alpha = T_1^{-1} \circ \widetilde{T}_2^{-1},$$
		where 
		$$\widetilde{T}_2(x,y) = \begin{pmatrix}
			x+\tilde{\alpha}|y-1/2| \\ 
			y
		\end{pmatrix}.
		$$   
		The singularity set for $\phi_{t,0}^\alpha$ is given by 
		$$\widetilde{\mathcal{S}}^- = \{y=0\}\cup\{y=1/2\}\cup \widetilde{T}_2(\{x=0\}) \cup \widetilde{T}_2(\{x=1/2\}) $$
		and has the same qualitative features as $\mathcal{S}^{-}$. Moreover, one can check that
		\begin{equation}
			(D\phi_{0,t}^\alpha)(z)C_s \subseteq C_s \quad \forall z \in \T^2 \setminus \widetilde{\mathcal{S}}^-, \qquad \inf_{v \in C_s} \inf_{z \in \T^2 \setminus\widetilde{\mathcal{S}}^-} \frac{|(D\phi_{0,t}^\alpha)(z)v|}{|v|} \ge \frac{\alpha^{3/2}}{2}
		\end{equation}
		for $\alpha$ sufficiently large. Following the same construction of $\Psi(W)$ for $W \in \Sigma$, one can define a decomposition of $\phi_{t,0}^\alpha(W)$ that is adapted to the singularity set $\widetilde{S}^-$. The same arguments from Lemmas~\ref{lem:complexityfact1} and~\ref{lem:ComplexityOneStep} apply to show that analogue of \eqref{eq:ComplexityOneStep2} (for $\eta = 0$) holds with constants independent of $t$, and Proposition~\ref{prop:StableUnstable}(b) then follows from the argument immediately preceding this proof. 
	\end{proof}
	
	\subsubsection{Estimate of strong unstable norm}
	
	In this section, we will prove Proposition~\ref{prop:StableUnstable}(c). To estimate
	\begin{equation} \label{eq:unstableLHS}
		d_{\Sigma}(W_1,W_2)^{-p} \left|\int_{W_1} \mathcal{L} f \dee m_{W_1} - \int_{W_2} \mathcal{L}f \dee m_{W_2}\right| 
	\end{equation}
	for general $(W_1,W_2) \in \Sigma_u$ satisfying
	$d_{\Sigma}(W_1,W_2) \le \alpha^{-1}$, it is convenient to
	first treat the case where $|W_i| \le (4\alpha)^{-1}$. To this
	end, we begin with a technical lemma that provides us with
	useful decomposition of an admissible pair $(W_1,W_2) \in
	\Sigma_u$ satisfying $|W_1| = |W_2| \le
	(4\alpha)^{-1}$. We will decompose $(W_1,W_2)$ into ``well-paired'' segments $(W_{1,j},W_{2,j})$, which coincide up to translation along the local stable direction of $DT^{-1}$, and remainder segments $V_{i,j}$ whose total length is $\mathcal{O}(d_\Sigma(W_1,W_2))$.
	
	\begin{lemma} \label{lem:pairdecomp}
		Let $(W_1,W_2) \in \Sigma_u$ be such that $|W_1| = |W_2| \le (4\alpha)^{-1}$. Set $d_{\Sigma}(W_1,W_2) = \epsilon \le \alpha^{-1}$. For $\alpha$ sufficiently large, there exists a pair of decompositions
		\begin{equation} \label{eq:pairdecomp1}
			W_i = \bigcup_{j=1}^n W_{i,j} \cup \bigcup_{j=1}^{m_i} V_{i,j}, \qquad i=1,2
		\end{equation}
		with the following properties:
		\begin{itemize}
			\item $m_1, m_2, n \le 7$;
			\vspace{0.15cm}
			\item Each pair $(W_{1,j},W_{2,j})$ is in $\Sigma_u$ and $\sup_{1\le j \le n} d_{\Sigma}(W_{1,j},W_{2,j}) \le 2\epsilon$;
			\vspace{0.1cm}
			\item For each $1 \le j \le n$, their exist lifts $\widetilde{W}_{1,j},
			\widetilde{W}_{2,j} \subseteq \R^2$ of   ${W}_{1,j}$ and
			${W}_{2,j}$ from $\T^2$ to
			its covering space $\R^2$
			such that
			$\widetilde{W}_{1,j}$ and
			$\widetilde{W}_{2,j}$ form opposite sides of a parallelogram $\mathcal{P}_j$ for which $\pi (\mathcal{P}_j)$ (where $\pi:\R^2 \to \T^2$ denotes the canonical projection) is contained in a single connected component $\mathcal{C}$ of $\T^2 \setminus \mathcal{S}^-$ where $D T^{-1}$ is constant. Moreover, the two remaining sides of $\mathcal{P}_j$ are parallel to the stable eigenvector of $DT^{-1}$ in $\mathcal{C}$;
			\vspace{0.1cm}
			\item For each $i = 1,2$, the remainder pieces $V_{i,j}$ belong to $\Sigma$ and $\sup_{1 \le j\le m_i}|V_{i,j}| \le C \epsilon$ for a constant $C \ge 1$ independent of $(W_1,W_2)$, $\alpha$, and $\epsilon$. 
		\end{itemize}
	\end{lemma}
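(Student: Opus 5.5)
We propose a direct geometric construction. Lift $W_1$ to a segment $\widetilde W_1\subseteq\R^2$ and set $\widetilde W_2=\widetilde W_1+h$ with $|h|=\epsilon$, so that $W_2=\pi(\widetilde W_2)$. By Lemma~\ref{lem:complexityfact1}(a), each of $W_1$ and $W_2$ meets $\mathcal{S}^-$ in at most three points, since $|W_i|\le(4\alpha)^{-1}$. Cutting at these points splits $W_i\setminus\mathcal{S}^-$ into at most four subsegments, and each subsegment lies in a single component $\mathcal{C}_k^-$. On such a component $DT^{-1}$ is constant, so it has a well-defined stable eigenvector $e_s^{(k)}$; by \eqref{eq:eigenvectors} this vector makes angle $\lesssim\alpha^{-1}$ with $e_y$, so $e_s^{(k)}\in C_s$. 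The plan is to pair points of $W_1$ and $W_2$ by sliding along $e_s^{(k)}$. Since both segments have the same direction $v\in C_s$, which is transverse to $h$, we can write $h=a v+b e_s^{(k)}$ only where this is nondegenerate. We use a different splitting: $h=\lambda v+\mu w$ with $w\in C_u$ chosen transverse to both $v$ and $e_s^{(k)}$.

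More concretely, we define the map $\tau_k\colon\widetilde W_1\to$ line of $\widetilde W_2$ sending $z$ to the unique point $z+s e_s^{(k)}$ on the line containing $\widetilde W_2$. This map is a translation along $v$ by a fixed amount. The displacement has size $\lesssim\epsilon$, because $e_s^{(k)}$ and $v$ are both in $C_s$ while $\widetilde W_2$'s line is obtained from $\widetilde W_1$'s by a shift of size $\epsilon$ in a direction with a uniformly transverse component. The transversality here is the subtle point: $v$ and $e_s^{(k)}$ both lie in the narrow cone $C_s$, so their angle can be tiny. What we need is that the perpendicular distance between the two parallel lines is at most $\epsilon$ and that $e_s^{(k)}$ is not parallel to $v$ (if it is, take the translation along $v$ of length $\le\epsilon$, after noting $h$ may itself be decomposed). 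Hence $|s|\le\epsilon\,/\sin\angle(e_s^{(k)},v)$, which could be large. To avoid this, we first handle the degenerate ambiguity by choosing the sliding direction $e_s^{(k)}$ only when it is not too close to $v$. Otherwise $W_1$ and $W_2$ are nearly aligned along $e_s^{(k)}$, and we reparametrize $h$ as a translation along the common line direction plus a perpendicular part, which gives $d_\Sigma\le2\epsilon$ after shifting endpoints by $O(\epsilon)$. The tolerance $2\epsilon$ in the statement is meant to absorb this.

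For each subsegment $A\subseteq W_1$ lying in $\mathcal{C}_k^-$, we intersect $\tau_k(A)$ with the corresponding subsegment of $W_2$ in the same component $\mathcal{C}_k^-$. We then further trim both pieces by removing $O(\epsilon)$-length ends wherever the parallelogram swept by sliding would cross $\mathcal{S}^-$. Since $\mathcal{S}^-$ consists of lines in $C_u$, uniformly transverse to $C_s$, the parallelogram meets $\mathcal{S}^-$ only within an $O(\epsilon)$ neighborhood of the cut points, provided its $e_s$-sides have length $O(\epsilon)$. The surviving pairs are the $(W_{1,j},W_{2,j})$; they are parallel, of equal length, and at distance $\le2\epsilon$. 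The removed pieces near each of the at most three singular crossings on each segment, together with the two endpoints, give the $V_{i,j}$, each of length $\le C\epsilon$. The counts are at most $3$ cut points plus endpoints, so $n,m_i\le7$.

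The main obstacle is the uniform control of the $e_s$-sides of the parallelograms, since $v$ and $e_s^{(k)}$ are both in $C_s$ and may be nearly parallel. We would resolve this as above: we bound instead the perpendicular offset and the shift along the lines separately, and absorb the along-line mismatch of size $O(\epsilon)$ into the remainder pieces $V_{i,j}$.
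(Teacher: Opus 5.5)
\textbf{There is a genuine gap: the stable eigenvector of $DT^{-1}$ is in $C_u$, not $C_s$.} The bound \eqref{eq:eigenvectors} concerns the eigenvectors of $A=DT$. The one near $e_y$ is contracted by $DT$, and is therefore \emph{expanded} by $DT^{-1}=A^{-1}$. The contracting eigenvector of $DT^{-1}$ is the expanding eigenvector $e_u$ of $A$. It lies within angle $\arctan(2\alpha^{-1})$ of $e_x$, so it is in $C_u$ (the paper's proof says exactly this) and nearly perpendicular to $v$.

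Consequently $\sin\angle(e_s^{(k)},v)\ge 1/2$ for large $\alpha$. The slide from the line of $\widetilde W_1$ to that of $\widetilde W_2$ then has length at most $2\epsilon$, and the ``main obstacle'' you spend most of the proposal on does not exist.

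More importantly, the construction you actually carry out does not prove the statement. Sliding along a near-vertical direction, or translating perpendicular to $v$ in your degenerate case, gives parallelogram sides that are not parallel to the contracting eigenvector, so the third bullet fails. That bullet is exactly what Lemma~\ref{lem:UnstableShortCase} uses to get $d_\Sigma(T^{-1}W_{1,j},T^{-1}W_{2,j})\lesssim\epsilon\alpha^{-2}$, and it cannot be relaxed. A unit vector in $C_u$ at angle $\theta$ from the eigenvector is sent by $DT^{-1}$ to a vector of length comparable to $\alpha^{-2}+\alpha^{2}\theta$. So even a purely horizontal offset of size $\epsilon$ pulls back to one of size about $\alpha\epsilon$, and your near-vertical slide direction is stretched by about $\alpha^2$.

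\textbf{With the direction corrected, one step still needs a different justification.} The rest of your skeleton is essentially the paper's argument: cut $W_1$ at its at most three singular points, slide along the eigenvector, discard $C\epsilon$-neighborhoods of the cut points and of $\partial W_1$, and count. However, you argue that the swept parallelogram meets $\mathcal{S}^-$ only near the cut points because $\mathcal{S}^-$ is transverse to $C_s$. Now the short sides of the parallelogram and the singular lines both lie in $C_u$, so that transversality argument no longer applies.

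The paper handles this as follows. It lets $I\subseteq W_1$ be the set of $z$ for which $t\mapsto z+te_s(z)$ reaches $W_2$ at some $t\le 2\epsilon$ before meeting $\mathcal{S}^-$. It then shows that $W_1\setminus I$ lies in the $C\epsilon$-neighborhood of $K=(W_1\cap\mathcal{S}^-)\cup\partial W_1$. The key fact is that every point of $\mathcal{S}^-$ lies on a continuous piecewise-linear curve in $\mathcal{S}^-$ crossing the fundamental domain, so singular segments do not terminate inside the parallelogram. If the ray hits $\mathcal{S}^-$ first, you follow that curve back to $W_1$ and land within $O(\epsilon)$ of $z$; otherwise $z$ is $O(\epsilon)$-close to $\partial W_1$. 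Removing the $C\epsilon$-neighborhoods of the at most five points of $K$ then gives the $V_{1,j}$. The components of what remains are the $W_{1,j}\subseteq I$, and their slides are the $W_{2,j}$.
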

	
	\begin{proof}
		For $z \in W_1 \setminus \mathcal{S}^-$, let $e_s(z)$ denote a unit eigenvector of $DT^{-1}(z)$ with corresponding eigenvalue inside the unit circle. Recall from Section~\ref{sec:hyperbolic} that $e_s(z) \in C_u$, and hence it is nearly perpendicular to $W_1$ and $W_2$ for $\alpha \gg 1$. We fix the sign of $e_s(z)$ so that, if the line $t \mapsto z+t e_s(z)$ meets $W_2$ for $|t|\le 2\epsilon$, then the intersection occurs for $t>0$. Let $\gamma_z(t) = z+t e_s(z)$ and define
		$$ I = \{z \in W_1\setminus \mathcal{S}^-: \exists t_0 \in (0,2\epsilon] \text{ such that }\gamma_z(t_0) \in W_2 \text{ and } \gamma_z([0,t_0]) \cap \mathcal{S}^- = \emptyset\}. $$
		Thus, $I$ consists of the points $z \in W_1 \setminus \mathcal{S}^-$ such that $t \mapsto \gamma_z(t)$ intersects $W_2$ before $\mathcal{S}^-$. The pairs $(W_{1,j},W_{2,j})$ will be obtained by matching the connected components of $I \subseteq W_1$ with the associated intersections of $\gamma_z$ with $W_2$.
		
		To pair segements as described above, it is necessary to estimate the size of $W_1 \setminus I$. To this end, let $$K = (W_1 \cap \mathcal{S}^-) \cup \partial W_1,$$ where $\partial W_1$ is the set of endpoints of $W_1$. We claim that there exists a constant $C \ge 1$, independent of $(W_1,W_2)$, $\epsilon$, and $\alpha$, such that 
		\begin{equation} \label{eq:Icompbound}
			z \in W_1 \setminus I \implies d_{W_1}(z,K) \le C \epsilon,
		\end{equation}
		where as earlier $d_{W_1}$ denotes the arclength distance along $W_1$. Indeed, if $z \in W_1 \setminus I$, then either $\gamma_z([0,2\epsilon]) \cap W_2 = \emptyset$ or $\gamma_z([0,2\epsilon]) \cap \mathcal{S}^{-} \neq \emptyset$. If the former holds, then $d_\Sigma(W_1,W_2) \le \epsilon$ and $\gamma_z' = e_s(z) \in C_u$ together imply that $d_{W_1}(z,\partial W_1) \leqc \epsilon$. For the latter case, first observe that any point $z_0 \in \mathcal{S}^-$ lies on some piecewise linear, continuous curve $L_{z_0}$ that is contained in $\mathcal{S}^-$ and connects the two horizontal edges of the fundamental domain $[0,1)^2$ (see Figure~\ref{fig:backward8}). If $t \mapsto \gamma_z(t)$ intersects $\mathcal{S}^-$ at some time $t_0 \in [0,2\epsilon]$, then we can trace back along $L_{\gamma_z(t_0)}$ from $\gamma_z(t_0)$ to its potential intersection with $W_1$. When such an intersection exists we necessarily have $d_{W_1}(z, W_1 \cap \mathcal{S}^-) \leqc \epsilon$, while if there is no intersection then $d_{W_1}(z, \partial W_1) \leqc \epsilon$. This proves \eqref{eq:Icompbound}. 
		
		From Lemma~\ref{lem:complexityfact1}(a), we have $\#K \le 5$ and hence $K = \{z_j\}_{j=1}^{m_1} \subseteq W_1$ for some $m_1 \le 5$. We then define 
		\begin{equation} \label{eq:VijDef}
			V_{1,j} = \{z \in W_1: d_{W_1}(z,z_j) \le C \epsilon\},
		\end{equation}
		where $C$ is as in \eqref{eq:Icompbound}, and let $\{W_{1,j}\}_{j=1}^n$ be the set of connected components of $W_1 \setminus \cup_{j=1}^m V_{1,j}$. Observe that since $m_1 \le 5$, we have $n \le 6$. To construct the associated decomposition of $W_2$, we define pairs $(W_{1,j},W_{2,j})\in \Sigma_u$ by using that \eqref{eq:Icompbound} implies that each $W_{1,j}$ is contained in $I$. In particular, for every $z \in W_{1,j}$, there exists $t_z \in [0,2\epsilon]$ such that $\gamma_z(t_z) \in W_2$ and $\gamma_z([0,t_z]) \cap \mathcal{S}^- = \emptyset$, and hence we may define 
		$$ W_{2,j} = \bigcup_{z \in W_{1,j}} \gamma_z(t_z). $$
		We then let $\{V_{2,j}\}_{j=1}^{m_2}$ be the set of connected components of $W_2 \setminus \cup_{j=1}^n W_{2,j}$ and note that, as before, $n \le 6$ implies $m_2 \le 7$. 
		
		Since $m_1 \le 5$, $n \le 6$, and $m_2 \le 7$, the
		first bullet point in the lemma statement holds. The
		second and third points are immediate from the
		construction above, while the fourth follows from
		\eqref{eq:VijDef} and the fact that
		$\sum_{j=1}^{m_2}|V_{2,j}| = \sum_{j=1}^{m_1}|V_{1,j}|
		\le 5 C \epsilon$. 
	\end{proof}
	
	The estimate of \eqref{eq:unstableLHS} in the case where $|W_1| = |W_2| \le (4\alpha)^{-1}$ is provided by the following lemma. 
	\begin{lemma} \label{lem:UnstableShortCase}
		Let $(W_1,W_2) \in \Sigma_u$ satisfy $|W_1| = |W_2| \le (4\alpha)^{-1}$ and set $d_{\Sigma}(W_1,W_2) = \epsilon \le \alpha^{-1}$ There exists a constant $C \ge 1$ that is independent of $(W_1,W_2)$,  $\alpha \gg 1$, and $\epsilon>0$ such that for every $f \in W^{1,\infty}(\T^2)$, we have
		\begin{equation}
			\frac{1}{\epsilon^p} \left|\int_{W_1} \mathcal{L} f \, \dee m_{W_1} - \int_{W_2} \mathcal{L}f \,\dee m_{W_2}\right| \le C (\alpha^{-2p}\|f\|_{u} + \|f\|_s).
		\end{equation}
	\end{lemma}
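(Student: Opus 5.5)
Apply Lemma~\ref{lem:pairdecomp} to $(W_1,W_2)$ to split each $W_i$ into well-paired pieces $W_{i,j}$, $1\le j\le n$, and remainder pieces $V_{i,j}$. The difference then becomes
\begin{equation}
\sum_{j=1}^n\left(\int_{W_{1,j}}\mathcal{L}f\,\dee m - \int_{W_{2,j}}\mathcal{L}f\,\dee m\right) + \sum_{j=1}^{m_1}\int_{V_{1,j}}\mathcal{L}f\,\dee m - \sum_{j=1}^{m_2}\int_{V_{2,j}}\mathcal{L}f\,\dee m .
\end{equation}

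\textbf{Remainder terms.} Each $V_{i,j}\in\Sigma$ has length at most $C\epsilon$. By the definition of the strong stable norm, with the test function $\varphi\equiv|V_{i,j}|^{-p}$ (so $|\varphi|_{C^\beta}\le|V_{i,j}|^{-p}$), we get
\begin{equation}
\left|\int_{V_{i,j}}\mathcal{L}f\right|\le|V_{i,j}|^p\|\mathcal{L}f\|_s\leqc\epsilon^p\|\mathcal{L}f\|_s .
\end{equation}
Proposition~\ref{prop:StableUnstable}(a) together with $|f|_w\leqc\|f\|_s$ gives $\|\mathcal{L}f\|_s\leqc\|f\|_s$. There are at most $14$ remainder pieces, so they contribute $\leqc\epsilon^p\|f\|_s$. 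Alternatively, one can pull back under $T^{-1}$ and use Lemma~\ref{lem:ComplexityOneStep} directly.

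\textbf{Well-paired terms.} Fix $j$. The parallelogram $\mathcal{P}_j$ projects into a single component $\mathcal{C}$ on which $T^{-1}$ is affine with constant derivative $A^{-1}$. Hence $T^{-1}(W_{1,j})$ and $T^{-1}(W_{2,j})$ are parallel segments of equal length, with common stable Jacobian $J:=|J T|\le 2\alpha^{-2}$. Their translation vector is the image under $A^{-1}$ of a vector parallel to the stable eigenvector $e_s$ of $A^{-1}$, of length $\le 2\epsilon\cdot(1+O(\alpha^{-1}))$. So $d_\Sigma(T^{-1}W_{1,j},T^{-1}W_{2,j})\le|\lambda_s|\cdot 3\epsilon\leqc\alpha^{-2}\epsilon\le\alpha^{-1}$.

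Change variables:
\begin{equation}
\int_{W_{1,j}}\mathcal{L}f-\int_{W_{2,j}}\mathcal{L}f = J\left(\int_{T^{-1}W_{1,j}}f-\int_{T^{-1}W_{2,j}}f\right).
\end{equation}
The images have length $J^{-1}|W_{i,j}|\le(\alpha^2/2)\cdot(4\alpha)^{-1}\le\alpha\cdot$const, which can exceed $2$. In that case, subdivide both images in parallel into at most $\leqc J^{-1}|W_{i,j}|+1$ matched pairs in $\Sigma_u$, each at distance $\leqc\alpha^{-2}\epsilon$. Apply the strong unstable norm to each pair. The total is bounded by
\begin{equation}
J\,(J^{-1}|W_{1,j}|+1)\,(C\alpha^{-2}\epsilon)^p\|f\|_u\leqc(|W_{1,j}|+\alpha^{-2})\,\alpha^{-2p}\epsilon^p\|f\|_u\leqc\alpha^{-2p}\epsilon^p\|f\|_u .
\end{equation}
Summing over $n\le7$ pieces gives $\leqc\alpha^{-2p}\epsilon^p\|f\|_u$. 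Dividing everything by $\epsilon^p$ yields the claim.

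\textbf{Main obstacle.} The delicate step is verifying that the pullback pairs really are admissible pairs at distance $\leqc\alpha^{-2}\epsilon$. This has two parts. First, the lift inside $\mathcal{P}_j$ must not wrap around, so that $T^{-1}$ is genuinely affine on the whole parallelogram; this is guaranteed by the third bullet of Lemma~\ref{lem:pairdecomp}. Second, the translation must contract by the stable eigenvalue $|\lambda_s|\approx\alpha^{-2}$ from \eqref{eq:eigenvalues}. A general vector would instead be expanded by $\alpha^2$, which is exactly why the sides of $\mathcal{P}_j$ must lie along $e_s$. The subdivision bookkeeping for long images is routine, since the Jacobian factor exactly compensates for the number of pieces.
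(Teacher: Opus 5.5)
Your proposal is correct and follows the paper's proof essentially step for step. You decompose via Lemma~\ref{lem:pairdecomp} and control the remainder pieces by $|V_{i,j}|^p\|\mathcal{L}f\|_s\lesssim\epsilon^p\|f\|_s$ using Proposition~\ref{prop:StableUnstable}(a); you then pull the well-paired pieces back through the single affine branch, so the translation is contracted by the stable eigenvalue to $\lesssim\alpha^{-2}\epsilon$, and subdivide long preimages in parallel so the Jacobian offsets the number of pieces. The only blemish is cosmetic: the upper bound on the preimage length should come from $J^{-1}\le\alpha^2+10\alpha^{3/2}$ in \eqref{eq:leadingorder2}, not from $J\le 2\alpha^{-2}$ (which only gives a lower bound on $J^{-1}$), and this plays no role in the final estimate.
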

	
	\begin{proof}
		Fix $(W_1,W_2) \in \Sigma_u$ with $|W_1| = |W_2| \le (4\alpha)^{-1}$ and let $\{W_{i,j}\}_{i=1}^n \cup \{V_{i,j}\}_{j=1}^{m_i}$ be the decomposition of $W_i$ guaranteed by Lemma~\ref{lem:pairdecomp}. Splitting the integral according to this decomposition, we have 
		\begin{equation} \label{eq:UnstableSum1}
			\begin{aligned}
				\frac{1}{\epsilon^p} \left|\int_{W_1} \mathcal{L}f \, \dee m_{W_1} - \int_{W_2} \mathcal{L}f \,\dee m_{W_2}\right| & \le \sum_{j=1}^n \frac{1}{\epsilon^p}\left|\int_{W_{1,j}} \mathcal{L} f \, \dee m_{W_{1,j}} - \int_{W_{2,j}} \mathcal{L} f \, \dee m_{W_{2,j}}\right| \\ 
				& + \sum_{i=1}^2 \sum_{j=1}^{m_i}\frac{1}{\epsilon^p}\left|\int_{V_{i,j}} \mathcal{L} f \, \dee m_{V_{i,j}}\right|.
			\end{aligned}
		\end{equation}
		We first estimate the terms involving $V_{i,j}$ using the strong stable norm. From Proposition~\ref{prop:StableUnstable}(a), it follows that $\|\mathcal{L} f\|_{s} \leqc \|f\|_s$. Therefore, by the definition of the strong stable norm and the fourth property in Lemma~\ref{lem:pairdecomp}, we have 
		$$ \left|\int_{V_{i,j}} \mathcal{L} f \,  \dee m_{V_{i,j}}\right| \le |V_{i,j}|^p\|\mathcal{L}f\|_s \leqc \epsilon^p \|f\|_s. $$
		Since $m_i \le 7$ for each $i$, we conclude 
		\begin{equation} \label{eq:RemainderEstimate1}
			\sum_{i=1}^2 \sum_{j=1}^{m_i} \frac{1}{\epsilon^p}\left|\int_{V_{i,j}} \mathcal{L} f \, \dee m_{V_{i,j}}\right| \leqc \|f\|_s.
		\end{equation}
		We now turn to the terms in \eqref{eq:UnstableSum1} that involve $W_{i,j}$. Fix $1 \le j \le n$. From the third point in Lemma~\ref{lem:pairdecomp}, there are lifts of $T^{-1}W_{1,j}$ and $T^{-1} W_{2,j}$ to $\R^2$ that form opposite sides of a parallelogram $\mathcal{P}$ whose remaining two sides are tangent to $C_u$ and have length $\leqc \epsilon \alpha^{-2}$. If $|T^{-1}W_{i,j}| \le 2$, then $(T^{-1}W_{1,j}, T^{-1}W_{2,j}):=(U_1,U_2) \in \Sigma_u$ is such that $d_{\Sigma}(U_1,U_2) \leqc \epsilon \alpha^{-2}$ and from the definition of the unstable norm we obtain 
		\begin{align}
			\frac{1}{\epsilon^p}\left|\int_{W_{1,j}} \mathcal{L} f \dee m_{W_{1,j}} - \int_{W_{2,j}} \mathcal{L} f \dee m_{W_{2,j}}\right| & = \frac{1}{\epsilon^p}|J_{U_1} T| \left|\int_{U_1} f \dee m_{U_1} - \int_{U_2} f \dee m_{U_2}\right| \\ 
			& \leqc \frac{1}{\epsilon^p} d_{\Sigma}(U_1,U_2)^p \|f\|_u 
			\leqc \alpha^{-2p}\|f\|_u. \label{eq:Paired1}
		\end{align}
		Above, we noted that $|J_{U_1} T| = |J_{U_2}T|$ since $T^{-1}$ is a single affine map on the parallelogram containing $W_{1,j}$ and $W_{2,j}$. In the event that $|T^{-1}W_{i,j}| > 2$, by splitting $\mathcal{P}$ into parallelograms with the same interior angles and projecting the long sides back down to $\T^2$, we obtain decompositions
		\begin{equation} 
			T^{-1}W_{i,j} = \bigcup_{k} U_{i,k}, \quad i=1,2
		\end{equation}
		such that $d_{\Sigma}(U_{1,k},U_{2,k}) \leqc \epsilon \alpha^{-2}$ and $|U_{i,k}| \in (1,2]$ for each $k$. Using computations similar to those in \eqref{eq:Paired1} and \eqref{eq:ComplexityOneStep3}, we then find
		\begin{align}
			\frac{1}{\epsilon^p}\left|\int_{W_{1,j}} \mathcal{L} f \,\dee m_{W_{1,j}} - \int_{W_{2,j}} \mathcal{L} f \, \dee m_{W_{2,j}}\right| &\le \frac{1}{\epsilon^p}\sum_{k}|J_{U_{1,k}}T|\left|\int_{U_{1,k}} f \, \dee m_{U_{1,k}} - \int_{U_{2,k}} f \,\dee m_{U_{2,k}}\right| \\
			& \leqc \alpha^{-2p}\|f\|_{u}\sum_{k}|J_{U_{1,k}}T| 
			\leqc \alpha^{-2p}\|f\|_{u} |W_{1,j}|. \label{eq:Paired2}
		\end{align}  
		In view of \eqref{eq:Paired1} and \eqref{eq:Paired2}, we have shown that 
		\begin{equation}\label{eq:Paired3}
			\frac{1}{\epsilon^p}\left|\int_{W_{1,j}} \mathcal{L} f \,\dee m_{W_{1,j}} - \int_{W_{2,j}} \mathcal{L} f \, \dee m_{W_{2,j}}\right| \leqc \alpha^{-2p}\|f\|_u.
		\end{equation}
		Putting \eqref{eq:RemainderEstimate1} and \eqref{eq:Paired3} into \eqref{eq:UnstableSum1} and recalling that $n \le 6$ completes the proof.
	\end{proof}
	
	We are now ready to complete the estimate of the unstable norm. Recall from the beginning of Section~\ref{sec:Sminus} that by the \textit{spanning curves} of $\mathcal{S}^{-}$ we mean the orange and green lines in Figure~\ref{fig:backward8} that connect the two vertical sides of the fundamental domain. We will call two spanning curves \textit{adjacent} if they are parallel and no other singularity curve lies between them. Note that in the proof below the notations $U_{i,j}$, $V_{i,j}$, and $W_{i,j}$ are reused and do not have exactly the same meaning as they did above. 
	
	\begin{proof}[Proof of Proposition~\ref{prop:StableUnstable}(c)]
		Let $d_{\Sigma}(W_1,W_2) = \epsilon \le \alpha^{-1}$. By
		Lemma~\ref{lem:UnstableShortCase}, it only remains to bound
		\eqref{eq:UnstableSum1} in the case where $|W_1| = |W_2| >
		(4\alpha)^{-1}$. In this situation, we split $(W_1,W_2)$ into
		pairs $(W_{1,j}, W_{2,j})$ bounded by adjacent spanning curves of $\mathcal{S}^-$, pairs $(U_{1,j}, U_{2,j})$ that satisfy the hypotheses of
		Lemma~\ref{lem:UnstableShortCase}, and short remainder
		pieces $V_{i,j}$. Precisely, we can find decompositions 
		\begin{equation} \label{eq:DecompTriple}
			W_i = \Big(\bigcup_{j=1}^n W_{i,j}\Big) \cup \Big(\bigcup_{j=1}^m U_{i,j} \Big)\cup \Big(\bigcup_{j=1}^{m_i} V_{i,j}\Big), \quad i = 1,2
		\end{equation}
		with the following properties:
		\begin{itemize}
			\item For each $1\le j \le n$, the pair $(W_{1,j}, W_{2,j})$ belongs to $\Sigma_u$, satisfies $d_{\Sigma}(W_{1,j},W_{2,j}) \le 2 \epsilon$, and $W_{i,j}$ is the portion of $W_i$ lying between two adjacent spanning curves of $\mathcal{S}^-$;
			\vspace{0.1cm}
			\item $m, m_1, m_2 \le C$ for an absolute constant $C \ge 1$ independent of $(W_1,W_2)$ and $\alpha$;
			\vspace{0.1cm}
			\item Each pair $(U_{1,j}, U_{2,j})$ is contained in
			$\Sigma_u$ and satisfies $d_{\Sigma}(U_{1,j},
			U_{2,j}) \le 2\epsilon$ and $|U_{1,j}| = |U_{2,j}|
			\le (4\alpha)^{-1}$;
			\vspace{0.1cm}
			\item Each remainder piece $V_{i,j}$ belongs to $\Sigma$ and satisfies $|V_{i,j}| \leqc \epsilon$.
		\end{itemize}
		The ``bulk'' of $(W_1,W_2)$ is comprised of the pairs $(W_{1,j},W_{2,j})$, which lie between intersections of $W_i$ with adjacent spanning curves of $\mathcal{S}^-$. The pairs $(U_{1,j}, U_{2,j})$ mainly deal with the portions of $W_i$ that lie inside of the sideways ``V'' shapes of $\mathcal{S}^-$, but they also come from the parts of $W_i$ near its endpoints that may not end on a spanning curve. The remainders $V_{i,j}$ are needed to ensure that $|U_{1,j}| = |U_{2,j}|$ for each $j$. They satisfy $|V_{i,j}| \leqc \epsilon$ because $d_\Sigma(W_1,W_2) = \epsilon$.  A typical pair $(W_1, W_2)$ and some representative elements of its associated decomposition are shown in Figure~\ref{fig:pair} below. Note that the $U_{i,j}$, displayed in yellow in Figure~\ref{fig:pair}, may need to be subdivided to ensure that they have length less than $(4\alpha)^{-1}$.  
		\begin{figure}[h]
			\begin{center}
				\includegraphics[width=12cm, height=8cm]{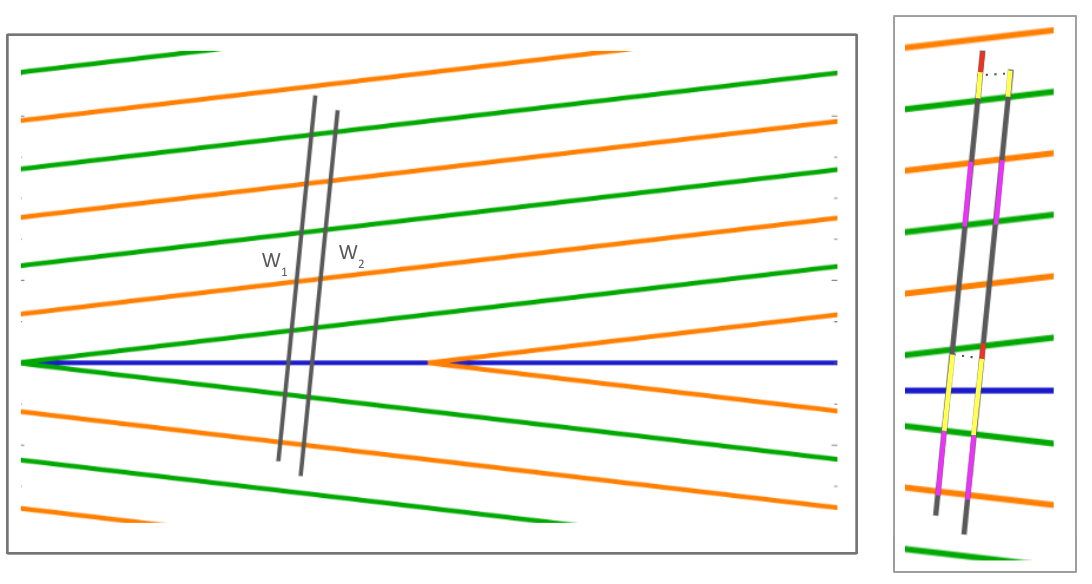}
			\end{center}
			\caption{A typical pair $(W_1,W_2)$ is shown on the left and some representative elements of the decomposition \eqref{eq:DecompTriple} are colored on the right. Two pairs $(W_{1,j}, W_{2,j})$ are shown in pink. The segments in each pink pair have length on the order of $\alpha^{-1}$, while the distance between them is  on the order of $\epsilon$. The vertical coordinates of the upper endpoints on the curves differ by roughly $\epsilon \alpha^{-1}$. Examples of the $U_{i,j}$ and $V_{i,j}$ are displayed in yellow and red, respectively.}
			\label{fig:pair}
		\end{figure}
		
		Splitting \eqref{eq:unstableLHS} according to \eqref{eq:DecompTriple}, we have 
		\begin{align}
			\frac{1}{\epsilon^p}\left|\int_{W_1} \mathcal{L}f  \, \dee m_{W_1} - \int_{W_2} \mathcal{L}f \, \dee m_{W_2}\right| & \le \frac{1}{\epsilon^p}\sum_{j=1}^n \left|\int_{W_{1,j}} \mathcal{L}f  \, \dee m_{W_{1,j}} - \int_{W_{2,j}} \mathcal{L}f \, \dee m_{W_{2,j}}\right|\\
			& + \frac{1}{\epsilon^p}\sum_{j=1}^m \left|\int_{U_{1,j}} \mathcal{L}f \,  \dee m_{U_{1,j}} - \int_{U_{2,j}} \mathcal{L}f \, \dee m_{U_{2,j}}\right| \\ 
			& + \frac{1}{\epsilon^p} \sum_{i=1}^2 \sum_{j=1}^{m_j}\left|\int_{V_{i,j}}\mathcal{L}f \, \dee m_{V_{i,j}}\right|.
		\end{align}
		The integrals involving $U_{i,j}$ can be bounded using Lemma~\ref{lem:UnstableShortCase} and those involving $V_{i,j}$ are controlled by the strong stable norm as in \eqref{eq:RemainderEstimate1}. Proceeding this way, it follows that
		\begin{equation} \label{eq:DecompTriple2}
			\frac{1}{\epsilon^p}\left|\int_{W_1} \mathcal{L}f  \dee m_{W_1} - \int_{W_2} \mathcal{L}f \dee m_{W_2}\right| \leqc \alpha^{-2p}\|f\|_u + \|f\|_s + \frac{1}{\epsilon^p}\sum_{j=1}^n \left|\int_{W_{1,j}} \mathcal{L}f  \dee m_{W_{1,j}} - \int_{W_{2,j}} \mathcal{L}f \dee m_{W_{2,j}}\right|,
		\end{equation}
		and thus it remains to estimate the integrals involving $W_{i,j}$. Fix some $1\le j \le n$. Since the orange and green lines in Figure~\ref{fig:pair} arise from the images of $\{x = 0\}$ and $\{x = 1/2\}$ under $T_2$, we see that $T_2^{-1}W_{1,j}$ and $T_2^{-1}W_{2,j}$ form opposite sides of a parallelogram whose remaining two sides are exactly vertical segments of length $\leqc \epsilon \alpha^{-1}$. As $T^{-1} = T_1^{-1} \circ T_2^{-1}$ and both the length and orientation of a vertical segment is preserved by $T_1^{-1}$, it follows that $|T^{-1}W_{i,j}| \approx \alpha$ and we may decompose $T^{-1}W_{i,j}$ into segments $\widetilde{W}_{i,j}$ such that $|\widetilde{W}_{i,j}| \in (1,2]$ and  $d_{\Sigma}(\widetilde{W}_{1,j},\widetilde{W}_{2,j}) \leqc \epsilon \alpha^{-1}$. Proceeding as in the proof of \eqref{eq:Paired2} then results in the bound 
		\begin{equation} \label{eq:Paired4}
			\frac{1}{\epsilon^p}\left|\int_{W_{1,j}} \mathcal{L} f \, \dee m_{W_{1,j}} - \int_{W_{2,j}} \mathcal{L} f \, \dee m_{W_{2,j}}\right| \leqc \alpha^{-p}|W_{1,j}| \|f\|_{u}.
		\end{equation}
		Putting \eqref{eq:Paired4} into \eqref{eq:DecompTriple2} and noting that $\sum_{j=1}^n |W_{1,j}| \le |W_1| \le 2$ completes the proof. 
	\end{proof}

	\subsubsection{Strong to weak decay estimate}
	
	We now turn to the proof of Proposition~\ref{prop:OneStep}. The key
	lemma that we need is stated below. Its proof is based on the fact
	that if $W \in \Sigma$ is bounded by adjacent spanning curves in
	$\mathcal{S}^{-}$, then the image $T^{-1}W$ can be understood
	explicitly, and in particular distributes itself uniformly over one
	half of the torus. 
	
	\begin{lemma} \label{lem:onestepkey}
		Let $W \in \Sigma$ connect two parallel spanning curves. Suppose that $W$ can be decomposed as $W = W_1 \cup W_2$ for line segments $W_1, W_2 \in \Sigma$ that are each bounded by adjacent spanning curves (see Figure~\ref{fig:adjacent}). There exists a constant $C \ge 1$ independent of $W$ and $\alpha \gg 1$ such that for all mean-zero $f \in W^{1,\infty}(\T^2)$ we have 
		\begin{equation} \label{eq:onestepkey}
			\sup_{|\varphi|_{C^1(W)} \le 1} \int_W \mathcal{L} f \varphi \, \dee m_{W} \le C \alpha^{-p} |W| \tnorm{f}.
		\end{equation}
	\end{lemma}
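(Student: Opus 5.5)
We plan to pull the integral back through $T^{-1}$ and exploit the explicit geometry of $T^{-1}W_i$. Write
\[
\int_W \mathcal{L} f\,\varphi\,\dee m_W=\sum_{i=1,2}\int_{W_i}\mathcal{L}f\,\varphi\,\dee m_{W_i}.
\]
For each $W_i$ bounded by adjacent spanning curves, $T_2^{-1}W_i$ is a nearly vertical segment running from $\{x=0\}$ to $\{x=1/2\}$ (or the reverse), up to a horizontal offset of size $O(\alpha^{-1})$. The image under $T_1^{-1}$ then winds vertically about $\alpha$ times. Hence $T^{-1}W_i$ is a single line segment of length $\approx \alpha|W_i|^{-1}\cdot|W_i|\alpha\approx\alpha$ (recall $|W_i|\approx\alpha^{-1}$). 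This segment lies in the stable cone, crossing one vertical half $\{0<x<1/2\}$ or $\{1/2<x<1\}$ of the torus horizontally with slope $O(\alpha^{-1})$ relative to vertical. Put differently, $T^{-1}W_i$ sweeps a thin strip once in $x$, at vertical speed $\approx\alpha$.

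\textbf{Step 1 (freeze the test function).} Replace $\varphi$ by its average $\bar\varphi$ on $W$. Since $|\varphi|_{C^1(W)}\le1$, the error is $\varphi-\bar\varphi=O(|W|)=O(\alpha^{-1})$ in sup norm. Its pullback $(\varphi-\bar\varphi)\circ T$ has $C^\beta$ norm on each piece of $\Psi(W_i)$ of size $\lesssim \alpha^{-1}+\alpha^{-2\beta}$, and the pieces have length in $(1,2]$. Using $\|\mathcal{L}f\|_s\lesssim\tnorm{f}$, or more directly the strong stable norm of $f$ together with \eqref{eq:ComplexityOneStep2}, this error contributes at most $\lesssim\alpha^{-\min(1,2\beta)}|W|\,\|f\|_s\lesssim\alpha^{-p}|W|\tnorm{f}$, because $p\le 1-\beta$ and the pieces carry the Jacobian $|J T|\approx\alpha^{-2}$ with total weight $\lesssim|W|$. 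It therefore remains to bound $\bigl|\int_{W_i}\mathcal{L}f\,\dee m\bigr|=|J_iT|\,\bigl|\int_{T^{-1}W_i}f\bigr|$, where $|J_iT|\approx\alpha^{-2}$ is constant on the segment.

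\textbf{Step 2 (compare with the mean over a half torus).} Cut $T^{-1}W_i$ into $K\approx\alpha$ subsegments $U_k$, each of equal length $\approx1$, vertically wrapping once. Consecutive $U_k$ are parallel translates of one another by a horizontal shift $h$ with $|h|\approx\alpha^{-1}$; the strip has width $1/2$ and is covered in $\approx\alpha$ passes. For any reference vertical-ish segment $U$ in the strip, every $U_k$ is at $d_\Sigma$-distance at most $1/2$ from $U$. We instead chain through neighbours: for each $k$, $(U_k,U_{k+1})\in\Sigma_u$ with $d_\Sigma\le\alpha^{-1}$. This gives
\[
\Bigl|\int_{U_k}f-\frac1K\sum_{l}\int_{U_l}f\Bigr|\lesssim \alpha^{-p}\cdot(\text{number of links})\cdots,
\]
which is too lossy, so the strategy needs adjusting. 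Instead, average: $\frac1K\sum_k\int_{U_k}f$ is a Riemann sum, with mesh $\approx\alpha^{-1}$ in $x$, for $\frac{1}{|h|K}\int_{\text{strip}}f$. Comparing each $\int_{U_k}f$ with the average of $\int_{U_k+s e_x}f$ over $s\in[0,|h|]$ costs $|h|^p\|f\|_u\lesssim\alpha^{-p}\|f\|_u$ per segment, by the definition of $\|\cdot\|_u$. Summing, with the weight $|J_iT|K\approx\alpha^{-2}\cdot\alpha\approx|W_i|$, gives
\[
\Bigl|\int_{W_i}\mathcal{L}f-c_i|W_i|\int_{\{x\in I_i\}}f\,\dee z\Bigr|\lesssim \alpha^{-p}|W_i|\tnorm{f},
\]
where $I_i$ is the relevant half interval and $c_i\approx 2$ up to endpoint corrections of relative size $\alpha^{-1}$.

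\textbf{Step 3 (cancellation).} This is why the lemma requires $W=W_1\cup W_2$. The two adjacent sub-strips correspond to the two halves $x\in(0,1/2)$ and $x\in(1/2,1)$: one comes from the green and one from the orange spanning family. The two main terms then combine to $c|W|\int_{\T^2}f=0$ by the mean-zero assumption. This works provided $|W_1|=|W_2|$ to leading order, which holds since the spacing of adjacent spanning curves is exactly $(2\sqrt{1+\alpha^2})^{-1}$. The mismatch is of relative order $\alpha^{-1}$, and it multiplies $\bigl|\int_{\text{half}}f\bigr|\lesssim|f|_w$.

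The main obstacle is Step 2: making the Riemann-sum comparison rigorous with equal-length parallel pairs. This requires handling the ends of $T^{-1}W_i$, where the segment does not close up a full vertical wrap and leaves leftover pieces of length $O(1)$. Each such piece carries weight only $\alpha^{-2}$, and we bound it by $\|f\|_s$, giving a total of $\alpha^{-2}\lesssim\alpha^{-p}|W|$. We also need to check that all translated segments stay inside a single component $\mathcal{C}$, so that the comparisons are valid.
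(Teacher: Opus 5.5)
Your proposal is correct and follows essentially the same route as the paper. You freeze the test function to a constant at cost $O(\alpha^{-1}|W|\,\|f\|_s)$. You then compare the average of $f$ along the $\approx\alpha$ vertical wraps of $T^{-1}W_i$ with its average over a half of the torus, sweeping the thin parallelograms between consecutive wraps (unstable norm, cost $h^p\|f\|_u$ per wrap) and absorbing the end pieces with $\|f\|_s$. Finally, you cancel the two half-torus averages using that $T_2^{-1}W_1$ and $T_2^{-1}W_2$ lie on opposite sides of $\{x=0\}$ or $\{x=1/2\}$, together with $\int f=0$.

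One correction in Step 1: $p\le 1-\beta$ does not give $p\le 2\beta$, so $\alpha^{-\min(1,2\beta)}\lesssim\alpha^{-p}$ can fail as justified. The step is still fine: since $|\varphi|_{C^1(W)}\le 1$, the H\"older seminorm of $\varphi\circ T$ on each piece is $O(\alpha^{-2})$. Hence $(\varphi-\bar\varphi)\circ T$ has $C^\beta$ norm $O(\alpha^{-1})$, and the error is $O(\alpha^{-1}|W|\,\|f\|_s)$, exactly as in the paper. Also, $T_2^{-1}W_i$ is nearly horizontal, not nearly vertical.
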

	
	\begin{figure}
		\begin{center}
			\includegraphics[width=10cm, height=6cm]{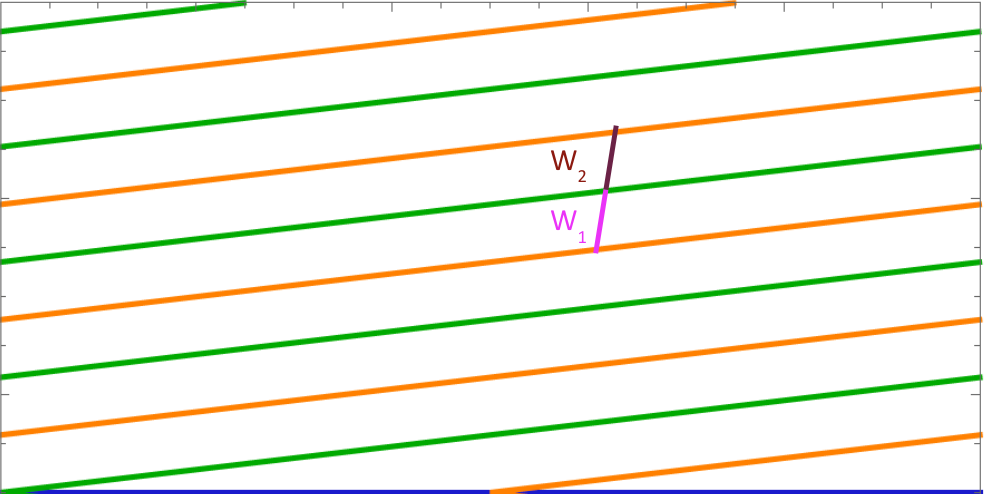}
		\end{center}
		\caption{An example of $W_1$ and $W_2$ as in the statement of Lemma~\ref{lem:onestepkey}. For the configuration shown, $T^{-1}W_1$ is contained in the right half of the fundamental domain and $T^{-1}W_2$ in the left half. The situation would be reversed if the point shared by $W_1$ and $W_2$ were instead on an orange spanning curve.}
		\label{fig:adjacent}
	\end{figure}

	\begin{proof}
		Let $W_1 \cup W_2 = W \in \Sigma$ be as in the statement of the lemma, $f \in W^{1,\infty}(\T^2)$ have zero mean, and $\varphi \in C^1(W)$ satisfy $|\varphi|_{C^1(W)} \le 1$. Define $V_i = T^{-1}(W_i)$ and $V = T^{-1}(W)$. Since $|J_{V_i}T| |V_i| = |W_i| = |W|/2$, we have
		\begin{align} \label{eq:key1}
			\int_{W} \mathcal{L} f \varphi \, \dee m_{W} = \sum_{i=1}^2\int_{W_i} \mathcal{L} f \varphi \, \dee m_{W_i}  & = \sum_{i=1}^2 |J_{V_i} T| \int_{V_i} f \varphi \circ T \, \dee m_{V_i} = \frac{|W|}{2} \sum_{i=1}^2 \dashint_{V_i} f \varphi \circ T \, \dee m_{V_i},
		\end{align} 
		where we have used the notation $\dashint_{V_i} f \varphi \circ T \,  \dee m_{V_i} := |V_i|^{-1} \int_{V_i} f\varphi \circ T \, \dee m_{V_i}.$ Let $$\bar{\varphi} = \frac{1}{|V_1| + |V_2|} \sum_{i=1}^2 \int_{V_i}\varphi \circ T \, \dee m_{V_i},$$ 
		so that by \eqref{eq:key1} we have
		\begin{equation} \label{eq:key1added}
			\left|\int_W \mathcal{L} f \varphi \, \dee m_W \right| \leqc |W| \sum_{i=1}^2 \left|\dashint_{V_i} f \bar{\varphi} \, \dee m_{V_i}\right| + |W| \sum_{i=1}^2 \left|\dashint_{V_i} f (\bar{\varphi} - \varphi \circ T)\, \dee m_{V_i}\right|.
		\end{equation}
		Arguing as in the proof of \eqref{eq:AveError1}, it can be shown that 
		\begin{equation} \label{eq:AveError2}
			|\bar{\varphi} - \varphi \circ T|_{C^1(V_i)} \leqc \alpha^{-1}|\varphi|_{C^1(W)} \le \alpha^{-1}
		\end{equation}
		for each $i = 1,2$. Since $\bar{\varphi}$ is constant and satisfies $|\bar{\varphi}| \le |\varphi|_{C^1(W)} \le 1$, it follows by the definition of the strong stable norm (splitting each $V_i$ into approximately $|V_i|$ admissible segments) and \eqref{eq:key1added} that
		\begin{equation}\label{eq:key2}
			\left| \int_W\mathcal{L} f \varphi \, \dee m_W\right| \leqc |W| \left|\sum_{i=1}^2 \dashint_{V_i} f \, \dee m_{V_i}\right| + |W| \alpha^{-1} \|f\|_s.
		\end{equation}
		
		Since $W_1, W_2 \in \Sigma$ are bounded by adjacent spanning curves, both $T_2^{-1}(W_1)$ and $T_2^{-1}(W_2)$ are line segments tangent to $C_u$ that connect one edge of the fundamental domain with the line $\{x=1/2\}$. Without loss of generality, we assume that the point shared by $W_1$ and $W_2$ lies on a green spanning curve, as in Figure~\ref{fig:adjacent}. Then, $T_2^{-1} (W_1) \subseteq \overline{R}_1$ and $T_2^{-1} (W_2) \subseteq \overline{R}_2$, where $R_1 = \{0 \le x < 1/2\}$ and $R_2 = \{1/2 \le x < 1\}$ denote the left and right halves of the fundamental domain, respectively. As $\overline{R}_1$ is preserved by $T_1^{-1}$, we see that $T_1^{-1}(T_2^{-1}W_1) = V_1 \in \mathcal{W}_s$ is a line segment contained in $\overline{R}_1$ with length approximately $\alpha$ that connects a point on the left edge of the fundamental domain and a point on $\{x = 1/2\}$. A similar statement holds for $V_2$. The images $V_1$ and $V_2$ when $W$ is configured as in Figure~\ref{fig:adjacent} are shown below in Figure~\ref{fig:adjacentimage}. We will prove below that for each $i = 1,2$ we have the bound
		\begin{equation} \label{eq:key3}
			\left|\dashint_{V_i} f \, \dee m_{V_i} - \dashint_{R_i} f(z) \, \dee z\right| \leqc \alpha^{-p}\tnorm{f}.
		\end{equation}
		Combining \eqref{eq:key3} with \eqref{eq:key2} completes the proof since 
		$$ \sum_{i=1}^2 \dashint_{R_i} f(z) \, \dee z = 2 \int_{\T^2} f(z)\, \dee z = 0. $$
		
		\begin{figure}[h]
			\includegraphics[width=7cm, height=7cm]{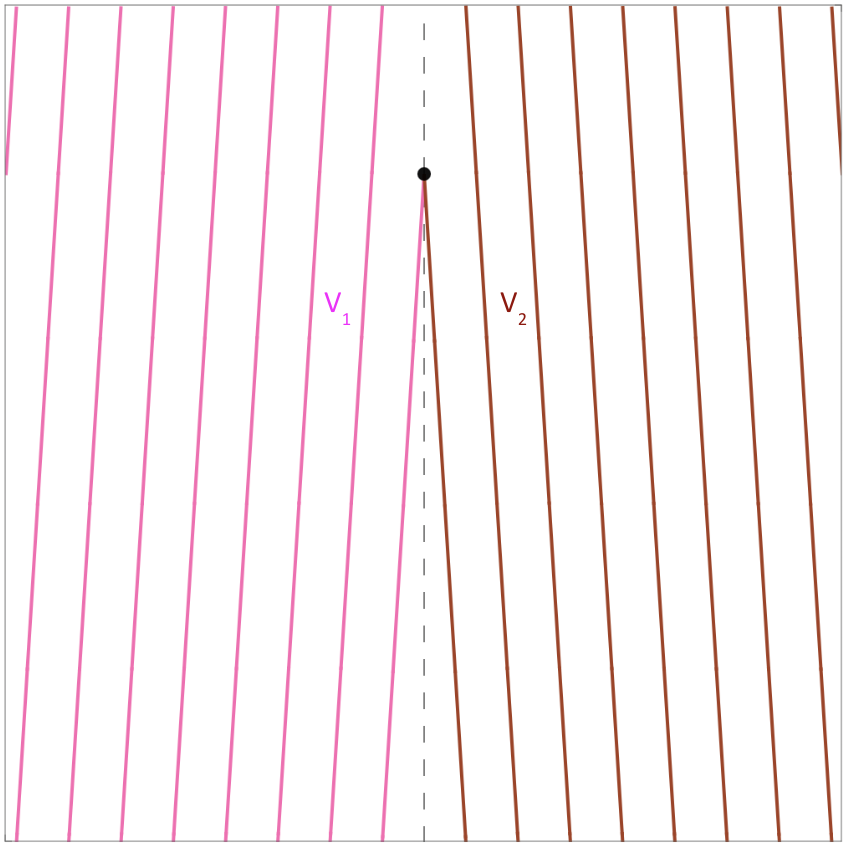}
			\hspace{0.5cm}
			\includegraphics[width=7cm, height=7cm]{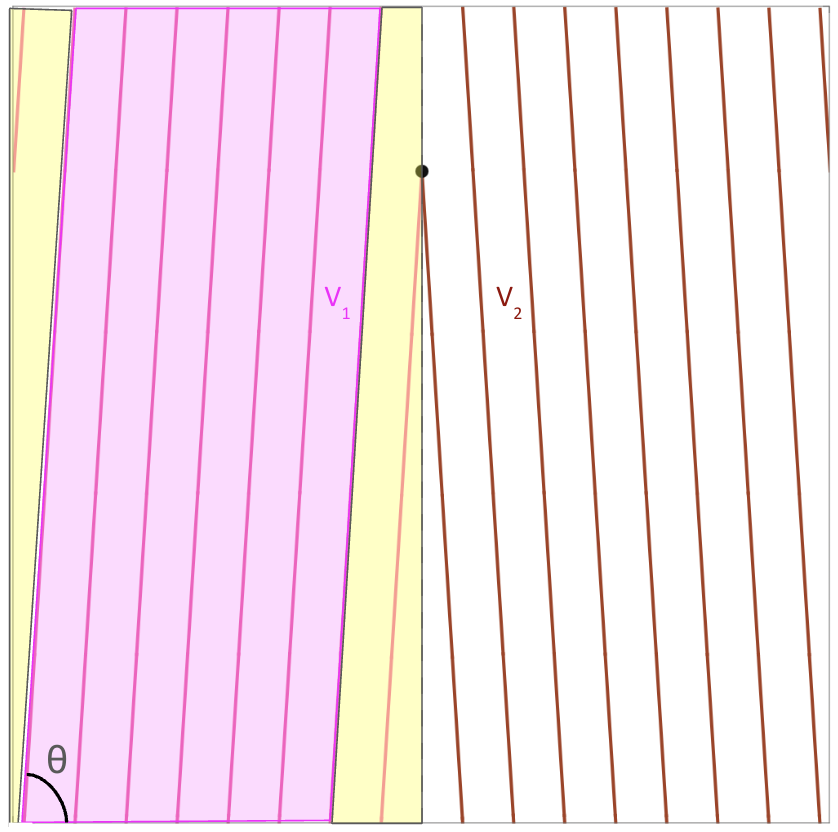}
			\caption{A typical example of the pre-images $V_1 = T^{-1}(W_1)$ and $V_2 = T^{-1}(W_2)$ for $\alpha = 16$ and $W_1$ and $W_2$ configured as in Figure~\ref{fig:adjacent}. Each $V_i$ consists of approximately $\alpha$ segments in $\Sigma$ that span $\T^2$ vertically, together with two segments that may not span. The parallelogram $P$ is shaded in pink and the remainder set $E = R_1 \setminus P$ is shaded in yellow. The angle $\theta$ satisfies $|\pi/2 - \theta| \approx \alpha^{-1}$ since the slope of each pink line is roughly $\alpha$.}
			\label{fig:adjacentimage}
		\end{figure}
		
		It remains to prove \eqref{eq:key3}. We set $i = 1$, as the proof is the same in both cases. We first reduce the proof of \eqref{eq:key3} to the estimate
		\begin{equation}\label{eq:key7}
			\left|\dashint_{V_1} f \dee m_{V_1} - \dashint_P f(z) \dee z\right| \leqc \alpha^{-p}\tnorm{f},
		\end{equation}
		where $P \subseteq \T^2$ is a  parallelogram
		satisfying $|R_1 \setminus P| \leqc \alpha^{-1}$ to be
		defined below. From Figure~\ref{fig:adjacentimage} and the fact that $|V_1| \approx \alpha$, there is $n \approx \alpha$ such that we may decompose $V_1$ as 
		\begin{equation} \label{eq:V1decomp}
			V_1 = \bigcup_{k=0}^{n+1} V_1^k,
		\end{equation}
		where each $V_1^k$, $1 \le k \le n$ is a line segment in $\Sigma$ that spans $\T^2$ vertically (labeled from left to right by increasing $k$) and has slope $\approx \alpha$, $V_1^0 \in \Sigma$ connects the left and top edges of the fundamental domain, and $V_1^{n+1} \in \Sigma$ connects the bottom edge of the fundamental domain and a point on $\{x = 1/2\}.$ Let $P$ be the parallelogram bounded by $V_1^1$, $V_1^n$, and the two horizontal edges of fundamental domain, and define $E = R_1 \setminus P$. Both $P$ and $E$ are displayed in the second image in Figure~\ref{fig:adjacentimage}. There exists $I \subseteq \T$ with $|I| \leqc \alpha^{-1}$ and a family of disjoint line segments $\{W_x\}_{x \in I} \subseteq \Sigma$ such that $E = \cup_{x \in I} W_x$. By Fubini's theorem, we then have 
		\begin{equation} \label{eq:key4}
			\left|\int_E f(z) \dee z\right| = \left| \int_{I} \left(\int_{W_x} f \, \dee m_{W_x}\right) \, \dee x\right|  \le |I| \|f\|_{s} \leqc \alpha^{-1} \|f\|_s.
		\end{equation}
		A similar argument shows that 
		\begin{equation} \label{eq:key5}
			\left|\int_P f(z) \dee z\right| \leqc \|f\|_{s}.
		\end{equation}
		From the identity
		$$  \dashint_{R_1} f(z) \, \dee z  = \dashint_P f(z) \, \dee z - \frac{|E|}{|R_1|} \dashint_P f(z) \, \dee z + \frac{1}{|R_1|} \int_E f(z) \, \dee z$$
		combined with \eqref{eq:key4}, \eqref{eq:key5}, and $|E| \leqc \alpha^{-1}$, we obtain
		\begin{equation} \label{eq:key6}
			\left|\dashint_{R_1} f(z) \, \dee z - \dashint_P f(z) \, \dee z\right| \leqc \alpha^{-1}\|f\|_s.
		\end{equation}
		It follows from \eqref{eq:key6} that \eqref{eq:key7} is enough to complete the proof of \eqref{eq:key3}.
		
		We now turn to the proof of \eqref{eq:key7}. For $1 \le k \le n-1$, let $P_k \subseteq P$ be the parallelogram bounded by $V_1^k$, $V_1^{k+1}$, and the two horizontal edges of the fundamental domain. Then, $|P_k \cap P_j| = 0$ for $k \neq j$ and $P = \cup_{k=1}^{n-1}P_k$. Let $\theta \in [0,\pi/2]$ denote the angle between $V_1^1$ and the line $\{y=0\}$ (see Figure~\ref{fig:adjacentimage}) and $h > 0$ be the distance between the intersections of $V_1^k$ and $V_1^{k+1}$, for $1 \le k \le n-1$, with $\{y=0\}$ and note that $h \leqc \alpha^{-1}$. For fixed $1 \le k \le n-1$ and $0 \le x \le h$, let $W_x = V_1^{k} + (x,0)$. Then, $d_\Sigma(V_1^k,W_x) \le h$ and therefore by the definition of the strong unstable norm we have
		\begin{equation} \label{eq:key8} \left|\int_{V_1^k} f \, \dee m_{V_1^k} - \int_{W_x} f \, \dee m_{W_x}\right| \le h^p \|f\|_u \leqc \alpha^{-p} \|f\|_u.
		\end{equation}
		Since
		\begin{align}\int_{P_k} f(z) \dee z &= \sin \theta \int_0^h \left(\int_{W_x} f \, \dee m_{W_x}\right) \dee x \\ 
			& = h \sin \theta \int_{V_1^k} f \, \dee m_{V_1^k} + \sin \theta \int_0^h \left(\int_{W_x} f \, \dee m_{W_x} - \int_{V_1^k} f \, \dee m_{V_1^k}  \right),    
		\end{align}
		it follows from \eqref{eq:key8} and $h \leqc \alpha^{-1}$ that 
		\begin{equation} \label{eq:key9}
			\left|\int_{P_k} f(z) \dee z - h\sin\theta \int_{V_1^k}f \dee m_{V_1^k} \right| \leqc \alpha^{-(1+p)}\|f\|_{u}.
		\end{equation}
		Summing this bound over $1 \le k \le n-1$ and defining $\widetilde{V}_1 = \cup_{k=1}^{n-1} V_1^k$, we obtain 
		\begin{equation} \label{eq:key10}
			\left| \int_P f(z) \, \dee z - h\sin \theta \int_{\widetilde{V}_1} f \, \dee m_{\widetilde{V}_1} \right| \leqc \alpha^{-p} \|f\|_u.
		\end{equation}
		Notice now that 
		$$|P| = h(n-1) |V_1^1| \sin\theta = h|\widetilde{V}_1| \sin \theta,$$
		and hence dividing \eqref{eq:key10} by $|P|$ gives 
		\begin{equation} \label{eq:key11}
			\left| \dashint_P f(z) \, \dee z - \dashint_{\widetilde{V}_1} f \, \dee m_{\widetilde{V}_1} \right| \leqc \alpha^{-p} \|f\|_u.
		\end{equation}
		Finally, since $\left|\int_{V_1^k} f \, \dee m_{V_1^k}\right| \le \|f\|_s$ for each $k$, $|V_1| \gtrsim \alpha$, and $\left|1-\dfrac{|\widetilde{V}_1|}{|V_1|}\right| \leqc \alpha^{-1}$, it follows from \eqref{eq:key11} and the identity
		\begin{align}
			\dashint_P f(z) \, \dee z - \dashint_{V_1} f \, \dee m_{V_1}  & = \dashint_P f(z) \, \dee z - \dashint_{\widetilde{V}_1} f \, \dee m_{\widetilde{V}_1} \\ 
			& + \left(1-\frac{|\widetilde{V}_1|}{|V_1|}\right)\dashint_{\widetilde{V}_1} f \, \dee m_{\widetilde{V}_1} - \frac{1}{|V_1|}\sum_{k\in \{0,n,n+1\}} \int_{V_1^k} f \, \dee m_{V_1^k} 
		\end{align}
		that 
		$$\left| \dashint_P f(z) \, \dee z - \dashint_{V_1} f \, \dee m_{V_1}\right| \leqc \alpha^{-p}\|f\|_u + \alpha^{-1}\|f\|_s \leqc \alpha^{-p}\tnorm{f}.$$
		This is \eqref{eq:key7} and thus completes the proof. 
	\end{proof}
	
	\begin{proof}[Proof of Proposition~\ref{prop:OneStep}]
		Let $f \in W^{1,\infty}(\T^2)$ have zero mean. Fix $W \in \Sigma$ and $\varphi \in C^1(W)$ with $|\varphi_{C^1(W)} \le 1$. Quite similar to \eqref{eq:DecompTriple}, we may decompose $W$ as 
		$$W = \bigcup_{i=1}^N W_i \cup \bigcup_{i=1}^M V_i,$$
		where each $W_i \in \Sigma$ satisfies the hypotheses of Lemma~\ref{lem:onestepkey}, $M \le C$ for an absolute constant $C$ independent of $\alpha$, and the remainder pieces $\{V_i\}_{i=1}^M \subseteq \Sigma$ satisfy $|V_i| \leqc \alpha^{-1}$. Then, by Lemma~\ref{lem:onestepkey}, we have 
		\begin{align}
			\int_W \mathcal{L} f \varphi \, \dee m_W &= \sum_{i=1}^N \int_{W_i} \mathcal{L} f \varphi \, \dee m_{W_i} + \sum_{i=1}^M \int_{V_i} \mathcal{L} f \varphi \, \dee m_{W_i} \\ 
			& \leqc \alpha^{-p}\tnorm{f}\sum_{i=1}^N |W_i| + M |V_i|^p \|f\|_s \\ 
			& \leqc \alpha^{-p} |W| \tnorm{f} + \alpha^{-p}\|f\|_s \\ 
			& \leqc \alpha^{-p} \tnorm{f}.
		\end{align}
	\end{proof}
	
	\section{Projected exponential mixing} \label{sec:projected}
	
	In this section we will prove Theorem~\ref{prop:Projected}, which recall consists of the mixing estimate
	\begin{equation} \label{eq:projectedrepeated}
		\left|\int_{\T^2} P_{\le N} (f \circ T^{-n})(z) P_{\le N}(g\circ T^{-m})(z) \, \dee z\right| \le C e^{-(m-n)c \log \alpha}\|f\|_{C^1} \|g\|_{C^1}
	\end{equation}
	for all integers $m,n \ge 0$, frequencies $N \ge 1$, and mean-zero $f,g \in C^1(\T^2)$. In Section~\ref{sec:mixingstatements}, we stated the result for $f,g \in W^{1,\infty}(\T^2)$, but by the same density argument used in the proof of Theorem~\ref{prop:UniformMixing} it is sufficient to consider functions in $C^1(\T^2)$. Recall also that in the above
	$$P_{\le N} f(z) = \int_{\T^2} \varphi_N(z-\bar{z}) f(\bar{z}) \dee \bar{z},$$
	where 
	$$ \varphi_N(z) = \sum_{m \in \Z^2} N^2 \varphi(N(z+m))$$
	and $\varphi:\R^2 \to [0,\infty)$ is a smooth, $L^1$-normalized bump function supported in $[-1,1]^2$. 
	
	By the symmetry of \eqref{eq:projectedrepeated}, we may assume without loss of generality that $m > n$. The proof will be split into the two cases $n \le c_0 (m-n)$ and $n >
	c_0(m-n$) for some fixed $c_0 > 0$. A first observation, already discussed briefly in Section~\ref{sec:mixingstatements}, is that the former case is a corollary of
	Theorem~\ref{prop:UniformMixing}. The lemma below makes this precise.
	
	\begin{lemma}\label{lem:easycase}
		There exist $c_0 \in (0,1)$ and $C,c > 0$ such that for all $\alpha$ sufficiently large, $m > n$ satisfying $n \le c_0(m-n)$, and mean-zero $f,g \in W^{1,\infty}(\T^2)$ we have
		\begin{equation}
			\left|\int_{\T^2} P_{\le N} (f \circ T^{-n})(z) P_{\le N}(g\circ T^{-m})(z) \, \dee z\right| \le C e^{-(m-n)c \log \alpha}\|f\|_{W^{1,\infty}} \|g\|_{W^{1,\infty}}.
		\end{equation}
	\end{lemma}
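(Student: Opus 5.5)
The plan is to move the projection onto the later iterate and then apply the quantitative mixing of Theorem~\ref{prop:UniformMixing}, paying with the Lipschitz growth of the earlier iterate. Since $\varphi$ is even, $P_{\le N}$ is self-adjoint on $L^2$, and it is a contraction on $L^2$, $L^\infty$ and $W^{1,\infty}$ (convolution with an $L^1$-normalized nonnegative kernel commutes with derivatives). First I would write
\begin{equation}
\int_{\T^2} P_{\le N}(f\circ T^{-n})\,P_{\le N}(g\circ T^{-m})\,\dee z = \int_{\T^2} (g\circ T^{-m})\, P_{\le N}^2(f\circ T^{-n})\,\dee z .
\end{equation}
Then I would use that composition with $T$ preserves Lebesgue measure to rewrite the right-hand side as $\int (g\circ T^{-(m-n)})\, h\,\dee z$ with $h := \big(P_{\le N}^2(f\circ T^{-n})\big)\circ T^{n}$. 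Note that $h$ is mean zero, because $f$ is mean zero and both convolution and composition with $T^{\pm1}$ preserve the mean.

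Applying \eqref{eq:uniformmix} with the pair $(g,h)$ and exponent $m-n$ gives a bound by $C\alpha^{-c(m-n)}\|g\|_{W^{1,\infty}}\|h\|_{W^{1,\infty}}$. It then remains to bound $\|h\|_{W^{1,\infty}}$. The sup norm is at most $\|f\|_{L^\infty}$. For the gradient, the chain rule together with the bound $\|DT\|_{L^\infty}\le 4\alpha^2$ and the analogous bound for $DT^{-1}$ from \eqref{eq:matrices} yields
\begin{equation}
\|\grad h\|_{L^\infty}\le (4\alpha^2)^n\,\|\grad P_{\le N}^2(f\circ T^{-n})\|_{L^\infty}\le (4\alpha^2)^n\,\|\grad(f\circ T^{-n})\|_{L^\infty}\le (4\alpha^2)^{2n}\|f\|_{W^{1,\infty}}.
\end{equation}
Here $T$ is only piecewise smooth but globally Lipschitz, so the chain rule holds a.e. and $h$ is indeed in $W^{1,\infty}$.

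Combining these gives a total bound of $C(4\alpha^2)^{2n}\alpha^{-c(m-n)}\|f\|_{W^{1,\infty}}\|g\|_{W^{1,\infty}}$. For $\alpha\ge 4$ we have $(4\alpha^2)^{2n}\le \alpha^{6n}$, and under the hypothesis $n\le c_0(m-n)$ this is at most $\alpha^{6c_0(m-n)}$. Choosing $c_0=c/12$ then produces the decay $e^{-(c/2)(m-n)\log\alpha}$, which is the claim with the constant $c$ replaced by $c/2$.

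I do not expect a real obstacle. The only points needing care are the mean-zero property of $h$, so that \eqref{eq:uniformmix} applies, and making sure that $c_0$ does not depend on $\alpha$. The latter is what the crude bound $(4\alpha^2)^{2n}\le\alpha^{6n}$ guarantees.
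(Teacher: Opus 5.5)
Your proof is correct and follows the paper's argument. Like the paper, you move both projections onto the earlier iterate using self-adjointness, apply Theorem~\ref{prop:UniformMixing}, and absorb the Lipschitz growth of $T^{\pm n}$ via $n \le c_0(m-n)$. The only difference is that you change variables to apply mixing at lag $m-n$, which costs you $(4\alpha^2)^{2n}$. The paper instead applies it directly at lag $m$ to the pair $\bigl(g, P_{\le N}^2(f\circ T^{-n})\bigr)$, paying only $(2\alpha)^{2n}$, and then uses $m \ge m-n$; both versions close once $c_0$ is small.
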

	
	\begin{proof}
		Let $c > 0$ be as in Theorem~\ref{prop:UniformMixing}. Then, 
		\begin{equation} \label{eq:alpha^qdecay}
			\left|\int_{\T^2} f_1 \circ T^{-k}(z) f_2(z) \, \dee z\right| \leqc \alpha^{-k c} \|f_1\|_{W^{1,\infty}}\|f_2\|_{W^{1,\infty}} 
		\end{equation} 
		holds for all $\alpha$ sufficiently large, $k \in \N$, and mean-zero $f_1,f_2 \in W^{1,\infty}(\T^2)$. Since $P_{\le N}$ is convolution against an even, normalized kernel, it is symmetric on $L^2$ and for any $h \in W^{1,\infty}(\T^2)$ we have $\|P_{\le N} h\|_{W^{1,\infty}} \le \|h\|_{W^{1,\infty}}$. Therefore, by \eqref{eq:alpha^qdecay} we have
		\begin{align}
			\left|\int_{\T^2} P_{\le N} (f \circ T^{-n})(z) P_{\le N}(g\circ T^{-m}(z)) \, \dee z\right| & = \left|\int_{\T^2} P^2_{\le N} (f \circ T^{-n})(z) g\circ T^{-m}(z)\,  \dee z\right| \\ 
			& \leqc \alpha^{-mc}\|g\|_{W^{1,\infty}}\|P_{\le N}^2 (f\circ T^{-n})\|_{W^{1,\infty}} \\ 
			& \leqc  \alpha^{-mc} \|g\|_{W^{1,\infty}} \|f \circ T^{-n}\|_{W^{1,\infty}} \\ 
			& \leqc 4^n \alpha^{-mc} \alpha^{2n} \|g\|_{W^{1,\infty}}\|f\|_{W^{1,\infty}},
			\label{eq:trivialcase1}
		\end{align}
		where in the last inequality we noted that 
		$$\|f \circ T^{-n}\|_{W^{1,\infty}} \le (2\alpha)^{2n} \|f\|_{W^{1,\infty}}$$
		by the explicit formula for the matrices in \eqref{eq:matrices}. Now, $n \le c_0 (m-n)$ implies
		$$ n \le \frac{c_0}{1+c_0} m \le \frac{c_0}{2} m. $$
		Thus, taking $c_0 < c/2$ gives $n \le mc/4$, which when put into \eqref{eq:trivialcase1} yields
		$$ \left|\int_{\T^2} P_{\le N} (f \circ T^{-n})(z) P_{\le N}(g\circ T^{-m}(z) \, \dee z\right| \leqc  2^m \alpha^{-\frac{mc}{2}}  \|f\|_{W^{1,\infty}}\|g\|_{W^{1,\infty}} \leqc \alpha^{-\frac{mc}{4}}\|f\|_{W^{1,\infty}}\|g\|_{W^{1,\infty}},$$
		where the last inequality holds for $\alpha$ sufficiently large. This completes the proof since $m \ge m-n$.
	\end{proof}
	
	The remainder of this section is devoted to proving \eqref{eq:projectedrepeated} for $n > c_0(m-n)$. By Lemma~\ref{lem:easycase}, this is sufficient to obtain Theorem~\ref{prop:Projected}. 
	
	\subsection{Discussion of the proof} \label{sec:projecteddiscussion}
	
	When $n > c_0 (m-n)$, the argument in Lemma~\ref{lem:easycase} fails because there is an essential mismatch between the exponents $c$ and $2$ in \eqref{eq:trivialcase1} and we can no longer leverage the smallness $n \ll m$ to absorb the factor of $\alpha^{2n}$. We begin here by elaborating on the challenges in treating the case $n < c_0(m-n)$ and providing an overview of our proof.
	
	Since the goal is to obtain decay in $|m-n|$, it is natural to express \eqref{eq:projectedrepeated} in a way that involves $f \circ T^{-(m-n)}$. This can be done using the symmetry of $P_{\le N}$ and a change of variables:
	\begin{align}
		\int_{\T^2} P_{\le N}(f\circ T^{-m})(z) P_{\le N}(g \circ T^{-n})(z)\, \dee z  & = \int_{\T^2} f\circ T^{-m}(z) P^2_{\le N}(g \circ T^{-n})(z) \, \dee z \\ 
		& = \int_{\T^2} f \circ T^{-(m-n)}(z) (P^2_{\le N}(g \circ T^{-n}) \circ T^n)(z) \, \dee z \\ 
		& = \int_{\T^2} f \circ T^{-(m-n)}(z) g_n(z) \, \dee z, \label{eq:easycase2}
	\end{align}
	where we have defined 
	\begin{equation} \label{eq:Gv2}
		g_n :=P^2_{\le N}(g \circ T^{-n}) \circ T^n.
	\end{equation}
	Equivalently, $g_n$ can be written as 
	\begin{equation}\label{eq:G}
		g_n(z) = \int_{\T^2} \psi_N(T^n(\bar{z})-T^n(z)) g(\bar{z}) \, \dee \bar{z},
	\end{equation}
	where 
	\begin{equation} \label{eq:psiN}
		\psi_N(z) = \int_{\T^2} \varphi_N(z - \bar{z})\varphi_N(\bar{z}) \, \dee \bar{z}  
	\end{equation}
	is the kernel associated with $P_{\le N}^2$. In obtaining \eqref{eq:G}, we have changed variables in the convolution and noted that $\psi_N$ is even because $\varphi_N$ is even.
	
	If we had $\|g_n\|_{W^{1,\infty}} \leqc \|g\|_{C^1}$ uniformly with respect to $n$ and $N$, then \eqref{eq:projectedrepeated} would follow from \eqref{eq:easycase2} and Theorem~\ref{prop:UniformMixing}. Unfortunately, such control on the regularity of $g_n$ is far from true. Indeed, unless $N$ is large enough that the averaging in $P_{\le N}(g \circ T^{-n})$ resolves the small scales of $g \circ T^{-n}$, which would require at least $N^{-1} \leqc \alpha^{-2n}$, then it is clear from \eqref{eq:G} that $|\grad g_n \cdot v|$ will in general grow exponentially fast in $n$ for some vectors $v \in C_u$. Our proof of Theorem~\ref{thm:discrete} only required \eqref{eq:projectedrepeated} for $N^{-1} \leqc \alpha^{-nc}$, where $c$ is the exponent from Theorem~\ref{prop:UniformMixing} (see \eqref{eq:nstar} and \eqref{eq:diagonal3}). However, it is necessary to bridge the gap between $N \sim \alpha^{nc}$ and $N \sim \alpha^{2n}$, and in this intermediate regime it seems impossible to treat \eqref{eq:projectedrepeated} as a corollary of Theorem~\ref{prop:UniformMixing}. This again reflects an inability to match the mixing rate with the worst possible growth rate of $DT^{-n}$. In particular, for $\alpha^{nc} \leqc N \leqc \alpha^{2n}$, the frequency $N$ is simultaneously too small to justify $P_{\le N} (g \circ T^{-n}) \approx g \circ T^{-n}$ and too large to prove that $\|P_{\le N} (g \circ T^{-n})\|\ll \|g\|_{L^2}$.
	
	While $|\grad g_n|$ can certainly be quite large, \eqref{eq:G} suggests that it is possible, at least in principle, for $g_n$ to be regular along admissible curves $W \in T^{-n}(\Sigma) \cap \Sigma$. This motivates decomposing \eqref{eq:projectedrepeated} into an integral along segments in $T^{-n}(\Sigma) \cap \Sigma$ that can be estimated via the decay of $|f \circ T^{-(m-n)}|_w$ provided by \eqref{eq:weakdecay}. In Section~\ref{sec:reduction}, we show that
	\begin{equation} \label{eq:discussionreduction}
		\left|\int_{\T^2} g_n(z) f \circ T^{-(m-n)}(z)\dee z\right| \le \int_{\T} \sum_{W_i \in \Psi_n(W_x)}|J_{W_i}T^n| \left|\int_{W_i} g_n \, f \circ T^{-(m-n)} \dee m_{W_i}\right| \dee x,
	\end{equation}
	where $W_x = \{(x,y) \in \T^2: y \in \T\} \in \Sigma$ is the vertical segment with horizontal coordinate $x$, $\Psi_n(W_x) \subseteq T^{-n}(\Sigma) \cap \Sigma$ is the decomposition of $T^{-n}(W_x)$ defined in Remark~\ref{rem:psin}, and $|J_{W_i} T^n|$ is the stable Jacobian of $T^n$ along $W_i$ (see \eqref{eq:nStableJacobian}). Using ideas from the ``complexity estimates'' in \cite{DemersLiverani2d}, it can be shown that 
	\begin{equation}
		\sup_{x \in \T}\sum_{W_i \in \Psi_n(W_x)} |J_{W_i}T^n| \leqc 1,
	\end{equation}
	and hence \eqref{eq:projectedrepeated} would follow from \eqref{eq:weakdecay} if one had $|g_n|_{C^1(W_i)} \leqc \|g\|_{C^1}$ for every $x \in \T$ and $W_i \in \Psi_n(W_x)$.
	
	An obstruction to the idea above is that, even for $W \in T^{-n}(\Sigma) \cap \Sigma$, it is possible that $|g_n|_{C^1(W)}$ is quite large. One can see this by changing variables in \eqref{eq:G} to rewrite $g_n$ as 
	\begin{equation} \label{eq:Gv3}
		g_n(z) = \int_{\T^2} \psi_N(\bar{z})g(T^{-n}(\bar{z} + T^n(z))) \, \dee \bar{z}.
	\end{equation}
	From \eqref{eq:Gv3}, it follows that $|\grad g_n (z)| \gg \|g\|_{C^1}$ may occur when $\mathcal{U} = \{\bar{z} \in \T^2: d_{\T^2}(\bar{z},z) \leqc N^{-1}\}$ contains points where the unstable eigenvalues of $(DT^{-n})(T^n(z))$ and $(DT^{-n})(\bar{z} + T^n(z))$ differ dramatically. This cannot be ruled out whenever $\mathcal{U}$ contains a point where many singuarity curves of $T^{-n}$ intersect. 
	
	Our proof of \eqref{eq:projectedrepeated} is based on \eqref{eq:discussionreduction} and \eqref{eq:weakdecay}, combined with a suitable decomposition of $\Psi_n(W_x)$ and a careful analysis of the singularity curves of $T^{-n}$. We prove a quantitative bound on the number of points where multiple singularity curves intersect. This allows us to restrict the line integrals in \eqref{eq:discussionreduction} to a ``regular'' portion $W_i^{(r)} \subseteq W_i$ of each curve, at the cost of neglecting a set with measure that vanishes like $\alpha^{-n} \le \alpha^{-c_0(m-n)}$ as $n \to \infty$. Along these regular segments, the growth of $|g_n|_{C^1(W^{(r)}_i)}$ can be controlled in a manner consistent with the estimate
	\begin{equation} \label{eq:gnregdiscussion}
		\sup_{x \in \T}\sum_{W_i \in \Psi_n(W_x)} |J_{W_i}T^n| |g_n|_{C^1(W_i^{(r)})} \leqc \|g\|_{C^1},
	\end{equation}
	which together with \eqref{eq:weakdecay} will be enough to complete the proof.
	
	The remainder of this section is organized as follows. In Section~\ref{sec:nstep}, we define the $n$-step backward singularity set $\mathcal{S}_n^{-}$ and a decomposition of $\Psi_n(W)$ into ``good'' and ``bad'' curves that extends the splitting of $\Psi(W)$ into ``long'' and ``short'' segments used in Section~\ref{sec:Sminus}. We also state our counting estimate, Lemma~\ref{lem:multipoints}, concerning the number of multi-intersection points in $\mathcal{S}_n^{-}$ and prove some preliminary facts that will be needed to control the regularity of $g_n$ away from such points. In Section~\ref{sec:reduction}, we reduce the proof to \eqref{eq:gnregdiscussion}, which is then established in Section~\ref{sec:regularity}. Finally, Lemma~\ref{lem:multipoints} is proven in Section~\ref{sec:multipoints}.
	
	\subsection{Higher order singularity sets and generations of curves}\label{sec:nstep}
	
	\subsubsection{$n$-step singularity set and matrix products} \label{sec:nstepsingularity}
	Recall that $\{\mathcal{C}_i^-\}_{i=1}^4$ partitions $\T^2 \setminus \mathcal{S}^-$ into open, connected components on which $T^{-1}$ has constant derivative. Let us label the inverses of the four possible derivative matrices from \eqref{eq:matrices} as $\{A_i\}_{i=1}^4$, such that $DT^{-1}(z) = A_i$ for $z \in \mathcal{C}_i^-$. For each $k \ge 0$ and $i \in \{1,2,3,4\}$, let $\mathcal{R}_{k,i} = T^k(\mathcal{C}_i^-)$. Then, $\{\mathcal{R}_{k,i}\}_{i=1}^4$ partitions $\T^2 \setminus T^k(\mathcal{S}^-)$ into open, connected components on which $DT^{-1}\circ T^{-k}$ is constant. In particular, $DT^{-1} \circ T^{-k}$ is given by $A_i$ on the set $\mathcal{R}_{k,i}$. We define the \textit{$n$-step backward singularity set} $\mathcal{S}_n^-$ by 
	\begin{equation}\label{eq:nstepsingdef}
		\mathcal{S}_n^- = \bigcup_{k=0}^{n-1}T^k(\mathcal{S}^-).
	\end{equation}
	For a word $\sigma = (\sigma_0, \ldots, \sigma_{n-1}) \in [4]^n:=\{1,2,3,4\}^n$, define 
	\begin{equation}
		\mathcal{C}_\sigma^{-} = \bigcap_{k=0}^{n-1}\mathcal{R}_{k,i}.
	\end{equation}
	These sets are open, but may be either disconnected or empty. By their definition, we have that 
	\begin{equation} \label{eq:nprod}
		DT^{-n}(z) = A_{\sigma_{n-1}} A_{\sigma_{n-2}}\ldots A_{\sigma_0}:= A_\sigma \quad \forall z \in \mathcal{C}_\sigma^-
	\end{equation}
	and it is straightforward to check that 
	\begin{equation}
		\mathcal{S}_n^{-} = \bigcup_{\sigma \in [4]^n} \partial \mathcal{C}_\sigma^-.
	\end{equation}
	
	Before proceeding, we record a lemma regarding matrix products of the form \eqref{eq:nprod}.
	
	\begin{lemma} \label{lem:equivexpansion}
		Fix $n \in \N$ and let $\sigma,\sigma' \in [4]^n$ be words that differ in only one entry. There exists a constant $C_0 \ge 1$ independent of $n$ and $\alpha$ such that for all $v \in C_s$ we have 
		\begin{equation} \label{eq:equivexpansion}
			\frac{|A_\sigma v|}{|A_{\sigma'}v|} \le C_0.
		\end{equation}
	\end{lemma}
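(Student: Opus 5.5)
The plan is to exploit the cone invariance \eqref{eq:backwardinvariant} together with the two-sided expansion bounds \eqref{eq:leadingorder2}. Write $\sigma$ and $\sigma'$ as agreeing except at position $j$, so that
\[
A_\sigma = B\, A_{\sigma_j}\, D, \qquad A_{\sigma'} = B\, A_{\sigma'_j}\, D,
\]
where $D = A_{\sigma_{j-1}}\cdots A_{\sigma_0}$ and $B = A_{\sigma_{n-1}}\cdots A_{\sigma_{j+1}}$. Here each $A_i$ is one of the inverse matrices $DT^{-1}$, which map $C_s$ into $C_s$ and expand vectors of $C_s$ by a factor in $[\alpha^2-10\alpha^{3/2},\alpha^2+10\alpha^{3/2}]$.

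First, given $v\in C_s$, set $w = Dv\in C_s$. Then $A_{\sigma_j}w$ and $A_{\sigma'_j}w$ both lie in $C_s$ (in fact in the smaller cone $\widetilde C_s$ from Remark~\ref{rem:largecones}). Their lengths are comparable, with ratio in $[(1-10\alpha^{-1/2})/(1+10\alpha^{-1/2}),\,(1+10\alpha^{-1/2})/(1-10\alpha^{-1/2})]$, which is at most $2$ for large $\alpha$.

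The remaining issue is that $B$ may expand two different vectors in $C_s$ by different amounts. I would control this with a bounded-distortion argument based on the angle between the two images. Because all $A_i$ have stable eigenvector (for $T^{-1}$ this is the expanding direction) within angle $O(\alpha^{-1})$ of $e_y$, applying $A_i$ contracts the projective distance between vectors in $C_s$ by a factor of order $\alpha^{-4}$. Here the ratio of the contracting to expanding eigenvalues is about $\alpha^{-4}$, and this is uniform over the cone since all four matrices are close to $\alpha^2\,\mathrm{diag}(0,\pm1)+O(\alpha)$ after inversion.

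Concretely, I would compute the expansion of $A_i$ on a unit vector $u=(\xi,1)/\sqrt{1+\xi^2}$ with $|\xi|\le 4\alpha^{-1/2}$. It has the form $|A_iu| = \alpha^2(1+O(\alpha^{-1}))\,(1+O(|\xi|/\alpha))$. More precisely, the log-expansion $\log|A_i u|$ is Lipschitz in the slope $\xi$ with constant $O(\alpha^{-1})\cdot\alpha^{0}$, which is bounded. Meanwhile, the slope map $\xi\mapsto \xi'$ induced by $A_i$ contracts by a factor $O(\alpha^{-2})$ (in fact $O(\alpha^{-4})$).

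Therefore, if $u_1,u_2$ are the unit directions of $A_{\sigma_j}w$ and $A_{\sigma'_j}w$, then after $k$ further applications their slopes differ by at most $C\alpha^{-1/2}(C\alpha^{-2})^k$. Telescoping the log-expansions of $B$ along the two orbits gives
\[
\Bigl|\log\frac{|Bu_1|}{|Bu_2|}\Bigr| \le \sum_{k\ge0} C\,\alpha^{-1/2}(C\alpha^{-2})^k \le 1.
\]
Combined with the factor $2$ from the middle step, this yields \eqref{eq:equivexpansion} with, say, $C_0 = 2e$, independent of $n$ and $\alpha$.

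The main obstacle is the projective contraction and Lipschitz estimates for the four explicit matrices: they must hold uniformly on the large cone $C_s$ (opening $\alpha^{-1/2}$), not just on $\widetilde C_s$. I would first verify them by direct computation with the inverses of the matrices in \eqref{eq:matrices}, which have the form $\begin{pmatrix}1&\mp\alpha\\ \pm\alpha&1\pm\alpha^2\end{pmatrix}$. After one step the images land in $\widetilde C_s$, and every subsequent bound can be done there, where the estimates are cleaner.
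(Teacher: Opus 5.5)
Your argument is correct. Its first step is the same as the paper's: you isolate the one differing factor and use the two-sided bound \eqref{eq:leadingorder2} to get $\tfrac12|w|\le|w'|\le 2|w|$ for the two vectors $w,w'\in C_s$ it produces.

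You depart from the paper in handling the common tail $B$ (the paper's $M$). The paper treats $M$ as a single linear map. Cone invariance gives it real eigenvectors $e_s\in C_s$ and $e_u\in C_u$, with $|\lambda_s|\ge(\alpha^2/2)^{n-k-1}$ and $|\lambda_u|\le(\alpha^2/2)^{-(n-k-1)}$. Expanding $w$ and $w'$ in this basis, where $|w_u|\lesssim\alpha^{-1/2}|w_s|$ and $|w_s|\approx|w|$, gives $|Mw|/|Mw'|\lesssim|w|/|w'|$ in one line. You instead run a bounded-distortion argument one matrix at a time. It combines uniform projective contraction of the slope map, a uniform Lipschitz bound on $\xi\mapsto\log|A_i u_\xi|$, and a geometric telescoping sum.

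The paper's route is shorter and needs nothing beyond cone invariance and the expansion rates. It exploits the fact that the same linear map acts on $w$ and $w'$. Your route uses only single-step information and never needs the spectral structure of the product, so it would survive if the two orbits were driven by different or nonlinear maps. The price is explicit computation with the four matrices.

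For that computation, note that the true inverses are $\begin{pmatrix}1&-\epsilon_2\alpha\\ -\epsilon_1\alpha&1+\epsilon_1\epsilon_2\alpha^2\end{pmatrix}$ with $\epsilon_1,\epsilon_2\in\{\pm1\}$, and they have determinant one. The sign pattern in \eqref{eq:matrices}, which you inherited, is slightly off. Because the determinant is one, the derivative of the slope map $\xi\mapsto\xi'$ is exactly $(1+\epsilon_1\epsilon_2\alpha^2-\epsilon_1\alpha\xi)^{-2}\approx\alpha^{-4}$ on all of $C_s$. This confirms your contraction claim.
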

	
	\begin{proof}
		Let $0 \le k \le n-1$ be such that $\sigma_m = \sigma_m'$ for $m \neq k$. Define
		$$ w = A_{\sigma_k} \left(\prod_{m = 0}^{k-1} A_{\sigma_m}\right) v \quad \text{and} \quad w' = A_{\sigma'_k} \left(\prod_{m = 0}^{k-1} A_{\sigma_m}\right) v.$$
		Then, $w,w' \in C_s$ and it follows from \eqref{eq:leadingorder} that
		\begin{equation} \label{eq:ww'}
			\frac{1}{2}|w| \le |w'| \le 2 |w|
		\end{equation}
		whenever $\alpha$ is sufficiently large. If $k = n-1$, then there is nothing left to show. If instead $k \le n-2$, then we define $M = \prod_{m={k+1}}^{n-1} A_{\sigma_m}$ and observe that
		\begin{equation}\label{eq:Mratio}
			\frac{|A_\sigma v|}{|A_{\sigma'}v|} = \frac{|M w|}{|Mw'|}.
		\end{equation}
		Being a product of hyperbolic matrices with common forward and backward invariant cones $C_s$ and $C_u$, respectively, $M$ has  eigenvalue and eigenvector pairs $(\lambda_s, e_s)$, $(\lambda_u, e_u)$ that satisfy $e_s \in C_s$, $e_u \in C_u$, $|\lambda_s| \ge (\alpha^2/2)^{n-k-1}$, and $|\lambda_u|  \le (\alpha^2/2)^{-(n-k-1)}$. Let $w = w_s e_s + w_u e_u$ be the decomposition of $w$ in this eigenbasis and similarly write $w' = w_s' e_s + w_u' e_u$. Since $w \in C_s$, we have $|w_u| \leqc \alpha^{-1/2} |w_s|$ and $|w| \approx |w_s|$, as well as the same bounds for $w'$. Thus, 
		$$ \frac{|M w|}{|Mw'|} \le \frac{|\lambda_s| |w_s| + |\lambda_u| |w_u|}{|\lambda_s| |w'_s| - |\lambda_u| |w'_u|} \leqc \frac{|\lambda_s| |w_s|}{|\lambda_s| |w_s'|} \leqc \frac{|w|}{|w'|} \leqc 1, $$ 
		where in the last inequality we used \eqref{eq:ww'}. In view of \eqref{eq:Mratio}, this completes the proof.
	\end{proof}
	
	\subsubsection{$n$-step generations of curves} \label{sec:nstepgen}
	Recall the definition of $\Psi(W)$ from Section~\ref{sec:Sminus}, which decomposes $T^{-1}(W)$, up to finitely many removed points, into a family of line segments $\{W_i\} \subseteq \Sigma$. The construction of $\Psi(W)$ guarantees that for each $W_i$, there is a unique $j \in \{1,2,3,4\}$ such that  $T(W_i) \subseteq \mathcal{C}_j^{-}$. Let $\Psi_0(W) = \{W\}$ and for $k \ge 1$ define 
	\begin{equation}
		\Psi_k(W) = \bigcup_{W_i \in \Psi_{k-1}(W)} \Psi(W_i).
	\end{equation}  
	Then, for $W \in \Sigma$ and $n \in \N$, $\Psi_n(W)$ decomposes $T^{-n}(W)$ into a collection $\{W_i\}\subseteq \Sigma$ with the property that for each $W_i$ there is a unique word $\sigma \in [4]^n$ such that $T^n(W_i) \subseteq \mathcal{C}_\sigma^-$. Generalizing \eqref{eq:StableJacobian}, we define
	\begin{equation}\label{eq:nStableJacobian}
		|J_{W_i}T^n| = |(DT^n)(z) v_i|,
	\end{equation} 
	where $z$ is any point on $W_i$ and $v_i \in C_s$ is the unit tangent vector that defines $W_i$. As in Section~\ref{sec:Sminus}, for $n \ge 0$, we define 
	$$S_n(W) = \{W_i \in \Psi_n(W): |W_i| < 1\}$$
	and 
	$$L_n(W) = \Psi_n(W) \setminus S_n(W) = \{W_i \in \Psi_n(W): |W_i| \in [1,2]\}.$$
	
	For the proof Theorem~\ref{prop:Projected}, we will require a splitting of $\Psi_n(W)$ into two disjoint families slightly different than the ``long'' and ``short'' decomposition defined above. Since $T^{-n}(W)$ is homeomorphic to an interval, for $W_i \in \Psi_n(W)$ there are at most two segments $W_{i,j} \in \Psi_n(W)$ other than $W_i$ such that $\overline{W}_i \cap \overline{W}_{i,j}$ is nonempty. We refer to these curves as the \textit{neighbors} of $W_i$. We define $G_n(W)$ as the set of long segments $W_i \in L_n(W)$ such that $W_i$ has two neighbors that are both also contained in $L_n(W)$. We then set $B_n(W) = \Psi_n(W)\setminus G_n(W)$. Here, the labels ``G'' and ``B'' stand for ``good'' and ``bad,'' respectively. Note that the definitions above set the convention that $W \in B_0(W)$, as $W$ has no neighbors in $\Psi_0(W) = \{W\}$. 
	
	A slight modification to the argument used to prove Lemma~\ref{lem:complexityfact1}(b) yields the following bound, which will allow us to control the rate at which bad curves are generated. 
	
	\begin{lemma} \label{lem:complexityfact}
		For $k \ge 0$ and $W_i \in \Psi_k(W)$, let
		$$ B_{k+1,k}(W_i) = \{V \in B_{k+1}(W): T(V) \subseteq W_i\} $$
		denote the set of bad curves in $\Psi_{k+1}(W)$ that are generated by $W_i$. There exists a constant $C_0 \ge 1$ independent $\alpha$, $W_i$, and $k$ such that
		$$\# B_{k+1,k} \le C_0. $$
	\end{lemma}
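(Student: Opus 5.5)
The plan is to reduce the count of bad curves generated by a single parent $W_i \in \Psi_k(W)$ to two sources: short curves in $\Psi(W_i)$, and long curves of $\Psi(W_i)$ that are neighbors of short curves or sit at the ends of $T^{-1}(W_i)$. By definition, $V \in \Psi(W_i)$ is bad precisely when either $|V| < 1$, or $|V| \ge 1$ but one of its (at most two) neighbors in $\Psi_{k+1}(W)$ is short or does not exist. Lemma~\ref{lem:complexityfact1}(b) gives $\#S(W_i) \le 10$, so at most $10$ of the children of $W_i$ are short. Each short child has at most two neighbors, so at most $20$ long children are bad because they neighbor a short child.

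The remaining case is a long child $V$ whose neighbor either does not exist (so $V$ contains an endpoint of $T^{-k-1}(W)$) or is short but belongs to a different parent. I would handle this by noting that any neighbor of $V$ not in $\Psi(W_i)$ must be adjacent to $V$ across an endpoint of the segment $T^{-1}(W_i)$. Since $T^{-1}(W_i)$ is a finite union of segments ordered along the curve $T^{-(k+1)}(W)$, homeomorphic to an interval, only the first and last children of $W_i$ can have neighbors outside $\Psi(W_i)$, or lack a neighbor. Thus at most $2$ long children are bad for this reason.

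Combining the cases gives
\[
\# B_{k+1,k}(W_i) \le 10 + 20 + 2 = 32 =: C_0,
\]
independent of $\alpha$, $k$ and $W_i$. The only real input is Lemma~\ref{lem:complexityfact1}(b), which applies because $W_i \in \Sigma$.

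The step I expect to need the most care is the bookkeeping of neighbors across different parents. I need to check that the ordering of $\Psi_{k+1}(W)$ along $T^{-(k+1)}(W)$ is compatible with the grouping by parents, so that children of a single $W_i$ occupy a contiguous block of this interval. This follows because $T^{-1}$ is a homeomorphism and $W_i$ is a contiguous subarc of $T^{-k}(W)$.
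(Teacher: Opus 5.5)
Your argument is correct, and it is a cleaner version of what the paper intends. The paper omits the proof. It says only that the lemma follows from a slight modification of the \emph{argument} for Lemma~\ref{lem:complexityfact1}(b): away from the endpoints of $W_i$, bad curves are produced only by the singularity curves in the sideways ``V'' shapes of $\mathcal{S}^-$ and by the nearest spanning curves, while the bulk of parallel spanning curves yields only good segments.

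You instead derive the lemma from the \emph{statement} of Lemma~\ref{lem:complexityfact1}(b) by bookkeeping alone. The children of $W_i$ form a contiguous block of $\Psi_{k+1}(W)$, so a long bad child must either neighbor one of the at most $10$ short children or be one of the two extremal children of the block. This gives $C_0 = 32$.

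The two routes express the same localization. Short children are exactly those cut off near the ``V'' shapes or near the ends of $W_i$, and their long neighbors are the segments reaching the nearest spanning curves. Your version, however, needs no further inspection of $\mathcal{S}^-$ and yields an explicit constant. The only input beyond Lemma~\ref{lem:complexityfact1}(b) is the ordering of $\Psi_{k+1}(W)$ along the arc $T^{-(k+1)}(W)$, which is the same convention the paper already uses to define neighbors.
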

	
	The basic observation behind Lemma~\ref{lem:complexityfact} is that bad curves can be generated by $W_i$ (away from its endpoints) only by intersections with singularity curves contained in the sideways ``V'' shapes of Figure~\ref{fig:backward8} and the most nearby spanning curves. Regions of $W_i$ in the ``bulk'' of $\mathcal{S}^-$ that cross many parallel spanning curves do not generate bad segments. We omit the details of the proof for the sake of brevity.
	
	Fix $W \in \Sigma$ and $n \in \N$. Since $W \in B_0(W)$, for each $W_i \in \Psi_n(W)$ one and only of the following three conditions are met:
	\begin{itemize}
		\item $W_i \in G_n(W)$;
		\vspace{0.1cm}
		\item $W_i \in B_n(W)$ and for every $1 \le k \le n$ there exists $V \in B_{n-k}(W)$ such that $T^k(W_i) \subseteq V$;
		\vspace{0.1cm}
		\item $W_i \in B_n(W)$ has a unique most recent good ancestor. That is, there exists $1 \le k \le n-1$ such that $T^k W_i \subseteq V$ for some $V \in G_{n-k}(W)$ and for every $0 \le \ell \le k-1$, $T^\ell W_i \subseteq V'$ for some $V' \in B_{n-\ell}(W)$.
	\end{itemize}
	
	Let $I_n(W)$ denote the set of $W_i \in \Psi_n(W)$ such that the second condition above holds, and for $1 \le k \le n-1$ and $V \in G_{n-k}(W)$ define 
	$$ I_{n,k}(V) = \{W_i \in B_n(W): T^k(W_i) \subseteq V \text{ and }\forall \, 0 \le \ell \le k-1, \, T^\ell(W_i) \subseteq V' \text{ for some } V' \in B_{n-\ell}(W)\},$$
	which represents the set of bad curves at step $n$ whose most recent good ancestor is $V$. With the definitions above, we can write $\Psi_n(W)$ as the disjoint union
	\begin{equation} \label{eq:projecteddecomp}
		\Psi_n(W) =  G_n(W) \bigcup I_{n}(W) \bigcup \left(\bigcup_{1 \le k \le n-1} \bigcup_{V \in G_{n-k}(W)} I_{n,k}(V)\right).
	\end{equation}
	Moreover, it follows immediately from Lemma~\ref{lem:complexityfact} that $I_n(W)$ and $I_{n,k}(V)$ satisfy the uniform-in-$\alpha$ exponential growth bounds stated below for any $W \in \Sigma$ and $V \in G_{n-k}(W)$.
	
	\begin{lemma} \label{lem:badcount}
		Fix $W \in \Sigma$ and $n \in \N$. Then, for every $1 \le k \le n-1$ and $V \in G_{n-k}(W)$ we have  $\# I_{n,k}(V) \le C_0^{k}$ and
		$\# I_n(W) \le C_0^n$, where $C_0 \ge 1$ is the constant from Lemma~\ref{lem:complexityfact}.
	\end{lemma}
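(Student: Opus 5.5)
This is a counting argument: iterate the bound of Lemma~\ref{lem:complexityfact} along genealogical chains of bad curves. The key fact is that the parent of a bad curve in $I_n(W)$ or $I_{n,k}(V)$ is itself bad (except for the topmost good ancestor $V$). Each such curve therefore arises through a chain in which every step passes from a curve to one of its bad children, and each curve has at most $C_0$ bad children.

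For $I_{n,k}(V)$, fix $1\le k\le n-1$ and $V\in G_{n-k}(W)$. For $0\le j\le k$, define $\mathcal{B}_j(V)$ to be the set of $U\in B_{n-k+j}(W)$ with $T^{j}(U)\subseteq V$ and, for every $0\le \ell\le j-1$, $T^\ell(U)\subseteq V'$ for some $V'\in B_{n-k+j-\ell}(W)$ (all intermediate ancestors bad). By definition, $I_{n,k}(V)=\mathcal{B}_k(V)$.

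\begin{itemize}
\item[(1)] \emph{Base case.} Every $U\in\mathcal{B}_1(V)$ is a bad curve in $\Psi_{n-k+1}(W)$ with $T(U)\subseteq V$. Since $\Psi_{n-k+1}(W)$ is built by applying $\Psi$ to the elements of $\Psi_{n-k}(W)$, this gives $\mathcal{B}_1(V)\subseteq B_{n-k+1,n-k}(V)$. Lemma~\ref{lem:complexityfact} then yields $\#\mathcal{B}_1(V)\le C_0$.
\item[(2)] \emph{Inductive step.} Each $U\in\mathcal{B}_{j+1}(V)$ has a unique parent $U'\in\Psi_{n-k+j}(W)$ containing $T(U)$. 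This uses the disjointness of the curves of a given generation and the fact that $T$ is injective on them, so $T(U)$ lies in exactly one element. The parent $U'$ lies in $\mathcal{B}_j(V)$, since its ancestors are precisely the later ancestors of $U$ and are all bad. Moreover $U\in B_{n-k+j+1,n-k+j}(U')$.
\item[(3)] \emph{Summing.} By (2) and Lemma~\ref{lem:complexityfact}, $\#\mathcal{B}_{j+1}(V)\le C_0\,\#\mathcal{B}_j(V)$. Induction from (1) gives $\#I_{n,k}(V)\le C_0^k$.
\end{itemize}

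The bound on $I_n(W)$ follows in exactly the same way, starting from $W$ itself, which is bad by the convention $W\in B_0(W)$. Define $\mathcal{B}_j$ as the set of $U\in B_j(W)$ all of whose ancestors in generations $0,\dots,j-1$ are bad. Then $\mathcal{B}_0=\{W\}$, and the same parent map gives $\#\mathcal{B}_{j+1}\le C_0\#\mathcal{B}_j$. Hence $\#I_n(W)=\#\mathcal{B}_n\le C_0^n$.

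The only point requiring care is step (2): well-definedness of the parent, and the fact that the generation-membership conditions in the definitions of $I_{n,k}$ and $I_n$ are inherited by the parent. The latter amounts to reindexing the condition on $\ell$, replacing $\ell$ by $\ell-1$. Both follow from the nested construction of $\Psi_k$, which never merges segments and only subdivides them. The case $k=1$ also needs to match the base case of the induction, and it does, since $I_{n,1}(V)=\mathcal{B}_1(V)$.
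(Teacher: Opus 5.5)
Your argument is correct and is essentially the paper's approach: the paper only says the bounds ``follow immediately from Lemma~\ref{lem:complexityfact},'' and your induction along chains of bad ancestors makes that precise. In that induction, each curve has at most $C_0$ bad children, the base case sits one generation below the good curve $V$, and $W\in B_0(W)$ serves as the root for $I_n(W)$.
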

	
	\subsubsection{Multi-intersection points}\label{sec:muliPoints}

	Recall from Section~\ref{sec:hyperbolic} that $\mathcal{S}^-$
	can be written as a finite union of closed line segments on
	$\T^2$ that are each tangent to the unstable cone $C_u$. The
	same is then true of $T^k(\mathcal{S}^-)$ for any $k \ge
	0$. Let $\mathcal{E}_k = \{E_{k,j}\}_{j=1}^{N_k}$ be the
	\textit{minimal decomposition} of $T^k(\mathcal{S}^-)$ into a
	finite union of closed line segments. The lines in
	$\mathcal{E}_k$ are pairwise disjoint, except possibly at
	their endpoints. Looking back at the definition of
	$\mathcal{S}_n^-$ from \eqref{eq:nstepsingdef}, we have that
	\begin{align*}
		\mathcal{S}_n^- =\bigcup_{k=0}^{n-1} \bigcup_{j=1}^{N_k} E_{k,j}\,.
	\end{align*}
	
	We define the $n$-step set of \textit{multi-intersection
		points} $\mathcal{M}_n^-$ to be the set of
	$z \in \mathcal{S}_n^-$ such that
	$z \in E_{k,j} \cap E_{\ell,m}$ for some $0\le k,\ell \le n-1$
	with $(k,j) \neq (\ell,m)$. In other words,
	$z \in \mathcal{M}_n^-$ if it is either a shared endpoint of
	two segments in $\mathcal{E}_k$ or lies on both
	a segment in    $\mathcal{E}_\ell$ and a segment $\mathcal{E}_k$ for some $k \neq
	\ell$. Recalling the definition of $A_\sigma$ from
	\eqref{eq:nprod} and the rest of the set-up from
	Section~\ref{sec:nstepsingularity}, we obtain the following
	result readily from the definition of $\mathcal{M}_n^-$.
	
	\begin{lemma} \label{lem:onecross}
		Suppose that $z, z' \in \T^2 \setminus \mathcal{S}_n^-$ can be connected by a continuous curve $\gamma$ that intersects $\mathcal{S}_n^-$ at a single point $z_0$. If $z_0 \not \in \mathcal{M}_n^-$, then $DT^{-n}(z) = A_{\sigma}$ and $D T^{-n}(z') = A_{\sigma'}$ for words $\sigma, \sigma' \in [4]^n$ that differ in at most one entry.
	\end{lemma}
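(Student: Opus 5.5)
The plan is to track, along $\gamma$, which region $\mathcal{R}_{k,i}$ contains the curve for each $k$, and to show that crossing a non-multi-intersection point changes the label at exactly one time index $k$. Recall that for each $0 \le k \le n-1$ the sets $\{\mathcal{R}_{k,i}\}_{i=1}^4$ partition $\T^2 \setminus T^k(\mathcal{S}^-)$. Each is open and connected, and $DT^{-1}\circ T^{-k}$ equals $A_i$ on $\mathcal{R}_{k,i}$. For $w \in \T^2\setminus \mathcal{S}_n^-$ we let $\sigma_k(w)$ be the index $i$ with $w \in \mathcal{R}_{k,i}$. By the chain rule and \eqref{eq:nprod}, $DT^{-n}(w) = A_{\sigma(w)}$. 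It therefore suffices to show that $\sigma_k(z) = \sigma_k(z')$ for all but at most one $k$.

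\emph{Step 1: the crossing point lies on one level only.} Since $z_0 \in \mathcal{S}_n^- \setminus \mathcal{M}_n^-$, by the definition of $\mathcal{M}_n^-$ there is a unique pair $(k_0,j_0)$ with $z_0 \in E_{k_0,j_0}$. In particular $z_0 \notin T^k(\mathcal{S}^-)$ for every $k \neq k_0$. Because each $T^k(\mathcal{S}^-)$ is closed, there is a neighborhood of $z_0$ that avoids all the sets $T^k(\mathcal{S}^-)$ with $k \ne k_0$.

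\emph{Step 2: labels are constant for $k \neq k_0$.} Fix $k \neq k_0$. The curve $\gamma$ meets $\mathcal{S}_n^-$ only at $z_0$, and by Step 1 $z_0 \notin T^k(\mathcal{S}^-)$. Hence $\gamma$ is a connected set contained in $\T^2 \setminus T^k(\mathcal{S}^-)$. This complement is the disjoint union of the open sets $\mathcal{R}_{k,i}$, so $\gamma$ lies in a single one of them. Therefore $\sigma_k(z) = \sigma_k(z')$.

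\emph{Step 3: conclude.} At the index $k_0$ the label may change, since $\gamma$ crosses $T^{k_0}(\mathcal{S}^-)$ at $z_0$. This is the only possible change, so $\sigma(z)$ and $\sigma(z')$ differ in at most the entry $k_0$, which proves the lemma.

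The argument is essentially definitional. The one point needing care is the convention that a shared endpoint of two segments of $\mathcal{E}_{k_0}$ counts as a multi-intersection, which is what makes $(k_0,j_0)$ unique. Note also that the proof uses only that $\gamma$ avoids the other levels $T^k(\mathcal{S}^-)$. Whether $z_0$ is an interior point of $E_{k_0,j_0}$ does not matter, because at most one entry changes in any case.
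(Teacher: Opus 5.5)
Your proof is correct and follows the paper's argument essentially verbatim. It uses $z_0 \notin \mathcal{M}_n^-$ to isolate a single level $k_0$, and then the connectedness of $\gamma$ inside $\T^2 \setminus T^k(\mathcal{S}^-)$ to fix the label $\sigma_k$ for every $k \neq k_0$. Your closing observation (only avoidance of the other levels is used) is exactly the paper's Remark~\ref{rem:onecross}, and the neighborhood in your Step 1 is unnecessary but harmless.
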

	
	\begin{proof}
		Since $z_0 \not \in \mathcal{M}_n^-$, there exists a
		unique $0 \le k \le n-1$ such that $z_0 \in T^k(\mathcal{S}^-)$. Thus, $\gamma \cap T^\ell(\mathcal{S}^-) = \emptyset$ for every $\ell \neq
		k$ with $0\le \ell \le n-1$. Since $\{\mathcal{R}_{\ell, i}\}_{i=1}^4$ is a partition of $\T^2 \setminus T^\ell(\mathcal{S}^-)$ into open sets, it follows from the connectdness of $\gamma$ that for every $0 \le \ell \le n-1$ other than $k$, there exists a
		unique $i_\ell \in \{1,2,3,4\}$ such that $\gamma \subseteq \mathcal{R}_{\ell,i_\ell}$. Because $z,z'
		\in \gamma$, we see that for every such $\ell\neq k$, we have $(DT^{-1} \circ T^{-\ell})(z) = (DT^{-1} \circ T^{-\ell})(z')$. Therefore, $DT^{-n}(z) = A_\sigma$ and $DT^{-n}(z') = A_{\sigma'}$ for words $\sigma, \sigma' \in [4]^n$ satisfying $\sigma_\ell = \sigma'_\ell$ for every $\ell \neq k$.
	\end{proof}
	
	\begin{remark} \label{rem:onecross}
		From the proof we see that it is not necessary for $\gamma$ to only intersect $\mathcal{S}_n^-$ at a single point. The same result holds under the weaker assumption that $\gamma$ has intersections with $T^k(\mathcal{S}^-)$ for at most one $0 \le k \le n-1$.
	\end{remark}
	
	For $r > 0$ and $z \in \T^2$, let $C_r(z)$ denote the open cube with side length $2r$ centered at $z$.
	
	\begin{lemma} \label{lem:topology}
		Fix $z \in \T^2$ and let $r \in (0,1/4)$ be such that $C_{2r}(z) \cap \mathcal{M}_n^{-} = \emptyset$. Suppose that there exists $0 \le k \le n-1$ and $E_{k,j} \in \mathcal{E}_k$ that intersects $C_r(z)$ at some point $z_0$. Then, there exists a closed line segment $L \subseteq \overline{C_{2r}(z)} \cap E_{k,j}$ which contains $z_0$ and connects the two vertical edges of $\overline{C_{2r}(z)}$.
	\end{lemma}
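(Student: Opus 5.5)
The plan is a continuation argument along the segments of $\mathcal{E}_k$. Starting from $z_0 \in E_{k,j} \cap C_r(z)$, we follow $E_{k,j}$ in both directions. If $E_{k,j}$ itself already crosses both vertical edges of $\overline{C_{2r}(z)}$, we are done. Otherwise, in at least one direction an endpoint $e$ of $E_{k,j}$ is reached inside $\overline{C_{2r}(z)}$, and the key claim is that this cannot happen. Two facts make the continuation possible. First, every segment of $T^k(\mathcal{S}^-)$ is tangent to $C_u$, so its slope is at most $4\alpha^{-1/2}$ in absolute value. Second, there are no multi-intersection points in $C_{2r}(z)$.

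\textbf{Step 1 (slope and exit geometry).} Because $E_{k,j}$ is a line segment tangent to $C_u$ and $r<1/4$, a piece of $E_{k,j}$ that starts at $z_0 \in C_r(z)$ and travels horizontally a distance of at most $3r$ changes height by at most $12\alpha^{-1/2} r < r$. It therefore cannot leave $C_{2r}(z)$ through the top or bottom edge. The only possible ways for the segment to stop before hitting a vertical edge of $\overline{C_{2r}(z)}$ are that $E_{k,j}$ ends at an endpoint $e$ in the interior, or that it ends on the boundary. The same slope bound also rules out the boundary case except at the vertical edges. Here we use that $C_{2r}(z)$ is open while $\mathcal{M}_n^-$ avoids it; if needed, we shrink slightly and use closedness of $\mathcal{M}_n^-$ (a finite union of points and segment intersections) to handle the closure.

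\textbf{Step 2 (endpoints are multi-intersection points).} We claim that every endpoint of a segment in the minimal decomposition $\mathcal{E}_k$ lies in $\mathcal{M}_n^-$. Since $T^k(\mathcal{S}^-)$ is a finite union of closed segments, its minimal decomposition has endpoints only where segments meet (on $\T^2$ the curves of $\mathcal{S}^-$ form closed curves without free ends; see \eqref{eq:S-explicit} and Figure~\ref{fig:backward8}, where every segment connects the edges of the fundamental domain). The lines $\{y=0\}$ and $\{y=1/2\}$ are closed loops. The segments $T_2(\{x=0\})$ and $T_2(\{x=1/2\})$ form closed broken lines whose corners lie on $\{y=1/2\}$ or $\{y=0\}$. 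Applying the homeomorphism $T^k$ preserves this property: $T^k(\mathcal{S}^-)$ has no free ends. Consequently, an endpoint $e$ of $E_{k,j}$ is either a corner shared with another segment of $\mathcal{E}_k$, and hence belongs to $\mathcal{M}_n^-$ by definition, or a point where minimality forced a break because of an intersection. Either way $e \in \mathcal{M}_n^-$, which contradicts $C_{2r}(z) \cap \mathcal{M}_n^- = \emptyset$.

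\textbf{Step 3 (conclusion).} Steps 1 and 2 together show that $E_{k,j}$ extends from $z_0$ in both directions until it reaches the two vertical lines bounding $\overline{C_{2r}(z)}$. Taking $L$ to be the portion of $E_{k,j}$ between these two crossings gives the required segment.

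I expect Step 2 to be the main obstacle. It requires making precise that $T^k(\mathcal{S}^-)$ consists of closed polygonal curves with no dangling endpoints. The natural route is to note that $T^{k}$ is a homeomorphism of $\T^2$ and that $\mathcal{S}^-$ is a union of closed curves (images of circles $\{x=c\}$, $\{y=c\}$); a closed curve has no free ends, so every break point of the minimal decomposition must be a corner, and hence lies on two segments of $\mathcal{E}_k$.
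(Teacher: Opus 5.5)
Your proposal is correct and follows essentially the same route as the paper. Since $T^k(\mathcal{S}^-)$ is a union of closed loops (images of circles under the homeomorphism $T^k$), every endpoint of a segment in $\mathcal{E}_k$ is shared and therefore lies in $\mathcal{M}_n^-$, hence outside $C_{2r}(z)$; the $C_u$ slope bound then forces the component of $E_{k,j}\cap\overline{C_{2r}(z)}$ through $z_0$ to reach both vertical edges. The ``shrink slightly'' hedge in your Step 1 is unnecessary, because an endpoint lying on a vertical edge does no harm and an endpoint on a horizontal edge is already ruled out by your slope estimate.
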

	
	\begin{proof}
		From \eqref{eq:ExplicitS1} and the fact that $T_1:\T^2 \to \T^2$ preserves vertical lines, we can express the singularity set as  
		$$\mathcal{S}^- = \{y=0\} \cup \{y=1/2\} \cup T(\{x=0\}) \cup T(\{x=1/2\}).$$
		Thus, 
		\begin{equation}\label{eq:ExplicitS2}
			T^k(\mathcal{S}^-) = T^k(\{y=0\}) \cup T^k(\{y=1/2\}) \cup T^{k+1}(\{x=0\}) \cup T^{k+1}(\{x=1/2\})
		\end{equation}
		is a union of four continuous closed loops on $\T^2$. It follows that the endpoints of every $E \in \mathcal{E}_k$ are in $\mathcal{M}_n^-$. In particular, since $C_{2r}(z) \cap \mathcal{M}_n^{-} = \emptyset$, the endpoints of $E_{k,j}$ are not contained in $C_{2r}(z)$. Thus, the connected component of $E_{k,j} \cap \overline{C_{2r}(z)}$ that contains $z_0$ is the desired line $L$. That this line segment connects the vertical edges of  $\overline{C_{2r}(z)}$ follows from the fact that $z_0 \in C_r(z)$ and $L$ is tangent to $C_u$.
	\end{proof}
	
	A key technical lemma needed in what follows states that $\mathcal{M}_n^- $ is a discrete set and gives an upper bound on the number of points it contains. We defer the proof to Section~\ref{sec:multipoints}.
	
	\begin{lemma} \label{lem:multipoints}
		For any $n \in \N$, the set $\mathcal{M}_n^-$ is
		discrete. Moreover, there exists a constant $C > 0$
		independent of $n$ and $\alpha$ such that $\# \mathcal{M}_n^- \le C(2\alpha)^{2n}$.
	\end{lemma}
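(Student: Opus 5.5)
We will bound $\#\mathcal{M}_n^-$ by counting intersections between the segments making up $\mathcal{S}_n^- = \bigcup_{k=0}^{n-1} T^k(\mathcal{S}^-)$. Discreteness then follows at once, because a finite set is discrete. Throughout we use the representation \eqref{eq:ExplicitS2}: $T^k(\mathcal{S}^-)$ is the union of the four closed loops $T^k(\{y=0\})$, $T^k(\{y=1/2\})$, $T^{k+1}(\{x=0\})$ and $T^{k+1}(\{x=1/2\})$. Each loop is a finite union of line segments in $C_u$ (Section~\ref{sec:hyperbolic}). Points of $\mathcal{M}_n^-$ come in two kinds:
\begin{itemize}
\item[(i)] shared endpoints (breakpoints) of segments in a single $\mathcal{E}_k$;
\item[(ii)] crossing points of a segment in $\mathcal{E}_k$ with a segment in $\mathcal{E}_\ell$, where $k\neq\ell$.
\end{itemize}

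\textbf{Step 1 (segment counts).} We will show that $N_k \le C(2\alpha)^{2k}$. Each application of $T$ stretches a curve in $C_u$ by a factor at most $\alpha^2+10\alpha^{3/2}\le (2\alpha)^2/2$, by \eqref{eq:leadingorder}. The images of the loops above are cut into new pieces only where they cross $\mathcal{S}^+$, which consists of $O(1)$ segments at mutual spacing $\gtrsim \alpha^{-1}$ in the stable direction. Hence a curve of length $\ell$ in $C_u$ picks up at most $C(1+\alpha\ell)$ breakpoints. Since the four base loops have length $O(1)$, the total length of $T^k(\mathcal{S}^-)$ is at most $C(2\alpha^2)^{k}$, up to factors absorbed into the constant. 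The number of breakpoints is then at most $C(1+\alpha\cdot \mathrm{length})$. With some care this gives $N_k\lesssim (2\alpha)^{2k}$, which bounds the type (i) points.

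\textbf{Step 2 (crossings between generations).} This is the main obstacle. Two segments both tangent to $C_u$ are nearly parallel, so a naive bound of (length)$\times$(length) is far too crude. Moreover, two such segments could overlap on a whole interval, which would make $\mathcal{M}_n^-$ non-discrete. Both problems will be handled by pulling back: for $k<\ell$, a point of $T^k(\mathcal{S}^-)\cap T^\ell(\mathcal{S}^-)$ is the image under $T^k$ of a point of $\mathcal{S}^-\cap T^{\ell-k}(\mathcal{S}^-)$. We will then use the following sharpened cone property (Remark~\ref{rem:largecones}): $T^{j}$ maps $C_u$ into $\widetilde C_u$, and its image directions have slopes $\approx \pm\alpha^{-1}$ perturbed by $O(\alpha^{-3})$. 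We will check from the explicit matrices \eqref{eq:matrices} that these image slopes are bounded away from the slopes $0$ and $\pm\alpha^{-1}$ of the segments of $\mathcal{S}^-$, at angle $\gtrsim \alpha^{-3}$. This is the step I expect to need the most care: the horizontal lines $\{y=0\}$ and $\{y=1/2\}$ are clearly transverse to the slope-$\alpha^{-1}$ families, but genuine parallelism among the spanning lines must be excluded using the sign structure of $\alpha|y-1/2|$. Once transversality holds, each segment of $T^{j}(\mathcal{S}^-)$ meets each of the $O(1)$ lines of $\mathcal{S}^-$ at most $O(1+\text{length})$ times. Consequently $\#(\mathcal{S}^-\cap T^{j}(\mathcal{S}^-)) \lesssim \mathrm{length}(T^j\mathcal{S}^-)\lesssim (2\alpha^2)^{j}$, and in particular the intersection is finite.

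\textbf{Step 3 (summing).} Pushing forward by $T^k$ is a bijection, so it does not change the number of points. The total count of type (ii) points is therefore
\[
\sum_{0\le k<\ell\le n-1}\#\bigl(\mathcal{S}^-\cap T^{\ell-k}(\mathcal{S}^-)\bigr)\lesssim \sum_{k<\ell} (2\alpha^2)^{\ell-k}\lesssim n\,(2\alpha^2)^{n}.
\]
Together with Step 1, and using $n(2\alpha^2)^n\le C(2\alpha)^{2n}$ uniformly in $n$ (because $(2\alpha^2)^n/(4\alpha^2)^n=2^{-n}$), this gives $\#\mathcal{M}_n^-\le C(2\alpha)^{2n}$. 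Along the way we will track the constants so that they do not depend on $n$ or $\alpha$.
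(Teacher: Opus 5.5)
Your route works, but it differs from the paper's at the decisive step.

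The paper never intersects two families that both lie in the unstable cone. From \eqref{eq:HVrep2} and \eqref{eq:SplusHV} it gets $\mathcal{S}^+=T^{-1}(\mathcal{S}^-)$, and it pulls back by $T^{j+1}$ rather than $T^{j}$. Then $T^j(\mathcal{S}^-)\cap T^k(\mathcal{S}^-)=T^{j+1}\bigl(\mathcal{S}^+\cap T^{k-j-1}(\mathcal{S}^-)\bigr)$ is an intersection of $C_s$-segments with $C_u$-segments, so it is finite for free. Moreover, each such point is where the forward decomposition $\Psi^+$ cuts a segment, hence an endpoint of a segment of $\Psi^+_{k-j}$ (Lemma~\ref{lem:psiplusfacts}). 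So breakpoints and cross-generation intersections are both controlled by the single count $\#\Psi^+_n(W)\le 4(2\alpha)^{2n}$ of Lemma~\ref{lem:longcount}, with no angle or lattice-point analysis.

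Your version instead needs the second-order fact that segments of $\mathcal{S}^-$ and of $T^j(\mathcal{S}^-)$, $j\ge 1$, are transverse with angle $\gtrsim\alpha^{-3}$. The cones cannot give this, since $\widetilde C_u$ contains both families. The fact is true and needs no sign analysis of $\alpha|y-1/2|$. Compute $DT=DT_2\,DT_1=\begin{pmatrix}1+ss'\alpha^2 & s'\alpha\\ s\alpha & 1\end{pmatrix}$ directly from \eqref{eq:shears}, with $s,s'\in\{\pm1\}$ recording the branches. The slope of $DT(1,t)$ minus $s'/\alpha$ is then exactly $-s'/\bigl(\alpha(1+ss'\alpha^2+s'\alpha t)\bigr)$. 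This never vanishes and has size $\approx\alpha^{-3}$ for $|t|\le 4\alpha^{-1/2}$, while the segments of $\mathcal{S}^-$ have slopes exactly $0,\pm\alpha^{-1}$.

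So your approach gives a self-contained forward-image picture. The cost is that it depends on the exact matrices rather than only on cone invariance, and it needs a separate crossing count for nearly parallel segments.

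Two quantitative slips should be fixed, though neither breaks the conclusion.

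First, $\mathcal{S}^-$ already has length of order $\alpha$, since each $T_2(\mathcal{V}_i^\pm)$ has length $\approx\alpha/2$. Hence $\mathrm{length}(T^k\mathcal{S}^-)\lesssim\alpha(2\alpha^2)^k$, not $C(2\alpha^2)^k$ with $C$ independent of $\alpha$.

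Second, take segments $E,F$ at angle $\theta$ with lifts $\tilde E,\tilde F\subset\R^2$. Transversality alone only gives $\#(E\cap F)\le\#\bigl(\Z^2\cap(\tilde E-\tilde F)\bigr)\lesssim 1+|E|+|F|+|E||F|\sin\theta$. When $F$ is one of the long segments of $\mathcal{S}^-$, this is $O(\alpha+|E|)$, not $O(1+|E|)$.

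With these corrections you still get $N_j\lesssim(2\alpha^2)^j$ and $\#(\mathcal{S}^-\cap T^j\mathcal{S}^-)\lesssim\alpha N_j+\mathrm{length}(T^j\mathcal{S}^-)\lesssim\alpha(2\alpha^2)^j$. The extra $\alpha$ is absorbed only because the generation gap is at most $n-1$: the type (ii) total is $\lesssim\alpha(2\alpha^2)^{n-1}\lesssim(2\alpha)^{2n}$. By contrast, $\alpha$ times your bound $n(2\alpha^2)^n$ would not be uniform in $\alpha$.
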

	
	\subsection{Reduction to \eqref{eq:gnregdiscussion}} \label{sec:reduction}
	
	Recall from Section~\ref{sec:projecteddiscussion} that 
	$$\int_{\T^2} P_{\le N}(f \circ T^{-m})(z) P_{\le N}(g\circ T^{-n})(z) \, \dee z = \int_{\T^2} f \circ T^{-(m-n)}(z) g_n(z) \, \dee z,$$
	where $g_n$ is given by \eqref{eq:G}. In this section, we define the ``regular'' curves $W_i^{(r)}$ mentioned at the end of Section~\ref{sec:projecteddiscussion} and use Lemma~\ref{lem:multipoints} to reduce the proof of Theorem~\ref{prop:Projected} to \eqref{eq:gnregdiscussion}.
	
	First, we let $W_{x} = \{(x,y) \in \T^2: y \in \T\} \in \Sigma$ and change variables to rewrite the right-hand side of the expression above as an integral over segments in $T^{-n}(\Sigma) \cap \Sigma$: 
	\begin{align}
		& \left|\int_{\T^2} f \circ T^{-(m-n)}(z)\, g_n(z) \, \dee z\right|
		=\left|\int_{\T^2} f \circ T^{-m}(z)\, g_n\circ T^{-n}(z)\, \dee z \right| \\ 
		& \qquad  = \left|\int_{\T}\left(\int_{W_{x}} f \circ T^{-m}\, g_n\circ T^{-n} \, \dee m_{W_x}\right)\dee x \right| \nonumber \\ 
		& \qquad = \left|\int_{\T} \sum_{W_i \in \Psi_n(W_x)} |J_{W_i} T^n|
		\int_{W_i} f \circ T^{-(m-n)}\, g_n \, \dee m_{W_i} \, \dee x\right| \\
		& \qquad  \le \int_{\T} \sum_{W_i \in \Psi_n(W_{x})}|J_{W_i}T^n|
		\left|\int_{W_i} f \circ T^{-(m-n)}\, g_n \, \dee m_{W_i}\right|\dee x, \label{eq:Gint2}
	\end{align}
	where $\Psi_n(W_{x})$ is the decomposition of $T^{-n}(W_x)$ introduced in Section~\ref{sec:nstepgen} and $|J_{W_i} T^n|$ is as defined in \eqref{eq:nStableJacobian}.
	
	Recall from Section~\ref{sec:muliPoints} above that $\mathcal{M}_n^{-}$ is the set of multi-intersection points of $\mathcal{S}_n^-$ and for $r > 0$ we write $C_r(z)$ to the denote the open cube of side length $2r$ centered at $z \in \T^2$. Let $r_n = \alpha^{-3n/2}$ and define
	$$ \mathring{\mathcal{M}}^-_n = \bigcup_{z \in \mathcal{M}_n^-} C_{r_n}(z). $$
	By Lemma~\ref{lem:multipoints}, we have 
	\begin{equation} \label{eq:Bcircvolume}
		|\mathring{\mathcal{M}}^-_n| \le (\# \mathcal{M}^-_n) r_n^2 \leqc (2\alpha)^{2n} \alpha^{-3n} = \left(\frac{\alpha}{4}\right)^{-n}.
	\end{equation}
	We define the sections $\mathring{\mathcal{M}}^-_{n,x} = \mathring{\mathcal{M}}_n^- \cap W_{x}$ and split each $W_i \in \Psi_n(W_{x})$ into pieces according to how $T^n (W_i)$ intersects $\mathring{\mathcal{M}}^-_{n,x}$. Let $$W_i^{(b)} = T^{-n}(T^n W_i \cap \mathring{\mathcal{M}}^-_{n,x}) \quad \text{and} \quad W_i^{(r)} = T^{-n}(T^n W_i \cap (W_{x}\setminus \mathring{\mathcal{M}}^-_{n,x})).$$ Since $T^n (W_i)$ is a vertical line segment, which by \eqref{eq:forwardcontract0} satisfies
	$$ |T^n W_i| \le |W_i|\left(\frac{\alpha^2}{4}\right)^{-n} \le 2(4^n) \alpha^{-2n} \ll r_n,$$
	and $\mathring{\mathcal{M}}^-_{n,x}$ is a finite union of intervals each with length at least $r_n$, $W_i^{(r)}$ contains at most one connected component. Thus, $W_i^{(r)} \in \Sigma$. 
	
	With the definitions above in place, we start from \eqref{eq:Gint2} and split the line integrals of $W_i$ between $W_i^{(r)}$ and $W_i^{(b)}$:  
	\begin{align}
		&\int_{\T} \sum_{W_i \in \Psi_n(W_{x})}|J_{W_i}T^n|
		\left|\int_{W_i} f \circ T^{-(m-n)}\, g_n \, \dee m_{W_i}\right|\dee x \\ 
		& \qquad \le \int_{\T} \sum_{W_i \in \Psi_n(W_{x})}|J_{W_i}T^n|
		\left|\int_{W_i^{(b)}} f \circ T^{-(m-n)}\, g_n \, \dee m_{W_i}\right|\dee x \\
		& \qquad + \int_{\T} \sum_{W_i \in \Psi_n(W_x)}|J_{W_i}T^n|
		\left|\int_{W_i^{(r)}} f \circ T^{-(m-n)}\, g_n\, \dee m_{W_i}(y)\right|\dee x. \label{eq:W_i^g}
	\end{align}
	Since 
	$$ \sup_{z \in \T^2}|g_n(z)||f \circ T^{-(m-n)}(z)| \le \|f\|_{L^\infty}\|g\|_{L^\infty}, $$
	for the first term above we have the estimate
	\begin{align}
		&\int_{\T} \sum_{W_i \in \Psi_n(W_{x})}|J_{W_i}T^n| \left|\int_{W_i^{(b)}} f \circ T^{-(m-n)}\, g_n \, \dee m_{W_i}\right|\dee x  \\ 
		& \qquad \le  \|f\|_{W^{1,\infty}}\|g\|_{W^{1,\infty}} \int_\T \sum_{W_i \in \Psi_n(W_{x})} |J_{W_i} T^n| |W_i^{(b)}| \, \dee x \\ 
		& \qquad =  \|f\|_{W^{1,\infty}}\|g\|_{W^{1,\infty}} \int_\T \sum_{W_i \in \Psi_n(W_{x})} |T^n W_i \cap \mathring{\mathcal{M}}^-_{n,x}|\, \dee x \\
		& \qquad  =  \|f\|_{C^1}\|g\|_{C^1} \int_{\T} |\mathring{\mathcal{M}}^-_{n,x}| \, \dee x \\
		& \qquad =  \|f\|_{C^1}\|g\|_{C^1}|\mathring{\mathcal{M}}_n^-| \leqc  \|f\|_{C^1}\|g\|_{C^1}\left(\frac{\alpha}{4}\right)^{-c_0(m-n)},
	\end{align}
	where in the last inequality we used \eqref{eq:Bcircvolume} and the fact that we consider here the case where $n > c_0(m-n)$. Combining this estimate with \eqref{eq:W_i^g} and using \eqref{eq:weakdecay} of Theorem~\ref{thm:exponential}, we have shown that 
	\begin{align}
		&\left|\int_{\T^2}P_{\le N} (f\circ T^{-n})(z) P_{\le N}(g \circ T^{-m})(z) \, \dee z\right| \\ 
		&\qquad \leqc \|f\|_{C^1}\|g\|_{C^1} \left(\frac{\alpha}{4}\right)^{-c_0(m-n)} + \alpha^{-q(m-n)}\|f\|_{C^1} \sup_{z_1 \in \T} \sum_{W_i \in \Psi_n(W_{x})}|J_{W_i} T^n| |g_n|_{C^1(W_i^{(r)})}.
	\end{align}
	Thus, to complete the proof of Theorem~\ref{prop:Projected}, assuming Lemma~\ref{lem:multipoints}, it is sufficient to show that for every $W_{x}$ we have 
	\begin{equation}\label{eq:reducedgoal}
		\sum_{W_i \in \Psi_n(W_{x})}|J_{W_i} T^n| |g_n|_{C^1(W_i^{(r)})} \le C \|g\|_{C^1}
	\end{equation}
	for a constant $C \ge 1$ that does not depend on $\alpha$, $n$, $N$, or $x$, which is simply a slightly more precise restatement of \eqref{eq:gnregdiscussion}.
	
	\subsection{Regularity of $g_n$} \label{sec:regularity}
	
	We now turn to the goal of proving \eqref{eq:reducedgoal}. Throughout, we fix some $x \in \T$ and to simplify notation set $W = W_{x}$. We proceed by decomposing the sum over  $W_i \in \Psi_n(W)$ in \eqref{eq:reducedgoal} according to \eqref{eq:projecteddecomp}, which gives
	\begin{align} \sum_{W_i \in \Psi_n(W)}|J_{W_i}T^n| |g_n|_{C^1(W^{(r)}_i)} &= \sum_{W_i \in I_{n}(W)} |J_{W_i}T^n| |g_n|_{C^1(W^{(r)}_i)} \label{eq:firstsum}  \\ 
		&  + \sum_{W_i \in G_n(W)}|J_{W_i}T^n| |g_n|_{C^1(W^{(r)}_i)} \label{eq:secondsum} \\ 
		& + \sum_{k=1}^{n-1} \sum_{V \in G_{n-k}(W)} \sum_{W_i \in I_{n,k}(V)} |J_{W_i}T^n| |g_n|_{C^1(W^{(r)}_i)}. \label{eq:thirdsum}
	\end{align}
	
	We will estimate each of these sums separately below, but first we record some preliminary facts. For any vector $v \in \R^2$, differentiating \eqref{eq:G} and \eqref{eq:Gv2} gives the two equivalent formulas
	\begin{align} 
		& \grad g_n(z) \cdot v =  -\int_{\T^2} \grad \psi_N(T^n(\bar{z}) - T^n(z))\cdot  DT^n(z) v \, g(\bar{z}) \, \dee \bar{z}, \label{eq:gradG1} \\ 
		& \grad g_n(z) \cdot v = \int_{\T^2} \psi_N(\bar{z}) \, \grad g(T^{-n}(\bar{z}+T^n(z))) \cdot DT^{-n}(\bar{z}+T^n(z)) DT^n(z)v  \, \dee \bar{z}. \label{eq:gradG2} 
	\end{align}
	Note that by the definition of $\varphi_N$ and \eqref{eq:psiN}, the kernel $\psi_N$ above lives at the scale $N^{-1}$ in the sense that $\psi_N(z) = 0$ for $z \not \in C_{4/N}(0)$. Moreover, $\int_{\T^2} \psi_N(z) \, \dee z = 1$ and $\|\grad \psi_N\|_{L^1(\T^2)}\leqc N$. Lastly, we record that from iterating \eqref{eq:leadingorder2}, we have
	\begin{equation} \label{eq:forwardcontract2}
		|J_{W_i} T^n| \le (\alpha^2 - 10\alpha^{3/2})^{-n} \le  \left(\frac{\alpha^2}{2}\right)^{-n}
	\end{equation}
	for any $n \in \N$ and $W_i \in \Psi_n(W)$.

	\subsubsection{Estimate of \eqref{eq:firstsum}}
	
	We will first bound \eqref{eq:firstsum}, as this term is by far the easiest and does not require any facts about $\mathcal{S}_n^{-}$ or $\mathcal{M}_n^-$. Fix any $W_i \in \Psi_n(W)$ and let $v_i \in C_s$ denote its unit tangent vector. Since $T^n(W_i)$ is contained in $W$, and hence a vertical segment, we know that $DT^n(z)v_i \in C_s$. Thus, it follows from \eqref{eq:leadingorder} and \eqref{eq:forwardcontract2} that for any $\bar{z} \in \T^2$ and $z \in W_i$ we have 
	\begin{equation}\label{eq:trivialgrad}
		|DT^{-n}(\bar{z} + T^n(z))DT^n(z) v_i| \le (\alpha^2 + 10\alpha^{3/2})^n|DT^n(z) v_i| \le \left(\frac{\alpha^2 + 10\alpha^{3/2}}{\alpha^2 - 10 \alpha^{3/2}}\right)^n \le 2^n
	\end{equation}
	whenever $\alpha$ is sufficiently large. Since
	\begin{equation} \label{eq:gC1}
		|g|_{C^1(W_i^{(r)})} \le \|g\|_{L^\infty} + \sup_{z \in W^{(r)}_i}|\grad g_n(z) \cdot v_i|,
	\end{equation}
	using \eqref{eq:gradG2} and \eqref{eq:trivialgrad} we obtain 
	\begin{equation} \label{eq:sum1_1}
		|g_n|_{C^1(W_i^{(r)})} \leqc 2^n \|g\|_{C^1}.
	\end{equation}
	Thus, by \eqref{eq:forwardcontract2} and Lemma~\ref{lem:badcount}, we have
	\begin{equation} \label{eq:firstsumfinal}
		\sum_{W_i \in I_n(W
			)} |J_{W_i}T^n| |g_n|_{C^1(W^{(r)}_i)}\leqc \# I_n(W) \left(\frac{\alpha^2}{2}\right)^{-n} 2^n \|g\|_{C^1} \le (4C_0)^n \alpha^{-2n} \|g\|_{C^1} \leqc \|g\|_{C^1},
	\end{equation}
	where $C_0$ is as in the statement of Lemma~\ref{lem:badcount} and $\alpha$ is taken sufficiently large in the final inequality.
	
	\subsubsection{Estimate of \eqref{eq:secondsum} and \eqref{eq:thirdsum}}
	
	We now turn to the sums \eqref{eq:secondsum} and \eqref{eq:thirdsum}. The following lemma will play a crucial role in their analysis and is the key motivation behind our decomposition of $\Psi_n(W)$ into good and bad curves, as well as the further splitting \eqref{eq:projecteddecomp}. Roughly speaking, for $W_i \in \Psi_n(W)$ and $z \in W^{(r)}_i$, the lemma restricts how the neighborhood $C_{4/N}(T^n(z))$ can intersect the backward singularity set. It controls intersections with $\mathcal{S}_n^-$ when $W_i$ is good, and with $\mathcal{S}_{n-k}^-$ when $W_i$ is a bad curve whose most recent good ancestor lies in $\Psi_{n-k}(W)$. 
	
	\begin{lemma} \label{lem:goodlemma2}
		Fix $0 \le k \le n-1$ and $V \in G_{n-k}(W)$. Let $W_i \in \Psi_n(W)$ satisfy $$T^k(W_i) \subseteq V.$$
		There exists a universal constant $C_1 \ge 1$ such that, if
		\begin{equation} \label{eq:maincaselemma}
			N^{-1} \le \frac{|J_{W_i} T^n|}{C_1},
		\end{equation}
		then for any $z \in W_i^{(r)}$ the neighborhood $C_{4/N}(T^n(z))$ can intersect $T^\ell(\mathcal{S}^-)$ for at most one $0 \le \ell \le n-k-1$.
	\end{lemma}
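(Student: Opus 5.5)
We argue by contradiction, using the geometry of the good ancestor $V$ together with Lemma~\ref{lem:topology}. Fix $z \in W_i^{(r)}$ and set $w = T^n(z) \in W$. Suppose $C_{4/N}(w)$ meets both $T^{\ell_1}(\mathcal{S}^-)$ and $T^{\ell_2}(\mathcal{S}^-)$ with $0 \le \ell_1 < \ell_2 \le n-k-1$.

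\textbf{Step 1: the box contains no multi-intersection point, so the crossings are long.} Since $z \in W_i^{(r)}$, the point $w$ lies at $y$-distance at least $r_n = \alpha^{-3n/2}$ from every point of $\mathcal{M}_n^-$ in the cube sense. We choose $C_1$ so that \eqref{eq:maincaselemma} together with \eqref{eq:forwardcontract2} (or the lower bound from \eqref{eq:leadingorder2}) forces $8/N \le 8|J_{W_i}T^n|/C_1 \lesssim \alpha^{-2n}/C_1 \ll r_n$. Then $C_{8/N}(w) \cap \mathcal{M}_n^- = \emptyset$. Lemma~\ref{lem:topology}, applied with $r = 4/N$, produces two segments $L_1 \subseteq T^{\ell_1}(\mathcal{S}^-)$ and $L_2 \subseteq T^{\ell_2}(\mathcal{S}^-)$. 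Each lies in $\overline{C_{8/N}(w)}$, crosses the box horizontally, and is tangent to $C_u$, hence nearly horizontal. Because these segments belong to different generations, they cannot meet inside the box, since any intersection would be a point of $\mathcal{M}_n^-$. Consequently both of them cross the vertical segment $W \cap C_{8/N}(w)$, at points $p_1, p_2$ with $|p_1 - p_2| \le 16/N$.

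\textbf{Step 2: pull back to the good ancestor.} The plan is to transport this configuration by $T^{-(n-k)}$ onto $V' := T^{-(n-k)}$ of the relevant piece. The set $T^k(W_i)$ sits in $V \in G_{n-k}(W)$, and $T^{n-k}(V) \subseteq W$ is a vertical segment containing the stretch of $W$ near $w$. A crossing of $W$ by $T^{\ell}(\mathcal{S}^-)$ with $\ell \le n-k-1$ marks a point where the decomposition $\Psi_{\ell+1}(W)$ cuts $T^{-(\ell+1)}W$; it therefore produces a breakpoint of $\Psi_{n-k}(W)$ in $T^{-(n-k)}W$. All points of $W$ within distance $16/N$ of $w$ pull back under $T^{-(n-k)}$ to within distance
\[
(16/N)\,\bigl(|J_{V}T^{n-k}|\bigr)^{-1}
\]
of $T^{-(n-k)}(w) = T^k(z)$, where we use that $T^{-(n-k)}$ is affine along the pieces with expansion comparable to $|J_V T^{n-k}|^{-1}$ up to the constant of Lemma~\ref{lem:equivexpansion}. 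Since $|J_{W_i}T^n| = |J_{V}T^{n-k}|\,|J_{W_i}T^k| \le |J_V T^{n-k}|$, this distance is at most $16 C/C_1 < 1/2$ once $C_1$ is large. So two distinct breakpoints of $\Psi_{n-k}(W)$ lie within distance $1/2$ of the point $T^k(z) \in V$. A segment carrying such a breakpoint in its interior is impossible, because cuts only occur at segment boundaries. Hence the point $T^k(z)$ is at distance $<1/2$ from two cuts. Then either $V$ or one of its neighbors has length less than $1$, which contradicts $V \in G_{n-k}(W)$: $V$ is long and both of its neighbors are long.

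\textbf{Main obstacle.} The delicate step is Step 2. We must justify that a singularity crossing of $W$ at generation $\ell$ really yields a cut in $\Psi_{n-k}$ located where claimed. A crossing point could first lie on the image of an earlier cut, which happens exactly at multi-intersection points, and these are excluded by Step 1. We must also control the distortion of distances under $T^{-(n-k)}$ across a single cut; here Lemma~\ref{lem:equivexpansion} and Lemma~\ref{lem:onecross} (via Remark~\ref{rem:onecross}) are used. The subdivisions of overlong pieces into lengths in $(1,2)$ are harmless, since they only create long pieces.
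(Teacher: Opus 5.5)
Your proof is correct and is essentially the paper's argument. It has the same ingredients: the box $C_{8/N}(T^n(z))$ misses $\mathcal{M}_n^-$, Lemma~\ref{lem:topology} gives two distinct crossings of the vertical segment $W\cap C_{8/N}(T^n(z))$ of length $16/N$, and these crossings can only sit at endpoints of images of $\Psi_{n-k}(W)$-segments. Lemmas~\ref{lem:onecross} and~\ref{lem:equivexpansion} then compare $V$ with a long neighbor. The only difference is that you pull back and compare with $|V|,|V_1|\ge 1$, while the paper stays in $W$ and compares $16/N$ with $\min(|T^{n-k}(V)|,|T^{n-k}(V_1)|)$.

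One point in Step 2 needs tightening. The distortion bound cannot be asserted for the whole $16/N$-neighborhood, since Lemma~\ref{lem:equivexpansion} only compares words differing in one entry. You only need it along the pulled-back arc from $T^k(z)$ to the second breakpoint, and that arc crosses at most the single cut between $V$ and one neighbor. The image of that cut lies in $C_{8/N}(T^n(z))$, hence outside $\mathcal{M}_{n-k}^-\subseteq\mathcal{M}_n^-$, so Lemma~\ref{lem:onecross} applies there.
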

	
	\begin{remark}
		If $k = 0$, then $V = W_i \in G_n(W)$, while if $k > 0$, then $V \in G_{n-k}(W)$ and $W_i \in I_{n,k}(V)$. 
	\end{remark}
	
	\begin{proof}
		Let $V \in G_{n-k}(W)$, $W_i \in \Psi_n(W)$ satisfy $T^k(W_i) \subseteq V$, and $z \in W_i^{(r)} \subseteq W_i$. Since $V \in G_{n-k}(W)$, by definition it has two long neighbors $V_1, V_2 \in L_{n-k}(W)$. Let
		$$\widetilde{V} = \overline{T^{n-k}(V) \cup T^{n-k}(V_1) \cup T^{n-k}(V_2)}.$$
		Then, $\widetilde{V}$ is a vertical, closed line segment contained in $W$.  Moreover, $T^n(z) \in T^{n-k}(V) \subseteq \widetilde{V}$ because $$T^n(z) = T^{n-k}(T^k(z)) \in T^{n-k}(T^k W_i) \subseteq T^{n-k}(V).$$ 
		Recalling the discussion at the beginning of Section~\ref{sec:nstepgen}, there are words $\sigma, \sigma_1, \sigma_2 \in [4]^{n-k}$ such that $T^{n-k}(V) \subseteq \mathcal{C}_\sigma^-$, $T^{n-k}(V_1) \subseteq \mathcal{C}_{\sigma_1}^-$, and $T^{n-k}(V_2) \subseteq \mathcal{C}_{\sigma_2}^-$. In particular, $\widetilde{V}$ can intersect $\mathcal{S}_{n-k}^-$ only at the endpoints of the closures of $T^{n-k}(V)$, $T^{n-k}(V_1)$, and $T^{n-k}(V_2)$. Thus, if $\widetilde{W}$ is a vertical line segment that intersects the shared endpoint $\overline{T^{n-k}(V)} \cap \overline{T^{n-k}(V_i)}$ for some $i \in \{1,2\}$, then
		\begin{equation}\label{eq:contradiction2}
			|\widetilde{W}| \le \frac{1}{2}\min(|T^{n-k}(V)|, |T^{n-k}(V_{i})|) \implies \# (\widetilde{W} \cap \mathcal{S}_{n-k}^-) \le 1.
		\end{equation}
		The remainder of the proof is based on showing that if the lemma is false, then we can construct such a line segment $\widetilde{W}$ for which the implication in \eqref{eq:contradiction2} fails.
		
		Let $\mathcal{U}_1= C_{4/N}(T^n(z))$, $\mathcal{U}_2 = C_{8/N}(T^n(z))$, and define $\widetilde{W} = \mathcal{U}_2 \cap W$.  Since $T^n(z) \in T^{n-k}(V)$, $\widetilde{W}$ is a vertical line segment of length $16N^{-1}$ that intersects $T^{n-k}(V)$ and connects the two horizontal edges of $\mathcal{U}_2$. Suppose for the sake of contradiction that there exist $0 \le \ell_1,\ell_2 \le n-k-1$ with $\ell_1 \neq \ell_2$ such that
		\begin{equation} \label{eq:contradiction3}
			T^{\ell_1}(\mathcal{S}^-) \cap \mathcal{U}_1 \neq \emptyset \quad \text{and} \quad T^{\ell_2}(\mathcal{S}^-) \cap \mathcal{U}_1 \neq \emptyset.
		\end{equation} 
		Since $z \in W_i^{(r)}$ and 
		$$N^{-1} \le |J_{W_i}T^n| \le \left(\frac{\alpha^2}{2}\right)^n \ll \alpha^{-3n/2} = r_n$$ 
		by \eqref{eq:maincaselemma} and \eqref{eq:forwardcontract2}, it follows from the definition of $W_i^{(r)}$ that $\mathcal{U}_2 \cap \mathcal{M}_n^- = \emptyset$. Therefore, by \eqref{eq:contradiction3} and Lemma~\ref{lem:topology} there exist line segments $L_1 \subseteq T^{\ell_1}(\mathcal{S}^-)$ and $L_2 \subseteq T^{\ell_2}(\mathcal{S}^-)$ that each connect the two vertical edges of $\mathcal{U}_2$. In particular, $L_1$ and $L_2$ intersect $\widetilde{W}$ at some points $z_1$ and $z_2$, respectively. Since $\widetilde{W} \subseteq \mathcal{U}_2 \subseteq \T^2 \setminus \mathcal{M}_n^{-}$ and $\ell_1 \neq \ell_2$, by the definition of the multi-intersection set it follows that $z_1 \neq z_2$ and we conclude that 
		\begin{equation} \label{eq:contradiction4}
			\#(\widetilde{W} \cap \mathcal{S}_{n-k}^{-}) \ge 2.
		\end{equation}
		
		Because $T^{n-k}(V) \subseteq \mathcal{C}_\sigma^- \subseteq \T^2 \setminus \mathcal{S}_{n-k}^-$ and $\widetilde{W}$ is a vertical segment containing $T^{n}(z) \in T^{n-k}(V)$, it is clear from \eqref{eq:contradiction4} that $\widetilde{W}$ must contain at least one endpoint of $\overline{T^{n-k}(V)}$. Without loss of generality, let us supposed that $\widetilde{W}$ contains the shared endpoint $\overline{T^{n-k}(V)} \cap \overline{T^{n-k}(V_1)}$. Since $\widetilde{W} \subseteq \T^2 \setminus \mathcal{M}_n^{-} \subseteq \T^2 \setminus \mathcal{M}_{n-k}^-$, this shared endpoint is not in $\mathcal{M}_{n-k}^-$. As $T^{n-k}(V)$ and $T^{n-k}(V_1)$ both do not intersect $\mathcal{S}_{n-k}^-$, it follows then from Lemma~\ref{lem:onecross} that the words $\sigma, \sigma_1 \in [4]^{n-k}$ differ in at most one entry. Thus, by Lemma~\ref{lem:equivexpansion} and the fact that $V$ and $V_1$ are both long, we have 
		\begin{equation} \label{eq:lengthequiv}
			\frac{|T^{n-k}(V)|}{|T^{n-k}(V_1)|} = \frac{|V| |A_{\sigma_1} e_2|}{|V_1| |A_\sigma e_2|} \le 2 C_0,
		\end{equation}
		where $C_0$ is as in Lemma~\ref{lem:equivexpansion}. Observe now that $$|J_{W_i}T^n| = |J_{W_i}T^k| |J_{V} T^{n-k}| \le |J_V T^{n-k}| = \frac{|T^{n-k}(V)|}{|V|} \le |T^{n-k}(V)|, $$
		and therefore \eqref{eq:maincaselemma} and \eqref{eq:lengthequiv} imply
		\begin{equation} \label{eq:maincasev2}
			N^{-1} \le \frac{|T^{n-k}(V)|}{C_1} \le \frac{2C_0}{C_1} |T^{n-k}(V_1)|.
		\end{equation}
		Thus, 
		\begin{equation}
			|\widetilde{W}| = 16 N^{-1} \le \frac{32C_0}{C_1} \min(|T^{n-k}(V)|, |T^{n-k}(V_1)|) 
		\end{equation}
		and \eqref{eq:contradiction4} contradicts \eqref{eq:contradiction2} for $C_1 > 64 C_0$.
	\end{proof}
	
	Lemma~\ref{lem:goodlemma2} allows us to control the size of $|g_n|_{C^1(W_i^{(r)})}$ when either $W_i \in G_n(W)$ or $W_i \in I_{n,k}(V)$ for some $V \in G_{n-k}(W)$. Precisely, we have the following.
	
	\begin{lemma}\label{lem:intermediatesum}
		If $W_i \in G_n(W)$, then 
		\begin{equation} \label{eq:gnreggood}
			|g_n|_{C^1(W_i^{(r)})} \leqc \|g\|_{C^1}
		\end{equation}
		and if $W_i \in I_{n,k}(V)$ for some $1 \le k \le n-1$ and $V \in G_{n-k}(W)$, then 
		\begin{equation} \label{eq:gnregintermediate}
			|g_n|_{C^1(W_i^{(r)})} \leqc 2^k \|g\|_{C^1}.
		\end{equation}
		The implicit constants above are independent of $W_i$, $W$, $k$, $n$, $V$, and $\alpha$.
	\end{lemma}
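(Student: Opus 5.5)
We bound $|g_n|_{C^1(W_i^{(r)})}$ by $\|g\|_{L^\infty} + \sup_{z\in W_i^{(r)}}|\grad g_n(z)\cdot v_i|$ as in \eqref{eq:gC1}, where $v_i\in C_s$ is the unit tangent of $W_i$, and we split into two regimes according to whether the smallness condition \eqref{eq:maincaselemma} of Lemma~\ref{lem:goodlemma2} holds. Both cases of the lemma are handled together by setting $k=0$ when $W_i\in G_n(W)$ (so $V=W_i$), and $k\ge1$ when $W_i\in I_{n,k}(V)$. Throughout, $u_i:=DT^n(z)v_i$ is vertical, since $T^n(W_i)\subseteq W$, and $|u_i|=|J_{W_i}T^n|$.

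\textbf{Regime 1: $N^{-1}> |J_{W_i}T^n|/C_1$.} Here the kernel scale is coarse compared with $|u_i|$, so we use the first formula \eqref{eq:gradG1}. Since $\|\grad\psi_N\|_{L^1}\leqc N$, it gives
\[
|\grad g_n(z)\cdot v_i|\le \|g\|_{L^\infty}\,\|\grad\psi_N\|_{L^1}\,|u_i|\leqc N|J_{W_i}T^n|\,\|g\|_{L^\infty}\le C_1\|g\|_{L^\infty}.
\]
This is the required bound even without the factor $2^k$.

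\textbf{Regime 2: \eqref{eq:maincaselemma} holds.} We use \eqref{eq:gradG2}. For $z\in W_i^{(r)}$, Lemma~\ref{lem:goodlemma2} says that the cube $C_{4/N}(T^n(z))$, which contains $\supp\psi_N(\cdot)+T^n(z)$, meets $T^\ell(\mathcal{S}^-)$ for at most one $0\le\ell\le n-k-1$. We factor
\[
DT^{-n}=DT^{-k}\circ T^{-(n-k)}\cdot DT^{-(n-k)}.
\]
Take $w=\bar z+T^n(z)$ with $w\notin\mathcal{S}^-_{n}$. Join $w$ to a point $w'$ of $T^{n-k}(V)$ near $T^n(z)$ by a straight segment inside the cube. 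By Remark~\ref{rem:onecross} this segment crosses $T^\ell(\mathcal{S}^-)$ for at most one $\ell<n-k$. Lemma~\ref{lem:onecross} then shows that $DT^{-(n-k)}(w)=A_{\sigma'}$, where $\sigma'$ differs from the word $\sigma$ of $T^{n-k}(V)$ in at most one entry. Lemma~\ref{lem:equivexpansion} therefore gives
\[
|DT^{-(n-k)}(w)u|\le C_0|A_\sigma u|
\]
for vertical $u$.

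Write $u_i=DT^{n-k}(T^k z)\,\tilde u$ with $\tilde u=DT^k(z)v_i\in C_s$. Since $DT^{-(n-k)}(T^nz)=A_\sigma$, we get $A_\sigma u_i=\tilde u$. Hence
\[
|DT^{-(n-k)}(w)u_i|\le C_0|\tilde u|=C_0|J_{W_i}T^k|,
\]
and the result lies in $C_s$ by cone invariance. The remaining $k$ steps are then controlled crudely, exactly as in \eqref{eq:trivialgrad}:
\[
|DT^{-k}(\cdot)\,y|\le(\alpha^2+10\alpha^{3/2})^k|y|\quad\text{for } y\in C_s,\qquad |J_{W_i}T^k|\le(\alpha^2-10\alpha^{3/2})^{-k}.
\]
Together these give $|DT^{-n}(w)u_i|\leqc 2^k$. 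Integrating against $\psi_N$, which has unit mass, yields $|\grad g_n(z)\cdot v_i|\leqc 2^k\|g\|_{C^1}$; when $k=0$ the factor is $O(1)$.

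The main obstacle is the geometric step in Regime 2. We must justify that every $w$ in the cube can be reached from $T^{n-k}(V)$ along a path crossing singularity curves of only one generation $\ell<n-k$. The curves of that generation may also be crossed several times, since parallel segments could be present, so we need Remark~\ref{rem:onecross} rather than Lemma~\ref{lem:onecross} itself. The set $\mathcal{S}^-_n$ has measure zero and can be ignored in the integral. Lemma~\ref{lem:topology} ensures the relevant curves span the cube, so straight segments give the required paths, and the at-most-one-generation conclusion of Lemma~\ref{lem:goodlemma2} supplies the rest.
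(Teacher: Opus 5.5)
Your proposal is correct and is essentially the paper's argument. The coarse regime $N|J_{W_i}T^n|\lesssim C_1$ is handled via \eqref{eq:gradG1}; otherwise you use \eqref{eq:gradG2} with the factorization through $DT^{-(n-k)}$, Lemma~\ref{lem:goodlemma2}, Remark~\ref{rem:onecross}, Lemma~\ref{lem:equivexpansion}, and crude bounds on the last $k$ steps, exactly as in the paper. The appeal to Lemma~\ref{lem:topology} in your final paragraph is unnecessary: take the straight segment from $T^n(z)$ itself (which already lies in $T^{n-k}(V)$) to $w$; it stays inside the small convex cube $C_{4/N}(T^n(z))$, so Lemma~\ref{lem:goodlemma2} alone ensures it meets singularity curves of at most one generation $\ell<n-k$.
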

	\begin{proof}
		Let $0 \le k \le n-1$, $V \in G_{n-k}(W)$, and $W_i \in \Psi_n(W)$ satisfy $T^k(W_i) \subseteq V$. We will prove that
		\begin{equation} \label{eq:gnregcombined}
			|g_n|_{C^1(W_i^{(r)})} \leqc 2^k \|g\|_{C^1},
		\end{equation}
		which treats both \eqref{eq:gnreggood} and \eqref{eq:gnregintermediate} simultaneously. Indeed, the $k = 0$ case corresponds to \eqref{eq:gnreggood}, while the $1 \le k \le n-1$ is the situation of \eqref{eq:gnregintermediate}.
		
		Let $v_i$ denote the unit tangent vector of $W_i$. As in the estimate of \eqref{eq:firstsum}, to prove \eqref{eq:gnregcombined} it is sufficient to show 
		\begin{equation} \label{eq:combinedreggoal}
			\sup_{z \in W_i^{(r)}} |\grad g_n(z) \cdot v_i| \leqc 2^k \|g\|_{C^1}.
		\end{equation}
		Fix $z \in W_i^{(r)}$. First observe that from \eqref{eq:gradG1}, we have
		\begin{equation}
			|\grad g_n(z) \cdot v_i| \le |J_{W_i}T^n| \|g\|_{L^\infty} \|\grad \psi_N\|_{L^1} \leqc N |J_{W_i} T^n| \|g\|_{C^1},
		\end{equation}
		where we used that $|D T^n(z) v_i| = |J_{W_i} T^n|$. Thus, \eqref{eq:combinedreggoal} is immediate when $N |J_{W_i} T^n| \le C_1$, where $C_1$ is as in the statement of Lemma~\ref{lem:goodlemma2}, and it suffices to consider the case that \eqref{eq:maincaselemma} holds. In this case, Lemma~\ref{lem:goodlemma2} applies and we estimate $|\grad g_n(z) \cdot v_i|$ using \eqref{eq:gradG2}, re-written below with $v = v_i$ for convenience: 
		\begin{equation}\label{eq:integral}
			\grad g_n(z) \cdot v_i = \int_{\T^2} \psi_N(\bar{z}) \, \grad g(T^{-n}(\bar{z}+T^n(z))) \cdot DT^{-n}(\bar{z}+T^n(z)) DT^n(z)v_i  \, \dee \bar{z}.
		\end{equation}
		For any $\bar{z} \in \T^2$ where all relevant derivatives are defined, we have the formula
		\begin{equation} \label{eq:derivativefactor}
			\begin{aligned} 
				& DT^{-n}(\bar{z}+T^n(z))DT^n(z) v_i \\ 
				& \qquad  = DT^{-k}(T^{-(n-k)}(\bar{z} + T^n(z)))DT^{-(n-k)}(\bar{z}+T^n(z))(DT^{-(n-k)}(T^n(z)))^{-1}DT^k(z) v_i. 
			\end{aligned}
		\end{equation}
		Suppose that $\bar{z} \in \supp(\psi_N) \subseteq C_{4/N}(0)$. Then, by Lemma~\ref{lem:goodlemma2}, there exists a continuous curve connecting $T^n(z)$ and $T^n(z) + \bar{z}$ that intersects $T^{\ell}(\mathcal{S}^-)$ for at most $0 \le \ell \le n-k-1$. Consequently, from Lemma~\ref{lem:onecross} (see Remark~\ref{rem:onecross}), we have that $DT^{-(n-k)}(T^n(z)) = A_\sigma$ and $DT^{-(n-k)}(\bar{z} + T^n(z)) = A_{\sigma'}$ for words $\sigma, \sigma' \in [4]^{n-k}$ that differ in at most one entry. Let $v \in C^s$ be the unit vector in the direction of $DT^k(z) v_i$. Then, we can rewrite \eqref{eq:derivativefactor} as
		\begin{equation} \label{eq:derivativefactor2}
			DT^{-n}(\bar{z}+T^n(z))DT^n(z) v_i = \frac{|DT^k(z) v|}{|A_\sigma (A_\sigma^{-1}v)|} \left[DT^{-k}(T^{-(n-k)}(\bar{z} + T^n(z)))\right]\left(A_{\sigma'}(A_\sigma^{-1}v)\right).
		\end{equation}
		By \eqref{eq:leadingorder2},
		\begin{equation} \label{eq:randombound1}
			|DT^k(z)v_i| \le (\alpha^2 - 10\alpha^{3/2})^{-k}.
		\end{equation}
		Moreover, $A_\sigma^{-1} v \parallel A_{\sigma}^{-1} DT^k(z) v_i = DT^n(z) v_i \parallel e_2 \in C_s$, and hence $A_{\sigma'} A_\sigma^{-1} v \in C_s$, from which it follows by \eqref{eq:leadingorder2} that
		\begin{equation} \label{eq:randombound2}
			\left|\left[DT^{-k}(T^{-(n-k)}(\bar{z} + T^n(z)))\right]\left(A_{\sigma'}(A_\sigma^{-1}v)\right)\right| \le (\alpha^2 + 10\alpha^{3/2})^k |A_{\sigma'}(A_\sigma^{-1}v)|.
		\end{equation}
		Putting \eqref{eq:randombound1} and \eqref{eq:randombound2} into \eqref{eq:derivativefactor2} and applying Lemma~\ref{lem:equivexpansion} yields
		$$\left| DT^{-n}(\bar{z}+T^n(z))DT^n(z) v_i \right| \le  \frac{(\alpha^2+10\alpha^{3/2})^k}{(\alpha^2 - 10\alpha^{3/2})^k} \frac{|A_{\sigma'}(A_\sigma^{-1}v)|}{|A_\sigma (A_\sigma^{-1} v)|} \leqc 2^k.$$
		This bound holds for Lebesgue almost every $\bar{z}$ where the integrand in \eqref{eq:integral} is nonzero, from which \eqref{eq:combinedreggoal} follows immediately.
	\end{proof}
	
	We are now finally prepared to estimate \eqref{eq:secondsum} and \eqref{eq:thirdsum}. For \eqref{eq:secondsum} we use \eqref{eq:gnreggood} and the fact that each $W_i \in G_n(W)$ satisfies $|W_i| \ge 1$ to obtain
	\begin{equation} \label{eq:secondsumfinal}
		\sum_{W_i \in G_n(W)}|J_{W_i}T^n| |g_n|_{C^1(W^{(r)}_i)} \leqc \|g\|_{C^1}\sum_{W_i \in G_n(W)}|J_{W_i}T^n||W_i| \le \|g\|_{C^1}|W| \le \|g\|_{C^1}. 
	\end{equation}
	For \eqref{eq:thirdsum}, applying \eqref{eq:gnregintermediate}, using Lemma~\ref{lem:badcount} to bound $\# I_{n,k}$, and taking $\alpha$ sufficiently large, we have
	\begin{align}
		\sum_{k=1}^{n-1} \sum_{V \in G_{n-k}(W)} \sum_{W_i \in I_{n,k}(V)} |J_{W_i}T^n| |g_n|_{C^1(W^{(r)}_i)} & = \sum_{k=1}^{n-1} \sum_{V \in G_{n-k}(W)} |J_V T^{n-k}| \sum_{W_i \in I_{n,k}(V)} |J_{W_i} T^{k}| |g_n|_{C^1(W_i^{(r)})} \\ 
		& \leqc \sum_{k=1}^{n-1} \sum_{V \in G_{n-k}(W)} |J_V T^{n-k}| \,  \#I_{n,k}(V) \left(\frac{\alpha^2}{2}\right)^{-k} 2^k \|g\|_{C^1} \\
		& \leqc \|g\|_{C^1} \sum_{k=1}^{n-1} \sum_{V \in G_{n-k}(W)} |J_V T^{n-k}| \left(\frac{\alpha^2}{4C_0}\right)^{-k} \\ 
		& \leqc \|g\|_{C^1} \sum_{k=1}^{n-1} \alpha^{-k} \sum_{V \in G_{n-k}(W)} |J_V T^{n-k}| |V|\\ 
		& \leqc \|g\|_{C^1} \sum_{k=1}^{n-1} \alpha^{-k} \leqc \|g\|_{C^1}. \label{eq:thirdsumfinal}
	\end{align}
	In the last three lines above, we have noted that $V \in G_{n-k}(W)$ implies that $V$ is long, so that 
	$$\sum_{V \in G_{n-k}(W)} |J_V T^{n-k}| \leqc \sum_{V \in G_{n-k}(W)} |J_V T^{n-k}| |V| = \sum_{V \in G_{n-k}(W)} |T^{n-k}(V)| \le |W| \le 1. $$
	Combining \eqref{eq:firstsumfinal}, \eqref{eq:secondsumfinal}, and \eqref{eq:thirdsumfinal}, we have proven \eqref{eq:reducedgoal}.
	
	\subsection{Proof of Lemma~\ref{lem:multipoints}} \label{sec:multipoints} 
	Section~\ref{sec:reduction} reduced the proof of Theorem~\ref{prop:Projected} to \eqref{eq:reducedgoal}, assuming Lemma~\ref{lem:multipoints}, and Section~\ref{sec:regularity} established \eqref{eq:reducedgoal}. Thus, to complete the proof of Theorem~\ref{prop:Projected} it remains to prove Lemma~\ref{lem:multipoints}. This will be carried out in the present section.   
	
	Recall from Section~\ref{sec:nstep} that \begin{equation} \label{eq:singrecall}
		\mathcal{S}_n^{-} = \bigcup_{k=0}^{n-1} T^k(\mathcal{S}^-) = \bigcup_{k=0}^{n-1} \bigcup_{j=1}^{N_k} E_{k,j},  
	\end{equation}
	where $\mathcal{E}_k = \{E_{k,j}\}_{j=1}^{N_k}$ is the minimal decomposition of $T^k(\mathcal{S}^-)$ into closed line segments. The set of multi-intersection points was defined as 
	\begin{equation} \label{eq:multiexplicit}
		\mathcal{M}_n^- = \{z \in \mathcal{S}_n^{-}: z \in E_{k,j} \cap E_{\ell,m} \,\, \text{for} \,\, (k,j) \neq (\ell,m)\}.
	\end{equation}
	That is, $\mathcal{M}_n^{-}$ consists of the shared endpoints between segments in $\mathcal{E}_k$ for $0 \le k \le n-1$, together with the points lying in both $T^k(\mathcal{S}^-)$ and $T^\ell(\mathcal{S}^-)$ for distinct $0\le k,\ell \le n-1$. Equivalently,
	\begin{equation}\label{eq:Mnequiv}
		\MM_n^- = \left(\bigcup_{k=0}^{n-1} \partial \mathcal{E}_k\right) \bigcup \left(\bigcup_{0 \le j < k \le n-1} T^j(\mathcal{S}^-) \cap T^k(\mathcal{S}^-) \right),
	\end{equation}
	where $\partial \mathcal{E}_k$ denotes the set of endpoints of the segments in $\mathcal{E}_k$. Here we have used the fact, pointed out in the proof of Lemma~\ref{lem:topology}, that the topological structure of $T^k(\mathcal{S}^-)$ forces every endpoint in $\mathcal{E}_k$ to be shared. Our goal is to show that $\mathcal{M}_n^-$ is discrete and  $\# \mathcal{M}_n^{-} \le C(2\alpha)^{2n}$ for a constant $C$ independent of $n$ and $\alpha$.
	
	To clarify the definitions above, we first describe explicitly the minimal decomposition of $\mathcal{S}^-$ and its set of multi-intersection points $\mathcal{M}_1^-$. Define the horizontal lines $\mathcal{H}_1 = \{y=0\}$, $\mathcal{H}_2 = \{y=1/2\}$ and the vertical lines $\mathcal{V}_1 = \{x=0\}$, $\mathcal{V}_2 = \{x=1/2\}$. As noted in Section~\ref{sec:hyperbolic}, the singularity set can be written as 
	\begin{equation} \label{eq:HVrep}
		\mathcal{S}^- = \mathcal{H}_1 \cup \mathcal{H}_2 \cup T_2(\mathcal{V}_1) \cup T_2(\mathcal{V}_2).
	\end{equation}
	Both $\mathcal{V}_1$ and $\mathcal{V}_2$ can be decomposed into two segments that each lie in a single region where $T_2$ is smooth. In particular, if we define $$\mathcal{V}_i^-= \mathcal{V}_i \cap \{(x,y): 0 \le y \le 1/2 \} \quad \text{and}  \quad \mathcal{V}_i^+ = \mathcal{V}_i \cap \{(x,y): 1/2 \le y \le 1\},$$
	then $T_2(\mathcal{V}_i^\pm)$ consists of a single closed line segment and we see from \eqref{eq:HVrep} that the minimal decomposition of $\mathcal{S}^-$ contains six lines:
	\begin{equation}\label{eq:Minimal0}
		\mathcal{S}^- = \mathcal{H}_1 \cup \mathcal{H}_2 \cup T_2(\mathcal{V}_1^+) \cup T_2(\mathcal{V}_1^-) \cup T_2(\mathcal{V}_2^+) \cup T_2(\mathcal{V}_2^-). 
	\end{equation}
	The set $\mathcal{M}_1^-$ consists of the shared endpoints of these six segments. Since $\mathcal{H}_1 \cap \mathcal{H}_2 = \mathcal{V}_1 \cap \mathcal{V}_2 = \emptyset$, such points arise only from intersections of the form $T_2(\mathcal{V}_i^\pm) \cap \mathcal{H}_j$ for $i,j \in \{1,2\}$. Each of these eight intersections contains a single point, but only four are distinct (see Figure~\ref{fig:backward8}). In particular, 
	\begin{equation} \label{eq:M1}
		\# \mathcal{M}_1^- = 4, \quad \mathcal{M}_1^- = \{(0,1/2), (1/2,1/2), T_2(0,0), T_2(1/2,0)\}.
	\end{equation}
	Before proceeding, it is also useful to record a convenient representation of $\mathcal{S}^-$ that follows immediately from \eqref{eq:HVrep}. Specifically, since $T_1(\mathcal{V}_i) = \mathcal{V}_i$, we have $T(\mathcal{V}_i) = T_2(\mathcal{V}_i)$, and therefore
	\begin{equation} \label{eq:HVrep2}
		\mathcal{S}^- = \mathcal{H} \cup T(\mathcal{V}),
	\end{equation}
	where $\mathcal{H} = \mathcal{H}_1 \cup \mathcal{H}_2$ and $\mathcal{V} = \mathcal{V}_1 \cup \mathcal{V}_2$.

	\subsubsection{Forward singularity set and generations of unstable curves}
	
	Let $\mathcal{W}_u$ denote the set of line segments in $\T^2$ whose tangent vectors lie in the unstable cone $C_u$. As discussed in Section~\ref{sec:hyperbolic} and evident from \eqref{eq:HVrep}, each segment in the minimal decomposition of $\mathcal{S}^-$ belongs to $\mathcal{W}_u$.  Since the characterization of $\MM_n^-$ in \eqref{eq:Mnequiv} involves the minimal decompositions of the sets $T^k(\mathcal{S}^-)$, we must study images of unstable curves under $T$. To this end, we first briefly describe the forward singularity set $\mathcal{S}^+$, and then introduce a decomposition of $T^k(W)$ for $W \in \mathcal{W}_u$ analogous to the decompositions $\Psi_k$ of pre-images of stable curves defined in Section~\ref{sec:Sminus}, with the role of $T^{-1}$ replaced by $T$. 
	
	The same reasoning that leads to \eqref{eq:HVrep} shows that $\mathcal{S}^+$ can be written as 
	\begin{equation} \label{eq:SplusHV}
		\mathcal{S}^+ = T^{-1}(\mathcal{H}) \cup \mathcal{V}.
	\end{equation}
	A plot of $\mathcal{S}^+$ is shown in Figure~\ref{fig:S1+}. It has the same geometric features as $\mathcal{S}^-$, but now consists of line segments tangent to $C_s$. Given $W \in \mathcal{W}_u$, we cut $W$ along $\mathcal{S}^+$ into subsegments, each contained in a region where $T$ is affine. We then push these subsegments through $T$, and subdivide any resulting segments of length greater than two into subsegments whose lengths lie in $(1,2]$. This produces a decomposition of $T(W)$ into segments in $\mathcal{W}_u$, which we denote by $\Psi^+(W)$. We then define $\Psi_1^+(W) = \Psi^+(W)$, and for $k \ge 2$,
	$$\Psi_k^+(W) = \bigcup_{W_i \in \Psi_{k-1}^+(W)} \Psi^+(W_i).$$
	We also set $\Psi_0^+(W) = \{W\}$.
	
	\begin{figure}[h]
		\centering 
		\includegraphics[ height=9cm, width=9cm]{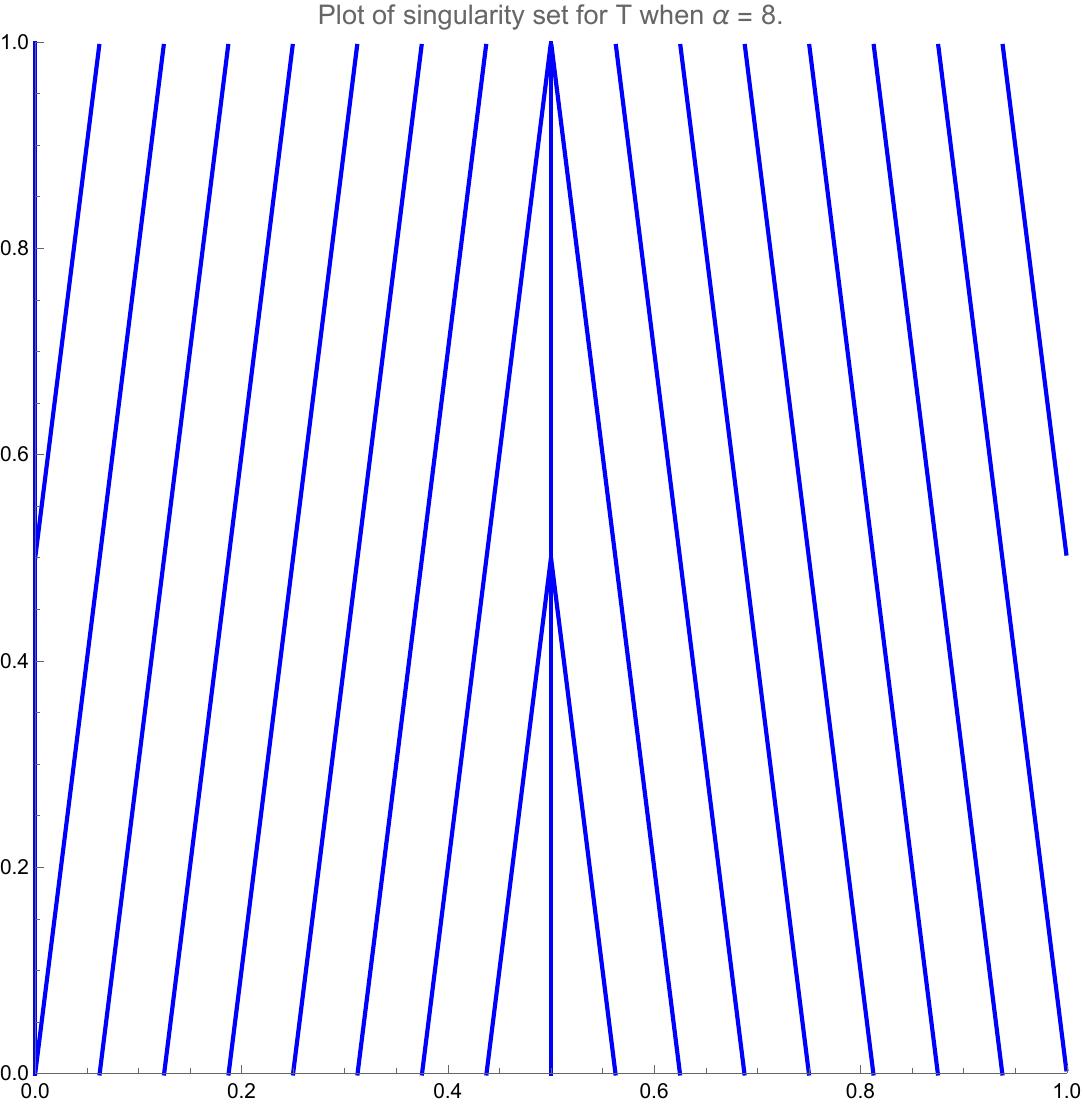}
		\caption{The singularity set $\mathcal{S}^+$ of $T$ when $\alpha = 8$.}
		\label{fig:S1+}
	\end{figure}
	
	The lemma below collects two facts following from the definition of $\Psi_k^+$ that will be necessary to estimate $\# \MM_n^-$ using \eqref{eq:Mnequiv}.
	
	\begin{lemma} \label{lem:psiplusfacts}
		Let $\{W_i\}_{i=1}^m \subseteq \mathcal{W}_u$ be any decomposition of $\mathcal{S}^-$. Then, for each $k \ge 0$, we have
		\begin{equation} \label{eq:psiplusfacts1}
			\# \partial \mathcal{E}_k \le 2\sum_{i=1}^m \# \Psi^+_k(W_i)
		\end{equation}
		and
		\begin{equation} \label{eq:psiplusfacts2}
			\# (\mathcal{S}^+ \cap T^k(\mathcal{S}^-)) \le 2\sum_{i=1}^m \# \Psi^+_{k+1}(W_i).
		\end{equation}
	\end{lemma}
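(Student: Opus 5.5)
The plan is to compare, for each fixed $k$, the minimal decomposition $\mathcal{E}_k$ of $T^k(\mathcal{S}^-)$ with the decomposition of $T^k(\mathcal{S}^-)$ furnished by the pieces $\bigcup_i \Psi_k^+(W_i)$. By construction, $\bigcup_{i=1}^m \Psi_k^+(W_i)$ is a finite family of closed line segments in $\mathcal{W}_u$ whose union is $T^k(\mathcal{S}^-)$, up to finitely many points, which we restore by taking closures. The basic observation is that a minimal decomposition into line segments is the coarsest one: any decomposition into segments refines it. Concretely, each $E \in \mathcal{E}_k$ is a maximal straight piece of $T^k(\mathcal{S}^-)$ (between shared endpoints), and every segment of $\bigcup_i\Psi_k^+(W_i)$ lies inside a single $E$. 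Otherwise it would contain an interior point that is an endpoint of a minimal segment. Such a point is a corner or branching point of $T^k(\mathcal{S}^-)$, and that is impossible for a point in the interior of a straight segment of the set, except where two minimal segments are collinear and abut. Even in that case the point is still an endpoint of some piece, as needed below. Thus every endpoint of a segment in $\mathcal{E}_k$ is an endpoint of at least one segment of $\bigcup_i \Psi_k^+(W_i)$. Each segment contributes at most $2$ endpoints, which gives \eqref{eq:psiplusfacts1}. I would make this rigorous by induction on $k$, using that $T$ is affine on each piece before it is pushed forward, so straight pieces map to straight pieces. I would also use that endpoints of the minimal decomposition of $T^k(\mathcal{S}^-)$ can only arise as images of endpoints of pieces at step $k-1$ or as images of points of $\mathcal{S}^+$, and both of these are endpoints in $\Psi^+_k$.

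For \eqref{eq:psiplusfacts2}, the plan is to show that every point $z \in \mathcal{S}^+ \cap T^k(\mathcal{S}^-)$ produces an endpoint of a segment in $\bigcup_i \Psi^+_{k+1}(W_i)$, injectively up to a factor two. Let $z$ lie on some segment $V \in \Psi_k^+(W_i)$. Since $V \in \mathcal{W}_u$ and $\mathcal{S}^+$ consists of segments tangent to $C_s$, the intersection $V \cap \mathcal{S}^+$ is finite and transversal. In the construction of $\Psi^+(V)$, the segment $V$ is cut at $z$. The two resulting subsegments are each mapped affinely by $T$, so $T$ applied on either side of $z$ (extended by continuity, since $T$ is continuous) sends $z$ to an endpoint of a segment in $\Psi^+(V) \subseteq \Psi_{k+1}^+(W_i)$. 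If $z$ is instead an endpoint of $V$, it is already accounted for as the image point of an endpoint. The map $z \mapsto T(z)$ is injective, and each segment of $\Psi^+_{k+1}$ has two endpoints. Hence the number of such $z$ is at most the number of endpoints, which is at most $2\sum_i \#\Psi^+_{k+1}(W_i)$.

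The main obstacle is the bookkeeping in the first estimate: making precise that the pieces of $\Psi_k^+$ refine $\mathcal{E}_k$. Two complications arise. The subdivisions into lengths in $(1,2]$ introduce artificial cut points, but these only increase the count on the right side, which is harmless. The harder issue is that distinct $W_i$ (or distinct pieces of $\Psi^+_k$) may overlap or be collinear, so that their union has fewer genuine endpoints than pieces. Again this only helps the inequality. What needs care is to check that no endpoint of $\mathcal{E}_k$ lies in the relative interior of every piece containing it. I would handle this by observing that such an endpoint is a point where $T^k(\mathcal{S}^-)$ fails to be locally a single straight segment. Such a point must come either from an endpoint of some $W_i$ or from a crossing of some earlier image with $\mathcal{S}^+$, and both of these are endpoints in the forward decomposition.
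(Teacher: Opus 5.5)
Your treatment of \eqref{eq:psiplusfacts2} is the same as the paper's. A point $z\in\mathcal{S}^+\cap T^k(\mathcal{S}^-)$ lies on some $V\in\Psi_k^+(W_i)$ and is either a cut point or an endpoint of $V$. So $T(z)$ is an endpoint of a segment of $\Psi_{k+1}^+(W_i)$, and injectivity of $T$ together with two endpoints per segment gives the factor $2$.

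For \eqref{eq:psiplusfacts1} you take a genuinely different and longer route. The paper never locates the endpoints of $\mathcal{E}_k$. It observes that $\bigcup_i\Psi_k^+(W_i)$ is itself a decomposition of $T^k(\mathcal{S}^-)$ into line segments. Minimality of $\mathcal{E}_k$ (fewest segments) then gives $\#\mathcal{E}_k\le\sum_i\#\Psi_k^+(W_i)$, and $\#\partial\mathcal{E}_k\le 2\,\#\mathcal{E}_k$ finishes. With this, the ``main obstacle'' you describe (refinement, overlapping or collinear pieces, induction on $k$) never arises.

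Your pointwise route proves the stronger statement that $\partial\mathcal{E}_k$ is contained in the set of endpoints of pieces, and your inductive version is sound. Suppose $q=T^{-1}(p)$ is neither on $\mathcal{S}^+$ nor an endpoint of a step-$(k-1)$ piece. Then near $p$ the set $T^k(\mathcal{S}^-)$ is a single straight segment, so by minimality no segment of $\mathcal{E}_k$ can end at $p$.

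The heuristic justification you give before the induction has one inaccurate step. A branching point can lie in the interior of a straight segment of the set: at each point of $\mathcal{M}_1^-$, the line $\mathcal{H}_j$ passes straight through while two segments $T_2(\mathcal{V}_i^{\pm})$ terminate. What makes such a $p$ an endpoint of a piece is that the pieces covering the terminating arms must end there, not that branching is incompatible with straightness.

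Your extra effort gives more information about where endpoints sit. For the counting needed in Lemma~\ref{lem:multipoints}, though, the paper's cardinality comparison suffices.
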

	
	\begin{remark} \label{rem:discrete}
		The set on the left-hand side of \eqref{eq:psiplusfacts2} is necessarily discrete, since the intersection $V_1 \cap V_2$ is finite for any $V_1 \in \mathcal{W}_s$ and $V_2 \in \mathcal{W}_u$.
	\end{remark}
	
	\begin{proof}
		Since $\cup_{i=1}^m \Psi_k^+(W_i)$ and $\mathcal{E}_k$ are both decompositions of $T^k(\mathcal{S}^-)$ into line segments, and $\mathcal{E}_k$ is the minimal such decomposition, we have 
		$$\# \mathcal{E}_k \le \#\left(\bigcup_{i=1}^m \Psi_k^+(W_i)\right) \le \sum_{i=1}^m \# \Psi_k^+(W_i). $$
		The bound \eqref{eq:psiplusfacts1} then follows from the fact that the number of endpoints in $\mathcal{E}_k$ cannot exceed $2 \, \# \mathcal{E}_k$.
		
		We now turn to the second bound. Let $z \in \mathcal{S}^+ \cap T^k(\mathcal{S}^-)$. Then $z \in W$ for some $1 \le i_0 \le m$ and $W \in \Psi_k^+(W_{i_0})$. Since $z \in \mathcal{S}^+$, the construction of $\Psi^+(W)$ cuts $W$ at $z$ before applying $T$. In particular, in the decomposition $\Psi^+(W)$ of $T(W)$, there is at least one segment having $T(z)$ as an endpoint. Choose one such segment and denote it by $W_z$. Then, $W_z \in \Psi_{k+1}^+(W_{i_0}) \subseteq \cup_{i=1}^m \Psi_{k+1}^+(W_i)$. This defines a map
		$$\zeta: \mathcal{S}^+ \cap T^k(\mathcal{S}^-) \to \bigcup_{i=1}^m \Psi_{k+1}^+(W_i), \quad \zeta(z)=W_z$$
		with the property that $T(z)$ is an endpoint of $\zeta(z)$. Since each segment in the codomain of $\zeta$ has only two endpoints, the mapping $\zeta$ is at most two-to-one. It follows that 
		$$\#(\mathcal{S}^+ \cap T^k(\mathcal{S}^-)) \le 2 \, \# \left(\bigcup_{i=1}^m \Psi_{k+1}^+(W_i)\right)\le 2 \sum_{i=1}^m \# \Psi_{k+1}^+(W_i),$$
		which is \eqref{eq:psiplusfacts2}.
	\end{proof}
	
	The essential ingredient in the proof of Lemma~\ref{lem:multipoints} is the following counting estimate for $\Psi_k^+$.
	
	\begin{lemma} \label{lem:longcount}
		Let $W \in \mathcal{W}_u$ satisfy $|W| \in [1,2]$. Then, for $\alpha$ sufficiently large, it holds that $\# \Psi_n^+(W) \le 4(2\alpha)^{2n}$ for every $n \ge 1$.
	\end{lemma}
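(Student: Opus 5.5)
We will prove the bound by induction on $n$, tracking both the number of segments in $\Psi_n^+(W)$ and their total length. The basic one-step estimate is the forward analogue of the analysis of $\Psi(W)$ in Section~\ref{sec:Sminus}. Fix $V \in \mathcal{W}_u$ with $|V| \le 2$. By \eqref{eq:SplusHV}, $\mathcal{S}^+$ is a union of $\mathcal{V}$ (two vertical loops) and $T^{-1}(\mathcal{H})$, whose spanning curves have slopes $\pm\alpha$ relative to the vertical. Adjacent parallel curves are at perpendicular distance $\approx (2\alpha)^{-1}$. Since $V$ is nearly horizontal, it meets $\mathcal{S}^+$ in at most $\approx 2\alpha|V| + C$ points.

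Each resulting piece is mapped affinely by $T$. Its length is multiplied by a factor between $\alpha^2 - 10\alpha^{3/2}$ and $\alpha^2 + 10\alpha^{3/2}$, by \eqref{eq:leadingorder}. The image is then subdivided into pieces of length in $(1,2]$, with short pieces kept as they are. Hence, for some absolute $C$,
\begin{equation}
\# \Psi^+(V) \le \#(V\cap \mathcal{S}^+) + 1 + |T(V)| \le C + 2\alpha|V| + (\alpha^2+10\alpha^{3/2})|V|.
\end{equation}

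Summing over $V \in \Psi_{k}^+(W)$ gives
\begin{equation}
\#\Psi^+_{k+1}(W) \le C\,\#\Psi_k^+(W) + (\alpha^2 + 2\alpha + 10\alpha^{3/2})\,\ell_k, \qquad \ell_k := \sum_{V\in \Psi_k^+(W)}|V| = |T^k(W)|.
\end{equation}
The total length satisfies $\ell_k \le (\alpha^2+10\alpha^{3/2})^k |W| \le 2(\alpha^2 + 10\alpha^{3/2})^k$. Iterating this linear recursion yields
\begin{equation}
\#\Psi^+_n(W) \le C^n + 2\sum_{k=0}^{n-1} C^{\,n-1-k}(\alpha^2+3\alpha^{3/2})(\alpha^2+10\alpha^{3/2})^k \lesssim (\alpha^2+10\alpha^{3/2})^n\bigl(1+O(C/\alpha^2)\bigr).
\end{equation}
This geometric sum is dominated by its last term when $\alpha^2 \gg C$, so for $\alpha$ large the bound is at most $4(2\alpha)^{2n} = 4\cdot 4^n\alpha^{2n}$, with ample room. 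The crude count above is already sufficient, because of the factor $4^n$ of slack.

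The main obstacle is making the additive constant $C$ in the one-step count rigorous and uniform. One has to bound the number of cuts of a short $V$ near the sideways ``V'' shapes and multi-intersection points of $\mathcal{S}^+$, and the extra short fragments these produce. This is the forward analogue of Lemma~\ref{lem:complexityfact1}(a),(b). The argument is the same transversality count: curves in $C_u$ versus singularity lines in $C_s$, whose adjacent parallel lines are separated by $\ge (4\alpha)^{-1}$. The only care needed is that short segments (length $<1$) are not assumed to have length $\ge 1$ anywhere. The counting bound must therefore be written as $\#\Psi^+(V) \le C + 3\alpha^2|V|$, valid for all $|V|\le 2$, and not as a bound per unit length.
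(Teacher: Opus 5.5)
Your proof is correct, but it organizes the count differently from the paper. The paper splits $\Psi_k^+(W)$ into short and long segments. It then combines two facts:
\begin{itemize}
\item the forward analogue of Lemma~\ref{lem:complexityfact1}(b), that each segment has at most ten short children;
\item the \emph{lower} expansion bound, that each long segment has at least $\alpha^2/8$ long children.
\end{itemize}
Together these give $\#S_n^+(W)/\#L_n^+(W)\le 160/\alpha^2$, hence $\#\Psi_n^+(W)\le 2\,\#L_n^+(W)$. The paper then bounds $\#L_n^+(W)$ by the total length $(2\alpha)^{2n}|W|$.

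You instead bound the children of a single segment by $\#(V\cap\mathcal{S}^+)+1+|T(V)|\le C+a|V|$, with $a=\alpha^2+O(\alpha^{3/2})$. You then close the linear recursion $x_{k+1}\le Cx_k+a\ell_k$ against $\ell_k\le 2b^k$, where $b=\alpha^2+10\alpha^{3/2}$. The additive constants only produce a geometric series dominated by its last term once $C\ll\alpha^2$. Your route needs only the upper expansion rate and a crossing count, with no short/long distinction and no lower bound on long children. It also gives a slightly better base, $\alpha^2(1+O(\alpha^{-1/2}))$ instead of $4\alpha^2$. The paper's route yields the extra structural fact that almost all curves in $\Psi_n^+(W)$ are long.

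Two clean-ups are needed.

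First, the lemma has the explicit constant $4$, so your final $\lesssim$ must become an explicit estimate. For example,
\[
\#\Psi_n^+(W)\le C^n+2ab^{n-1}(1-C/b)^{-1}\le(1+3\cdot 2^n)\alpha^{2n}\le 4(2\alpha)^{2n}
\]
holds once $\alpha$ is large enough that $C\le\alpha^2$, $a\le 1.1\,b$, $C/b\le 0.1$ and $10\alpha^{-1/2}\le 1$. Also make the constants in your displays consistent: $\alpha^2+3\alpha^{3/2}$, $\alpha^2+2\alpha+10\alpha^{3/2}$ and $3\alpha^2$ all appear.

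Second, the ``main obstacle'' is milder than you suggest. On $\{0<x<1/2\}$ and $\{1/2<x<1\}$, the curves of $T^{-1}(\mathcal{H})$ are the level sets $y-\alpha x\in\tfrac12\Z+c_-$ and $y+\alpha x\in\tfrac12\Z+c_+$ respectively. The function $y\mp\alpha x$ is strictly monotone along any $V\in\mathcal{W}_u$, with speed at most $(\alpha+1)$ times the horizontal speed. This gives $\#(V\cap\mathcal{S}^+)\le 2(\alpha+1)|V|+C$ directly, with $C$ absolute. Points where several singularity lines meet are cut only once. The piece count $\#(V\cap\mathcal{S}^+)+1$ already accounts for every short fragment, so no analogue of Lemma~\ref{lem:complexityfact1} is needed at all.
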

	
	\subsubsection{Conclusion of the proof}
	
	We momentarily postpone the proof of Lemma~\ref{lem:longcount} and use it to conclude the proof of Lemma~\ref{lem:multipoints}.
	
	\begin{proof}[Proof of Lemma~\ref{lem:multipoints}]
		
		Observe that \eqref{eq:HVrep2} and \eqref{eq:SplusHV} imply that the forward and backward singularity sets are related by 
		$$\mathcal{S}^+ = T^{-1}(\mathcal{S}^-).$$
		Thus, for any $0 \le j < k \le n-1$ we have 
		$$T^j(\mathcal{S}^-) \cap T^k(\mathcal{S}^-) = T^{j+1}(\mathcal{S}^+ \cap T^{k-j-1}(\mathcal{S}^-)).$$
		Since $k-j-1 \ge 0$, we see from Remark~\ref{rem:discrete} that $T^j(\mathcal{S}^-) \cap T^k(\mathcal{S}^-)$ is discrete and
		\begin{equation} \label{eq:discrete2}
			\#(T^j(\mathcal{S}^-) \cap T^k(\mathcal{S}^-)) = \# (\mathcal{S}^+ \cap T^{k-j-1}(\mathcal{S}^-)). 
		\end{equation}
		Using \eqref{eq:discrete2} in \eqref{eq:Mnequiv}, we obtain
		\begin{equation} \label{eq:Mnequivbound}
			\# \mathcal{M}_n^- \le \sum_{k=0}^{n-1} \# \partial \mathcal{E}_k \, \, \, + \sum_{0 \le j < k \le n-1}  \# (\mathcal{S}^+ \cap T^{k-j-1}(\mathcal{S}^-)).
		\end{equation}
		
		To conclude, we bound both terms in \eqref{eq:Mnequivbound} using Lemmas~\ref{lem:psiplusfacts} and~\ref{lem:longcount}. Each segment in the minimal decomposition of $\mathcal{S}^-$ from \eqref{eq:Minimal0} has length greater than or equal to one and belongs to $\mathcal{W}_u$. Therefore, we can find a family $\{W_i\}_{i=1}^m \subseteq \mathcal{W}_u$ such that $$\mathcal{S}^-=\bigcup_{i=1}^m W_i \quad \text{and} \quad |W_i| \in [1,2]  \quad \forall \, 1\le i \le m.$$ 
		Moreover, since 
		$$|T_2(\mathcal{V}_i^\pm)| = \sqrt{\alpha^2 + \frac{1}{4}},$$
		the total length of the segments in \eqref{eq:Minimal0} does not exceed $5\alpha$ for large $\alpha$. Thus, $\{W_i\}_{i=1}^m$ may be chosen so that $m \le 5\alpha$. For the first term in \eqref{eq:Mnequivbound}, we apply  Lemmas~\ref{lem:psiplusfacts} and~\ref{lem:longcount}, together with $m \le 5\alpha$, to estimate
		\begin{equation}
			\# \partial \mathcal{E}_k \le 2 \sum_{i=1}^m \# \Psi_k^+(W_i) \le \sum_{i=1}^m 8 (2\alpha)^{2k} \le 8m(2\alpha)^{2k}  \le 40(2\alpha)^{2k+1}. 
		\end{equation}
		Thus, 
		\begin{equation} \label{eq:endpointcount}
			\sum_{k=0}^{n-1} \# \partial \mathcal{E}_k \leqc \sum_{k=0}^{n-1} (2\alpha)^{2k+1} = \frac{(2\alpha)^{2n+1} - 2\alpha}{(2\alpha)^2 - 1} \leqc (2\alpha)^{2n}
		\end{equation}
		For the second term in \eqref{eq:Mnequivbound}, using the same two lemmas again we obtain, for any $0 \le j < k \le n-1$,
		\begin{align*}
			\#(\mathcal{S}^+ \cap T^{k-j-1}(\mathcal{S}^-)) \le 2 \sum_{i=1}^m \# \Psi_{k-j}^+(W_i) \le \sum_{i=1}^m 8 (2\alpha)^{2(k-j)} \le 80 (2\alpha)^{2(k-j)}.
		\end{align*}
		Thus,
		\begin{align}
			\sum_{0 \le j < k \le n-1} \#(\mathcal{S}^+ \cap T^{k-j-1}(\mathcal{S}^-)) &\le 80\alpha \sum_{0 \le j < k \le n-1} (2\alpha)^{2(k-j)} 
			= 80\alpha \sum_{k=1}^{n-1} (2\alpha)^{2k} \sum_{j=0}^{k-1} (2\alpha)^{-2j} \\ 
			& \leqc \sum_{k=1}^{n-1} (2\alpha)^{2k+1} 
			\leqc (2\alpha)^{2n}. \label{eq:extracount}
		\end{align}
		Substituting \eqref{eq:endpointcount} and \eqref{eq:extracount} into \eqref{eq:Mnequivbound} completes the proof.
	\end{proof}

	\begin{proof}[Proof of Lemma~\ref{lem:longcount}]
		Fix $W \in \mathcal{W}_u$ with $|W| \in [1,2]$.  We decompose $\Psi_k^+(W)$ into long and short segments:
		$$S_k^+(W) = \{W_i \in \Psi_k^+(W): |W_i| < 1\}, \quad L_k^+(W) = \Psi_k^+(W) \setminus S_k^+(W).$$
		Since $\mathcal{S}^-$ and $\mathcal{S}^+$ are structurally the same after reversing the roles of $C_u$ and $C_s$, it follows from the proof of Lemma~\ref{lem:complexityfact1}(b) that each member of $\Psi_{k}^+(W)$ can generate at most ten short curves in the next generation. Thus, 
		\begin{equation} \label{eq:shortcount+}
			\# S^+_{k+1}(W) \le 10 \, \#\Psi^+_k(W) = 10(\# S^+_k(W) + \# L^+_k(W)).
		\end{equation}
		We may similarly obtain a lower bound on $L_{k+1}^+(W)$. Indeed, it follows from \eqref{eq:forwardinvariant} that $|T(V)| \ge |V|  (\alpha^2/2) \ge \alpha^2/2$ for any $V \in L^+_k(W)$. Since $\Psi_1^+(V)$ contains at most ten short curves,  we see that
		$$ 
		\frac{\alpha^2}{2} \le |T(V)| \le 2 \, \#  L_1^+(V) + S_1^+(V) \le 2 \, \#  L_1^+(V)+10.
		$$
		Thus, $\# L_1^+(V) \ge \alpha^2/8$ for large $\alpha$ and by summing over $V \in L_k^+(W)$ we conclude
		\begin{equation} \label{eq:longcount+}
			\# L_{k+1}(W) \ge \frac{\alpha^2}{8} \# L_{k}(W).
		\end{equation}
		
		Define $$r_k = \frac{\# S^+_k(W)}{\# L^+_k(W)}.$$ Together, \eqref{eq:shortcount+} and \eqref{eq:longcount+} imply that
		\begin{equation} \label{eq:rk}
			r_{k+1} \le \frac{80}{\alpha^2} r_k + \frac{8}{\alpha^2}
		\end{equation}
		for any $k \ge 0$. Note that $r_0 = 0$ since $|W| \ge 1$. Therefore, we find by iterating \eqref{eq:rk} that 
		\begin{equation}  \label{eq:rkbound}
			r_n \le \sum_{k =1}^n \left(\frac{80}{\alpha^2}\right)^k \le \frac{\frac{80}{\alpha^2}}{1-\frac{80}{\alpha^2}} \le \frac{160}{\alpha^2} \le 1
		\end{equation}
		for $\alpha$ sufficiently large. From \eqref{eq:rkbound} we have 
		\begin{equation} \label{eq:2longcount}
			\# \Psi_n^+(W)  = \# L_n^+(W) + \# S_n^+(W) \le 2 \, \#L_n^+(W).
		\end{equation}
		Now, by the control on $DT$ provided by \eqref{eq:matrices}, it follows that 
		$$\sum_{W_i \in \Psi_n^+(W)} |W_i| \le (2\alpha)^{2n} |W| \le 2 (2\alpha)^{2n}.$$
		Since each long segment has length at least one, we deduce
		\begin{equation} \label{eq:longcount3}
			\#L^+_n(W) \le \sum_{W_i \in L^+_n(W)} |W_i| \le 2 (2\alpha)^{2n}.
		\end{equation}
		Putting \eqref{eq:longcount3} into \eqref{eq:2longcount} completes the proof.
	\end{proof}

	\bibliographystyle{plainnat}
	\bibliography{Batchelor}
	
\end{document}